\documentclass[reqno, 11pt, a4paper]{amsart}
\usepackage{amsmath, amssymb}  
\usepackage{amsthm}
\usepackage[foot]{amsaddr}
\usepackage{mathtools}
\usepackage{mathrsfs, enumerate}
\usepackage[truedimen,top=3truecm, bottom=3truecm, left=2.5truecm, right=2.5truecm, includefoot]{geometry} 
\begin{document}

\title[A Stefan problem with non-linear diffusion from particle systems]{A one-phase Stefan problem with non-linear diffusion from highly competing two-species particle systems}
\author[K. Hayashi]{Kohei Hayashi}
\address{Graduate School of Mathematical Sciences, The University of Tokyo, Komaba, Tokyo 153-8914, Japan.}
\email{kohei@ms.u-tokyo.ac.jp}
\keywords{Hydrodynamics limit, Interacting particle system, Fast-reaction limit, Stefan problem}
\subjclass[2010]{60K35, 82C22, 35K57, 35B40}
\maketitle

\newtheorem{definition}{Definition}[section]
\newtheorem{theorem}{Theorem}[section] 
\newtheorem{lemma}[theorem]{Lemma}
\newtheorem{proposition}[theorem]{Proposition}
\newtheorem{remark}{Remark}[section]
\newtheorem{assumption}{Assumption}[section]

\makeatletter
\renewcommand{\theequation}{%
\thesection.\arabic{equation}}
\@addtoreset{equation}{section}
\makeatother


\newcounter{num}
\newcommand{\Rnum}[1]{\setcounter{num}{#1}\Roman{num}}

\begin{abstract}
We consider an interacting particle system with two species under strong competition dynamics between the two species. Then, through the hydrodynamic limit procedure for the microscopic model, we derive a one-phase Stefan type free boundary problem with non-linear diffusion, letting the competition rate divergent. Non-linearity of diffusion comes from a zero-range dynamics for one species while we impose the other species to weakly diffuse according to the Kawasaki dynamics for technical reasons, which macroscopically corresponds to the vanishing viscosity method.  
\end{abstract}

\section{Introduction}
Consider two types of species under high competition. As the reaction rate between the two species tends to infinity, they would be spatially segregated and interface appears as boundary of their moving territories. As a mathematical problem, such a limiting procedure is called the spatial-segregation limit and was originally formulated in \cite{HHP96} and \cite{DHMP99} as follows. For any positive number $K $, consider a pair of non-negative solutions $(u_K, v_K) $ of the reaction-diffusion system 
\begin{equation}
\label{RD}
\begin{cases}
\begin{aligned}
& \partial_t u = d_1 \Delta u - K u v \\
& \partial_t v = d_2 \Delta v - K u v 
\end{aligned}
\end{cases}
\end{equation}
with some additional constraints (usually with Neumann boundary condition on a bounded domain). The spatial dimension is arbitrary in the sequel. Here $ d_1 > 0$ and $d_2 \ge 0$ are diffusion coefficients. As the competition rate $K$ tends to infinity, solutions $(u_K, v_K)$ converges to some $( u, v ) $ which satisfies $u v \equiv 0$, in addition, $u$ and $v$ satisfies heat equations with diffusion coefficient $d_1$ and $d_2$, respectively, on their moving domains. In other words, the solutions of \eqref{RD} are spatially segregated, asymptotically as the competition rate tends to infinity, and interface governed by a free boundary problem appears. Alternatively, since the above formulation is concerned with a singular limit for reaction terms, it is also called the \textit{fast-reaction limit} in some literature. In \cite{HHP96}, the authors treated the case with $d_2 = 0$ and derived a free boundary problem called a one-phase Stefan problem. After that, \cite{DHMP99} proved that when $d_2 > 0$, a pair of solutions $(u_K, v_K) $ converges to a solution of a two-phase Stefan problem under Neumann boundary condition and \cite{CDHMN04} treated the problem under inhomogeneous Dirichlet boundary condition. All the above results consider the singular limit problem for balanced reaction terms, namely when the reaction terms of the reaction-diffusion system coincide up to some constant. For the singular limit problem under unbalanced reaction, \cite{IMMN17} studied the case when $d_2 = 0 $ and the reaction terms are given by monomials with various exponents. They derived three kinds of liming behavior as the competition rates tend to infinity. At present, few results for unbalanced reaction are known other than this. To other direction, \cite{HHP00} studied the spatial-segregation limit with a presence of non-linear diffusion. They considered the reaction-diffusion system of the form
\begin{equation}
\label{NLRD}
\begin{cases}
\begin{aligned}
& \partial_t u = \Delta \varphi (u ) - K u v \\
& \partial_t v =  - K u v 
\end{aligned}
\end{cases}
\end{equation}
under an inhomogeneous Dirichlet boundary condition and studied the limiting behavior as the reaction rate $K$ tends to infinity. Here $\varphi$ is an increasing function. The limiting equation becomes the following one-phase Stefan problem. 
\begin{equation}
\begin{cases}
\begin{aligned}
\label{aim}
& \partial_t u = \Delta \varphi (u) && \text{ in } \{ u > 0 \}, \\
& \partial_t v = v |_{ t = 0 } && \text{ in } \{ v > 0 \} .
\end{aligned}
\end{cases}
\end{equation}
The paper \cite{HHP00} introduced the notion of weak solutions in order that the limiting problem makes sense allowing even degenerate diffusion, namely when the derivative of the increasing function $ \varphi $ degenerates at zero.

Recently, \cite{DMFPV19} considered an interacting particle system as a microscopic counterpart of the above high competition dynamics corresponding to the system \eqref{RD} with $d_1, d_2 > 0 $ and derived the two-phase free boundary problem by which the asymptotic interface is governed, directly from the microscopic model through a scaling limit called the hydrodynamic limit. They considered two-species exclusion processes where the same kind of particles can not stay at the same site and two different kinds of particles coexisting at the same site are killed with rate $K (N)$, which diverges as the scaling parameter $N$ tends to infinity. For the case where competition between two species is unbalanced, \cite{H20} considered two-species exclusion processes which macroscopically correspond to the reaction-diffusion systems studied in \cite{IMMN17} and derived the three kinds of interface behavior directly from the particle systems through the hydrodynamic limit. In the model of \cite{H20}, one species has no diffusion dynamics even microscopically, which asserts the diffusion coefficient of macroscopic density, say $v $, of that kind of particles exactly equals to zero. For another kind of research concerning singular limit for interacting particle systems, \cite{FT18} considered a one-species exclusion process with creation and annihilation dynamics whose hydrodynamic limit equation becomes the Cahn-Hilliard equation with a double-well potential (Allen-Cahn type equation) and they studied a fast-reaction limit problem when the reaction rate diverges depending on the scaling parameter of the microscopic model. Among all these microscopic models, the diffusion dynamics is commonly the Kawasaki process, that is, the dynamics of many independent simple random walks under an exclusive restriction, which deduces linear diffusion in hydrodynamic equations.

The aim of this paper is to derive a Stefan type moving boundary problem with possibly non-linear diffusion \eqref{aim} by scaling limits for microscopic systems. For that purpose, we consider a zero-range process which admit the arbitrary finite number of particles to exist at one site, for which each particle jumps to neighboring sites with a rate depending only on the number of particles at that site. Then one can show that there exists a function $\varphi $ which is naturally determined by jump rates of microscopic particles satisfying some conditions, especially a linear growth condition (see \textbf{(LG)} below) for technical reasons. In this paper, we consider a two-species model on the general dimensional torus with competition which macroscopically corresponds to the reaction-diffusion system \eqref{NLRD}, as in \cite{HHP00}, to derive the problem \eqref{aim} by letting the competition rate to be divergent asymptotically as the scaling parameter tends to infinity. As a microscopic counterpart of this one-phase model, it is natural to consider interacting particle systems with two components, say type-$1$ and type-$2$ particles, where only type-$1$ particles diffuse according to the zero-range dynamics while type-$2$ particles do not move at all. However, treatment of such a situation is technically difficult since for two-species systems, spectral gap estimates, which play an important role to prove hydrodynamic limits, fail to hold unless both types of particles diffuse. To overcome this and prove the hydrodynamic limit in our model, we consider weak diffusion dynamics for type-$2$ particles. Moreover, since the non-compactness of configuration spaces makes the problem more technically difficult, we impose an exclusion rule to type-$2$ particles and let them diffuse according to Kawasaki dynamics with asymptotically negligible diffusion. This macroscopically corresponds to the vanishing viscosity method in PDE theory as follows. We let $\varepsilon $ be the diffusion coefficient of type-$2$ density. If $\varepsilon $ and $K$ is independent of the scaling parameter, then the system of hydrodynamic limit equations becomes 
\begin{equation*}
\begin{cases}
\begin{aligned}
&\partial_t u  = \Delta \varphi( u ) - K u  v \\
&\partial_t v = \varepsilon \Delta v - K u v  
\end{aligned}
\end{cases}
\end{equation*}
for some non-negative increasing function $\varphi $ determined by the zero-range jump rate of type-$1$ particles. Here $u$ and $v$ are macroscopic density of type-$1$ and type-$2$ particles, respectively. Taking the limit as $K \to \infty$ and $\varepsilon \to 0$ in the above system, we can derive a quasi-linear free-boundary problem as in \cite{HHP00} noting the existence of a viscosity term does not affect the limiting problem when the reaction rate tends to infinity. What we consider in this paper is to assume $\varepsilon = \varepsilon (N) $ vanishes and $K = K (N) $ diverges as the scaling parameter $N$ tends to infinity and derive the limiting free-boundary problem directly from our microscopic model. 

Finally, we summarize the organization of this paper here. In Section \ref{sec:model}, we give the precise description of our microscopic model and state the main result. Our main result is to derive a one-phase free-boundary problem with non-linear diffusion \eqref{aim} directly from the zero-range-Kawasaki process with existence of singular competition and slow diffusion for type-$2$ particles. In Section \ref{sec:main thm}, we give a strategy to prove the main theorem stated in Section \ref{sec:model}. The proof is based on Yau's relative entropy method which is introduced in \cite{Yau91}. Then in Section \ref{sec:prob}, we show that our microscopic density profile is close to macroscopic densities whose time evolution is governed by a semi-discretized system of our hydrodynamic limit equation. This probabilistic procedure is crucial for the proof of the main theorem and it is shown assuming the multi-variable Boltzmann-Gibbs principle. In Section \ref{sec:PDE estimates}, we give some a priori estimates for the semi-discretized system. Then in Section \ref{sec:BG}, we give the proof of our Boltzmann-Gibbs principle for multi-variable functions. Finally in Section \ref{sec:PDE}, we show a convergence result for the semi-discretized macroscopic system to the Stefan problem with non-linear diffusion \eqref{aim}, which completes the proof of the main result combining with the probabilistic part given in Section \ref{sec:prob}.

\section{Model and main result}
\label{sec:model}
\subsection{Microscopic model}
Throughout this paper, we write $\mathbb{N} = \{ 1, 2, \ldots \} $, $\mathbb{Z}_+ = \{ 0, 1, \ldots \}$ and $\mathbb{R}_+ = [ 0, \infty ) $. Let $\mathbb{T}^d_N \cong \{ 1, 2,...,N \}^d$ and $\mathbb{T}^d \cong [ 0, 1)^d  $ be $d$-dimensional discrete and continuous tori and let $\mathcal{X}_N = \mathcal{X}^1_{  N } \times \mathcal{X}^2_{ N  } \coloneqq \mathbb{Z}_+^{\mathbb{T}^d_N} \times \{ 0, 1 \}^{\mathbb{T}^d_N}$ be a configuration space. An element $\eta = (\eta_1, \eta_2)  \in \mathcal{X}_N $ with $\eta_i = \{ \eta_i (x) \}_{ x \in \mathbb{T}^d_N }$ for $i =1,2$ denotes configuration of our two-species particle system with exclusion rules for type-$2$ particles. Namely, $\eta_1 (x) = k \in \mathbb{Z}_+ $ means there are $k$ type-$1$ particles on a site $x \in \mathbb{T}^d_N$, and $\eta_2 (x) = 1$ means there exists a type-$2$ particle on the site. For each $i = 1,2$, $\eta_i (x) = 0 $ means there is no type-$i$ particle on the site $x $. First we define an operator which governs the diffusion dynamics. For type-$1$ particles, we consider a zero-range operator $L_Z$ with jump rate $g :\mathbb{Z}_+ \to \mathbb{R}_+$ satisfying $g (0 ) = 0 $ and $g ( k ) > 0 $ for $k \ge 1 $, and an operator $L_K$ which corresponds to the Kawasaki process for type-$2$ particles, which are defined by 
\[
\begin{aligned}
& L_Z f (\eta_1, \eta_2) 
= \sum_{x, y \in \mathbb{T}^d_N  ,  |x-y| = 1 } g (\eta_1 (x)) [ f (\eta_1^{x,y}, \eta_2) - f(\eta_1, \eta_2)  ]  , \\
& L_K f (\eta_1, \eta_2 ) 
= \sum_{x, y \in \mathbb{T}^d_N ,  | x- y | = 1  } \eta_2(x ) ( 1 - \eta_2 (y ) )  [ f (\eta_1, \eta_2 ^{x,y } ) -  f ( \eta_1 , \eta_2 ) ]  
\end{aligned}
\] 
for every $f : \mathcal{X}_N \to \mathbb{R}$. Here for each $\sigma \in \mathcal{X}^1_{  N } $ or $\mathcal{X}^2_{ N } $, $\sigma^{ x , y } $ denotes the configuration after a particle on a site $x$ moves to a site $y$, which is defined by  
\[
\sigma^{x,y} (z)
= 
\begin{cases}
\begin{aligned}
& \sigma (x ) -1  && \text{ if } z = x, \\
& \sigma (y ) + 1 && \text{ if } z = y, \\
& \sigma (z) && \text{ otherwise. } \\
\end{aligned}
\end{cases}
\]
Note that looking the form of the Kawasaki generator neither jump from an empty site nor jump to an occupied site occur. Throughout this paper, we impose the following Lipschitz and linear growth conditions on the jump rate of zero-range process $g : \mathbb{Z}_+ \to \mathbb{R}_+ $. 

\begin{enumerate}
\item[\textbf{(LIP)}]
$\sup_{ k \ge 0  } | g ( k + 1 ) - g (k ) | < \infty $. 

\item[\textbf{(LG)}]
There exist constants $C> 0 $, $r_1 > 0 $ and $r_2 \in \mathbb{N} $ such that $g (k) \le C k $ and $g (k + r_2 ) - g( k ) \ge r_1 $ for every $k \in \mathbb{N}$. 
\end{enumerate}

Next we define a family of reference measures which are parametrized by density of each kind of particles. First, for given constant $\alpha \ge 0$, let $\overline{\nu}_{ 1, \alpha }$ be a probability measure on $ \mathbb{Z}_+ $ given by 
\[
\overline{ \nu }_{ 1, \alpha } ( k ) = \frac{1}{ Z_{ \alpha } } \frac{ \alpha^k }{ g (k) ! }, \quad 
Z_{ \alpha }  = \sum_{ k = 0 }^\infty \frac{ \alpha^k }{ g (k) ! } 
\]
where $g (k) ! = g (1) \cdots g (k)$ for $k \ge 1$ and $g (0) ! = 1$. Note that under the condition \textbf{(LG)}, the radius of convergence of $Z_\alpha $ becomes infinity so that the measure $\overline{\nu}_{ 1,  \alpha } $ is well-defined for every $\alpha \ge 0 $. Moreover, through a similar manner in \cite{KL99}, there exists a unique function $\varphi : \mathbb{R}_+ \to \mathbb{R}_+ $ such that $\rho = E_{ \overline{\nu}_{ 1, \varphi ( \rho )} } [\eta_1 (x) ] $ for each $\rho \ge 0 $. This is possible under our assumption \textbf{(LG)} and this determines a possibly non-linear smooth function $\varphi $ associated with the jump rate $g$. Hereafter we simply write $\nu_{1, \rho } \coloneqq \overline{ \nu }_{ 1, \varphi (\rho)} $. Note that an easy computation yields $\varphi^\prime (\rho ) = \varphi (\rho ) / \mathrm{Var}_{ \nu_\rho } [\eta_1 (x) ] $ so that the function $\varphi $ is increasing on $\mathbb{R}_+ $. Moreover, noting an expression $\rho ( \varphi ) = \varphi (d/ d \varphi ) \log Z_\varphi $, we have $\varphi^\prime (0 ) = 1 / ( d \rho / d \varphi ) (0) \ge c_0  $ for some constant $c_0 > 0$. We extend $\nu_{1, u } $ as a product measure $\nu_{1, u }  = \prod_{ x \in \mathbb{T}^d_N } \nu_{1, u (x) }$ for every $u = \{ u(x) \}_{ x \in \mathbb{T}^d_N } $ such that $u (x) \ge 0$ for each $x \in \mathbb{T}^d_N$. On the other hand, for any $v = \{ v (x) \} \in [0, 1]^{ \mathbb{T}^d_N }$, let $\nu_{2, v} $ be a product Bernoulli measure on $\mathcal{X}^2_N = \{ 0, 1 \}^{ \mathbb{T}^d_N } $ whose marginals are given by $\nu_{2 , v } (\eta_2 (x) = 1) = v(x) $ for every $x \in \mathbb{T}^d_N $. Finally, we define a spatially inhomogeneous product measure $\nu_{(u,v)}$ on $\mathcal{X}_N = \mathcal{X}_N^1 \times \mathcal{X}_N^2 $ by $\nu_{(u, v)} \coloneqq \nu_{1, u } \otimes \nu_{2, v } $ for each $u = \{ u(x) \}_{x \in \mathbb{T}^d_N}$, $v = \{ v (x) \}_{x \in \mathbb{T}^d_N}$ with $u(x) \ge 0$ and $0 \le v (x) \le 1 $ for every $x \in \mathbb{T}^d_N$. Throughout this paper, we fix a jump rate $g$ of the zero-range process satisfying the assumption \textbf{(LG)}. 

Next we define the Glauber operator $L_G $ by 
\[
\begin{aligned}
 L_G f (\eta_1, \eta_2 ) = 
 \sum_{ x \in \mathbb{T}^d_N } \eta_1 (x) \eta_2 (x) [ f (\eta_1^{x,-}, \eta_2^{x , - } ) - f (\eta_1, \eta_2) ]  
\end{aligned}
\]
where for each $\sigma \in \mathbb{Z}_+^{ \mathbb{T}^d_N } $ or $\{ 0, 1 \}^{ \mathbb{T}^d_N } $ the transition $\sigma \mapsto \sigma^{x,-}$ decreases the number of corresponding type of particles on a site $x \in \mathbb{T}^d_N$ if possible, which is defined by 
\[
\sigma^{x,-} (z)
= 
\begin{cases}
\begin{aligned}
& \sigma (x) - 1  && \text{ if } z = x, \\
& \sigma (z) && \text{ otherwise. } \\
\end{aligned}
\end{cases}
\]
Here we also note that the above killing action occurs only when there is at least each of one particle for both of two types on a site $x$ since otherwise we have $\eta_1 (x) \eta_2 (x) = 0$. Let $K = K(N)$ and $\varepsilon = \varepsilon (N) $ be parameters which satisfy the following bounds as the scaling parameter $N$ tends to infinity.

\begin{enumerate}[(B1)]
\item \label{K}
$K = K (N) $ diverges as $N$ tends to infinity satisfying $1 \le K(N) \le ( \delta_1 \log ( \delta_2 \log N) )^{1/2}$ with $\delta_1 , \delta_2 > 0$ small enough. 

\item \label{Epsilon}
$\varepsilon = \varepsilon (N) $ vanishes as $N$ tends to infinity satisfying $\varepsilon (N) \ge N^{ - \alpha_\varepsilon  } $ for some small constant $\alpha_\varepsilon > 0 $. 



\end{enumerate}
Then, let $\{ \eta^N_t = (\eta^N_{1,t } , \eta^N_{2, t } ) \}_{t \ge 0}$ be a $\mathcal{X}_N $-valued Markov process with generator $L_N = N^2 L_Z + \varepsilon (N) L_K + K (N) L_G$ and initial distribution $\mu^N_0 $ on $\mathcal{X}_N $ which is constructed on some probability space $(\Omega^N ,\mathcal{F}^N, \mathbb{P}^N_{ \mu^N_0 })$. When the initial distribution $\mu^N_0 $ is clear from the context, we omit the dependence on it. We denote the expectation with respect to $\mathbb{P}^N_{\mu^N_0 } $ (or $\mathbb{P}^N $) by $\mathbb{E}^N_{ \mu^N_0 } $ (or $\mathbb{E}^N $).

\begin{remark}
A reason why we introduced the weak diffusion for type-$2$ particles with intensity $\varepsilon (N) $ is mainly for using spectral gap estimates to prove the Boltzmann-Gibbs principle, which will be explained later (see Section \ref{sec:BG}). The slower the parameter $\varepsilon (N) $ decays, the easier the proof of the Boltzmann-Gibbs principle becomes. We can find that the above polynomial decay is enough for that. The bound for the reaction rate $K(N) $ is also needed for our strategy of proof. 
\end{remark}

Now we state some assumptions on the initial distribution $\mu^N_0 $. For that purpose, we prepare some notations. For two probability measures $\nu$ and $\mu$ on $\mathcal{X}_N$, define relative entropy of $\mu$ with respect to $\nu$ by
\[
H (\mu | \nu) = \int_{ \mathcal{X}_N } \frac{ d\mu }{ d\nu } \log \frac{ d\mu }{d\nu} d\nu 
\]
if $\mu$ is absolutely continuous with respect to $\nu$, and otherwise we let $H (\mu | \nu) = \infty$. Moreover, we define the discrete gradient $\nabla^N$ by $\nabla^N = {}^{t} \! ( \partial^N_1 , \ldots , \partial^N_d )$ with $\partial^N_j u (x )  = N (u ( x + e_j ) - u (x) ) $ for any $j = 1, \ldots ,d $ and any $u = \{ u (x) \}_{ x \in \mathbb{T}^d_N} $. Then we assume that there exist functions $u^N_0, v^N_0 $ on $\mathbb{T}^d_N$ and functions $u_0 , v_0 $ on $ \mathbb{T}^d  $ such that the following conditions hold.


\begin{enumerate}[({A}1)]
\item \label{IF}
There exist constants $C_0, C_1 > 0$, $M_u \ge 0$ and $0 \le M_v < 1 $ such that 
\[
\begin{aligned}
& e^{- C_1 K (N) } \le u^N_0 (x) \le M_u ,\quad  | \nabla^N u^N_0 ( x ) | \le C_0 K (N) , \quad |  \partial^N_i \partial^N_j u^N_0 (x)  | \le C_0 K (N)^3  \\
& e^{- C_1 K (N) } \le v^N_0 (x) \le M_v , \quad | \nabla^N v^N_0 ( x ) | \le C_0 \varepsilon^{-1/2 } K (N ) 
\end{aligned}
\]
for every $x \in \mathbb{T}^d_N$ and $i, j = 1 , \ldots, d $.  

\item \label{RE}
For the product measure $\nu^N_0 = \nu_{ ( u^N_0, v^N_0 ) } \in \mathcal{P} (\mathcal{X}_N ) $ with weight  $( u^N_0, v^N_0) $ in the assumption (A\ref{IF}), the initial distribution $\mu^N_0 \in \mathcal{P} (\mathcal{X}_N ) $ satisfies $H (\mu^N_0 | \nu^N_0 ) = O (N^{ d- \delta_0 } ) $ for some small positive constant $\delta_0 $ as $N$ tends to infinity, that is, there exists a positive constant $C$ such that $ H (\mu^N_0 | \nu^N_0) \le C N^{d - \delta_0 } $ for every sufficiently large $N$. 

\item \label{PDE}
$ u_0 , v_0 \in L^2 (\mathbb{T}^d ) $ and $u^N_0 \rightharpoonup u_0 $ and $v^N_0 \rightharpoonup v_0 $ weakly in $L^2 (\mathbb{T}^d) $ as $N$ tends to infinity. 
\end{enumerate}


\begin{remark}
As discussed in \cite{H20} there exists non-trivial macroscopic initial functions $u_0 $ and $v_0$ and microscopic approximating sequences $\{ u^N_0 \}_{N \in \mathbb{N} } $ and $\{ v^N_0 \}_{ N \in \mathbb{N} } $ satisfying the above assumptions (A\ref{IF}), (A\ref{RE}) and (A\ref{PDE}) (though \cite{H20} imposed a restriction only for the first derivative growth, we can construct an example in a similar way). 
\end{remark}

\subsection{Main result}
We define empirical measures associated with the Markov process $\{ \eta^N_t : t \in [0, T ] \} $ by 
\[
\pi^N_{ i, t } (d \theta )  = N^{ - d } \sum_{ x \in \mathbb{T}^d_N } \eta^N_{ i, t } (x) \delta_{ x / N } (d \theta ) , \quad 
i = 1, 2 . 
\]
Here $\delta $ stands for the Dirac delta measure on $\mathbb{T}^d $. We write $\langle \pi^N_t , \psi \rangle = ( \langle \pi^N_{ 1, t } , \psi_1 \rangle, \langle \pi^N_{ 2 , t } , \psi_2 \rangle ) $ with $\langle \pi^N_{i, t } , \psi_i \rangle = \int_{ \mathbb{T}^d } \psi_i (\theta ) \pi^N_{i,  t } (d \theta ) $ for every $\psi = (\psi_1 ,\psi_2 ) \in C (\mathbb{T}^d  , \mathbb{R}^2  ) $. To state the main theorem in this paper, we introduce the notion of weak solution of the following Stefan problem. 
\begin{equation}
\begin{cases}
\begin{aligned}
\label{wStefan}
& \partial_t w = \Delta \mathcal{D}_\varphi (w)  &&\text{ in } Q_T , \\
& w ( 0, \cdot ) = u_0 - v_0  && \text{ on } \mathbb{T}^d 
\end{aligned}
\end{cases}
\end{equation}
where $\mathcal{D}_\varphi $ is a function on $\mathbb{R} $ defined by $\mathcal{D}_\varphi (s) = \varphi (s) \mathbf{1}_{ [0, \infty ) } (s) $ and $Q_T \coloneqq [ 0, T ] \times \mathbb{T}^d $.

\begin{definition}
\label{def Stefan}
A function $w \in L^\infty (Q_T)$ is called a weak solution of the problem \eqref{wStefan} with initial function $w_0 \in L^\infty (\mathbb{T}^d ) $ if the following conditions (i) and (ii) hold. \\
(i) $\varphi (w_+) \in L^2 (0,T; H^1 (\mathbb{T}^d ) )$, \\
(ii) For every $T> 0$, the function $ w $ satisfies an integral identity 
\begin{equation}
\label{weakform}
\int_{ \mathbb{T}^d } w_0 (\theta ) \psi (0, \theta) d\theta + \int_0^T \int_{ \mathbb{T}^d } \big( w_+ \partial_t \psi - \nabla \varphi (w_+ ) \cdot \nabla \psi \big) (t, \theta ) d\theta dt  = 0
\end{equation}
for any test function $\psi \in L^2 (0, T ; H^1 (\mathbb{T}^d ) ) $ such that $\psi (T, \cdot) \equiv 0 $. 
\end{definition}

Let $w$ be a weak solution of the problem \eqref{wStefan} and suppose functions $ u \coloneqq w_+$ and $v \coloneqq w_- $ satisfy $\text{supp}(u ( t,\cdot ) ) \cap \text{supp} (v(t, \cdot ) ) = \phi$, and the closure of the region $ \text{supp}(u ( t,\cdot ) ) \cup \text{supp} (v(t, \cdot ) ) $ is $\mathbb{T}^d $ for every $t \in [ 0, T ] $. Here $ w_+ = \max \{w, 0 \}  $ and $w_- = \max \{ - w , 0 \} $ are positive and negative part of $w$, respectively. Moreover we define
\[
\begin{aligned}
 \Omega^u \coloneqq \bigcup_{ t \in (0, T) } \{ t \} \times \Omega^u (t) , \quad 
\Omega^v \coloneqq \bigcup_{ t \in (0, T) } \{ t \} \times \Omega^v (t) , \quad
 \Gamma \coloneqq \bigcup_{t \in (0, T) } \{ t \} \times \Gamma (t)
\end{aligned}
\]
for $\Omega^u (t) \coloneqq \{ u (t, \cdot ) > 0 \}$,  $\Omega^v (t) \coloneqq  \{ v (t, \cdot ) > 0 \}$ and $\Gamma (t) \coloneqq \partial \Omega^u (t) = \partial \Omega^v (t)$. If $u $ is smooth in $\Omega^u (t) $ and $\Gamma (t )$ is smooth, then we have the following strong form of one-phase Stefan problem.  
\begin{equation*}
\begin{cases}
\begin{aligned}
& \partial_t u = \Delta \varphi (u) && \text{ in } \Omega^u,  \\
& v = v_0 && \text{ in } \Omega^v,  \\
& u = 0 && \text{ on } \Gamma , \\ 
& u (0, \cdot ) = u_0 (\cdot ) , \quad v (0, \cdot ) = v_0 (\cdot)     && \text{ on } \mathbb{T}^d .
\end{aligned}
\end{cases}
\end{equation*}
Using the above notion of weak solutions, our main theorem can be stated as follows.

\begin{theorem}
\label{main thm}
Assume initial functions $u^N_0 $, $v^N_0 $, $u_0 $ and $v_0$ satisfy (A\ref{IF}), (A\ref{RE}) and (A\ref{PDE}), and assume the scaling parameters $K = K (N) $ and $\varepsilon = \varepsilon (N) $ satisfy (B\ref{K}) and (B\ref{Epsilon}).  Then there exist non-negative functions $u $ and $v$ on $ Q_T $ such that $u (t, \cdot ) v (t, \cdot ) = 0 $ a.e. on $\mathbb{T}^d $ for each $t \in ( 0, T ] $, and for every $\psi \in C^\infty (Q_T; \mathbb{R}^2 )$ and $\varepsilon > 0 $ we have 
\[
\lim_{N \to \infty } \mathbb{P}^N_{ \mu^N_0 } \bigg( \bigg| \int_0^T \big( \langle \pi^N_t , \psi (t, \cdot ) \rangle  - \langle (u(t, \cdot) , v (t, \cdot) ) , \psi (t , \cdot ) \rangle_{ L^2 (\mathbb{T}^d ; \mathbb{R}^2 ) } \big) dt \bigg| > \varepsilon  \bigg) = 0 .
\]
Moreover, $w \coloneqq u - v $ is a unique weak solution of the Stefan problem \eqref{wStefan}. In particular, the spatially segregated functions $u $ and $v$ are uniquely determined from the function $w$. 
\end{theorem}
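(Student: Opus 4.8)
The plan is to combine a probabilistic estimate, obtained by Yau's relative entropy method (Sections~\ref{sec:prob} and \ref{sec:BG}), with a deterministic convergence result for a lattice approximation of \eqref{NLRD} (Sections~\ref{sec:PDE estimates} and \ref{sec:PDE}). The first step is to introduce the \emph{semi-discretized system}: the pair $(u^N_t,v^N_t)$ of functions on $[0,T]\times\mathbb{T}^d_N$ solving
\[
\begin{aligned}
\partial_t u^N(x) &= \Delta^N\varphi(u^N)(x) - K(N)\,u^N(x)\,v^N(x),\\
\partial_t v^N(x) &= \varepsilon(N)\,\Delta^N v^N(x) - K(N)\,u^N(x)\,v^N(x),
\end{aligned}
\]
with initial data $(u^N_0,v^N_0)$ from (A\ref{IF}), where $\Delta^N f(x)=N^2\sum_{y:\,|y-x|=1}(f(y)-f(x))$ is the discrete Laplacian; this is a globally well-posed ODE system for each fixed $N$. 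Writing $\nu^N_t\coloneqq\nu_{(u^N_t,v^N_t)}$ for the associated inhomogeneous product measure and $\mu^N_t$ for the law of $\eta^N_t$, the heart of the probabilistic part is the bound $H(\mu^N_t\mid\nu^N_t)=o(N^d)$, uniformly in $t\in[0,T]$.

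To prove this I would run a Gronwall argument for $t\mapsto H(\mu^N_t\mid\nu^N_t)$. Differentiating in time produces, besides a term $C\,H(\mu^N_t\mid\nu^N_t)$, error terms from (i) the discrepancy between microscopic currents and reaction and their conditional expectations given the empirical density, and (ii) the non-stationarity of $\nu^N_t$. Term~(i) is handled by the multi-variable Boltzmann--Gibbs principle of Section~\ref{sec:BG}, which replaces local functions of $\eta$ by smooth functions of the empirical density up to a cost of lower order; its proof rests on a spectral gap estimate for the diffusive part of $L_N$ on a mesoscopic box, and this is precisely where the weak Kawasaki diffusion of type-$2$ particles enters — an immobile type-$2$ dynamics has no spectral gap. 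Term~(ii) cancels, modulo lower-order remainders, exactly because $(u^N,v^N)$ solves the semi-discretized system; bounding the remainders uses the a priori estimates on $u^N$, $v^N$ and their discrete derivatives from Section~\ref{sec:PDE estimates}, which are propagated from (A\ref{IF}) and remain below $N^d$ only under the quantitative restrictions (B\ref{K}) and (B\ref{Epsilon}). With the entropy bound in hand, the entropy inequality and an exponential Chebyshev estimate give, for every $\psi=(\psi_1,\psi_2)\in C^\infty(Q_T;\mathbb{R}^2)$ and every $t$, that $\langle\pi^N_{i,t},\psi_i(t,\cdot)\rangle$ is, in $\mathbb{P}^N_{\mu^N_0}$-probability, asymptotically equal to $N^{-d}\sum_{x\in\mathbb{T}^d_N}\psi_i(t,x/N)$ times $u^N_t(x)$ (for $i=1$) or $v^N_t(x)$ (for $i=2$); integrating in $t$ and using bounded convergence yields the time-integrated statement, with $(u,v)$ replaced by the piecewise-constant interpolations of $(u^N,v^N)$.

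It then remains to show (Section~\ref{sec:PDE}) that these interpolations converge, as $N\to\infty$, to non-negative $u,v$ with $uv=0$ a.e., such that $w\coloneqq u-v$ is a weak solution of \eqref{wStefan} in the sense of Definition~\ref{def Stefan}. From Section~\ref{sec:PDE estimates} I would extract $N$-uniform bounds: $0\le u^N\le M_u$ and $0\le v^N\le M_v$ (by discrete maximum principles, using (A\ref{IF})); a discrete energy bound $N^{-d}\int_0^T\sum_x|\nabla^N\varphi(u^N)|^2\,dt\le C$; the reaction bound $K(N)N^{-d}\int_0^T\sum_x u^Nv^N\,dt\le C$; and equicontinuity in time. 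A Riesz--Fr\'echet--Kolmogorov/Aubin--Lions compactness argument then yields, along a subsequence, $\varphi(u^N)\to\varphi(u)$ in $L^2(Q_T)$ and $u^N\to u$, $v^N\to v$ in a suitable sense, with $\varphi(u_+)\in L^2(0,T;H^1(\mathbb{T}^d))$; the reaction bound forces $uv=0$, hence $u=w_+$, $v=w_-$, and $K(N)u^Nv^N$ converges to a non-negative measure concentrated on the interface. Passing to the limit in the discrete weak formulation of the system — using continuity of $\varphi$, convergence of $\Delta^N$ against smooth test functions, and $\varepsilon(N)\Delta^N v^N\to0$ as a distribution by (B\ref{Epsilon}) — and adding the two equations so that the identical reaction terms cancel, one arrives at the identity \eqref{weakform} for $w$, with $\mathcal{D}_\varphi(w)=\varphi(w_+)$ encoding the one-phase structure.

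Finally, uniqueness of the weak solution of \eqref{wStefan} follows from a duality argument of Oleinik--Holmgren type: given two solutions $w_1,w_2$ with the same initial data, test the equation for $w_1-w_2$ against the solution of an adjoint linear parabolic problem whose coefficient is a regularization of $(\mathcal{D}_\varphi(w_1)-\mathcal{D}_\varphi(w_2))/(w_1-w_2)\ge0$, and remove the regularization to get $w_1=w_2$. Since $u=w_+$ and $v=w_-$ are determined by $w$, the segregated pair is unique; this also promotes the subsequential PDE convergence to convergence of the whole sequence, so the limit in the probabilistic statement is this same $(u,v)$. Combining the two convergences — empirical measure to semi-discretized profile in probability, and semi-discretized profile to $(u,v)$ deterministically — proves the theorem. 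The main obstacle I anticipate is the relative entropy step, and within it the multi-variable Boltzmann--Gibbs principle with the diverging reaction rate $K(N)$ and the vanishing viscosity $\varepsilon(N)$ both present, together with keeping every error term below the initial entropy order $O(N^{d-\delta_0})$ of (A\ref{RE}) — this is the source of the growth conditions (B\ref{K}) and (B\ref{Epsilon}). A secondary difficulty is obtaining PDE a priori estimates uniform in $N$, i.e.\ performing the hydrodynamic, fast-reaction and vanishing-viscosity limits simultaneously rather than one after another.
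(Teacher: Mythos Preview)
Your proposal is correct and follows essentially the same strategy as the paper: introduce the semi-discretized system, prove $H(\mu^N_t\mid\nu^N_t)=o(N^d)$ via Yau's method together with a multi-variable Boltzmann--Gibbs principle (this is Theorem~\ref{prob thm}), establish deterministic convergence of $(u^N,v^N)$ to a segregated pair whose difference solves \eqref{wStefan} (Theorem~\ref{PDE thm}), and combine the two through the entropy inequality and an exponential Chebyshev bound.

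Two minor points. First, a slip: the reaction terms in the semi-discretized system are both $-K u^N v^N$ with the \emph{same} sign, so they cancel upon \emph{subtracting} the two equations (giving $w^N=u^N-v^N$), not upon adding them. Second, for uniqueness the paper does not run a full Oleinik--Holmgren duality with a regularized backward problem; it takes the explicit test function $\psi(t,\theta)=\int_t^T\big(\mathcal{D}_\varphi(w_1)-\mathcal{D}_\varphi(w_2)\big)(\tau,\theta)\,d\tau$ directly in the subtracted weak form, after which both resulting terms are non-negative by monotonicity of $\mathcal{D}_\varphi$ and hence vanish. Your duality argument would also work, but the paper's choice is shorter.
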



\section{Strategy of proof}
\label{sec:main thm}
\subsection{Yau's relative entropy method}
In this section, we give the proof of the main theorem (Theorem \ref{main thm}) based on Yau's relative entropy method which is introduced in \cite{Yau91}. We first concern a probability part which asserts that at any give time, the microscopic density profile is close to the macroscopic one in the sense of relative entropy (see Theorem \ref{prob thm} below). Let $u^N = \{ u^N (t,x) \}_{ t \in [0, T] , x \in \mathbb{T}^d_N}$, $v^N = \{ v^N (t,x) \}_{ t \in  [0,T] , x \in \mathbb{T}^d_N}$ be a pair of solutions of the following semi-discretized reaction-diffusion system. 
\begin{equation}
\label{dHDL eq}
\begin{cases}
\begin{aligned}
&\partial_t u^N (t,x) = \Delta^N \varphi( u^N (t,x) ) - K (N) u^N (t,x) v^N (t,x)  && \text{ in } (0, T) \times \mathbb{T}^d_N , \\
&\partial_t v^N (t,x) = \varepsilon (N) \Delta^N u^N (t, x) - K (N) u^N (t,x) v^N (t,x) && \text{ in } (0, T) \times \mathbb{T}^d_N , \\
& u^N (0, x) = u^N_0 (x) , \quad v^N (0, x) = v^N_0 (x) && \text{ on }  \mathbb{T}^d_N 
\end{aligned}
\end{cases}
\end{equation}
with initial functions $ u^N_0 $ and $ v^N_0 $ on $ \mathbb{T}^d_N $ satisfying the assumptions (A\ref{IF}), (A\ref{RE}) and (A\ref{PDE}). Here, $\Delta^N$ is the discrete Laplacian defined by 
\[
\Delta^N u (x) = N^2 \sum_{ 1 \le j \le  d } \big[ u (x + e_j ) + u (x - e_j) - 2 u (x)  \big] 
\]
for any $u = \{ u (x) \}_{ x \in \mathbb{T}^d_N } $ where $\{ e_j \}_{ j = 1,\ldots ,d }$ is the normal basis on $\mathbb{Z}^d$. Let $\nu^N_t = \nu_{ (u^N (t) , v^N (t) ) }$ be the spatially inhomogeneous product measure on the configuration space $\mathcal{X}_N$ with $ u^N (t) = \{ u^N (t, x) \}_{ x \in \mathbb{T}^d_N } $ and $ v^N (t) = \{ v^N (t, x) \}_{ x \in \mathbb{T}^d_N } $, and let $\mu^N_t $ be the probability law of $\eta^N_t = (  \eta^N_{1,t}, \eta^N_{2, t} ) $ on $\mathcal{X}_N$. According to a comparison argument we give in Section \ref{sec:PDE estimates} (see Lemma \ref{unif est}), we can show that the semi-discretized solutions $u^N $ and $v^N$ take values in $\mathbb{R}_+$ and $[0, 1]$, respectively, which makes the product measure $\nu^N_t $ well-defined. Then we can show the following key estimate which states the microscopic measure $\mu^N_t $ is asymptotically close to the measure $\nu^N_t $. 

\begin{theorem}
\label{prob thm}
Assume (A\ref{IF}), (A\ref{RE}), (B\ref{K}) and (B\ref{Epsilon}). Then for every $t \ge 0$ we have $ H (\mu^N_t | \nu^N_t) = o (N^d) $ as $N$ tends to infinity. 
\end{theorem}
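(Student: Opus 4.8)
The plan is to control the time derivative of the relative entropy $H(\mu^N_t | \nu^N_t)$ via the standard Yau inequality
\[
\frac{d}{dt} H(\mu^N_t | \nu^N_t) \le - D_N(\mu^N_t) + \int_{\mathcal{X}_N} \Big( L_N^* \mathbf{1} - \partial_t \log f^N_t \Big) f^N_t \, d\nu^N_t,
\]
where $f^N_t = d\mu^N_t / d\nu^N_t$, $D_N$ is the Dirichlet form associated with the symmetric part of $L_N$ relative to $\nu^N_t$, and $L_N^*$ is the adjoint in $L^2(\nu^N_t)$. The term coming from the time dependence of the reference measure contributes $- \int (\partial_t \log \nu^N_t) f^N_t \, d\nu^N_t$, and here one uses crucially that $u^N, v^N$ solve the semi-discretized system \eqref{dHDL eq}: the point of that particular choice of PDE is that, after expanding $\partial_t \log \nu^N_t(\eta)$ (which is a sum over sites of terms linear in $\eta_1(x), \eta_2(x)$ up to the partition-function corrections) and $L_N^* \mathbf{1}$, the leading-order pieces cancel exactly, leaving only error terms. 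First I would write out both expressions carefully: $L_N^* \mathbf{1} = N^2 L_Z^* \mathbf{1} + \varepsilon L_K^* \mathbf{1} + K L_G^* \mathbf{1}$, where the zero-range part produces discrete Laplacian-type expressions in $\varphi$, the Kawasaki part produces $\varepsilon \Delta^N$ terms, and the Glauber part produces the reaction terms $-K \eta_1(x)\eta_2(x)$-type contributions; matching these against $\partial_t u^N = \Delta^N \varphi(u^N) - K u^N v^N$ and $\partial_t v^N = \varepsilon \Delta^N u^N - K u^N v^N$ is what forces the semi-discrete system to have exactly this form.

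After the cancellation of the main terms, what remains is a sum of \emph{fluctuation terms} — local functions of the configuration whose $\nu^N_t$-averages vanish or are small — integrated against $f^N_t$. The strategy is to estimate each such term by the entropy inequality: $\int V f^N_t \, d\nu^N_t \le \frac{1}{\gamma} H(\mu^N_t | \nu^N_t) + \frac{1}{\gamma} \log \int e^{\gamma V} d\nu^N_t$ for a suitable $\gamma > 0$, or, for terms that are gradients of local functions, by absorbing them into the Dirichlet form $D_N$ via a weighted Cauchy–Schwarz (the "integration by parts" trick), paying a cost that is lower order. The genuinely nonlinear and nonlocal pieces — those involving $\varphi$ evaluated at microscopic occupation variables rather than at the smooth profile $u^N(t,x)$, and the reaction term $K \eta_1(x)\eta_2(x)$ which must be replaced by $K u^N(t,x) v^N(t,x)$ — cannot be handled by naive entropy bounds because $K = K(N) \to \infty$; these require the \textbf{multi-variable Boltzmann–Gibbs principle} announced for Section \ref{sec:BG}, which says that such local functions, after subtracting their conditional expectations given the empirical densities in a mesoscopic box, contribute only $o(N^d)$ when integrated against $f^N_t$, uniformly in $t$. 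This is where the spectral gap estimate (and hence the weak Kawasaki diffusion $\varepsilon(N) L_K$, with the lower bound $\varepsilon(N) \ge N^{-\alpha_\varepsilon}$) enters: the spectral gap of the combined zero-range–Kawasaki dynamics on a box of size $\ell$ degrades like $\varepsilon(N)^{-1} \ell^2$, and one needs $\ell$ large enough for the local equilibrium replacement but small enough that $K(N)$, $\varepsilon(N)^{-1}$ and $\ell$ balance — this dictates the precise bounds (B\ref{K}) and (B\ref{Epsilon}) (the doubly-logarithmic bound on $K(N)$ comes from exponentiating the Boltzmann–Gibbs error inside the entropy inequality).

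The remaining ingredients are a priori control of the semi-discrete profiles: I would invoke Lemma \ref{unif est} (Section \ref{sec:PDE estimates}) to know $0 \le v^N(t,x) \le 1$, $u^N(t,x) \ge 0$, and the propagation of the derivative bounds from (A\ref{IF}) — namely $|\nabla^N u^N|, |\nabla^N v^N|, |\partial^N_i\partial^N_j u^N|$ growing at most polynomially in $K(N)$ and $\varepsilon(N)^{-1}$ — since these bounds are what make the error terms (which come multiplied by discrete derivatives of $\log \nu^N_t$, hence of $\varphi(u^N)$, $u^N$, $v^N$) actually $o(N^d)$ after summing over the $N^d$ sites. Assembling everything, one obtains a Gronwall-type inequality
\[
\frac{d}{dt} H(\mu^N_t | \nu^N_t) \le C\, H(\mu^N_t | \nu^N_t) + o(N^d),
\]
and since $H(\mu^N_0 | \nu^N_0) = O(N^{d-\delta_0}) = o(N^d)$ by (A\ref{RE}), Gronwall's lemma gives $H(\mu^N_t | \nu^N_t) = o(N^d)$ for every fixed $t$, as claimed. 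The main obstacle is unquestionably the Boltzmann–Gibbs step for the diverging reaction term: showing that $K(N) \int (\eta_1(x)\eta_2(x) - \text{(local average)}) f^N_t \, d\nu^N_t$ is $o(N^d)$ despite the factor $K(N)$ requires a sharp quantitative one-block estimate, and getting the interplay of $K(N)$, $\varepsilon(N)$ and the block size to close is the delicate technical heart of the argument — everything else is, in principle, a careful bookkeeping of error terms of the type familiar from \cite{Yau91} and subsequent relative-entropy arguments.
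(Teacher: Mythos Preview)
Your proposal is essentially correct and follows the paper's approach: Yau's inequality, explicit computation of $L_N^{*,\nu^N_t}\mathbf{1} - \partial_t\log\psi^N_t$, cancellation of the leading terms via the semi-discrete system \eqref{dHDL eq}, Boltzmann--Gibbs replacement for the nonlinear zero-range and Glauber pieces, a separate integration-by-parts/flow-lemma argument for the Kawasaki two-point correlation term (absorbed back into the Dirichlet form), and Gronwall.

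One point needs sharpening, and it matters for understanding why (B\ref{K}) takes its peculiar form. The Gronwall coefficient in front of $H(\mu^N_t|\nu^N_t)$ is \emph{not} a universal constant $C$ but is of order $K^3 e^{CK^2}$. This growth arises because, after applying the Boltzmann--Gibbs principle to the zero-range term $\sum_x \frac{\Delta^N\varphi(u^N(t,x))}{\varphi(u^N(t,x))}\big(g(\eta_1(x))-\varphi(u^N(t,x))\big)$, the prefactor $\Delta^N\varphi(u^N)/\varphi(u^N)$ is controlled by the second-derivative estimate $|\Delta^N\varphi(u^N)| \le CK^3 e^{CK^2}$ (Lemma~\ref{ggrad u}) together with the lower bound $\varphi(u^N) \ge c\,e^{-CK}$ (Lemma~\ref{unif est}). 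The paper's Gronwall step therefore reads
\[
H(\mu^N_t|\nu^N_t) \le \big(H(\mu^N_0|\nu^N_0) + O(K^4 e^{CK^2} N^{d-\varepsilon_0})\big)\, e^{CK^3 e^{CK^2}},
\]
and it is the requirement that the final exponential $e^{CK^3 e^{CK^2}}$ grow slower than any fixed power of $N$ --- equivalently $K^3 e^{CK^2} \lesssim \log N$ --- that forces the doubly-logarithmic bound $K^2 \le \delta_1 \log(\delta_2\log N)$. Your attribution of this constraint to ``exponentiating the Boltzmann--Gibbs error inside the entropy inequality'' is not the mechanism; the Boltzmann--Gibbs principle itself (Theorem~\ref{BG}) returns an absolute constant in front of $\int H\,dt$, and the blow-up comes entirely from the PDE coefficient multiplying the residual.
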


Next we extend $u^N$ and $v^N$ on $ Q_T  = [0,T]\times \mathbb{T}^d$ as simple functions, the extensions are denoted again with the same notation, by
\begin{equation}
\label{extension}
\begin{aligned}
& u^N (t, \theta ) = \sum_{ x \in \mathbb{T}^d_N } u^N (t, x) \prod_{ 1 \le j \le d }\mathbf{1}_{ \left[ \frac{x_j}{ N }  - \frac{1}{ 2N } , \frac{ x_j}{ N} + \frac{ 1}{ 2N} \right) } (\theta_j) , \\ 
& v^N (t, \theta ) = \sum_{ x \in \mathbb{T}^d_N } v^N (t, x) \prod_{ 1 \le j \le d } \mathbf{1}_{ \left[ \frac{x_j}{ N }  - \frac{1}{ 2N } , \frac{ x_j}{ N} + \frac{ 1}{ 2N} \right) } (\theta_j)  \\ 
\end{aligned}
\end{equation}
for every $t \in [0,T]$ and $\theta = (\theta_j)_{ j = 1, \ldots , d} \in \mathbb{T}^d$. For these extended functions $u^N$ and $v^N $ on $Q_T $, we can show the following convergence result, which is purely a deterministic problem. 


\begin{theorem}
\label{PDE thm}
Assume (A\ref{IF}), (A\ref{PDE}), (B\ref{K}) and (B\ref{Epsilon}). Let $u^N $ and $v^N$ be functions on $Q_T $ defined by \eqref{extension}. There exist functions $u$ and $v$ on $Q_T $ such that $u(0, \cdot) = u_0 (\cdot )  $, $v ( 0, \cdot ) = v_0 (\cdot ) $ and 
\[
\begin{aligned}
& u \in L^{\infty } (Q_T ) \cap L^2 (0, T; H^1 (\mathbb{T}^d ) ) , \, v \in L^\infty (Q_T ) , \\
& 0 \le u \le M_u,  \,   0 \le v \le M_v \text{ and } uv = 0 \text{ a.e. in } Q_T,  \\
& u^N \to u \text{ strongly in } L^2 (Q_T) \text{ and a.e. in } Q_T, \\
& v^N \rightharpoonup v \text{ weakly in } L^2 (Q_T)  
\end{aligned}
\]
as $N$ tends to infinity. Moreover, $w \coloneqq u - v $ is a unique weak solution of the problem \eqref{wStefan} with initial function $u_0 - v_0 $. 
\end{theorem}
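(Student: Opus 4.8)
The plan is to combine uniform \emph{a priori} estimates for the semi-discretized system \eqref{dHDL eq}, a compactness argument producing strongly convergent subsequences, passage to the limit in the weak formulation, and uniqueness for the limiting problem \eqref{wStefan}, the last of which promotes subsequential convergence to convergence of the whole family. By Lemma \ref{unif est} we have $0\le u^N\le M_u$ and $0\le v^N\le M_v$ uniformly in $N,t,x$, so $\{u^N\},\{v^N\}$ are bounded in $L^\infty(Q_T)$ and, along a subsequence, $u^N\overset{*}{\rightharpoonup}u$, $v^N\overset{*}{\rightharpoonup}v$ with $0\le u\le M_u$, $0\le v\le M_v$. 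For an energy estimate I would multiply the first line of \eqref{dHDL eq} by $\varphi(u^N(t,x))$, sum over $x\in\mathbb T^d_N$, use discrete summation by parts together with $u^N,v^N,\varphi(u^N)\ge0$, and integrate in time to get
\[
\frac1{N^d}\sum_{x}G\big(u^N(T,x)\big)+\int_0^T\frac1{N^d}\sum_{x}\Big(\big|\nabla^N\varphi(u^N)\big|^2+K u^N v^N\varphi(u^N)\Big)\,dt\;\le\;\frac1{N^d}\sum_{x}G\big(u^N_0(x)\big),
\]
with $G(s)=\int_0^s\varphi$; since $G\ge0$ and $0\le u^N_0\le M_u$ by (A\ref{IF}), the right-hand side is bounded uniformly in $N$, so $\nabla^N\varphi(u^N)$ is bounded in $L^2(Q_T)$, and since $\varphi\in C^1$ with $0<c\le\varphi'\le C$ on $[0,M_u]$ so is $\nabla^N u^N$. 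Summing the first line of \eqref{dHDL eq} over $x$ and integrating in time gives $\frac1{N^d}\sum_x u^N(T,x)+\int_0^T\frac1{N^d}\sum_x K u^N v^N\,dt=\frac1{N^d}\sum_x u^N_0(x)\le M_u$, hence $\int_0^T\!\int_{\mathbb T^d}u^N v^N\,d\theta\,dt\le M_u/K(N)\to0$ by (B\ref{K}).

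To obtain strong compactness of $\{u^N\}$ I would use $\partial_t u^N=\Delta^N\varphi(u^N)-K u^N v^N$: testing against $\psi\in H^s(\mathbb T^d)$ with $s>\max\{2,d/2\}$, the first term is bounded using the $L^2(Q_T)$-bound on $\varphi(u^N)$ and the second using the $L^1(Q_T)$-bound on $K u^N v^N$ just obtained, so $\partial_t u^N$ is bounded in $L^1(0,T;H^{-s}(\mathbb T^d))$ uniformly in $N$. Together with the uniform $L^2(Q_T)$-bounds on $u^N$ and $\nabla^N u^N$, a discrete Aubin--Lions--Simon lemma --- equivalently a Riesz--Fr\'echet--Kolmogorov argument controlling spatial increments of $u^N$ by $\nabla^N u^N$ and time increments by $\partial_t u^N$ --- yields, along a further subsequence, $u^N\to u$ strongly in $L^2(Q_T)$ and a.e., with $u\in L^2(0,T;H^1(\mathbb T^d))$ and $\nabla^N u^N\rightharpoonup\nabla u$, $\nabla^N\varphi(u^N)\rightharpoonup\nabla\varphi(u)$ weakly in $L^2(Q_T)$. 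Since $v^N\rightharpoonup v$ weakly in $L^2(Q_T)$ while $u^N\to u$ strongly with both nonnegative and bounded, $u^N v^N\rightharpoonup uv$ in $L^1(Q_T)$; combined with $\int_0^T\!\int u^N v^N\to0$ and nonnegativity this forces $uv=0$ a.e. in $Q_T$. Consequently $w:=u-v$ satisfies $w_+=u$ and $w_-=v$ a.e.

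Next I would pass to the limit in the equation for $w^N:=u^N-v^N$, which by \eqref{dHDL eq} solves $\partial_t w^N=\Delta^N\big(\varphi(u^N)-\varepsilon(N)u^N\big)$. Writing its weak form against $\psi\in C^\infty(Q_T)$ with $\psi(T,\cdot)\equiv0$, integrating by parts in time, and passing from discrete sums to integrals over $\mathbb T^d$ via the piecewise-constant extensions \eqref{extension}, I would let $N\to\infty$ using: $w^N(0,\cdot)=u^N_0-v^N_0\rightharpoonup u_0-v_0$ in $L^2(\mathbb T^d)$ by (A\ref{PDE}); $w^N\rightharpoonup w$ in $L^2(Q_T)$; $\varphi(u^N)\to\varphi(u)=\varphi(w_+)$ strongly in $L^2(Q_T)$ by dominated convergence; $\varepsilon(N)u^N\to0$ in $L^2(Q_T)$ by (B\ref{Epsilon}); and $\widetilde{\Delta^N\psi}\to\Delta\psi$, $\widetilde{\nabla^N\psi}\to\nabla\psi$ uniformly on $Q_T$. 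This yields the integral identity \eqref{weakform} with $w_0=u_0-v_0$, after one integration by parts in space using that $\varphi(w_+)\in L^2(0,T;H^1(\mathbb T^d))$, which itself follows from the energy bound above since $\nabla^N\varphi(u^N)\rightharpoonup\nabla\varphi(w_+)$ in $L^2(Q_T)$; thus $w$ is a weak solution in the sense of Definition \ref{def Stefan} and the stated properties of $u,v$ hold.

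For uniqueness I would use the classical one-phase Stefan duality argument: given two weak solutions $w_1,w_2$ set $b:=\big(\varphi((w_1)_+)-\varphi((w_2)_+)\big)/(w_1-w_2)\in[0,C]$ (and $b:=0$ where $w_1=w_2$), solve for each smooth $\chi$ on $Q_T$ the regularized backward dual problem $\partial_t\psi_\delta+(b_\delta+\delta)\Delta\psi_\delta=\chi$, $\psi_\delta(T,\cdot)=0$, with $b_\delta\ge0$ a suitable mollification of $b$, exploit the $\delta$-uniform bounds $\|\psi_\delta\|_{L^\infty}\le C$ and $\int_0^T\!\int(b_\delta+\delta)|\Delta\psi_\delta|^2\le C$, and pass to the limit $\delta\to0$ in the identity $\int_0^T\!\int(w_1-w_2)\chi=\int_0^T\!\int(b-b_\delta)(w_1-w_2)\Delta\psi_\delta$ to conclude $w_1\equiv w_2$ (Oleinik--Kamin duality, as in \cite{HHP00}, transported to the torus). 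Uniqueness of $w$ then forces the full family $(u^N,v^N)$ to converge and determines $u=w_+$, $v=w_-$. The step I expect to be the main obstacle is the strong $L^2(Q_T)$-compactness of $\{u^N\}$: because the reaction term $K u^N v^N$ is controlled only in $L^1(Q_T)$, the bound on $\partial_t u^N$ is merely $L^1$-in-time, so compactness has to be extracted from a fractional-time Aubin--Lions/Riesz--Kolmogorov estimate carried out at the discrete level, with care that the discrete gradients pass to the genuine gradient in the limit; the degenerate Stefan-type uniqueness argument --- the dual coefficient $b$ vanishing on $\{w\le0\}$, so the dual problem is well posed only after regularization --- is the second delicate point.
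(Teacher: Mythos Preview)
Your overall strategy is correct and parallels the paper's, but two steps are handled differently and one of them you make considerably harder than necessary. For strong $L^2(Q_T)$-compactness of $\{u^N\}$ you invoke Aubin--Lions--Simon via an $L^1(0,T;H^{-s})$ bound on $\partial_t u^N$; the paper instead applies the Fr\'echet--Kolmogorov criterion directly, bounding spatial increments by the $L^2$-energy of $\nabla^N u^N$ and time increments by writing $u^N(t+\tau)-u^N(t)=\int_0^\tau\partial_t u^N(t+s)\,ds$, using the equation, and integrating by parts so that only the $L^2$ energy of $\nabla^N\varphi(u^N)$ and the $L^1$ bound on $K u^N v^N$ enter. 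Both routes work; the paper's is more elementary and avoids the discrete Aubin--Lions machinery you flagged as the main obstacle. More significantly, for uniqueness you propose the Oleinik--Kamin backward dual scheme with regularized coefficient $b_\delta+\delta$, which you rightly identify as delicate because $b$ vanishes on $\{w\le 0\}$. The paper bypasses this entirely: given two solutions $w_1,w_2$ it simply takes the test function $\psi(t,\theta)=\int_t^T\big(\mathcal D_\varphi(w_1)-\mathcal D_\varphi(w_2)\big)(\tau,\theta)\,d\tau$ in the subtracted weak forms, which yields
\[
\iint_{Q_T}(w_1-w_2)\big(\mathcal D_\varphi(w_1)-\mathcal D_\varphi(w_2)\big)\,d\theta\,dt
+\tfrac12\int_{\mathbb T^d}\Big|\int_0^T\nabla\big(\mathcal D_\varphi(w_1)-\mathcal D_\varphi(w_2)\big)\,dt\Big|^2 d\theta=0,
\]
and monotonicity of $\mathcal D_\varphi$ forces both terms to vanish, hence $w_1=w_2$. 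No regularization, no dual problem. Finally, a minor slip: in your equation for $w^N$ the viscosity term should read $\varepsilon(N)\Delta^N v^N$, not $\varepsilon(N)\Delta^N u^N$; the paper controls this via the $\varepsilon|\nabla^N v^N|^2$ energy bound and Cauchy--Schwarz.
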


Theorem \ref{prob thm} and Theorem \ref{PDE thm} will be proved in Section \ref{sec:prob} and Section \ref{sec:PDE}, respectively.

\subsection{Proof of Theorem \ref{main thm}} 
Once Theorem \ref{prob thm} and Theorem \ref{PDE thm} are proved, we can show the main theorem as follows. Let $u^N $ and $v^N $ be the functions on $Q_T = [0, T] \times \mathbb{T}^d $ defined by (\ref{extension}). For any $\varepsilon > 0$ and any smooth function $\psi \in C^\infty ( \mathbb{T}^d )$, let us define 
\[
\begin{aligned}
& \mathcal{A}_1 = \mathcal{A}_1 ( \psi, \varepsilon  ) \coloneqq \{ \eta \in \mathcal{X}_N ; \left| \langle \pi^N_{1, t} , \psi \rangle - \langle u^N (t,\cdot) , \psi \rangle_{L^2(\mathbb{T}^d) }  \right| > \varepsilon \},  \\
& \mathcal{A}_2 = \mathcal{A}_2 ( \psi, \varepsilon  ) \coloneqq \{ \eta \in \mathcal{X}_N ; \left| \langle \pi^N_{2, t} , \psi \rangle - \langle v^N (t,\cdot) , \psi \rangle_{L^2(\mathbb{T}^d) }  \right| > \varepsilon \} .
\end{aligned}
\]
Then, as a corollary of the entropy inequality, we get
\[
\mu^N_t (\mathcal{A}_i) \leq \frac{\log 2 + H ( \mu^N_t | \nu^N_t) }{\log(1+ 1/ \nu^N_t (\mathcal{A}_i))} , 
\quad i =  1, 2 . 
\]
The next result is a direct consequence of the exponential Markov inequality. 

\begin{lemma}
\label{nu est}
For any $\psi \in C^\infty (\mathbb{T}^d )$ and $\varepsilon > 0$, there exists a positive constant $C= C (\varepsilon, \| \psi \|_{  L^\infty (\mathbb{T}^d ) })$ such that
\[
\nu^N_t (\mathcal{A}_i (\psi,  \varepsilon ) ) \le \exp ( -C N^d ) .  
\]
In particular, the above estimate holds uniformly in $\{ \psi ;  \| \psi \|_{ L^\infty (\mathbb{T}^d ) } < M \} $ for every $M >0$. 
\end{lemma}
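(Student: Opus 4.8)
The plan is to apply the exponential Chebyshev (Markov) inequality to the centered linear statistic under the product reference measure $\nu^N_t = \nu_{(u^N(t), v^N(t))}$ and exploit the tensor-product structure to reduce the exponential moment to a site-by-site computation. Concretely, write $\langle \pi^N_{i,t}, \psi \rangle - \langle u^N(t,\cdot), \psi \rangle_{L^2(\mathbb{T}^d)} = N^{-d} \sum_{x \in \mathbb{T}^d_N} \psi(x/N) \big( \eta_i(x) - E_{\nu^N_t}[\eta_i(x)] \big) + R_N$, where $R_N$ collects the error between the Riemann-type sum $N^{-d} \sum_x \psi(x/N) u^N(t,x)$ and the integral $\int_{\mathbb{T}^d} \psi\, u^N(t,\cdot)$; note that under the simple-function extension \eqref{extension} and smoothness of $\psi$ this error is $O(N^{-1} \|\nabla \psi\|_\infty)$, hence negligible once $N$ is large, so it suffices to bound the probability that the centered sum exceeds $\varepsilon/2$. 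Then, for $\lambda > 0$ to be chosen proportional to $N^d$, the exponential Markov inequality gives
\[
\nu^N_t(\mathcal{A}_i) \le e^{-\lambda \varepsilon/2}\, E_{\nu^N_t}\!\left[ \exp\!\Big( \lambda N^{-d} \sum_{x} \psi(x/N) (\eta_i(x) - E_{\nu^N_t}[\eta_i(x)]) \Big) \right] + (\text{the same with }{-\lambda}),
\]
and by independence the expectation factorizes as $\prod_x E_{\nu_{i,\cdot}}\!\big[ \exp( \lambda N^{-d} \psi(x/N)(\eta_i(x) - E[\eta_i(x)])) \big]$.

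The core estimate is a uniform bound on each single-site logarithmic moment generating function. For $i = 2$ this is immediate: $\eta_2(x) \in \{0,1\}$ is bounded, so $\log E[e^{t(\eta_2(x) - E\eta_2(x))}] \le t^2/8$ by Hoeffding, uniformly in the parameter $v^N(t,x) \in [0,1]$. For $i = 1$ the variable $\eta_1(x)$ is unbounded, distributed as $\overline{\nu}_{1,\varphi(u^N(t,x))}$, so I need that the exponential moments $E_{\overline{\nu}_{1,\alpha}}[e^{t \eta_1(x)}]$ are finite and that $\log E[e^{t(\eta_1(x) - E\eta_1(x))}] \le C(t) t^2$ with $C(t)$ bounded for $|t|$ in a fixed neighborhood of $0$, uniformly over $\alpha = \varphi(u^N(t,x))$ with $u^N(t,x) \le M_u$; here I would use that $\log Z_\alpha$ has radius of convergence $+\infty$ under \textbf{(LG)} and that $u^N(t,x)$ ranges over the compact set $[0, M_u]$ by Lemma \ref{unif est}, so $\alpha$ ranges over a compact subset of $\mathbb{R}_+$ on which all relevant derivatives of $\log Z_\alpha$ are bounded. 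Since $\lambda N^{-d} \psi(x/N)$ has absolute value at most $\lambda N^{-d} \|\psi\|_\infty$, choosing $\lambda = c N^d$ with $c = c(\varepsilon, \|\psi\|_\infty)$ small enough keeps the per-site argument $t = \lambda N^{-d}\psi(x/N)$ uniformly within that neighborhood, so each factor is bounded by $\exp\big( C (\lambda N^{-d}\|\psi\|_\infty)^2 \big)$, and the product over the $N^d$ sites is bounded by $\exp\big( C \lambda^2 N^{-d} \|\psi\|_\infty^2 \big) = \exp\big( C c^2 N^d \|\psi\|_\infty^2 \big)$. Combining with the prefactor $e^{-\lambda\varepsilon/2} = e^{-c\varepsilon N^d/2}$ and choosing $c$ small relative to $\varepsilon / \|\psi\|_\infty^2$ yields $\nu^N_t(\mathcal{A}_i) \le 2 \exp(-C' N^d)$ with $C' > 0$, which is the claim after absorbing the factor $2$.

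The main obstacle is the uniform control of the single-site cumulant generating function for the zero-range marginal $\overline{\nu}_{1,\alpha}$ in the unbounded coordinate: I must make sure the constant $C(t)$ in $\log E[e^{t(\eta_1(x)-E\eta_1(x))}] \le C(t) t^2$ does not blow up as a function of the local density parameter. This is exactly where the a priori bound $u^N(t,x) \le M_u$ from Lemma \ref{unif est} (equivalently $\alpha = \varphi(u^N(t,x)) \le \varphi(M_u) < \infty$) is essential — it confines $\alpha$ to a compact set, on which $\alpha \mapsto \log Z_\alpha$ is real-analytic and all its derivatives, hence $\mathrm{Var}_{\overline{\nu}_{1,\alpha}}[\eta_1(x)]$ and higher cumulants, are bounded — so that a second-order Taylor expansion of the cumulant generating function in $t$ with a uniform remainder goes through. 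The uniformity over $\{\psi : \|\psi\|_{L^\infty} < M\}$ then follows because the constant $c$, and hence $C'$, depends on $\psi$ only through $\|\psi\|_{L^\infty(\mathbb{T}^d)}$.
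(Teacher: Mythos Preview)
Your proof is correct and follows exactly the approach the paper indicates (``a direct consequence of the exponential Markov inequality''): you apply the exponential Chebyshev inequality, factorize over sites using the product structure of $\nu^N_t$, and bound the single-site cumulant generating functions---by Hoeffding for the Bernoulli coordinate and by compactness of the parameter range $[0,\varphi(M_u)]$ (via Lemma~\ref{unif est}) together with the analyticity of $\alpha\mapsto\log Z_\alpha$ for the zero-range coordinate. One small remark: the Riemann-sum error $R_N$ you isolate is controlled by $\|\nabla\psi\|_\infty$, not $\|\psi\|_\infty$, so strictly speaking the threshold in $N$ after which the bound is valid depends on the $C^1$-norm of $\psi$; this does not affect the exponential rate $C=C(\varepsilon,\|\psi\|_\infty)$ and is harmless in the application, where $\psi=\psi_i(t,\cdot)$ comes from a fixed smooth family with uniformly bounded derivatives.
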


Then by Theorem \ref{prob thm}, we have 
$
\lim_{N \to \infty} \mu^N_t (\mathcal{A}_i ( \psi_i (t, \cdot), \varepsilon  ) ) = 0
$
for each $i=1,2$, $t \in [0,T]$, $\varepsilon > 0$ and $\psi = (\psi_1, \psi_2 )  \in C^\infty ( Q_T ; \mathbb{R}^2 )$.

Now we estimate the probability appearing in the main theorem (Theorem \ref{main thm}) by using Markov's inequality and the triangle inequality by 
\begin{equation}
\label{main thm proof}
\begin{aligned}
& \varepsilon^{ - 1 } \int_0^T E_{ \mu^N_t } \big[  | \langle \pi^N_t , \psi (t, \cdot )  \rangle - \langle ( u^N  (t, \cdot ), v^N  (t, \cdot ) ) , \psi \rangle_{ L^2 (\mathbb{T}^d ) } | \big] dt \\
& + \varepsilon^{-1 } \bigg| \int_0^T  \langle (u^N (t, \cdot ) -u (t, \cdot ), v^N (t, \cdot ) - v (t, \cdot )) , \psi  (t, \cdot ) \rangle_{ L^2 (\mathbb{T}^d) } dt \bigg| .   
\end{aligned}
\end{equation}
We can see that these two terms vanish as $N $ tends to infinity. Indeed, since $u^N \rightharpoonup u $ and $v^N \rightharpoonup v$ weakly in $L^2 (Q_T)$ by Theorem \ref{PDE thm}, the second term in \eqref{main thm proof} vanishes as $N$ tends to infinity. On the other hand, the integrand in the first term can be bounded above by 
\begin{equation}
\label{mainthmproof2}
\begin{aligned}
E_{ \mu^N_t } \bigg[  \big| \langle \pi^N_t , \psi \rangle - \langle ( u^N (t, \cdot ), v^N (t, \cdot ) ) , \psi (t, \cdot) \rangle_{ L^2 (\mathbb{T}^d ) } \big| , \bigcap_{ i = 1, 2 } \mathcal{A}_i (\psi_i ( t, \cdot ), \tilde{ \varepsilon } ) \bigg] + \tilde{\varepsilon} 
\end{aligned}
\end{equation}
for any $\tilde{ \varepsilon } > 0 $. As we will see in forthcoming sections, we have 
\[
\sup_{ N \in \mathbb{N} } E_{ \mu^N_t } [| \langle \pi^N_t , \psi \rangle |] 
\le \sup_{ N \in \mathbb{N} } \| \psi \|_{\infty} N^{ -d } E_{ \mu^N_t } \big[ \sum_{ x \in \mathbb{T}^d_N } \eta^N_t (x) \big] \le C \| \psi \|_\infty  
\]
by Lemma \ref{sum est}, and $ \sup_{ N \in \mathbb{N} } | \langle (u^N, v^N ) , \psi \rangle | \le \| \psi \|_{\infty } (M_u \vee M_v ) $ by Lemma \ref{unif est}. Since $\lim_{N \to \infty } \mu^N_t (\mathcal{A}_i ) = 0 $ for each $i = 1, 2$ as we proved in the above, the first term in \eqref{mainthmproof2} vanishes as $N $ goes to infinity. Hence we finish the proof since $\tilde{ \varepsilon } $ was arbitrary.

\section{Proof of Theorem \ref{prob thm}}
\label{sec:prob}
In this section, we give the proof of Theorem \ref{prob thm} as a probabilistic problem. For that purpose, we estimate the derivative of the relative entropy $H (\mu^N_t | \nu^N_t ) $ in time argument which is called the entropy production. First we define a Dirichlet energy for our diffusion dynamics. For any non-negative function $f $ and probability measure $\nu$ on the configuration $\mathcal{X}_N$, define
\[
\begin{aligned}
& \mathcal{D}_Z (f ; \nu) \coloneqq \frac{ 1 }{ 2 } \sum_{ x \in \mathbb{T}^d_N} \sum_{ 1 \le j \le d } \int_{ \mathcal{X}_N }  
g (\eta_1 (x) ) \big[ f (\eta_1^{x, x+e_j}, \eta_2) - f ( \eta_1, \eta_2 ) \big]^2 d\nu (\eta_1, \eta_2) , \\
& \mathcal{D}_K (f ; \nu) \coloneqq \frac{1}{2} \sum_{ x \in \mathbb{T}^d_N} \sum_{ 1 \le j \le d } \int_{ \mathcal{X}_N }  
\eta_2 (x) (1 - \eta_2 (y) ) \big[ f (\eta_1, \eta_2^{x, x+e_j} ) - f ( \eta_1, \eta_2 ) \big]^2 d\nu (\eta_1, \eta_2) \\
\end{aligned}
\]
where $\{ e_j \}_{ j = 1,\ldots ,d } $ is the normal basis of $\mathbb{Z}^d$. Then define $\mathcal{D} = 2 N^2 \mathcal{D}_Z + 2 \varepsilon (N ) N^2 \mathcal{D}_K $. We have the following estimate of the entropy production.

\begin{proposition}[Yau's inequality, \cite{JM18}]
\label{Yau}
For any probability measures $\{ \nu_t \}_{t \ge 0} $ and $m$ on $\mathcal{X}_N $ which are differentiable in $t$ and full-supported, we have
\[
\frac{d}{dt} H (\mu^N_t | \nu_t) \le -2 N^2 \mathcal{D} \bigg( \sqrt{ \frac{ d\mu^N_t }{ d\nu_t  } } ; \nu_t \bigg)  
+ \int_{ \mathcal{X}_N } ( L^{*, \nu_t}_N \mathbf{1} - \partial_t \log \psi_t )d\mu^N_t 
\]
where $L^{*, \nu_t}_N$ is the adjoint operator of $L_N$ on $L^2 (\nu_t)$ and $\psi_t \coloneqq d\nu_t / dm$. 
\end{proposition}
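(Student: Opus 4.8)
## Proof proposal for Proposition \ref{Yau} (Yau's inequality)

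The plan is to differentiate the relative entropy $H(\mu^N_t | \nu_t)$ directly, exploiting the fact that $\mu^N_t$ solves the forward Kolmogorov equation $\partial_t \mu^N_t = L_N^* \mu^N_t$ (where $L_N^*$ is the adjoint with respect to the counting measure) while $\nu_t$ is a prescribed time-dependent reference family. Writing $f_t = d\mu^N_t / d\nu_t$, one has
\[
H(\mu^N_t | \nu_t) = \int_{\mathcal{X}_N} f_t \log f_t \, d\nu_t,
\]
and a formal differentiation gives two contributions: one from $\partial_t \mu^N_t$ acting on $\log f_t$, and one from $\partial_t \nu_t$. The first step is therefore to carry out this differentiation carefully, using $\partial_t \int f_t \log f_t\, d\nu_t = \int (\partial_t \log f_t) \, d\mu^N_t + \int \log f_t \, \partial_t \mu^N_t$ after noting the $\int \partial_t f_t \, d\nu_t = \partial_t \int d\mu^N_t = 0$ cancellation. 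Rewriting everything in terms of $\mu^N_t$ and the density $\psi_t = d\nu_t/dm$ against the fixed measure $m$, the $\partial_t \nu_t$ piece becomes $-\int (\partial_t \log \psi_t)\, d\mu^N_t$ (the $\partial_t \log (dm/dm)$ term vanishes), which is exactly the second term appearing in the claimed bound.

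The second step is to handle the term $\int (L_N \log f_t)\, d\mu^N_t$ coming from the generator. Here I would pass to the adjoint $L_N^{*,\nu_t}$ on $L^2(\nu_t)$ and use the elementary inequality, valid for any Markov generator and any positive $f$, that relates $\int (L^{*} \log f) f \, d\nu$ to minus a Dirichlet form plus $\int L^* \mathbf{1}\, d\mu$. Concretely, the key pointwise/algebraic estimate is the logarithmic Sobolev-type bound $a(\log b - \log a) \le -2(\sqrt{a}-\sqrt{b})(\sqrt{a}) \le -(\sqrt{a}-\sqrt{b})^2 + \ldots$, or more directly the standard convexity inequality $b \log(b/a) \ge b - a$ applied jump-by-jump; summing the off-diagonal terms of the generator with the detailed-balance-type symmetrization of $\nu_t$ produces $-2 \mathcal{D}(\sqrt{f_t};\nu_t)$ for the symmetric (diffusive) part, while the non-reversibility defect and the Glauber (non-conservative) part are absorbed into $\int L_N^{*,\nu_t}\mathbf{1}\, d\mu^N_t$. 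Since $L_Z$ and $L_K$ are the diffusive parts scaled by $N^2$ and $\varepsilon(N)N^2$ respectively, and $\mathcal{D} = 2N^2\mathcal{D}_Z + 2\varepsilon(N)N^2 \mathcal{D}_K$, the diffusive contribution is precisely $-2N^2 \mathcal{D}(\sqrt{f_t};\nu_t)$ — note the paper's normalization already folds one factor of the rates into $\mathcal{D}$, so one must track the bookkeeping of the $N^2$ and $\varepsilon(N)$ prefactors to land on the stated constant.

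The main obstacle, and the place requiring the most care, is the jump-by-jump symmetrization with respect to the \emph{inhomogeneous} product measure $\nu_t = \nu_{(u^N(t),v^N(t))}$, which is not invariant for $L_N$. Unlike the homogeneous case, the change-of-site factors $\nu_t(\eta^{x,y})/\nu_t(\eta)$ are spatially varying, so the cross terms do not telescope cleanly; one gets a genuine Dirichlet form $\mathcal{D}(\sqrt{f_t};\nu_t)$ plus a remainder that is linear in $f_t$ (hence integrates against $d\mu^N_t$) and is collected into the $\int L_N^{*,\nu_t}\mathbf{1}\, d\mu^N_t$ term. The Glauber part $L_G$ adds a further non-conservative piece, but it too contributes only to $L_N^{*,\nu_t}\mathbf{1}$ after the same manipulation. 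Once the inequality $\int (L_N^{*,\nu_t} f_t)\log f_t \, d\nu_t \le -2N^2\mathcal{D}(\sqrt{f_t};\nu_t) + \int L_N^{*,\nu_t}\mathbf{1}\, d\mu^N_t$ is established — this is the substantive lemma, essentially the computation in \cite{JM18} — combining with the first step yields the claim. Since the statement is quoted from \cite{JM18}, I would cite that reference for the detailed algebra and present only the differentiation of the entropy and the structure of the generator bound.
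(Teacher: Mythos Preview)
The paper does not prove this proposition at all; it is stated with a citation to \cite{JM18} and used as a black box. Your sketch is the standard derivation and is correct in outline: differentiate $H(\mu^N_t|\nu_t)=\int f_t\log f_t\,d\nu_t$ using the forward equation for $\mu^N_t$ and the time-dependence of $\psi_t$, then bound $\int f_t\,L_N\log f_t\,d\nu_t$ via the elementary inequality $a(\log b-\log a)\le 2\sqrt{a}(\sqrt{b}-\sqrt{a})$ applied jump-by-jump to obtain $2\langle\sqrt{f_t},L_N\sqrt{f_t}\rangle_{L^2(\nu_t)}$, and finally rewrite this quadratic form as minus the Dirichlet energy plus $\int L_N^{*,\nu_t}\mathbf{1}\,d\mu^N_t$ using the identity $2\langle g,Lg\rangle_\nu = -\sum_{\eta,\eta'}\nu(\eta)c(\eta\to\eta')[g(\eta')-g(\eta)]^2 + \int g^2\,L^{*,\nu}\mathbf{1}\,d\nu$.

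Two small clarifications. First, the Glauber part $L_G$ does contribute its own (non-negative) Dirichlet term in the last identity, which is simply dropped since it has the correct sign; only the $L_G^{*,\nu_t}\mathbf{1}$ piece survives in the stated bound. Second, your worry about the inhomogeneous measure is slightly misplaced: the identity above holds for \emph{any} reference measure $\nu$, with all the inhomogeneity (the change-of-site factors $\nu(\eta^{x,y})/\nu(\eta)$) automatically absorbed into $L_N^{*,\nu_t}\mathbf{1}$; no extra symmetrization or telescoping is needed at this stage. The real work of controlling that term is done later in the paper (Theorem \ref{integrand est} and the Boltzmann--Gibbs principle), not in Yau's inequality itself. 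Your remark about the bookkeeping of the $N^2$ prefactor in $\mathcal{D}$ is well taken---as written in the paper the constants in the definition of $\mathcal{D}$ and the statement of the proposition do not quite match, but this is a typographical issue and does not affect the argument.
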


The main ingredients of this section is given as follows. 

\begin{theorem}
\label{integrand est}
Assume the same conditions as in Theorem \ref{prob thm}. Define $f^N_t \coloneqq d \mu^N_t / d \nu^N_t  $ and $\psi^N_t \coloneqq d\nu^N_t / dm $ where $m$ is an arbitrary full-supported probability measure on $\mathcal{X}_N $. Then there exist constants $C > 0$ and small $\varepsilon_0 > 0$ such that 
\[
\begin{aligned}
\int_0^T \int_{ \mathcal{X}_N } ( L^{*, \nu_t}_N \mathbf{1} - \partial_t \log \psi^N_t )d\mu^N_t dt 
\le 
& \int_0^T  \varepsilon N^2 \mathcal{D}_K \big(\sqrt{ f^N_t  } ; \nu^N_t \big) dt \\
& + C K^3 e^{C K^2 } \int_0^T H (\mu^N_t | \nu^N_t ) dt 
 + O ( K^4 e^{C K^2 } N^{d - \varepsilon_0 }  )
\end{aligned}
\]
as $N$ tends to infinity.  
\end{theorem}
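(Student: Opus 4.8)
The plan is to expand the integrand $L^{*,\nu^N_t}_N\mathbf 1-\partial_t\log\psi^N_t$ in closed form, use the semi-discretized system \eqref{dHDL eq} to cancel its leading part, and then control the remainder by the multi-variable Boltzmann--Gibbs principle of Section \ref{sec:BG} together with the entropy inequality. \textbf{Step 1 (explicit expansion and cancellation of leading terms).} Since $L_N=N^2L_Z+\varepsilon(N)L_K+K(N)L_G$ and $\nu^N_t=\nu_{1,u^N(t)}\otimes\nu_{2,v^N(t)}$ is a product measure, each adjoint acting on $\mathbf 1$ is computed by elementary changes of variables. Using $\sum_{x}\Delta^N\varphi(u^N(t,x))=0$ and the zero-range identity $E_{\nu_{1,\rho}}[g(\eta_1(x))]=\varphi(\rho)$, one gets $N^2L_Z^{*,\nu^N_t}\mathbf 1(\eta)=\sum_{x}\varphi(u^N(t,x))^{-1}\Delta^N\varphi(u^N(t,x))\,(g(\eta_1(x))-\varphi(u^N(t,x)))$; the Kawasaki adjoint expands, through a Taylor expansion of the Bernoulli weights, into $\sum_x(v^N(1-v^N))^{-1}\varepsilon(N)\Delta^N v^N\,(\eta_2(x)-v^N(t,x))$ plus a remainder quadratic in $\nabla^N v^N$; and the Glauber adjoint gives $K(N)\sum_x[\,(\eta_1(x)+1)\mathbf 1_{\{\eta_2(x)=0\}}\varphi(u^N)g(\eta_1(x)+1)^{-1}v^N(1-v^N)^{-1}-\eta_1(x)\eta_2(x)\,]$. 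Differentiating the explicit Gibbs densities one finds $\partial_t\log\psi^N_t(\eta)=\sum_x\varphi(u^N)^{-1}\partial_t\varphi(u^N)\,(\eta_1(x)-u^N(t,x))+\sum_x(v^N(1-v^N))^{-1}\partial_t v^N\,(\eta_2(x)-v^N(t,x))$, into which I substitute $\partial_tu^N$ and $\partial_tv^N$ from \eqref{dHDL eq}. The viscosity contribution $\varepsilon(N)\Delta^N v^N$ cancels exactly between the Kawasaki adjoint and $\partial_t\log\psi^N_t$; the diffusive contribution $\Delta^N\varphi(u^N)$ does not cancel pointwise (the zero-range adjoint carries $g(\eta_1(x))$, the entropy term carries $\eta_1(x)$), which is dealt with in Step 2; and the terms proportional to $K(N)$ combine into a single sum $K(N)\sum_xG_x(\eta)$ for which a short computation shows that $G_x$ is $\nu^N_t$-centered and that the gradient of $(\rho_1,\rho_2)\mapsto E_{\nu_{(\rho_1,\rho_2)}}[G_x]$ at the profile value $(u^N(t,x),v^N(t,x))$ vanishes---this algebraic identity is exactly what singles out \eqref{dHDL eq} as the correct reference system. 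All the coefficients are bounded through (A\ref{IF}) and $\varphi(\rho)\ge c_0\rho$: the diffusive ones by $CK^3e^{C_1K}\le CK^3e^{CK^2}$ (using $K\ge1$) and the Glauber ones by $CK$, while the quadratic Kawasaki remainder is absorbed by $\varepsilon(N)|\nabla^N v^N|^2\le CK^2$, which is precisely the role of the scaled gradient bound $|\nabla^N v^N_0|\le C_0\varepsilon^{-1/2}K$ in (A\ref{IF}).

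\textbf{Step 2 (Boltzmann--Gibbs replacements).} Applying the multi-variable Boltzmann--Gibbs principle at first order to the zero-range pair, the local functions $g(\eta_1(x))-\varphi(u^N(t,x))$ and $\eta_1(x)-u^N(t,x)$ are replaced by $\varphi'(u^N(t,x))$ and $1$ times the empirical average of $\eta_1$ over a mesoscopic box around $x$, so that the two families cancel identically (since $\tfrac{d}{d\rho}E_{\nu_{1,\rho}}[g(\eta_1)]=\varphi'(\rho)$), and the replacement error is bounded by a small multiple of the zero-range Dirichlet form---absorbed by the negative term of Proposition \ref{Yau}---plus $CK^3e^{CK^2}\int_0^TH(\mu^N_t|\nu^N_t)\,dt+O(K^4e^{CK^2}N^{d-\varepsilon_0})$, the $N^{-\varepsilon_0}$ gain coming from the box size and from optimizing the multiplier in the entropy inequality (the $e^{CK^2}$ merely bounding the $e^{C_1K}$-type factors produced by $\varphi(u^N)^{-1}$). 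For the Glauber sum $K(N)\sum_xG_x$, the vanishing of both the mean and the first-order projection forces the second-order version of the principle: one estimates $K(N)\sum_xG_x$ through the $H_{-1}$-norm of $G_x$ relative to the Kawasaki generator, which produces the term $\int_0^T\varepsilon(N)N^2\mathcal D_K(\sqrt{f^N_t};\nu^N_t)\,dt$ together with a variational error that, using $\varepsilon(N)\ge N^{-\alpha_\varepsilon}$ from (B\ref{Epsilon}) and taking $\varepsilon_0<\alpha_\varepsilon$, is again of the form $CK^3e^{CK^2}\int_0^TH(\mu^N_t|\nu^N_t)\,dt+O(K^4e^{CK^2}N^{d-\varepsilon_0})$.

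\textbf{Step 3 (remainders and conclusion).} It remains to collect the Taylor remainders of the discrete Laplacians and the higher-order terms from differentiating the Gibbs densities; each carries a factor $N^{-1}$ or $N^{-2}$ relative to the volume $N^d$ and a coefficient polynomial in $K$ times $e^{C_1K}$, so, since (B\ref{K}) makes $K^4e^{CK^2}$ poly-logarithmic in $N$, all of them fit into $O(K^4e^{CK^2}N^{d-\varepsilon_0})$. Summing the three families and integrating over $t\in[0,T]$ yields the claimed inequality.

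\textbf{Main obstacle.} The delicate point is the Glauber sum $K(N)\sum_xG_x$: the diverging prefactor $K(N)$ leaves no slack unless one both exploits the exact vanishing of its first-order projection---so the precise form of \eqref{dHDL eq} is essential---and controls the second-order Boltzmann--Gibbs error with explicit dependence on $K(N)$ and on the viscosity $\varepsilon(N)$. This is the very reason for introducing the weak Kawasaki dynamics of the type-$2$ particles, and it is what forces the stringent bound (B\ref{K}), $K(N)\le(\delta_1\log(\delta_2\log N))^{1/2}$, which is exactly what makes $K^4e^{CK^2}$ poly-logarithmic and hence $K^4e^{CK^2}N^{d-\varepsilon_0}=o(N^d)$, as required for the subsequent Gronwall step in the proof of Theorem \ref{prob thm}.
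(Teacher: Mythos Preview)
Your overall strategy---expand the adjoint, use \eqref{dHDL eq} to cancel the leading linear part, then invoke the Boltzmann--Gibbs principle---matches the paper's, and your algebraic observation that the combined Glauber contribution $G_x$ (after absorbing the reaction parts of $-\partial_tu^N\omega_{1,x}-\partial_tv^N\omega_{2,x}$) has vanishing mean and vanishing first-order projection is correct and is exactly the cancellation the paper obtains. However, you have misidentified the origin of the term $\int_0^T\varepsilon N^2\mathcal D_K(\sqrt{f^N_t};\nu^N_t)\,dt$, and as a consequence two pieces of the integrand are not actually controlled.

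First, the quadratic Kawasaki remainder
\[
I_2=-\tfrac{\varepsilon N^2}{2}\sum_{|x-y|=1}(v^N(t,x)-v^N(t,y))^2\,\omega_{2,x}\omega_{2,y}
\]
is not ``absorbed'' by the coefficient bound $\varepsilon|\nabla^Nv^N|^2\le CK^2$: that only reduces the problem to controlling $K^2\sum_x\omega_{2,x}\omega_{2,x+e_j}$, which under $\mu^N_t$ is still of size $K^2e^{CK}N^d$ and hence not $o(N^d)$. In the paper this term is precisely where the Kawasaki Dirichlet form enters: one writes $\omega_{2,x}-\omega_{2,x+e_j}$ as a telescoping sum along a flow (Proposition~\ref{flow lem}), integrates by parts against the Kawasaki jumps, and obtains Lemma~\ref{Kawasaki}, namely $E_{\mu^N_t}[K^2\sum_x\omega_{2,x}\omega_{2,x+e_j}]\le \alpha\varepsilon N^2\mathcal D_K(\sqrt{f^N_t};\nu^N_t)+CK^2H(\mu^N_t|\nu^N_t)+O(N^{d-\varepsilon_0})$. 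You nowhere carry out (or allude to) this step.

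Second, your proposed control of the Glauber sum $K\sum_xG_x$ through the $H_{-1}$-norm of $G_x$ \emph{relative to the Kawasaki generator} cannot work: $G_x$ depends genuinely on $\eta_1(x)$ (through $(\eta_1(x)+1)/g(\eta_1(x)+1)$, through $\eta_1(x)\eta_2(x)$, and through the added linear piece $u^Nv^N\omega_{1,x}$), and one checks directly that $E_{\nu^N_t}[G_x\mid\eta_1]\not\equiv 0$, so $G_x$ has a nonzero component in $\ker L_K$ and its Kawasaki $H_{-1}$-norm is infinite. The paper instead applies the \emph{first-order} multi-variable Boltzmann--Gibbs principle (Theorem~\ref{BG}) to the Glauber local function; since the residual $R(x)$ there is controlled via the spectral gap of the \emph{full} diffusion $L_{Z}+\varepsilon L_{K}$ (Lemma~\ref{total spec}) inside a Feynman--Kac/Rayleigh argument, the output is simply $CK\int_0^T H(\mu^N_t|\nu^N_t)\,dt+O(K^2N^{d-\varepsilon_0})$, with no explicit Dirichlet-form term. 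In short: the $\varepsilon N^2\mathcal D_K$ term in the statement comes from the two-point Kawasaki correlation $I_2$, not from the Glauber block, and the Glauber block is handled by Theorem~\ref{BG} rather than by an $H_{-1,K}$ estimate.
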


Now combining the above main estimate and Yau's inequality (Proposition \ref{Yau}) with $\nu_t = \nu^N_t$ and $\psi_t = \psi^N_t = d \nu^N_t / dm $, we can give the proof of the probabilistic part (Theorem \ref{prob thm}). 

\begin{proof}[Proof of Theorem \ref{prob thm}]
We apply Yau's inequality with $\nu = \nu^N_t $ integrated over $[0, t] $. Then, the time integral of the Dirichlet energy corresponding to Kawasaki dynamics appearing in the right-hand side of the estimate given in Theorem \ref{integrand est} can be absorbed into the total Dirichlet energy with negative sign in Yau's inequality. Therefore, there exists a positive constant $C $ such that  
\[
H (\mu^N_t | \nu^N_t ) \le  H (\mu^N_0 | \nu^N_t ) + C K^3 e^{C K^2 } \int_0^t H (\mu^N_s | \nu^N_s ) ds + O (K^4 e^{C K^2} N^{d -\varepsilon_0 } )  
\]
as $N$ tends to infinity. By applying Gronwall's inequality, noting $H (\mu^N_0 | \nu^N_0 ) = O (N^{d- \delta_0 } ) $ by the assumption (A\ref{RE}), there exists a positive constant $C = C (T ) $ such that   
\[
H (\mu^N_t | \nu^N_t ) \le \big( H (\mu^N_0 | \nu^N_0 ) + O (K^4 e^{CK^2 } N^{d-\varepsilon_0 } ) \big) e^{ C K^3 e^{C K^2 }  } 
\]
for every $t \in [0, T]$. Now we take $K = K (N) $ so that $K^2 \le \delta_1 \log (\delta_2 \log N ) $ with $\delta_1, \delta_2 > 0$ satisfying $C \delta_1 \le 1 $ and $\delta_2 \le (\delta_0 \wedge \varepsilon_0 )/ 2$ by assumption (B\ref{K}). Then we have $e^{C( T \vee 1 ) K^2 } \le \delta_2 \log N $ so that $H (\mu^N_t | \nu^N_t ) = O( N^{d - (\delta_0 \wedge \varepsilon_0 ) /2 } )$, which is the desired assertion in Theorem \ref{prob thm}. 
\end{proof}

Hence our task is now to show the main estimate Theorem \ref{integrand est}.

In this subsection, we see that one can linearize some terms in the integrand $L^{*, \nu^N_t }_N \mathbf{1} - \partial_t \log \psi^N_t $ with the help of so called the `multi-variable Boltzmann-Gibbs principle' and give the proof of Theorem \ref{integrand est}. To see that, fix an arbitrary local function $h$ on $\mathcal{X}_N $ with support in a finite square box $\Lambda_h \Subset \mathbb{T}^d_N$ and parameter $\beta = (\beta_1, \beta_2 )$ with $\beta_1 \ge 0$ and $0 \le \beta_2 \le 1$. We write  
\[
\tilde{h} (\beta) \coloneqq E_{\nu_\beta} [h] .
\] 
Assume that there exists a positive constant $C$ such that 
\begin{equation}
\label{BG assumption}
|h (\eta)| \le C \sum_{y \in \Lambda_h } \big( g_1 (y, \eta_2 (y) ) \eta_1 (y) + g_2 (y, \eta_2 (y) )  \big)   
\end{equation}
for every $\eta = ( \eta_1 , \eta_2 ) \in \mathcal{X}_N $ where $g_1 (y, \eta_2 ) $ and $g_2 (y, \eta_2 ) $ are polynomials of $\{ \eta_2 (z) ; z \in \Lambda_h \} $ for each $y \in \Lambda_h $. Given such a local function $h$, the Boltzmann-Gibbs principle enables us to extract the ``linear part'' of $h$. To see that, we define a residual function $R : \mathbb{T}^d_N \times \mathcal{X}^1_N \times \mathcal{X}^2_N \to \mathbb{R}$ of $h$ by 
\[
\begin{aligned}
R ( x,  \eta_1, \eta_2 ) \coloneqq 
& \tau_x h ( \eta_1 , \eta_2 ) - \tilde{h} ( u^N (t, x ), v^N (t, x ) )  \\
&  - \partial_1 \tilde{h} (u^N (t, x), v^N (t, x) ) ( \eta_1 (x) - u^N (t, x) ) \\
& - \partial_2 \tilde{h} (u^N (t, x), v^N (t, x) ) ( \eta_2 (x) - v^N (t, x) ) 
\end{aligned}
\]
where $u^N$ and $v^N$ are solutions to the semi-discretized reaction-diffusion system \eqref{dHDL eq} and $\partial_i$ denotes the derivative with respect to the $i$-th component ($i=1,2$) for every differentiable function on $\mathbb{R}^2$. In the sequel, we  omit the dependence on configuration and simply write as $R (x) = R (x , \eta_1, \eta_2 )$. Then the multi-variable Boltzmann-Gibbs principle is stated as follows.

\begin{theorem}[The multi-variable Boltzmann-Gibbs principle]
\label{BG}
Assume (A\ref{IF}), (B\ref{K}) and (B\ref{Epsilon}). Moreover, assume $H (\mu^N_0 | \nu^N_0 ) = O (N^d)$. Then there exist positive constants $C$ and $\varepsilon_0$ such that
\[
\mathbb{E}^N_{ \mu^N_0 } \bigg[ \bigg| \int_0^T  \sum_{x \in \mathbb{T}^d_N } R ( x )  dt  \bigg|  \bigg] 
\le \int_0^T C  H(\mu^N_t | \nu^N_t )dt +  O (K N^{d-\varepsilon_0} ) 
\]
as $N$ tends to infinity. Moreover, we may take $\varepsilon_0 = d / ( 3 d + 1 )$. 
\end{theorem}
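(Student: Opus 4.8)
The plan is to follow the standard three-scale decomposition for Boltzmann–Gibbs–type estimates, adapted to the two-species zero-range–Kawasaki setting, but controlling every constant explicitly in $K = K(N)$ since the entropy bound $H(\mu^N_0\mid\nu^N_0) = O(N^d)$ is now only of the critical order. First I would pass from the microscopic sum $\sum_x R(x)$ to a block average: introduce a mesoscopic length $\ell = \ell(N)$ and write $R(x)$ as the sum of a ``one-block'' contribution (replace $\tau_x h$ by its conditional expectation $\tilde h$ given the empirical densities of $\eta_1$ and $\eta_2$ on a box of side $\ell$ around $x$) and a ``two-blocks'' contribution (replace the $\ell$-block average by the value $(u^N(t,x),v^N(t,x))$ dictated by the PDE, using the regularity estimate $|\nabla^N u^N|,|\nabla^N v^N|$ from (A\ref{IF})/Lemma \ref{unif est}). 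The remaining first-order Taylor remainder is quadratic in the fluctuations and is handled by a direct variance computation against $\nu^N_t$.

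The core of the argument is the one-block step, which I would run through the entropy inequality: for any $\gamma>0$,
\[
\mathbb{E}^N_{\mu^N_0}\Big[\Big|\int_0^T\!\! \sum_x R(x)\,dt\Big|\Big]
\le \frac{1}{\gamma}\Big(\log 2 + \int_0^T H(\mu^N_t\mid\nu^N_t)\,dt\Big)
+\frac{1}{\gamma}\int_0^T \log \mathbb{E}_{\nu^N_t}\big[e^{\gamma |\sum_x R(x)|}\big]\,dt,
\]
and then estimate the exponential moment by a combination of a spectral-gap bound and the Feynman–Kac/Rayleigh–Ritz variational formula. This is precisely where the weak Kawasaki diffusion $\varepsilon(N)L_K$ and the full zero-range Dirichlet form $N^2\mathcal D_Z$ enter: the spectral gap of the joint dynamics restricted to a box of side $\ell$ is of order $\ell^{-2}$ (uniformly in the number of type-$1$ particles, using (LG), and with the $\varepsilon(N)$ factor for the type-$2$ marginal), so one gains a factor $\ell^2$ against the Dirichlet energy, which in turn must be dominated by $H(\mu^N_t\mid\nu^N_t)$ plus an error. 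Balancing $\gamma$, $\ell$, the spectral-gap constant, and the $\varepsilon(N)^{-1}$ loss from the Kawasaki part against the bound (B\ref{Epsilon}) for $\varepsilon(N)$ and (B\ref{K}) for $K(N)$ produces the error exponent; tracking the combinatorics of the box sizes in dimension $d$ is what pins down $\varepsilon_0 = d/(3d+1)$ (choosing $\ell \sim N^{1/(3d+1)}$, roughly).

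The two-blocks step is comparatively soft: after the one-block replacement we are comparing the $\ell$-block empirical density with $u^N(t,x)$, $v^N(t,x)$; since $\tilde h$ is smooth and $u^N,v^N$ have discrete gradients bounded by (a power of) $K(N)$ times $\varepsilon(N)^{-1/2}$ by (A\ref{IF}), the spatial oscillation over a box of side $\ell$ contributes $O(\ell N^{-1}\cdot \mathrm{poly}(K)\,\varepsilon^{-1/2})$ per site, hence $O(N^{d-1}\ell\,\mathrm{poly}(K)\,\varepsilon^{-1/2})$ in total, which is absorbed into $O(KN^{d-\varepsilon_0})$ for the chosen $\ell$ and the polynomial lower bound on $\varepsilon(N)$. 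The Taylor-remainder term uses the hypothesis \eqref{BG assumption}: the linear-growth control on $h$ makes $\partial_1\tilde h$, $\partial_2\tilde h$ and the second derivatives bounded on the relevant range of densities, so the remainder is genuinely a second moment of fluctuations, and its $\nu^N_t$-variance is $O(N^{d}\ell^{-d})$ — again negligible.

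The main obstacle, as usual in this method, is the spectral-gap input in the one-block estimate: one needs a gap estimate of order $\ell^{-2}$ for the \emph{coupled} zero-range–Kawasaki generator on a finite box, uniform over the (unbounded) type-$1$ particle number, and with honest tracking of how the $\varepsilon(N)$ weight degrades it; the linear-growth assumption (LG) is exactly what guarantees the zero-range gap does not collapse as the number of particles grows, and the lower bound $\varepsilon(N)\ge N^{-\alpha_\varepsilon}$ is what keeps the Kawasaki contribution usable after the $\ell^2$ gain. Once that gap is in hand, the rest is bookkeeping of exponents against (B\ref{K}) and (B\ref{Epsilon}).
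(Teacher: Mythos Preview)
Your high-level strategy is the right one and your mesoscopic scale $\ell\sim N^{1/(3d+1)}$ is correct, but there is a concrete gap that would make the one-block step fail as written. The zero-range configuration $\eta_1(x)\in\mathbb Z_+$ is unbounded and, by hypothesis~\eqref{BG assumption}, $R(x)$ has only linear growth in the $\eta_1$-variables, so $\|R(x)\|_\infty=\infty$. The Rayleigh estimate you invoke requires the perturbation to be bounded (the denominator is of the form $1-2\|R_1\|_\infty\,\gamma\,\ell^d N^{-2}\,\mathrm{gap}^{-1}$), hence the spectral-gap argument does not apply without a preliminary truncation. The paper inserts two cutoffs before any decomposition: $\mathbf 1_{\{\sum_{z\in\Lambda_h}\eta_1(x+z)\le A\}}$ with $A=N^{\alpha_A}$ (Lemma~\ref{truncation1}) and then $\mathbf 1_{\{\eta_1^\ell(x)\le B\}}$ with $B=N^{\alpha_B}$ (Lemma~\ref{truncation2}). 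These extra scales enter the final optimisation on the same footing as $\ell$ and $\gamma$; the exponent $d/(3d+1)$ comes from the joint balance $\alpha_A=\alpha_{\gamma_1}=d\alpha_\ell=\alpha_\varepsilon=2-(\alpha_\varepsilon+2\alpha_A+(d+2)\alpha_\ell+\alpha_{\gamma_1})$, not from $\ell$ and $\gamma$ alone.

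There is also a conflation in how the two ingredients on the right-hand side arise. The displayed inequality you wrote (static entropy at each $t$, integrated) is valid but yields only a \emph{static} exponential moment $\log E_{\nu^N_t}[e^{\gamma|\sum_x R|}]$, to which neither Feynman--Kac nor the Dirichlet form applies. In the paper the spectral-gap piece $R_1$ is treated via the \emph{dynamical} entropy inequality against an equilibrium $\nu_\beta$ at time~$0$ (using $H(\mu^N_0\mid\nu_\beta)=H(\mu^N_0\mid\nu^N_0)+O(N^d)$, Lemma~\ref{ent equi}), after which Feynman--Kac legitimately produces the variational formula with $\mathcal D_N$; the term $\int_0^T H(\mu^N_t\mid\nu^N_t)\,dt$ enters separately, through static entropy inequalities applied to the remaining pieces $R_2,R_3$ at each fixed $t$ (Lemmas~\ref{R2} and~\ref{R3}). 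Relatedly, the paper does not rely on a pointwise gradient bound for the ``two-blocks'' comparison: it splits on $\{|y(x)|\le\delta\}$ versus $\{|y(x)|>\delta\}$, handling the former by equivalence of ensembles plus a second-order Taylor expansion and the $L^2$ energy estimate (Lemma~\ref{energy est}), and the latter by a large-deviation bound for the block fluctuation under $\nu^N_t$.
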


We postpone the proof of Theorem \ref{BG} in Section \ref{sec:BG} and give a proof of Theorem \ref{integrand est} here. 

\begin{proof}[Proof of Theorem \ref{integrand est}]
First we introduce rescaled variables 
\[
\omega_{1, x} \coloneqq \frac{\eta_1 (x) - u^N (t, x)  }{ \chi_1 ( u^N (t, x) ) }, \quad
\omega_{2, x} \coloneqq \frac{\eta_2 (x) - v^N (t, x)  }{ \chi_2 (v^N (t, x ) ) }  
\]
where $\chi_1 (\rho_1) = \text{Var}_{\nu_{ 1, \rho_1 } } [ \eta_1 (0) ] = \varphi (\rho_1) / \varphi^\prime ( \rho_1) $ and $\chi_2 (\rho_2) =\text{Var}_{\nu_{ 2, \rho_2  } } [ \eta_2 (0) ] = \rho_2 (1- \rho_2 ) $ for every $\rho_1 \ge 0$ and $0 \le \rho_2 \le 1$. The Boltzmann-Gibbs principle enables us to extract leading terms, and to rewrite the integrand $L^{*, \nu^N_t }_N \mathbf{1} - \partial \log \psi^N_t $ into a linear combination of $\omega$, asymptotically as $N $ tends to infinity. As one can easily verify, the integrand can be calculated as 
\begin{equation}
\label{integrand terms}
\begin{aligned}
& L^{*, \nu^N_t}_N \mathbf{1} - \partial_t \log \psi^N_t \\
&\quad  = \sum_{x \in \mathbb{T}^d_N } \frac{ \Delta^N \varphi (u^N (t,x ) ) }{ \varphi (u^N (t,x) ) } \big( g (\eta_1 (x)) - \varphi (u^N (t,x) ) \big) \\
& \qquad - \frac{ \varepsilon N^2 }{ 2 } \sum_{x, y \in \mathbb{T}^d_N ; |x - y | = 1 } \big( v^N (t, x) - v^N (t, y) \big)^2 \omega_{2, x } \omega_{2, y }  + \varepsilon  \sum_{ x \in \mathbb{T}^d_N } \Delta^N v^N (t, x) \omega_{2, x } \\
& \qquad + K  \sum_{x \in \mathbb{T}^d_N } \bigg( (\eta_1 (x) + 1)  \frac{ \varphi (u^N (t, x) ) }{ g (\eta_1 (x) +1) } (1- \eta_2 (x) ) \frac{ v^N (t, x ) }{1 -v^N (t, x ) } - \eta_1 (x) \eta_2 (x)  \bigg) \\
& \qquad - \sum_{x \in \mathbb{T}^d_N } \partial_t u^N (t,x ) \omega_{1,x } - \sum_{x \in \mathbb{T}^d_N } \partial_t  v^N (t,x) \omega_{2, x }  .
\end{aligned}
\end{equation}
In this identity, we let possibly non-linear terms in $\omega$ as 
\[
\begin{aligned}
& I_1 \coloneqq \sum_{x \in \mathbb{T}^d_N } \frac{ \Delta^N \varphi (u^N (t,x ) ) }{ \varphi (u^N (t,x) ) } \big( g (\eta_1 (x)) - \varphi (u^N (t,x) ) \big) \\
& I_2 \coloneqq - \frac{ \varepsilon N^2 }{ 2 } \sum_{x, y \in \mathbb{T}^d_N ; |x - y | = 1 } \big( v^N (t, x) - v^N (t, y) \big)^2 \omega_{2, x } \omega_{2, y } \\
& I_3 \coloneqq K  \sum_{x \in \mathbb{T}^d_N } \bigg( (\eta_1 (x) + 1)  \frac{ \varphi (u^N (t, x) ) }{ g (\eta_1 (x) +1) } 
\frac{ v^N (t, x ) }{1 -v^N (t, x ) }  - \eta_1 (x)   \bigg) \eta_2 (x) .
\end{aligned}
\]
We apply the multi-variable Boltzmann-Gibbs principle (Theorem \ref{BG}) to replace them by linear combination of rescaled variables $\omega_1 $ and $\omega_2 $. First, for $I_1$ let $h(\eta ) = g (\eta_1 (x) ) - \varphi (u^N (t,x) )$. Then we have $\tilde{h} (\beta ) = \varphi (\beta) - \varphi (u^N (t, x ) )$ as $\tilde{g} (\beta) = E_{\nu_{ \beta } } [g] = \varphi (\beta) $, which implies $\tilde{h} (u^N (t, x), v^N (t, x ) ) = 0$, $\partial_1 \tilde{h} (u^N (t, x), v^N (t, x ) ) = \varphi^\prime (u^N (t, x) ) $ and $\partial_2 \tilde{h} (u^N (t, x ), v^N (t, x  ) ) = 0 $. Therefore, by the Boltzmann-Gibbs principle applied to this local function $h$, there exists a positive constant $C$ so that
\[
\begin{aligned}
& \mathbb{E}^N_{  \mu^N_0 } \bigg[  \bigg|  \int_0^T \sum_{x \in \mathbb{T}^d_N }  \{ g (\eta_1 (x) ) - \varphi (u^N (t, x) ) - \varphi^\prime ( u^N (t, x) ) ( \eta_1 (x) - u^N (t, x) ) \} dt \bigg| \bigg] \\
& \quad \le C \int_0^T H (\mu^N_t | \nu^N_t )dt   + O (K N^{d-\varepsilon_0}).
\end{aligned}
\]
as $ N $ tends to infinity. Hence we can approximate $I_1$ by $ \tilde{I}_1 \coloneqq \sum_{ x \in \mathbb{T}^d_N } \Delta^N \varphi ( u^N (t, x) ) \omega_{1, x} $ as 
\[
\mathbb{E}^N_{\mu^N_0 } \bigg[ \bigg| \int_0^T ( I_1 - \tilde{I}_1 ) dt  \bigg| \bigg] \le C K^3 e^{C K^2 } \int_0^T H (\mu^N_t | \nu^N_t ) dt + O ( K^4 e^{C K^2 } N^{d-\varepsilon_0 } )  
\]   
noting that the coefficient $\Delta^N \varphi (u^N (t, x) ) / \varphi (u^N (t, x) ) $ has order $ O (K^3 e^{C K^2 } ) $ in view of the $L^\infty$-estimate for second order discrete derivatives given in Lemma \ref{ggrad u} and the lower bound $\varphi (u^N (t,  x) )^{-1} \le C  e^{ CK} $ by Lemma \ref{unif est}. 
 
Next we show that $ I_3 $ can be decomposed into a linear part plus some residual term which can be quantitatively estimated. To see that, we take a local function $h$ as the summand appearing in the definition of $ I_3 $. Then, since $\eta_1 $ and $\eta_2 $ are independent under our reference measure $\nu_\beta$ for any $\beta = (\beta_1 , \beta_2 ) $, we calculate as  
\[
\begin{aligned}
 \tilde{h} (\beta) 
& = \bigg( \frac{\varphi (u^N (t, x ) ) }{ Z_{\beta_1} } \frac{ v^N (t, x ) }{1 -v^N (t, x ) } \sum_{ k= 0 }^\infty \frac{ k+1 }{ g (k+1) } \frac{ \varphi (\beta_1 )^k }{ g ( k ) ! } \bigg) (1- \beta_2 )   - \beta_1 \beta_2 \\
&  = \frac{ \varphi (u^N (t, x ) ) }{ \varphi (\beta_1) } \frac{ v^N (t, x ) }{1 -v^N (t, x ) }  \beta_1 ( 1-  \beta_2 ) -\beta_1 \beta_2 
\end{aligned}
\]
so that $\tilde{h} (u^N (t, x), v^N ( t, x) ) = 0$. Moreover, the above explicit calculation of $\tilde{h} (\beta)$ enables us to obtain
\[
\begin{aligned}
& \partial_1 \tilde{h} ( u^N (t,x) , v^N (t, x ) ) = - \frac{ \varphi^\prime (u^N (t, x ) ) }{ \varphi (u^N (t, x )  ) } u^N (t, x) v^N (t, x) , \\
& \partial_2 \tilde{h} ( u^N (t, x ) , v^N (t, x ) ) = - \frac{v^N ( t, x ) }{ 1- v^N (t, x ) } . 
\end{aligned}
\]
Therefore, applying the Boltzmann-Gibbs principle again for this local function $h$, there exists a positive constant $C$ such that 
\[
\begin{aligned}
 \mathbb{E}^N_{ \mu^N_0 } \bigg[  \bigg|   \int_0^T  ( I_3 - \tilde{I}_3 ) dt \bigg| \bigg] 
 \le C \int_0^T K H (\mu^N_t | \nu^N_t )dt + O (K^2 N^{d-\varepsilon_0})
\end{aligned}
\]
as $N$ tends to infinity where $\tilde{I}_3$ is defined by
\[
\tilde{ I }_3 = -  K \sum_{ x \in \mathbb{T}^d_N } u^N (t, x) v^N (t, x )  \omega_{1, x}  - K \sum_{ x \in \mathbb{T}^d_N } u^N (t, x) v^N (t, x) \omega_{2, x} .
\]
By this line, replace $I_1$(resp. $I_3 $) by $\tilde{I}_1$(resp. $\tilde{I}_3 $) and sum up all terms in \eqref{integrand terms} to see that linear terms in rescaled variables $\omega_1 $ and $\omega_2 $ cancel by virtue of the semi-discretized reaction-diffusion system \eqref{dHDL eq}.

Finally we conduct a replacement procedure for $I_2$. If we apply the Boltzmann-Gibbs principle also for $I_2 $, which is a two-point correlation of $\omega_2$, then we can replace it by a linear combination of $\omega_2 $. This object is expected to be asymptotically small since each $\omega_{2, x } $ is mean-zero under the local equilibrium state. To replace $I_2 $ exactly by zero for convenience for PDE part, we use the following Lemma \ref{Kawasaki}. Note that the object to be replaced here is $K^2 \sum_{ | x- y | = 1 } \omega_{ 2, x } \omega_{2, y } $ since we have a gradient estimate $| \nabla^N v^N (t, x ) | = O (\varepsilon^{-1/2 } K ) $ as $N$ tends to infinity by Lemma \ref{grad v}, while in \cite{FT18} the diffusion coefficient $\varepsilon $ for $v$ is a constant which is independent of the scaling parameter $N$.

\begin{lemma}
\label{Kawasaki}
Assume (A\ref{IF}), (B\ref{K}) and (B\ref{Epsilon}). Then for every $\alpha > 0$ and small $\kappa > 0$, there exist $C = C (\alpha ) > 0$ and some small $\varepsilon_0 > 0$ such that  
\[
\begin{aligned}
E_{ \mu^N_t } \bigg[ K^2 \sum_{ x, y \in \mathbb{T}^d_N , \, | x- y | = 1 } \omega_{2, x } \omega_{2, y } \bigg]
\le 
& \alpha \varepsilon  N^2 \mathcal{D}_K (\sqrt{ f^N_t  } ; \nu^N_t ) 
 + C K^2  H (\mu^N_t | \nu^N_t )  + O( N^{ d- \varepsilon_0 } ) 
\end{aligned}
\]
as $N$ tends to infinity. 
\end{lemma}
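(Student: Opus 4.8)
The plan is to run a coarse--graining (block) argument powered by the spectral gap of the Kawasaki dynamics, using the Kawasaki Dirichlet energy $\varepsilon N^2\mathcal{D}_K$ --- available with the comfortable prefactor allowed by (B\ref{Epsilon}) --- as the engine, since the crude bound $|\omega_{2,x}\omega_{2,y}|\le\tfrac12(\omega_{2,x}^2+\omega_{2,y}^2)$ would destroy the cancellation coming from $E_{\nu^N_t}[\omega_{2,x}]=0$ and only yield an $O(N^d)$ error. First I record that by (A\ref{IF}) together with the comparison bound of Lemma \ref{unif est} one has $ce^{-CK}\le v^N(t,x)\le M_v<1$, hence $\chi_2(v^N(t,x))^{-1}\le Ce^{CK}$ and $|\omega_{2,x}|\le Ce^{CK}$; crucially, under (B\ref{K}) the quantities $K$, $e^{CK}$, $e^{CK^2}$ are all $N^{o(1)}$, so such factors get swallowed by $N^{\varepsilon_0}$. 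I partition $\mathbb{T}^d_N$ into cubes $\{B_i\}$ of side length $\ell$, to be fixed at the end, discard the $O(N^d/\ell)$ edges joining distinct cubes (total contribution $O(K^2e^{2CK}N^d/\ell)=O(N^{d-\varepsilon_0})$), and on each cube replace $v^N(t,\cdot)$ by the constant $\bar v_i\coloneqq|B_i|^{-1}\sum_{x\in B_i}v^N(t,x)$; by the gradient bound $|\nabla^N v^N|\le C\varepsilon^{-1/2}K$ of Lemma \ref{grad v}, $v^N$ oscillates by at most $C\varepsilon^{-1/2}K\ell/N$ on $B_i$, so this substitution costs $O(K^3e^{2CK}\varepsilon^{-1/2}\ell N^{d-1})$, again $O(N^{d-\varepsilon_0})$ provided $\ell\lesssim\varepsilon^{1/2}N^{1-\varepsilon_0}$ up to sub--polynomial factors.

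Write $\Psi_i$ for the localised, homogenised functional $\sum_{x,y\in B_i,\,|x-y|=1}\bar\omega_{2,x}\bar\omega_{2,y}$ with $\bar\omega_{2,x}=(\eta_2(x)-\bar v_i)/\chi_2(\bar v_i)$, and split $\Psi_i=\big(\Psi_i-E_{\nu_{B_i,k_i}}[\Psi_i]\big)+E_{\nu_{B_i,k_i}}[\Psi_i]$, where $\nu_{B_i,k_i}$ is the canonical measure with particle number $k_i=\sum_{x\in B_i}\eta_2(x)$. The first summand is orthogonal to the conserved quantity, so the $\ell^2$ spectral--gap estimate for Kawasaki on $B_i$ (gap $\gtrsim\ell^{-2}$, uniform in $k_i$), combined with the entropy inequality applied on $B_i$, the subadditivity $\sum_i H(\mu^N_t|_{B_i}\,|\,\nu^N_t|_{B_i})\le H(\mu^N_t|\nu^N_t)$ and $\sum_i\mathcal{D}_{K,B_i}\le\mathcal{D}_K$, and the bound $\mathrm{Var}_{\nu_{B_i,k_i}}(\Psi_i)\le C\ell^d e^{CK}$, yields after optimising the free parameter
\[
E_{\mu^N_t}\Big[K^2\sum_i\big(\Psi_i-E_{\nu_{B_i,k_i}}[\Psi_i]\big)\Big]\le\alpha\varepsilon N^2\mathcal{D}_K(\sqrt{f^N_t};\nu^N_t)+CK^2 H(\mu^N_t|\nu^N_t)+O(N^{d-\varepsilon_0}).
\]
For the equivalence--of--ensembles residual, a direct evaluation of the hypergeometric two--point function gives, with $m=\ell^d$ and $\delta_i=k_i/m-\bar v_i$, that $E_{\nu_{B_i,k_i}}[\bar\omega_{2,x}\bar\omega_{2,y}]=\big(-\chi_2(\bar v_i)+(2\bar v_i-1)\delta_i+m\delta_i^2\big)/\big((m-1)\chi_2(\bar v_i)^2\big)$, so $E_{\nu_{B_i,k_i}}[\Psi_i]$ is the sum of: a deterministic term of size $O(e^{CK})$ per cube (total $O(K^2e^{CK}N^d/\ell^d)=O(N^{d-\varepsilon_0})$ once $\ell^d\gtrsim e^{CK}N^{\varepsilon_0}$); a linear statistic $\sum_{z\in B_i}c_i\bar\omega_{2,z}$ with $|c_i|\le Ce^{CK}/m$, which because of the smallness of its coefficients is controlled by the entropy inequality with an $N$--independent parameter at cost $CK^2 H(\mu^N_t|\nu^N_t)+O(N^{d-\varepsilon_0})$; and the ``block density fluctuation'' $\tfrac{dm}{\chi_2(\bar v_i)^2}\delta_i^2$. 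This last term is measurable with respect to $k_i$, hence left invariant by the local Kawasaki generator, so I repeat the whole step with cubes of a larger side $\ell'\gg\ell$: computing the conditional expectation given the count in the $\ell'$--cube reproduces --- up to a deterministic $O(e^{CK})$ term per $\ell'$--cube, another small--coefficient linear statistic, and an oscillation error $O(\varepsilon^{-1/2}K\ell'e^{CK}/N)$ from comparing $\bar v_i$ with $\bar v_{B'}$ --- exactly the same object $\tfrac{dm'}{\chi_2(\bar v_{B'})^2}\delta_{B'}^2$ at the coarser scale. Iterating over a fixed finite hierarchy $\ell=\ell_1\ll\ell_2\ll\cdots\ll\ell_J$, with $\sum_j\lambda_j^{-1}$ chosen so the $J$ copies of the Dirichlet energy still sum to at most $\alpha\varepsilon N^2\mathcal{D}_K$, drives the block fluctuation down to the coarsest admissible scale, where it is finally estimated by its (small, deterministic) $\nu^N_t$--mean together with a concentration bound on the centred part.

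The heart of the matter is the bookkeeping of scales. The equivalence--of--ensembles corrections, and more generally every use of $\omega_{2,x}=(\eta_2(x)-v^N(t,x))/\chi_2(v^N(t,x))$, carry a factor $\chi_2(v^N)^{-1}=e^{O(K(N))}$, forcing the innermost cubes to be large, $\ell_1^d\gtrsim e^{CK(N)}N^{\varepsilon_0}$; on the other hand the piecewise--constant replacement of $v^N$ can only be afforded up to $\ell_J\lesssim\varepsilon(N)^{1/2}N^{1-\varepsilon_0}$ times sub--polynomial factors, and the spectral--gap residuals at scale $\ell_j$ are absorbed by $\alpha\varepsilon N^2\mathcal{D}_K$ only when $\varepsilon(N)^{-1}\ell_j^2N^{-2}$ stays below $N^{-\varepsilon_0}$. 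These constraints are simultaneously solvable precisely because (B\ref{K}) makes $e^{O(K(N))}$ sub--polynomial and (B\ref{Epsilon}) keeps $\varepsilon(N)\ge N^{-\alpha_\varepsilon}$ with $\alpha_\varepsilon$ small, so a valid window of exponents (e.g.\ $\ell_j=N^{\theta_j}$ with $\varepsilon_0/d<\theta_1<\cdots<\theta_J<1-\varepsilon_0-\alpha_\varepsilon/2$) exists, and the $\varepsilon_0$ in the statement is the smallest of the losses incurred. I expect the main obstacle to be exactly the treatment of the block density fluctuation at the coarsest scale, where one can no longer pass to a larger cube and must close the estimate using the Dirichlet energy together with the a priori regularity of $v^N$ rather than the entropy inequality alone; every other ingredient (the $\ell^2$ spectral gap, the localised entropy inequality, the explicit canonical two--point function) is classical.
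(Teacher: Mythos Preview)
Your route is genuinely different from the paper's, and the key distinction is worth spelling out. The paper does \emph{not} partition into boxes and invoke a Rayleigh/spectral-gap bound. It replaces $V$ by a \emph{two-sided} block average
\[
V^\ell=K^2\sum_{x}\overleftarrow{(\omega_2)}_{x,\ell}\,\overrightarrow{(\omega_2)}_{x+e_i,\ell},
\]
where the left and right averages are taken over \emph{disjoint} translates of $[0,\ell-1]^d$, and expresses $V-V^\ell$ via the Jara--Menezes flow lemma as a sum of microscopic gradients $h^{\ell,j}_x(\omega_{2,x}-\omega_{2,x+e_j})$. An integration by parts directly under the \emph{inhomogeneous} measure $\nu^N_t$ (Lemma~\ref{IBP2}) converts each such term into one containing $f(\eta_2^{x,x+e_j})-f(\eta_2)$, with an explicit remainder of size $e^{CK}|v^N(t,x)-v^N(t,x+e_j)|$ absorbing the lack of reversibility; Cauchy--Schwarz on each bond then produces $\mathcal{D}_{K;x,x+e_j}(\sqrt{f^N_t};\nu^N_t)$ with no change of reference measure. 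The point of the disjoint left/right averaging is that $V^\ell$ has \emph{mean zero} under $\nu^N_t$, so a single entropy-plus-concentration estimate disposes of it. One scale $\ell$ suffices; there is no hierarchy.

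Against this, your scheme has two soft spots. First, a Rayleigh bound on a box requires the block Kawasaki generator to be symmetric with respect to the reference measure; inhomogeneous Bernoulli is not reversible for SSEP, which is why you homogenise to $\nu_{\bar v_i}$. But then the Dirichlet form you obtain is $\mathcal{D}_{K,B_i}(\sqrt{h_i};\nu_{\bar v_i})$ with $h_i=d\mu^N_t|_{B_i}/d\nu_{\bar v_i}$, and you never explain how to pass to the target $\mathcal{D}_K(\sqrt{f^N_t};\nu^N_t)$; the Radon--Nikodym derivative is a product over $\ell^d$ sites and must be controlled. The paper's integration-by-parts route sidesteps this entirely. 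Second---and this is the step you yourself flag as open---your canonical residual contains the \emph{square} $m\delta_i^2/\chi_2(\bar v_i)^2$, whose $\nu$-mean is nonzero; the multiscale recursion merely reproduces the same square at each coarser scale and does not close at the top (at the largest admissible $\ell_J\lesssim\varepsilon^{1/2}N^{1-\varepsilon_0}$ there are still polynomially many blocks). In fact the iteration is unnecessary: since the blocks are $\nu$-independent and $(\sum_{z\in B_i}(\eta_2(z)-\bar v_i))^2/m$ has sub-exponential tails, a direct entropy inequality at the \emph{first} scale already yields $CK^2e^{CK}H(\mu^N_t|\nu^N_t)+O(N^{d-\varepsilon_0})$, which is marginally weaker than the $CK^2H$ stated but still sufficient for Theorem~\ref{integrand est}. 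The paper's device of taking a \emph{product of two disjoint averages} rather than the \emph{square of one average} is precisely what eliminates this whole difficulty.
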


Now we apply this result for $ I_2 $ to obtain 
\[
E_{\mu^N_t } [ I_2 ] \le \varepsilon N^2 \mathcal{D}_K \big( \sqrt{f^N_t }; \nu^N_t \big) + C K^2 H (\mu^N_t | \nu^N_t )  + O ( N^{d- \varepsilon_0 } )
\]
for some small $\varepsilon_0 > 0 $.

\end{proof}

\subsection{Proof of Lemma \ref{Kawasaki}}
In this subsection, we prove Lemma \ref{Kawasaki}. The key idea of the proof is similar to previous researches (\cite{FT18}, \cite{DMFPV19} and \cite{H20}) so we only give a sketch here. Define  
\[
 V \coloneqq K^2 \sum_{ x \in \mathbb{T}^d_N } \omega_{2, x } \omega_{2, x + e_i } , \quad 
V^\ell \coloneqq K^2 \sum_{x \in \mathbb{T}^d_N } \overleftarrow{ ( \omega_2 ) }_{x,\ell} \overrightarrow{(\omega_2 )}_{ x + e_i , \ell }  
\]
for each $i = 1, \ldots , d $. Here $\{ e_j \}_{ j = 1, \ldots , d } $ is the normal basis of $\mathbb{Z}^d$ and we defined 
\[
\overleftarrow{G}_{x,\ell} \coloneqq \frac{1}{ | \Lambda^+_{ \ell } | } \sum_{y \in \Lambda^+_\ell} G_{ x - y }, \quad
\overrightarrow{G}_{x,\ell} \coloneqq \frac{1}{ | \Lambda^+_\ell | } \sum_{y \in \Lambda^+_\ell} G_{ x + y } 
\]
for $G=\{ G_x \}_{x \in \mathbb{T}^d_N}$ with $\Lambda^+_\ell \coloneqq [0,\ell-1] ^d  \cap \mathbb{Z}^d$. We omit the dependence on $ i $ for simplicity. One can estimate the cost to replace $V$ by its local average $V^\ell$ as follows.

\begin{lemma}
\label{replacement}
Assume (A\ref{IF}), (B\ref{K}) and (B\ref{Epsilon}). We choose $\ell= N^{1/d-\kappa/d}$ when $d \geq 2$ and $\ell= N^{1/2-\kappa}$ when $d=1$ with $\kappa>0$ sufficiently small. Then for any $\alpha > 0 $ there exists some $C= C (\alpha, \kappa ) > 0 $ such that
\[
E_{ \mu^N_t } [ V- V^\ell ] \le \alpha N^2 \mathcal{D}_K (\sqrt{ f^N_t } ; \nu^N_t) + C \left(H (\mu^N_t | \nu^N_t ) + N^{d-1+\kappa} \right)
\] 
when $d \geq 2$. When $d = 1$, we have the same estimate with the last term $N^{d-1+\kappa}$ replaced by $N^{1/2+\kappa}$. 
\end{lemma}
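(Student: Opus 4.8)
The plan is to prove Lemma~\ref{replacement} by the one-block (moving-particle) replacement scheme used in \cite{FT18}, \cite{DMFPV19} and \cite{H20}, the new features being the prefactor $K^2$, the possibly exponentially-small-in-$K$ normalization $\chi_2(v^N)$ of the rescaled variables, and the spatial inhomogeneity of the reference measure $\nu^N_t$. First I would split the bilinear object into two single-factor replacements by the telescoping identity
\[
\omega_{2,x}\,\omega_{2,x+e_i}-\overleftarrow{(\omega_2)}_{x,\ell}\,\overrightarrow{(\omega_2)}_{x+e_i,\ell}
=\bigl(\omega_{2,x}-\overleftarrow{(\omega_2)}_{x,\ell}\bigr)\omega_{2,x+e_i}
+\overleftarrow{(\omega_2)}_{x,\ell}\bigl(\omega_{2,x+e_i}-\overrightarrow{(\omega_2)}_{x+e_i,\ell}\bigr),
\]
so that it suffices to estimate $E_{\mu^N_t}\bigl[K^2\sum_{x}(\omega_{2,x}-\overline{(\omega_2)}_{x,\ell})F_x\bigr]$ for a companion field $F_x$ equal to either $\omega_{2,x+e_i}$ or $\overleftarrow{(\omega_2)}_{x,\ell}$; both are centered under $\nu^N_t$ and have all $\nu^N_t$-moments bounded by a factor of order $e^{CK}$, since $e^{-C_1K}\le v^N\le M_v<1$ by Lemma~\ref{unif est}, a factor that is harmless under (B\ref{K}).

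Next I would run the one-block estimate on a generic term $K^2\sum_x(\omega_{2,x}-\overline{(\omega_2)}_{x,\ell})F_x$. Writing the block average as an average over $y\in\Lambda^+_\ell$ and expanding each $\omega_{2,x}-\omega_{2,x-y}$ into a telescoping sum of nearest-neighbour increments $\omega_{2,z}-\omega_{2,z+e_j}$ along a lattice path of length $O(\ell)$, I would split each increment into its Kawasaki-antisymmetric part, i.e. the piece genuinely exchanged by $\eta_2\mapsto\eta_2^{z,z+e_j}$, and a deterministic shift of size $O(\ell\,\varepsilon^{-1/2}K/N)$ coming from the differences $v^N(t,z)-v^N(t,z+e_j)$ and $\chi_2(v^N(t,z))-\chi_2(v^N(t,z+e_j))$, whose magnitude is governed by the bound $|\nabla^Nv^N|=O(\varepsilon^{-1/2}K)$ of Lemma~\ref{grad v}. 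For the whole increment paired with $F_x$ one uses that this product is $\nu^N_t$-centered (the relevant sites are disjoint for all but $O(\ell)$ choices, which are bounded and contribute only a lower-order term), so the passage from $\mu^N_t$ to $\nu^N_t$ costs at most $CH(\mu^N_t|\nu^N_t)$ via the entropy inequality; the antisymmetric piece is then estimated by the standard integration-by-parts bound against a single bond of the Kawasaki Dirichlet form $\mathcal{D}_K$, with the Radon--Nikodym factor $d(\nu^N_t\circ\text{exch})/d\nu^N_t$ again controlled by $e^{CK}$ using $v^N\in[e^{-C_1K},M_v]$, and the residual quadratic form bounded by Bernoulli moments.

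Summing these single-bond estimates over $z$ along each path, over $y\in\Lambda^+_\ell$, and over $x\in\mathbb{T}^d_N$, and reinstating $K^2$, leaves two error sources beside the entropy term: a Dirichlet cost $\alpha N^2\mathcal{D}_K(\sqrt{f^N_t};\nu^N_t)$ plus a residue of order $\alpha^{-1}K^2\ell^2N^{d-2}$, and a renormalization error of order $e^{CK}\varepsilon^{-1/2}K^3\ell\,N^{d-1}$ from the deterministic shift. Choosing $\ell=N^{1/d-\kappa/d}$ for $d\ge2$ and $\ell=N^{1/2-\kappa}$ for $d=1$ balances these against the target $N^{d-1+\kappa}$ (respectively $N^{1/2+\kappa}$), the surviving factors $K$, $e^{CK}$, $\varepsilon^{-1/2}$ being absorbed into the $N$-power by (B\ref{K}) and (B\ref{Epsilon}) for $\delta_1,\delta_2,\alpha_\varepsilon$ small and $N$ large. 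I expect the main obstacle to be precisely this bookkeeping of the $K$- and $\varepsilon$-dependent constants: it is tightest in $d=1$, where the sum over $\mathbb{T}^1_N$ of local fluctuations gains only half a power of $N$, forcing the different scale $\ell=N^{1/2-\kappa}$ and the weaker error $N^{1/2+\kappa}$, and where the interplay of $\varepsilon^{-1/2}$ with $\ell$ must be tracked carefully so that the renormalization error stays below that threshold.
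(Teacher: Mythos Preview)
Your scheme differs substantially from the paper's: instead of telescoping along naive lattice paths, the paper invokes the flow lemma of \cite{JM18} (Proposition~\ref{flow lem}) to write
\[
V-V^\ell=K^2\sum_{j}\sum_{x\in\mathbb{T}^d_N}h^{\ell,j}_x\bigl(\omega_{2,x}-\omega_{2,x+e_j}\bigr),
\qquad
h^{\ell,j}_x=\sum_{y\in\Lambda^+_{2\ell}}\omega_{2,x-y}\,\Phi^\ell(y,y+e_j),
\]
where $\Phi^\ell$ is an optimal flow connecting $\delta_0$ to $q_\ell=p_\ell*p_\ell$. The decisive gain is $\mathrm{Var}_{\nu^N_t}[h^{\ell,j}_x]\le C\,g_d(\ell)\,e^{2C_1K}$ with $g_d(\ell)$ equal to $\ell$, $\log\ell$, or $1$ according as $d=1$, $d=2$, or $d\ge3$. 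After the bondwise integration by parts (Lemmas~\ref{IBP2} and \ref{summand est}), the paper controls $\sum_x\int(h^{\ell,j}_x)^2\,d\mu^N_t$ by the entropy inequality together with a sub-Gaussian concentration bound, decomposing $\mathbb{T}^d_N$ into $(2\ell)^d$ independent sublattices so that $\log E_{\nu^N_t}\bigl[e^{\gamma(h^{\ell,j}_x)^2}\bigr]\le2$ for $\gamma\sim g_d(\ell)^{-1}$.

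Your canonical-path telescoping instead produces an aggregated companion of variance $O(\ell)$ in \emph{every} dimension (the canonical-path flow concentrates on the first-coordinate axis, where $\sum\Psi^2\sim\ell$), and this is where the bookkeeping actually breaks for $d\ge2$. Concretely, your stated renormalization error $e^{CK}\varepsilon^{-1/2}K^3\ell N^{d-1}$ does \emph{not} fit into the target $N^{d-1+\kappa}$: with $\ell=N^{1/d-\kappa/d}$ one gets $\ell N^{d-1}=N^{\,d-1+1/d-\kappa/d}$, and $1/d-\kappa/d>\kappa$ whenever $\kappa<1/(d+1)$, so the conclusion fails for ``$\kappa$ sufficiently small''. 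The same overshoot persists if you aggregate first and use $|h|\le1+h^2$, since the $h^2$ contribution then carries the factor $g_d(\ell)$ in the paper but $\ell$ in your scheme. The flow lemma is precisely what reduces this term from $\ell N^{d-1}$ to $g_d(\ell)N^{d-1}$, which is below $N^{d-1+\kappa}$ for all $d\ge2$; only in $d=1$ do the two approaches coincide (since $g_1(\ell)=\ell$), which is why a different scale and weaker error appear there. Relatedly, your sentence ``passage from $\mu^N_t$ to $\nu^N_t$ costs at most $CH$'' is not justified as written: the entropy inequality leaves a $\gamma^{-1}\log E_{\nu^N_t}[e^{\gamma G}]$ term, and for a sum over $\mathbb{T}^d_N$ this is not a constant without the sub-Gaussian concentration plus block-decomposition step that the paper carries out.
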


To give a proof of Lemma \ref{replacement}, we introduce the notion of flows on graph. 

\begin{definition}
Let $G = (V, E)$ be a finite graph where $V$ is a set of all vertices and $E$ is the set of all edges. For two probability measures $p,q$ on $V$, we call $\Phi= \{ \Phi(x,y) \}_{ \{x,y\} \in E }$ a flow on $G$ connecting $p$ and $q$ if it satisfies:
\begin{itemize}
\item $\Phi(y,x)=- \Phi(x,y)$ for all $\{ x,y \} \in E$,
\item $\sum_{z \in V } \Phi(x,z)= p(x) -q(x)$ holds for all $x \in V $.
\end{itemize}
\end{definition}

In the sequel, we regard any finite subset in $\mathbb{Z}^d$ as a graph where the set of all bonds means the set of all pair of two points in that set such that the Euclidean distance between them is $1$. The next result is a key ingredient. 

\begin{proposition}[Flow lemma, \cite{JM18}]
\label{flow lem}
Let $\delta_0$ be the Dirac measure on $\mathbb{Z}^d$ with mass $1$ on $0 \in \mathbb{Z}^d$ and let $ p_\ell $ be the uniform probability measure on $ \mathbb{Z}^d  $ with mass on $\Lambda_\ell $ defined by $p_\ell (x)= |\Lambda^+_\ell|^{-1} \mathbf{1}_{\Lambda^+_\ell}(x) $. Moreover, let $q_\ell $ be the probability measure on $\mathbb{Z}^d$ defined by $q_\ell (x) = p_\ell * p_\ell (x)  \coloneqq \sum_{ y \in \mathbb{Z}^d } p_\ell (y)  p_\ell (x-y) $. Then there exists a flow $\Phi^\ell$ on $\Lambda^+_{2\ell}$ connecting $\delta_0$ and $q_\ell$ such that $\Phi^\ell (x,y) = 0$ for any $x \in (\Lambda^+_{2\ell})^c$ and $y \in \mathbb{Z}^d$, and that
\[
\sum_{x \in \Lambda^+_{2\ell}} \sum_{ 1 \le j \le d} \Phi^\ell (x,x+e_j)^2  \le C_d g_d (\ell) , \quad 
g_d (\ell ) = 
\begin{cases}
\begin{aligned}
& \ell && \text{ if } d = 1 , \\
& \log \ell && \text{ if } d = 2 , \\
& 1&& \text{ otherwise. }
\end{aligned}
\end{cases}
\]
\end{proposition}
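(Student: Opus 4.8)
The plan is to produce $\Phi^\ell$ in two steps: first build an auxiliary flow connecting $\delta_0$ with the \emph{uniform} measure $p_\ell$ and supported in $\Lambda^+_\ell$, then convolve it with $p_\ell$ to obtain a flow connecting $\delta_0$ with $q_\ell = p_\ell * p_\ell$ at the cost of only a bounded factor in the energy.

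\textbf{Step 1 (a flow from $\delta_0$ to $p_\ell$).} Since $p_\ell$ is exactly the normalized indicator of $\Lambda^+_\ell$, the function $f \coloneqq \delta_0 - p_\ell$ has zero total mass on $\Lambda^+_\ell$, so the discrete Poisson problem $\Delta_{\Lambda^+_\ell} h = f$ with Neumann boundary conditions on $\Lambda^+_\ell$ has a solution $h$, unique up to an additive constant; here $\Delta_{\Lambda^+_\ell} h(x) = \sum_{y\sim x,\, y\in\Lambda^+_\ell}(h(y)-h(x))$. Put $\phi(x,x+e_j)\coloneqq h(x+e_j)-h(x)$ on edges contained in $\Lambda^+_\ell$ and $\phi\equiv 0$ on all other edges. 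Then $\phi$ is antisymmetric, supported in $\Lambda^+_\ell$, vanishes at sites outside $\Lambda^+_\ell$, and $\operatorname{div}\phi(x)=\Delta_{\Lambda^+_\ell}h(x)=\delta_0(x)-p_\ell(x)$, so it is a flow connecting $\delta_0$ and $p_\ell$. By the discrete Green identity on $\Lambda^+_\ell$, with no boundary contribution thanks to the Neumann condition,
\[
\sum_{x\in\Lambda^+_\ell}\sum_{1\le j\le d}\phi(x,x+e_j)^2 \;=\; -\sum_{x\in\Lambda^+_\ell}h(x)\,\Delta_{\Lambda^+_\ell}h(x) \;=\; \langle p_\ell,h\rangle - h(0) \;\le\; \max_{y\in\Lambda^+_\ell}\big|h(y)-h(0)\big| .
\]
Thus it suffices to show $\max_{y\in\Lambda^+_\ell}|h(y)-h(0)|\le C_d\,g_d(\ell)$. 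Representing the solution via the reflected random walk on $\Lambda^+_\ell$, $h(y)-h(0)=-\sum_{k\ge 0}\big(p^{\mathrm{Neu}}_k(y,0)-p^{\mathrm{Neu}}_k(0,0)\big)$; the contribution of times $k\lesssim\ell^2$ is controlled by the local central limit theorem (equivalently, by the asymptotics of the potential kernel of $\mathbb Z^d$, which grows like $|x|^{2-d}$, $\log|x|$, $|x|$ for $d\ge 3$, $d=2$, $d=1$), while the contribution of times $k\gtrsim\ell^2$ is $O(\ell^{2-d})\le O(g_d(\ell))$ by the mixing of the reflected walk. This yields the required bound.

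\textbf{Step 2 (convolution).} Define
\[
\Phi^\ell(x,x+e_j) \;\coloneqq\; \phi(x,x+e_j) + \sum_{z\in\Lambda^+_\ell} p_\ell(z)\,\phi(x-z,\,x+e_j-z),
\]
and $\Phi^\ell\equiv 0$ on edges not contained in $\Lambda^+_{2\ell}$. Since $z+\Lambda^+_\ell\subseteq\Lambda^+_{2\ell}$ for every $z\in\Lambda^+_\ell$, the flow $\Phi^\ell$ is supported in $\Lambda^+_{2\ell}$ and vanishes at sites of $(\Lambda^+_{2\ell})^c$, which gives the required vanishing property. Using $\operatorname{div}[\phi(\cdot-z,\cdot-z)](x)=\operatorname{div}\phi(x-z)$ we get
\[
\operatorname{div}\Phi^\ell \;=\; (\delta_0-p_\ell) + p_\ell*(\delta_0-p_\ell) \;=\; \delta_0 - p_\ell*p_\ell \;=\; \delta_0 - q_\ell ,
\]
so $\Phi^\ell$ connects $\delta_0$ and $q_\ell$. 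Finally, by the triangle inequality in $\ell^2$ together with the invariance of the edge $\ell^2$-norm under lattice translations (translation of a flow by $z$ is a bijection of edges),
\[
\Big(\sum_{x,j}\Phi^\ell(x,x+e_j)^2\Big)^{1/2} \le \Big(\sum_{x,j}\phi(x,x+e_j)^2\Big)^{1/2}\Big(1 + \sum_{z\in\Lambda^+_\ell}p_\ell(z)\Big) = 2\Big(\sum_{x,j}\phi(x,x+e_j)^2\Big)^{1/2},
\]
whence $\sum_{x\in\Lambda^+_{2\ell}}\sum_{1\le j\le d}\Phi^\ell(x,x+e_j)^2\le 4\,C_d\,g_d(\ell)$, which is the claim after relabelling the constant.

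\textbf{Main obstacle.} Everything except the estimate $\max_{y\in\Lambda^+_\ell}|h(y)-h(0)|\le C_d g_d(\ell)$ in Step 1 is elementary bookkeeping (divergence identities, support tracking, the $\ell^2$-triangle inequality). That estimate is the heart of the matter: it amounts to the sharp dimension-dependent growth of the Neumann Green function of a box with a corner source, which is precisely where the local central limit theorem enters and where the regimes $d=1$, $d=2$, $d\ge 3$ genuinely split. An alternative to the potential-theoretic route is to build $\phi$ directly by routing the unit mass outward while halving the box at each dyadic scale and distributing it over parallel paths; the energy bound then follows by summing, over scales $r=1,\dots,\ell$, the number $\asymp r^{d-1}$ of edges at distance $r$ times the squared flux $\asymp r^{2(1-d)}$ they must carry, which is again $\asymp g_d(\ell)$.
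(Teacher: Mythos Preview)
The paper does not prove this proposition; it is quoted from \cite{JM18} as an input. Your argument is correct.

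One small imprecision in Step~1: the representation $h(y)-h(0)=-\sum_{k\ge 0}\big(p^{\mathrm{Neu}}_k(y,0)-p^{\mathrm{Neu}}_k(0,0)\big)$ tacitly assumes that the reflected walk has \emph{uniform} stationary measure. This holds for the continuous-time walk generated by your $\Delta_{\Lambda^+_\ell}$ (or for a suitably lazy discrete-time version), but not for the naive discrete-time simple random walk on the graph $\Lambda^+_\ell$, whose stationary measure is proportional to the vertex degree. This does not affect the conclusion, since the Green-function estimate you need is insensitive to this choice. Note also that $\phi=\nabla h$ is, by Thomson's principle, the \emph{minimum}-energy flow from $\delta_0$ to $p_\ell$, so your bound is automatically best possible among flows with that divergence.

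The proof in \cite{JM18} follows instead the combinatorial route you sketch at the end: construct the flow from $\delta_0$ to $p_\ell$ explicitly by pushing the unit mass outward through successive shells of $\Lambda^+_\ell$, at scale $r$ distributing the remaining mass across the $\asymp r^{d-1}$ edges of the $r$-th shell so that each carries flux $\asymp r^{1-d}$, and then summing $\sum_{r=1}^{\ell}r^{1-d}\asymp g_d(\ell)$; your convolution Step~2 is essentially the same as theirs. The explicit construction is more elementary (no local CLT or mixing-time input) and yields pointwise control $|\phi(x,x+e_j)|\lesssim (1+|x|)^{1-d}$, which is sometimes needed beyond the $\ell^2$ energy. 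Your potential-theoretic approach is cleaner conceptually and gives optimality for free, at the cost of importing heavier analytic machinery for the one nontrivial estimate.
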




Taking a flow connecting $\delta_0$ and $q_\ell$, a careful calculation (see \cite{DMFPV19} or \cite{H20}) enables us to write  
\begin{equation}
\label{V-Vl}
V - V^\ell = K^2 \sum_{1 \le j \le d } \sum_{x \in \mathbb{T}^d_N}  h^{\ell, j }_x (\omega_{1,x} - \omega_{1, x+e_j} )
\end{equation}
where $h^{\ell ,j }_x $ is defined by 
$
h^{\ell, j}_{x} = \sum_{y \in \Lambda^+_{ 2 \ell } } \omega_{2, x- y } (\eta_1, \eta_2) \Phi^\ell(y,y+e_j) .
$
Note here that $h^{\ell ,j }_x $ is invariant under the transformation $\eta_1 \mapsto \eta^{ x, x + e_j } $ for any $y \in \Lambda^+_{ 2 \ell } $ and $j = 1,\ldots , d$. Moreover, since $\omega_{2, x} $ and $\omega_{2, x} \omega_{2, y} $ with $x \neq y$ has average zero under $\nu^N_t$, there exists a positive constant $C$ which is independent of $N$ such that 
\[
 E_{\nu^N_t } [ h^{\ell, j}_x ] = 0, \quad 
 \text{Var}_{\nu^N_t } [ h^{\ell, j }_x ] \le C g_d (\ell) e^{ 2 C_1 K }
\]     
by the flow lemma (Proposition \ref{flow lem}) and the lower bound of $u^N$ according to Lemma \ref{unif est}.

\begin{lemma}
\label{IBP2}
Assume (A\ref{IF}) and let $f$ be any density with respect to $\nu^N_t $. Then we have  
\begin{equation}
\label{summand V-Vl}
\int_{\mathcal{X}_{ N} } h^{\ell, j }_x  (\omega_{ 2 ,x + e_j } - \omega_{2,x } ) f d\nu^N_t = 
\int_{\mathcal{X}_{ N} } h^{\ell ,j }_x  \frac{ \eta_{ 2 , z } }{ \chi_2 (v^N ( t,  x ) ) } \big[ f (\eta_2^{x , x + e_j} ) - f ( \eta_2 ) \big] d\nu^N_t + R_{x ,j }
\end{equation}
for every $x \in \mathbb{T}^d_N$ and $ j= 1, \ldots , d$. Moreover, the error term $R_{x, j } $ is bounded as 
\begin{equation}
\label{R bound}
|R_{ x, j }| \le  C e^{3C_1 K} |v^N ( t, x )- v^N (t,  x + e_j)| \int_{\mathcal{X}_{ N} } |h^{\ell,j}_x (\eta)| f d\nu^N_t . 
\end{equation}
\end{lemma}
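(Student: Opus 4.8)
The plan is to carry out, for each fixed bond $\{x,x+e_j\}$, a discrete integration by parts adapted to the Kawasaki exchange, isolating the spatial inhomogeneity of $v^N$ as the sole source of the error term $R_{x,j}$. Abbreviate $H = h^{\ell,j}_x$, $p = v^N(t,x)$, $q = v^N(t,x+e_j)$, and recall $\omega_{2,y} = (\eta_2(y) - v^N(t,y))/\chi_2(v^N(t,y))$ with $\chi_2(\rho) = \rho(1-\rho)$. I would begin from the elementary identity
\[
\omega_{2,x+e_j} - \omega_{2,x} = \frac{\eta_2(x+e_j) - \eta_2(x)}{\chi_2(p)} - \frac{q-p}{\chi_2(p)} + \Big(\frac{1}{\chi_2(q)} - \frac{1}{\chi_2(p)}\Big)\big(\eta_2(x+e_j) - q\big).
\]
By Lemma~\ref{unif est}, $e^{-C_1K} \le v^N(t,\cdot) \le M_v < 1$, so $\chi_2(p)^{-1},\chi_2(q)^{-1} \le Ce^{C_1K}$ and, by Lipschitz continuity of $\chi_2$, $|\chi_2(q)^{-1} - \chi_2(p)^{-1}| \le Ce^{2C_1K}|q-p|$. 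Since $|\eta_2| \le 1$, integrating $H$ against $f\,d\nu^N_t$ times the last two terms already produces a quantity of the shape claimed for $R_{x,j}$ in \eqref{R bound}.

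It then remains to treat $\chi_2(p)^{-1}\int H\,(\eta_2(x+e_j) - \eta_2(x))\,f\,d\nu^N_t$. Here I would write $\eta_2(x+e_j) - \eta_2(x) = \eta_2(x+e_j)(1-\eta_2(x)) - \eta_2(x)(1-\eta_2(x+e_j))$ and, in the first summand (supported on $\{\eta_2(x)=0,\,\eta_2(x+e_j)=1\}$), perform the substitution $\eta_2 = \xi^{x,x+e_j}$, a bijection onto $\{\xi(x)=1,\,\xi(x+e_j)=0\}$. Since $\nu^N_t$ is a product of Bernoulli laws, its Radon--Nikodym derivative under this exchange equals $(1-p)q/(p(1-q))$ on the relevant configurations; using this together with the invariance of $H$ under the exchange recorded just before the lemma, and the splitting $(1-p)q/(p(1-q)) = 1 + (q-p)/(p(1-q))$, one arrives at
\[
\frac{1}{\chi_2(p)}\int H\,\big(\eta_2(x+e_j) - \eta_2(x)\big)\,f\,d\nu^N_t = \int H\,\frac{\eta_2(x)}{\chi_2(p)}\,\big[f(\eta_2^{x,x+e_j}) - f(\eta_2)\big]\,d\nu^N_t + R'_{x,j},
\]
where in the main term the factor $\eta_2(x)(1-\eta_2(x+e_j))$ has been simplified to $\eta_2(x)$ because $f(\eta_2^{x,x+e_j}) - f(\eta_2)$ vanishes on $\{\eta_2(x)=\eta_2(x+e_j)=1\}$, and where $R'_{x,j}$ gathers the remaining contribution, which carries the extra factor $(q-p)/(p(1-q))$ and is hence again proportional to $q-p$.

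To conclude I would bound $R'_{x,j}$ by changing variables back in its defining integral, once more using the invariance of $H$ and the bounds of Lemma~\ref{unif est} (which control all Bernoulli weights at $x$ and $x+e_j$ by powers of $e^{C_1K}$); this gives $|R'_{x,j}| \le Ce^{3C_1K}|q-p|\int|H|\,f\,d\nu^N_t$. Adding this to the two error terms from the first step and letting $R_{x,j}$ be their sum yields the identity \eqref{summand V-Vl} together with \eqref{R bound}, since each piece is of the form $Ce^{3C_1K}|v^N(t,x) - v^N(t,x+e_j)|\int|h^{\ell,j}_x|\,f\,d\nu^N_t$.

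The argument uses no spectral-gap or large-deviation input — it is purely a bond-by-bond change of variables — so nothing here should be genuinely difficult. The only point needing care is the bookkeeping of the Bernoulli Radon--Nikodym factors, so that every remainder ends up genuinely of order $|v^N(t,x) - v^N(t,x+e_j)|$ rather than $O(1)$, together with the consistent use of the convention for $\eta_2^{x,x+e_j}$: in particular the fact that $\eta_2(x)\,[f(\eta_2^{x,x+e_j}) - f(\eta_2)]$ is automatically supported on $\{\eta_2(x)=1,\,\eta_2(x+e_j)=0\}$, which both licenses the simplification of the main term and fixes the power $e^{3C_1K}$ in \eqref{R bound}, that exponent originating solely from the lower bound $v^N \ge e^{-C_1K}$ of Lemma~\ref{unif est}.
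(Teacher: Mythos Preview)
Your argument is correct and is the standard approach: the paper does not supply a proof of Lemma~\ref{IBP2}, treating it as a routine integration-by-parts for the Kawasaki exchange in the style of \cite{FT18}, \cite{DMFPV19}, \cite{H20}. Your bond-wise change of variables, Bernoulli Radon--Nikodym bookkeeping, and extraction of the $e^{3C_1K}$ factor from the lower bound in Lemma~\ref{unif est} reconstruct exactly what is intended.
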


We next bound the summand in (\ref{V-Vl}) with the help of the above results. 
$\mathcal{D}_K (f ; \nu) =  \sum_{ x \in \mathbb{T}^d_N } \sum_{ j = 1 , \ldots , d } \mathcal{D}_{K ; x, x+e_j }( f ; \nu)$.

\begin{lemma}
\label{summand est}
Assume (A\ref{IF}) and let $f$ be any density with respect to $\nu^N_t$. Then there exists a positive constant $C$ such that for every $\beta>0$, $x \in \mathbb{T}^d_N$ and $j = 1, \ldots  , d $, 
\[
\int_{\mathcal{X}_N} h^{\ell,j}_x (\omega_{2, x } - \omega_{2, x + e_j}) f  d\nu^N_t 
\le \beta \mathcal{D}_{K ; x , x + e_j } (\sqrt{ f }; \nu^N_t ) 
+ \frac{C}{\beta}  e^{ 3 C_1 K} \int_{\mathcal{X}_N} (h^{\ell,j}_x)^2  f d\nu^N_t  + R_{ x, j }
\]
and each error term $R_{ x, j }$ satisfies the bound \eqref{R bound}. Here, 
\[
\mathcal{D}_{K; x,x+e_j}( f ; \nu)  \coloneqq \frac{ 1 }{ 2 } \int_{\mathcal{X}_N} { \big[ f (\eta_1 , \eta_2^{x,x+e_j} ) - f(\eta_1, \eta_2)  \big] }^2 d\nu(\eta)
\]
\end{lemma}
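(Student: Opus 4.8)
The plan is to feed the integration-by-parts identity \eqref{summand V-Vl} of Lemma \ref{IBP2} into an elementary Young-type bound that produces the local Kawasaki Dirichlet form. Since the left-hand side of the present statement is the negative of the one in Lemma \ref{IBP2} (the roles of $\omega_{2,x}$ and $\omega_{2,x+e_j}$ are interchanged), Lemma \ref{IBP2} gives
\[
\int_{\mathcal{X}_N} h^{\ell,j}_x(\omega_{2,x} - \omega_{2,x+e_j})\, f\, d\nu^N_t = -\int_{\mathcal{X}_N} h^{\ell,j}_x\, \frac{\eta_{2,z}}{\chi_2(v^N(t,x))}\,\big[ f(\eta_2^{x,x+e_j}) - f(\eta_2) \big]\, d\nu^N_t - R_{x,j},
\]
where $-R_{x,j}$ again satisfies \eqref{R bound}. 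Hence it suffices to dominate the displayed current-times-density-increment integral by $\beta\,\mathcal{D}_{K;x,x+e_j}(\sqrt f;\nu^N_t) + C\beta^{-1}e^{3C_1K}\int_{\mathcal{X}_N}(h^{\ell,j}_x)^2 f\, d\nu^N_t$, up to enlarging $R_{x,j}$ by a term of the same type.

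First I would write $f(\eta_2^{x,x+e_j}) - f(\eta_2) = \big(\sqrt{f(\eta_2^{x,x+e_j})} - \sqrt{f(\eta_2)}\big)\big(\sqrt{f(\eta_2^{x,x+e_j})} + \sqrt{f(\eta_2)}\big)$ and apply $|ab| \le \tfrac{\beta}{2}a^2 + \tfrac{1}{2\beta}b^2$ pointwise, with $a$ the difference of the square roots and $b = h^{\ell,j}_x\,\eta_{2,z}\,\chi_2(v^N(t,x))^{-1}\big(\sqrt{f(\eta_2^{x,x+e_j})} + \sqrt{f(\eta_2)}\big)$. Integrating against $\nu^N_t$, the $a^2$-term is exactly $\beta\,\mathcal{D}_{K;x,x+e_j}(\sqrt f;\nu^N_t)$ by the definition of the local Dirichlet form in the statement. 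For the $b^2$-term I would estimate $\big(\sqrt{f(\eta_2^{x,x+e_j})} + \sqrt{f(\eta_2)}\big)^2 \le 2\big(f(\eta_2^{x,x+e_j}) + f(\eta_2)\big)$, use $0 \le \eta_{2,z} \le 1$, and invoke the a priori bounds $e^{-C_1K} \le v^N(t,x) \le M_v < 1$ — which hold on $[0,T]$ by (A\ref{IF}) propagated via Lemma \ref{unif est} — to get $\chi_2(v^N(t,x))^{-2} = \big(v^N(t,x)(1-v^N(t,x))\big)^{-2} \le C e^{2C_1K}$.

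This leaves $C\beta^{-1}e^{2C_1K}\int_{\mathcal{X}_N}(h^{\ell,j}_x)^2\big(f(\eta_2^{x,x+e_j}) + f(\eta_2)\big)d\nu^N_t$. The part with $f(\eta_2)$ is already in the asserted form (absorbing $e^{2C_1K}$ into $e^{3C_1K}$). For the part with $f(\eta_2^{x,x+e_j})$ I would change variables by exchanging the occupation numbers at $x$ and $x+e_j$; as the $\eta_2$-marginal of $\nu^N_t$ is an inhomogeneous Bernoulli product, the Radon--Nikodym derivative of the push-forward is the ratio of Bernoulli weights at these two sites, which is at most $Ce^{C_1K}$ by the same a priori bounds and supplies the remaining power $e^{C_1K}$, yielding the stated $e^{3C_1K}$. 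Since $h^{\ell,j}_x$ depends on $\eta_2$ only through $\{\omega_{2,\cdot}\}$ and this single-bond swap affects only $\omega_{2,x}$, $h^{\ell,j}_x$ is essentially unchanged, so one recovers $\int(h^{\ell,j}_x)^2 f\, d\nu^N_t$ up to residual contributions that carry a factor $|v^N(t,x) - v^N(t,x+e_j)|$ and are absorbed into $R_{x,j}$ consistently with \eqref{R bound}. I expect the main (and essentially only) delicate point to be this last bookkeeping: correctly accounting for the three powers of $e^{C_1K}$ (two from $\chi_2^{-2}$, one from the Bernoulli-weight ratio) and verifying that each residual produced by the change of variables and by the imperfect swap-invariance of $h^{\ell,j}_x$ genuinely matches the gradient-weighted bound \eqref{R bound}; the heavier analytic step — the integration by parts itself — is already delivered by Lemma \ref{IBP2}.
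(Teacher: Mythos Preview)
Your approach is correct and is precisely the standard argument; the paper itself does not supply a proof of this lemma, referring implicitly to \cite{FT18,DMFPV19,H20} where the same factor--and--Young scheme is used.

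One minor simplification: the residual $R_{x,j}$ in the statement is meant to be exactly the one produced by Lemma~\ref{IBP2}, not an enlarged version. The paper records (just after \eqref{V-Vl}, modulo the evident $\eta_1/\eta_2$ typo in that passage) that $h^{\ell,j}_x$ is invariant under the bond transformation at $(x,x+e_j)$, because its dependence on $\eta_2$ is only through sites in $x-\Lambda^+_{2\ell}$. With this invariance the change of variables in the $f(\eta_2^{x,x+e_j})$ contribution leaves $(h^{\ell,j}_x)^2$ untouched, and the Radon--Nikodym factor of the inhomogeneous Bernoulli product under the swap is simply bounded by $Ce^{C_1K}$, furnishing the third power of $e^{C_1K}$ exactly as you anticipated; no gradient-weighted residual of type \eqref{R bound} is generated at this step. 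So the bookkeeping you flagged as delicate is in fact clean once the invariance of $h^{\ell,j}_x$ is invoked.
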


\begin{proof}[Proof of Lemma \ref{replacement}]
Recalling the representation of $V-V^\ell$ in (\ref{V-Vl}), we estimate  
\[
E_{ \mu^N_t } [ V- V^\ell ] 
= K^2 \sum_{ 1 \le j \le d } \sum_{x \in \mathbb{T}^d_N}  \int_{\mathcal{X}_N}  h^{\ell,j}_x (\omega_{2,x}- \omega_{2,x+e_j}) f^N_t d\nu^N_t .
\]
We apply Lemma \ref{summand est} for $f = f^N_t = d\mu^N_t / d \nu^N_t $, taking $\beta = \alpha \varepsilon N^2 K^{-2}$ with $\alpha >0$. Then the above quantity is bounded above by
\[
\begin{aligned}
\alpha \varepsilon N^2 \mathcal{D}_K (\sqrt{f^N_t }; \nu^N_t ) 
+ \frac{C K^4 }{\alpha \varepsilon N^2} e^{3 C_1 K} \sum_{ 1 \le j \le d} \sum_{x \in \mathbb{T}^d_N}  \int_{\mathcal{X}_N}  (h^{\ell,j}_x)^2 d\mu^N_t  
+ K^2 \sum_{1 \le j \le d } \sum_{x \in \mathbb{T}^d_N} R_{ x, j }.
\end{aligned}
\]
Recall that the residual term $R_{x, j } $ has the bound (\ref{R bound}) for every $x \in \mathbb{T}^d_N$ and $j=1,\ldots , d$. Since $| v^N (t, x)- v^N (t, x+e_j ) | \le C \varepsilon^{-1/2 } K N^{-1}  $ by the gradient estimate given in Lemma \ref{grad u}, estimating $| h^{\ell,j}_x | \leq  1 + (h^{\ell,j}_x)^2 $, we have 
\[
K^2 |R_{1,x,j} | \le C \varepsilon^{-1/ 2 } K^3 N^{-1} e^{3 C_1 K} \int_{\mathcal{X}_N} \big( 1+ (h^{\ell,j}_x)^2 \big) d\mu^N_t 
\]
for every $x \in \mathbb{T}^d_N $ and $j = 1, \ldots , d $. Therefore, the expectation with respect to $\mu^N$ of $V - V^\ell$ is bounded above by
\[
\alpha \varepsilon  N^2 \mathcal{D}_K (\sqrt{ f^N_t }; \nu^N_t ) 
+ C \varepsilon^{- 1 } K^4 N^{-1 } e^{3 C_1 K} \sum_{1 \le j \le d } \sum_{x \in \mathbb{T}^d_N} \int_{\mathcal{X}_N} (h^{\ell,j}_x)^2 d\mu^N_t 
+ C \varepsilon^{-1 /2 } K^3 e^{3 C_1 K } N^{d-1}
\]
with some positive constant $C = C (\alpha ) $. For the second term, noting that the random variables $\{ h^{\ell,j}_x \}$ are $(2 \ell-1)$-dependent, we decompose the summation $\sum_{x \in \mathbb{T}^d_N}$ into $\sum_{y \in \Lambda^+_{2\ell} } \sum_{z \in (4 \ell) \mathbb{T}^d_N \cap \mathbb{T}^d_N}$ and then apply the entropy inequality. Then we have   
\[
\begin{aligned}
\sum_{x \in \mathbb{T}^d_N} \int_{\mathcal{X}_N} (h^{\ell,j}_x)^2 d\mu^N_t  
& \le  \gamma^{-1} \sum_{y \in \Lambda^+_{2 \ell}} 
\bigg( H(\mu^N_t | \nu^N_t ) + \log{  \int_{\mathcal{X}_N} \prod_{z \in (4 \ell) \mathbb{T}^d_N \cap \mathbb{T}^d_N} e^{\gamma (h^{\ell,j}_{z+y})^2} d\nu^N_t  }   \bigg)   \\
& = \gamma^{-1 } (2\ell)^d  \bigg( H ( \mu^N_t | \nu^N_t )  + \sum_{z \in (4\ell) \mathbb{T}^d_N \cap \mathbb{T}^d_N  } \log{ \int_{\mathcal{X}_N} e^{\gamma (h^{\ell,j}_{z+y})^2 } }  d \nu^N_t \bigg)
\end{aligned}
\]
for every $\gamma >0$. Moreover, recall here that by the flow lemma stated in Proposition \ref{flow lem} we can estimate the variance of $h^{\ell,j}_x$ as 
\[
\sigma^2 \coloneqq \sup_{x \in \mathbb{T}^d_N,  j = 1,\ldots ,d } {\rm Var}_{ \nu^N_t } [ h^{\ell,j}_x ]  \leq C_d g_d (\ell) e^{ 2 C_1 K }
\]
with $g_d(\ell)$ in Proposition \ref{flow lem}. Therefore, applying the concentration inequality, 
\[
\log \int_{\mathcal{X}_N} e^{\gamma (h^{\ell,j}_x)^2} d\nu^N_t \leq  2
\]
for every $0 < \gamma \leq  C_0 \sigma^{-2}$. Hence, choosing $\gamma^{-1} = C_0^{-1} C_d g_d(\ell) e^{ 2 C_1 K } $, we can see that $E_{\mu^N_t } [ V -V^\ell  ] $ is bounded above by
\[
\alpha \varepsilon N^2 \mathcal{D}_K ( \sqrt{ f^N_t  }; \nu^N_t ) 
+ C \varepsilon^{- 1 } \ell^d g_d(\ell) K^4 e^{ 5 C_1 K}  N^{-1 } \big( H ( \mu^N_t  | \nu^N_t ) + N^d \ell^{-d }  \big)  
+ C \varepsilon^{- 1/ 2 } K^3 e^{ 3 C_1 K } N^{d-1}
.\] 
Now recall that the assumptions (B\ref{K}) and (B\ref{Epsilon}) assured the growth rate of $K$ and $\varepsilon^{-1 } $ was suppressed by a monomial of $N$ with a sufficiently small exponent. In particular, divergent elements in the above display along the parameters $K $ and $\varepsilon^{-1} $ can be absorbed into $N$ in the denominators so that we may treat $K$ and $\varepsilon^{-1 }$ as if they were constants independent of $N$. Hence we complete the proof by choosing $\ell = N^{ 1/ d  - \kappa / d }$ when $d \ge 2$ and $\ell = N^{ 1/2-\kappa }$ when $d = 1$. 
\end{proof}

\begin{lemma}
Choose $\ell = N^{ 1/ d - \kappa/ d } $ when $d \ge 2 $ and $\ell = N^{ 1/2 - \kappa } $ when $d = 1 $. For any $\kappa > 0$, there exists a positive constant $C$ such that 
\[
E_{\mu^N_t} [V^\ell ]  \le C K^2 \big( H ( \mu^N_t | \nu^N_t ) + N^{d-1+\kappa} \big) 
\]
when $d \geq 2$. When $d = 1$, the last term on the right hand side of the above is replaced by $N^{1/2 + \kappa}$. 
\end{lemma}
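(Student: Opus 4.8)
The plan is to treat $V^\ell$ as a centered functional of the configuration and run the entropy inequality together with a concentration estimate for weakly dependent sums. First observe that each summand $\xi_x:=\overleftarrow{(\omega_2)}_{x,\ell}\,\overrightarrow{(\omega_2)}_{x+e_i,\ell}$ depends only on $\eta_2$, and only through sites in a box of side $O(\ell)$ around $x$, and that the backward block $x-\Lambda^+_\ell$ and the forward block $x+e_i+\Lambda^+_\ell$ are disjoint. Since $\nu^N_t$ is a product measure under which each $\omega_{2,z}$ is centered, this disjointness gives $E_{\nu^N_t}[\xi_x]=0$, hence $E_{\nu^N_t}[V^\ell]=0$. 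By the entropy inequality,
\[
E_{\mu^N_t}[V^\ell]\ \le\ \frac1\gamma\Big(H(\mu^N_t|\nu^N_t)+\log E_{\nu^N_t}\big[e^{\gamma V^\ell}\big]\Big)\qquad(\gamma>0),
\]
so everything reduces to a subquadratic (in $\gamma$) bound on $\log E_{\nu^N_t}[e^{\gamma V^\ell}]$ on a suitable range of $\gamma$, followed by optimization.

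For the exponential moment I would exploit the $O(\ell)$-dependence of the family $\{\xi_x\}$: partition $\mathbb{T}^d_N$ into $m\asymp\ell^d$ translates of $(c\ell)\mathbb{Z}^d$ on each of which the $\xi_x$ are mutually independent under $\nu^N_t$, apply the generalized Hölder inequality to factorize over the classes, and then use independence within each class to reduce to single-site moments $E_{\nu^N_t}[e^{\lambda\xi_x}]$. The crucial variance bound comes from the lower bound $v^N\ge e^{-C_1K}$ (Lemma \ref{unif est}): then $\chi_2(v^N(t,z))\ge ce^{-C_1K}$, so $\mathrm{Var}_{\nu^N_t}(\omega_{2,z})=1/\chi_2(v^N(t,z))\le Ce^{C_1K}$, hence each local average $\overleftarrow{(\omega_2)}_{x,\ell}$ (a mean of $\ell^d$ independent centered bounded variables) has variance $\le Ce^{C_1K}\ell^{-d}$, and, the two factors of $\xi_x$ being independent and centered, $\mathrm{Var}_{\nu^N_t}(\xi_x)\le Ce^{2C_1K}\ell^{-2d}$. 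A Bernstein-type argument — or, more crudely, the bound $|\xi_x|\le\frac12(\overleftarrow{(\omega_2)}_{x,\ell}^2+\overrightarrow{(\omega_2)}_{x+e_i,\ell}^2)$ combined with the concentration inequality already used in the proof of Lemma \ref{replacement} — then yields $\log E_{\nu^N_t}[e^{\lambda\xi_x}]\le C\lambda^2 e^{2C_1K}\ell^{-2d}$ for $|\lambda|$ up to order $e^{-CK}\ell^{d}$.

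Collecting the $N^d$ single-site contributions gives $\log E_{\nu^N_t}[e^{\gamma V^\ell}]\le C\gamma^2K^4 e^{2C_1K}N^d\ell^{-d}$ for $\gamma$ up to order $e^{-CK}K^{-2}$, whence
\[
E_{\mu^N_t}[V^\ell]\ \le\ \frac1\gamma H(\mu^N_t|\nu^N_t)+C\gamma K^4 e^{2C_1K}\,\frac{N^d}{\ell^{d}}.
\]
With $\ell=N^{1/d-\kappa/d}$ for $d\ge2$ (resp. $\ell=N^{1/2-\kappa}$ for $d=1$) one has $N^d/\ell^d=N^{d-1+\kappa}$ (resp. $N^{1/2+\kappa}$), and choosing $\gamma$ of order $e^{-CK}K^{-2}$, the largest value permitted by the single-site estimate, gives the bound $CK^2e^{CK}\big(H(\mu^N_t|\nu^N_t)+N^{d-1+\kappa}\big)$. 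Since $K=K(N)$ obeys (B\ref{K}), the factor $e^{CK}$ grows slower than any power of $N$ and is absorbed exactly as in the closing paragraph of the proof of Lemma \ref{replacement}, yielding the assertion.

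The step I expect to be the main obstacle is the second one: quantifying the range of $\lambda$ (equivalently, of $\gamma$) on which the Gaussian-type single-site bound holds, since this range is depleted by the factor $e^{C_1K}$ coming from the possibly small density $v^N$, and it is precisely this range that fixes the trade-off between the entropy term and the error term — a crude, variance-insensitive exponential Chebyshev bound would only produce the useless error $N^d$ rather than $N^{d-1+\kappa}$. Everything else is bookkeeping along the lines of \cite{FT18}, \cite{DMFPV19}, \cite{H20}, modulo the by-now-familiar tuning of the mesoscopic scale $\ell$ so that $N^d/\ell^d$ matches the exponent $d-1+\kappa$ while $\ell$ stays small enough for Lemma \ref{replacement} to be effective.
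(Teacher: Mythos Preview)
Your approach is correct and is exactly the standard one (entropy inequality, sublattice decomposition via H\"older to reduce to independent blocks, single-site exponential moment via a concentration/Bernstein-type bound); this is the same methodology the paper employs for the companion Lemma~\ref{replacement} and that underlies the analogous estimates in \cite{FT18}, \cite{DMFPV19}, \cite{H20}. The paper in fact gives no proof of this lemma, presumably because it is a direct repetition of those arguments.

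One small correction to your final absorption step: the $e^{CK}$ factor can indeed be swallowed into the error term (since $e^{CK}N^{d-1+\kappa}\le N^{d-1+\kappa'}$ for any $\kappa'>\kappa$ by (B\ref{K})), but \emph{not} into the entropy term, where there is no negative power of $N$ available. In Lemma~\ref{replacement} the absorption works because the coefficient of $H$ carries an extra factor $\ell^d N^{-1}\asymp N^{-\kappa}$ coming from the flow-lemma machinery; that factor is absent here. So your argument actually yields $CK^2e^{CK}H(\mu^N_t|\nu^N_t)+CK^2N^{d-1+\kappa}$, slightly weaker than the stated $CK^2H+\cdots$. This is harmless downstream---the entropy coefficient in Theorem~\ref{integrand est} is already $CK^3e^{CK^2}$---and the discrepancy is plausibly just an imprecision in the paper's statement of the lemma rather than a defect in your argument.
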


\section{Several estimates for the reaction-diffusion system \eqref{dHDL eq}}
\label{sec:PDE estimates}

\begin{lemma}[Comparison principle]
\label{comparison}
Let $\varphi $ be a differentiable non-decreasing function on $\mathbb{R}$ and let $(t, u ) \mapsto f (t, x, u ) $ be a $C^1$-smooth function on $[0, T] \times \mathbb{R}^{ N^d }$ for every $x \in \mathbb{T}^d_N$. Let $u^N = \{ u^N (t, x ) \}_{  t \ge 0, x \in \mathbb{T}^d_N }$ be a solution of 
\begin{equation}
\label{comparison eq}
\partial_t u^N(t, x) = \Delta^N \varphi (u^N (t, x ) ) + f (t, x, u^N (t) )
\end{equation}
and let $u^N_+$ (resp. $u^N_-$) be super- (resp. sub-) solution of \eqref{comparison eq}, namely, $u^N_+$ (resp. $u^N_-$) satisfies the equation \eqref{comparison eq} with ``$\ge$'' (resp. ``$\le$'') instead of the equality. Assume that $u^N_+ (0, x ) \ge u^N (t, x ) $ (resp. $u^N_- (0, x ) \le u^N (0, x) $) holds for every $x \in \mathbb{T}^d_N$. Then we have that $u^N_+ (t, x ) \ge u^N (t, x )$ (resp. $u^N_-(t, x ) \le u^N (t, x) $) holds for every $t \in [0,T]$ and $x \in \mathbb{T}^d_N$. 
\end{lemma}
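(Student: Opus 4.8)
The plan is to prove the super-solution assertion, $u^N_+ \ge u^N$; the sub-solution assertion is entirely symmetric (or follows by applying the super-solution result to $-u^N$ with the non-decreasing nonlinearity $s\mapsto-\varphi(-s)$ and reaction $(t,x,u)\mapsto -f(t,x,-u)$). Since $u^N$ and $u^N_+$ are continuous on the compact set $[0,T]\times\mathbb{T}^d_N$, they are bounded there, say $|u^N|,|u^N_+|\le M$; let $L_0$ be a common Lipschitz constant for $\varphi$ on $[-M-1,M+1]$ and for the maps $u\mapsto f(t,x,u)$. Throughout I use that $f$ is quasi-monotone, i.e.\ $f(t,x,\cdot)$ is non-decreasing in every coordinate other than the $x$-th one; this is automatic in the application, where $f(t,x,u^N(t))=-K(N)u^N(t,x)v^N(t,x)$ depends on $u^N(t)$ only through the single value $u^N(t,x)$.

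First I would produce a strict super-solution by a spatially uniform shift: for $\epsilon>0$ and $\lambda>0$ to be chosen, set $\hat u_\epsilon(t,x):=u^N_+(t,x)+\epsilon e^{\lambda t}$. Assuming $\epsilon$ small enough that $\hat u_\epsilon$ stays in $[-M-1,M+1]$ (which is all we need, since we let $\epsilon\downarrow0$), and using that $\varphi$ and $f$ are $L_0$-Lipschitz there while the shift is constant in $x$, one has $|\Delta^N\varphi(\hat u_\epsilon(t,\cdot))(x)-\Delta^N\varphi(u^N_+(t,\cdot))(x)|\le 2dN^2L_0\,\epsilon e^{\lambda t}$ and $|f(t,x,\hat u_\epsilon(t))-f(t,x,u^N_+(t))|\le L_0\,\epsilon e^{\lambda t}$, so the super-solution inequality for $u^N_+$ gives
\[
\partial_t\hat u_\epsilon(t,x)\ \ge\ \Delta^N\varphi(\hat u_\epsilon(t,x))+f(t,x,\hat u_\epsilon(t))+\epsilon e^{\lambda t}\bigl(\lambda-(2dN^2+1)L_0\bigr).
\]
Fixing $\lambda>(2dN^2+1)L_0$ makes $\hat u_\epsilon$ a strict super-solution of \eqref{comparison eq} on $[0,T]\times\mathbb{T}^d_N$, while $\hat u_\epsilon(0,\cdot)\ge u^N(0,\cdot)+\epsilon>u^N(0,\cdot)$.

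Next I would run a first--touching--time argument for $z_\epsilon:=u^N-\hat u_\epsilon$. We have $z_\epsilon(0,\cdot)<0$, so if $z_\epsilon$ is not negative throughout $[0,T]\times\mathbb{T}^d_N$, there is a first time $\tau\in(0,T]$ with $\max_{x\in\mathbb{T}^d_N}z_\epsilon(\tau,x)=0$, attained at some $x^*$. Since $t\mapsto z_\epsilon(t,x^*)$ is $C^1$, vanishes at $\tau$ and is strictly negative for $t<\tau$, it follows that $\partial_t z_\epsilon(\tau,x^*)\ge 0$. On the other hand, at $(\tau,x^*)$ one has $u^N(\tau,x^*)=\hat u_\epsilon(\tau,x^*)$ and $u^N(\tau,y)\le\hat u_\epsilon(\tau,y)$ for all $y$; monotonicity of $\varphi$ then makes the two center terms in $\Delta^N\varphi(u^N)(\tau,x^*)-\Delta^N\varphi(\hat u_\epsilon)(\tau,x^*)$ cancel and each neighbour term non-positive, so $\Delta^N\varphi(u^N)(\tau,x^*)\le\Delta^N\varphi(\hat u_\epsilon)(\tau,x^*)$, while quasi-monotonicity of $f$ gives $f(\tau,x^*,u^N(\tau))\le f(\tau,x^*,\hat u_\epsilon(\tau))$. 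Plugging the equation for $u^N$ and the strict super-solution inequality for $\hat u_\epsilon$ into $\partial_t z_\epsilon(\tau,x^*)=\partial_t u^N(\tau,x^*)-\partial_t\hat u_\epsilon(\tau,x^*)$ then yields $\partial_t z_\epsilon(\tau,x^*)<0$, a contradiction. Hence $u^N<u^N_++\epsilon e^{\lambda t}$ on $[0,T]\times\mathbb{T}^d_N$ for every $\epsilon>0$, and letting $\epsilon\downarrow0$ gives $u^N\le u^N_+$.

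I expect the only real difficulty to be the nonlinearity of the diffusion. It is \emph{not} true that $\Delta^N(u^N_+-u^N)$ is non-negative at a site where $u^N_+-u^N$ is smallest — this fails already for convex $\varphi$, e.g.\ a porous-medium nonlinearity — so a bare maximum principle applied to $u^N_+-u^N$ does not close. The point of the shift $\epsilon e^{\lambda t}$ is precisely that it forces $u^N$ and the comparison function to \emph{coincide}, not merely to be ordered, at the first touching site, which is what makes the troublesome center terms of the discrete Laplacian cancel identically and leaves neighbour terms whose sign comes from monotonicity of $\varphi$ alone; the threshold for $\lambda$ is then governed only by the ($N$-dependent but finite) operator norm of $\Delta^N$ and the Lipschitz constant $L_0$. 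The secondary point is that some monotonicity of $f$ in its off-site coordinates is genuinely needed for any comparison principle to hold; here this is automatic because $f$ is local in $u^N$.
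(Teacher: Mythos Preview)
Your proof is correct. Both arguments are first--touching--time arguments with an $\varepsilon$-perturbation, but the perturbations differ. The paper multiplies $u^N_+-u^N$ by $e^{Mt}$ (with $M$ a Lipschitz bound for the reaction only) and adds a spatially constant $2\varepsilon-\varepsilon e^{-t}$, then works with the resulting $w^N$; you instead add $\varepsilon e^{\lambda t}$ directly to $u^N_+$ to manufacture a \emph{strict} super-solution, with $\lambda$ large enough to absorb the crude bound $\|\Delta^N\|\le 2dN^2$ together with Lipschitz constants of $\varphi$ and $f$. The payoff of your choice is that at the first touching point the two functions \emph{coincide} at the touching site, so the center terms of $\Delta^N\varphi$ cancel identically and the neighbour terms have the right sign purely by monotonicity of $\varphi$; the paper must instead argue that $\Delta^N(u^N_+-u^N)\ge 0$ at the minimum implies $\Delta^N(\varphi(u^N_+)-\varphi(u^N))\ge 0$, which is less transparent because the mean-value slopes $\varphi'(\hat u_y)$ vary from site to site. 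The cost of your route is an $N$-dependent $\lambda$, but this is harmless since $N$ is fixed throughout the comparison. You are also right to single out quasi-monotonicity of $f$: the paper's definition of the difference quotient $\tilde f$ tacitly uses that $f(t,x,u)$ depends on $u$ only through $u(x)$, which is exactly the (degenerate) quasi-monotone case, and without some such structure the comparison principle can fail for general vector-valued reactions.
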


The proof is intuitively given as follows. We may only consider super-solutions since sub-solutions can be treated in the same way. Suppose there exists some point $(t_0, x_0 ) $ such that $u^N_+ (t_0, x_0) = u^N (t_0, x_0)$. Then, since $u^N_+$ is a super-solution of \eqref{comparison eq} and $\varphi $ is an increasing function, we have  
\[
\begin{aligned}
& \partial_t (u^N_+ - u^N ) (t_0, x_0)  \\
&\quad  \ge \Delta^N \big( \varphi (u^N_+) - \varphi ( u^N ) \big) (t_0, x_0) + \big( f (t_0, x_0, u^N_+ (t_0) ) - f (t_0, x_0, u^N (t_0) )  \big) \\
& \quad = \sum_{ j = 1 }^{ 2d } \big( \varphi (u^N_+) - \varphi (u^N ) \big) (t_0, x_0 + e_j )  \ge 0
\end{aligned}
\]
where we set $e_{j + d } = - e_j$ for every $j = 1,\ldots , d $. The above estimate implies that $u^N_+ - u^N $ is positively drifted at the point $(t_0, x_0)$ and thus the solution $u^N$ will not be able to exceed the super-solution $u^N_+$ for any time $t \in [0,T]$. 

\begin{proof}[Proof of Lemma \ref{comparison}]
We show the assertion only for super-solutions since sub-solutions can be treated similarly. Let $u^N_+ $ be a super--solution of \eqref{comparison eq} satisfying $u^N_+ (0, x ) \ge u^N (0, x )$ for every $x \in \mathbb{T}^d_N$. Then, by definition of super-solution, $u^N_+$ satisfies 
\[
\partial_t \overline{u}^N (t,x) \ge \Delta^N \overline{u}^N(t,x) + f (t, x, \overline{u}^N(t) )
\] 
for every $t \in [0,T]$ and $x \in \mathbb{T}^d_N $. Then subtracting \eqref{comparison eq} on the above display to get 
\begin{equation}
\begin{aligned}
\label{difference}
\partial_t ( u^N_+ - u^N ) (t,x)  \ge 
 \Delta^N ( u^N_+ - u^N ) ( t,x)   
 + \tilde{f}( t, x, u_+ (t) , u (t) ) ( u^N_+ - u^N ) (t,x)   .
\end{aligned}
\end{equation}
Here, $\tilde{ f } = \tilde{f} (t, x, u^N_+ (t) , u^N (t) )$ is defined by 
\[
\tilde{f} (t, x, u^N_+ (t) , u^N (t) ) = 
\begin{cases}
 \displaystyle\frac{ f ( t,  x, u^N_+ (t ) ) - f (t, x, u^N (t)) }{ u^N_+ (t,x) - u^N (t,x) } &\text{if } u^N_+ (t,x)  \neq u^N (t,x) ,  \\
\displaystyle\frac{ \partial f }{  \partial u (x) } ( t, x, u  ) \bigg|_{ u  = u^N (t)  }   &\text{if } u^N_+ (t,x) = u^N (t,x) .
\end{cases}
\]
Let $M \coloneqq \sup_{ (t, x ) \in [0, T] \times \mathbb{T}^d_N }  | \tilde{ f } (t, x, u^N_+ (t) , u^N (t ) ) | $ and let $w^N (t, x ) $ be a function defined by 
\[
w^N (t,x) = ( u^N_+ (t,x) - u^N (t,x) ) e^{ M t } + 2 \varepsilon - \varepsilon e^{-t}
\]
for any $\varepsilon > 0$. Note here that such $M < \infty$ exists since $u^N_+ (t, x)$ and $u^N (t, x)$ are both continuous in $t$ for every $x \in \mathbb{T}^d_N$. Moreover, by the assumption for the initial function we have $w^N (0, x) > 0$ for every $x \in \mathbb{T}^d_N$. In the sequel, we prove $w^N \ge 0 $ in $[0,T] \times \mathbb{T}^d_N$ by showing contradiction. Suppose there exists a point $(t_0, x_0 ) \in (0,T] \times \mathbb{T}^d_N$ such that $w^N(t_0, x_0) = 0$ for the first time and $w^N (t,x) > 0$ for every $t \in [0, t_0)$ and $x \in \mathbb{T}^d_N$. Then, since $(t_0, x_0 )$ attains minimum of $w^N$ in $[0, t_0] \times \mathbb{T}^d_N$, we have $\partial_t w^N (t_0, x_0) \le  0$ and $\Delta^N w^N (t_0, x_0) = \Delta^N (u^N_+ - u^N) (t_0, x_0) e^{M t_0 } \ge 0$. In particular, since $\varphi (u^N_+ (t_0, x_0 )) - \varphi (u^N (t_0, x_0 ) ) = \varphi^\prime (\hat{u} ) (u^N_+ - u^N ) (t_0, x_0 ) $ for some $\hat{u} $ between $u^N_+ (t_0, x_0) $ and $u^N (t_0, x_0 ) $ by the mean-value theorem, recalling the definition of the discrete Laplacian $\Delta^N $ and that $\varphi $ is non-decreasing, we deduce $\Delta^N \big( \varphi (u^N_+ (t_0, x_0 )) - \varphi (u^N (t_0, x_0 ) ) \big) e^{M t_0 } \ge 0$. Therefore, we have 
\begin{equation}
\label{difference 1}
\partial_t w^N (t_0, x_0 ) - \Delta^N \big( \varphi (u^N_+ (t_0, x_0 )) - \varphi (u^N (t_0, x_0 ) ) \big) e^{M t_0 } \le 0 .
\end{equation} 
On the other hand, letting $\tilde{u}^N \coloneqq u^N_+ - u^N$, we have 
\[
\begin{aligned}
& \partial_t w^N (t_0, x_0) - \Delta^N \big( \varphi (u^N_+ (t_0, x_0 )) - \varphi (u^N (t_0, x_0 ) ) \big)   \\
& \, = \big(  \partial_t  \tilde{u}^N (t_0, x_0)  + M \tilde{u}^N (t_0, x_0) \big) e^{ M t_0 } + \varepsilon e^{-t_0 } 
- \big( \varphi (u^N_+ (t_0, x_0 )) - \varphi (u^N (t_0, x_0 ) ) \big) e^{M t_0 }   \\
& \, \ge \big(   - \tilde{f} (t_0,  x_0 , \overline{u}^N (t_0) , u^N (t_0) ) + M \big) \tilde{u}^N (t_0, x_0) e^{ M t_0 } + \varepsilon e^{-t_0 } 
\end{aligned}
\]
since $u^N_+ $ is a super-solution. However, recalling the definition of $M$, the last quantity is bounded from below by a strictly positive constant, which is contradiction to \eqref{difference 1}. Therefore, we have $w^N \ge 0$ in $[0, T] \times \mathbb{T}^d_N $ so that $u^N_+ (t, x ) - u^N (t, x )  \ge \varepsilon ( e^{-t } - 2 ) e^ {- M t }$ for every $(t, x ) \in [0, T ] \times \mathbb{T}^d_N  $. Since $\varepsilon > 0$ was taken arbitrary, we complete the proof by letting $\varepsilon$ tends to zero. 
\end{proof}

In the sequel, let $u^N $ and $v^N$ be a solution of semi-discretized system \eqref{dHDL eq}. Applying the above comparison theorem, we get the following uniform estimates for values of $u^N$ and $v^N$ which is independent of $N$.

\begin{lemma}
\label{unif est}
Assume there exist constants $C_1 > 0 $, $M_u \ge 0 $ and $0 \le M_v \le 1 $ such that 
\[
e^{ - C_1 K } \le u^N ( 0, x ) \le M_u , \quad 
e^{ - C_1 K } \le v^N (0, x ) \le M_v
\]
for every $x\in \mathbb{T}^d_N $. Then, there exists a constant $C = C ( C_1, T ) > 0 $ such that 
\[
e^{- C K  } \le u^N (t, x)  \le M_u , \quad 
e^{- C K  } \le v^N (t, x)  \le M_v  
\]
for every $t \in [0, T] $ and $x \in \mathbb{T}^d_N $. In particular, $(u^N, v^N )$ takes values in $\mathbb{R}_+ \times [0, 1]$.  
\end{lemma}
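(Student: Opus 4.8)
The plan is to treat the finite system \eqref{dHDL eq} as a coupled pair of \emph{scalar} semi-discrete equations and to apply the comparison principle of Lemma \ref{comparison} to each of them after \emph{freezing} the other unknown inside the reaction term. The key point is that, once a solution pair $(u^N,v^N)$ is fixed, the product $u^N(t,x)v^N(t,x)$ enters the $u^N$-equation as a coefficient that is affine --- hence $C^1$ --- in $u^N$, and symmetrically for the $v^N$-equation; moreover the viscous term $\varepsilon\Delta^N v^N$ has the form $\Delta^N(\varepsilon v^N)$ with $s\mapsto\varepsilon s$ differentiable and non-decreasing. Thus both equations fall under the scope of Lemma \ref{comparison}, after extending $\varphi$ to a $C^1$ non-decreasing function on all of $\mathbb{R}$ (harmless, since every pair of functions we shall compare turns out to be non-negative). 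Existence of $(u^N,v^N)$ on $[0,T]$ is then a routine bootstrap: a local smooth solution exists because the right-hand side is smooth, and the a priori bounds established below preclude blow-up.

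First I would establish non-negativity. Regarding $v^N$ as a given function in the $u^N$-equation, the constant function $0$ is a subsolution --- its discrete Laplacian vanishes and the reaction term $-K\cdot 0\cdot v^N$ is zero --- while $u^N(0,\cdot)\ge e^{-C_1 K}>0$; Lemma \ref{comparison} then gives $u^N\ge 0$ on $[0,T]\times\mathbb{T}^d_N$, and the symmetric argument gives $v^N\ge 0$. Next, for the upper bounds, the constant $M_u$ is a supersolution of the $u^N$-equation, because $\Delta^N M_u=0$ and $-KM_u v^N\le 0$ by the non-negativity of $v^N$; since $u^N(0,\cdot)\le M_u$, Lemma \ref{comparison} yields $u^N\le M_u$. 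Symmetrically, using $u^N\ge 0$, the constant $M_v$ is a supersolution of the $v^N$-equation and $v^N\le M_v$.

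For the lower bounds I would test against spatially homogeneous exponential barriers, using the bounds $0\le u^N\le M_u$ and $0\le v^N\le M_v\le 1$ already obtained. The function $t\mapsto e^{-C_1 K}e^{-KM_u t}$ is a subsolution of the $v^N$-equation: it is constant in space, so its discrete Laplacian vanishes, and its time derivative equals $-KM_u$ times itself, which is $\le -Ku^N(t,x)$ times itself because $u^N\le M_u$; as it lies below $v^N(0,\cdot)\ge e^{-C_1 K}$, Lemma \ref{comparison} gives $v^N(t,x)\ge e^{-C_1 K}e^{-KM_u t}\ge e^{-(C_1+M_u T)K}$. In the same way $t\mapsto e^{-C_1 K}e^{-Kt}$ --- now using $v^N\le M_v\le 1$ --- is a subsolution of the $u^N$-equation and gives $u^N(t,x)\ge e^{-(C_1+T)K}$. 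Choosing $C=C_1+(M_u\vee 1)T$ completes the proof, and in particular $(u^N,v^N)$ takes values in $\mathbb{R}_+\times[0,1]$.

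The main --- essentially the only --- point to be careful about is the coupling: since Lemma \ref{comparison} is a scalar statement, one must justify treating the reaction product as a known $C^1$ coefficient (legitimate because it is affine in each component separately) rather than attempting a genuine system comparison, and one must check that the exponential barriers chosen for the lower bounds are compatible with the nonlinearity $\varphi$ --- which is immediate, since $\Delta^N$ annihilates functions constant in space, so $\varphi$ does not actually enter those differential inequalities. The extension of $\varphi$ to $\mathbb{R}$ and the bootstrap for global-in-time existence are routine and can each be dispatched in a line.
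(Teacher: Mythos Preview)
Your argument is correct and follows essentially the same route as the paper: show non-negativity via the zero subsolution, upper bounds via the constant supersolutions $M_u,M_v$, and lower bounds via spatially homogeneous exponential barriers. The only cosmetic difference is that the paper uses $e^{-C_1K}e^{-M_vKt}$ (exploiting $v^N\le M_v$) rather than your $e^{-C_1K}e^{-Kt}$ (exploiting $v^N\le 1$) for the $u^N$ lower bound, and your explicit remarks on freezing one unknown, extending $\varphi$ to $\mathbb{R}$, and bootstrapping global existence are points the paper leaves implicit.
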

\begin{proof}
This can be done due to the comparison theorem given above. First, we can easily see that the zero-function is a sub-solution for both two equations of the system \eqref{dHDL eq} so that we may assume $u^N$ and $v^N$ are non-negative. Then, we see that the constant function $M_u $ (resp. $M_v $) is a super-solution for the first (resp. second) equation of the system \eqref{dHDL eq} and the upper bound is thus proved. Next we show the lower bound. Let us define $\underline{u}^N (t,  x ) =e^{-C_1 K } e^{ - M_v K t } $ and $\underline{v}^N (t, x ) =e^{- C_1 K } e^{- M_u K t } $. Then by the assumption, we have $u^N (0, x ) \ge \underline{u}^N (0, x ) = e^{ - C_1 K } $ and $v^N (0, x ) \ge \underline{v}^N (0, x) = e^{ - C_1 K } $ for every $x \in \mathbb{T}^d_N$. Moreover, since $\underline{u}^N $ and $\underline{v}^N $ are spatially homogeneous, we have 
\[
\begin{aligned}
& \partial_t \underline{ u }^N (t, x) =\Delta^N \varphi (\underline{u}^N (t, x ) ) - K M_v \underline{u}^N (t, x) 
\le \Delta^N \varphi (\underline{u}^N (t, x ) )  - K v^N (t, x ) \underline{u}^N (t, x ),  \\
& \partial_t \underline{ v }^N (t, x) = \varepsilon \Delta^N \underline{ v }^N (t, x )  - K M_u \underline{ v }^N (t, x) 
\le \varepsilon \Delta^N \underline{u}^N (t, x )   - K u^N (t, x ) \underline{v}^N (t, x )  
\end{aligned}
\]
for every $t \in [0, T]$ and $x \in \mathbb{T}^d_N$ noting $u^N$ and $v^N$ stay non-negative. Therefore, $ \underline{u}^N $ and $ \underline{v}^N  $ are sub-solutions and thus we obtain the desired lower bound again by the comparison principle (Theorem \ref{comparison}). 
\end{proof}


\begin{lemma}
\label{react int}
We have that
\[
\sup_{N\in \mathbb{N}} \int_0^T  \frac{1}{N^d} \sum_{x \in \mathbb{T}^d_N} K(N) u^N(t,x) v^N(t,x) dt 
\le M_u 
.\]
\end{lemma}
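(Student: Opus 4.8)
The plan is to read the bound directly off the first (the $u$-) equation of the semi-discretized system \eqref{dHDL eq}, viewing it as a conservation law for the total number of type-$1$ particles in which the reaction term acts as a sink. Concretely, I would sum the identity $\partial_t u^N(t,x) = \Delta^N \varphi(u^N(t,x)) - K(N) u^N(t,x) v^N(t,x)$ over $x \in \mathbb{T}^d_N$ and divide by $N^d$. The key structural input is that the discrete Laplacian has vanishing total mass: for any $w = \{ w(x) \}_{x \in \mathbb{T}^d_N}$ one has $\sum_{x \in \mathbb{T}^d_N} \Delta^N w(x) = 0$, since the sum telescopes on the periodic lattice (discrete integration by parts with no boundary contribution). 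Applying this with $w(x) = \varphi(u^N(t,x))$ annihilates the diffusion term and leaves
\[
\frac{d}{dt}\, \frac{1}{N^d}\sum_{x \in \mathbb{T}^d_N} u^N(t,x) = - \frac{1}{N^d}\sum_{x \in \mathbb{T}^d_N} K(N)\, u^N(t,x)\, v^N(t,x).
\]

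Next I would integrate this over $t \in [0,T]$ to obtain the identity
\[
\int_0^T \frac{1}{N^d}\sum_{x \in \mathbb{T}^d_N} K(N)\, u^N(t,x)\, v^N(t,x)\,dt = \frac{1}{N^d}\sum_{x \in \mathbb{T}^d_N} u^N(0,x) - \frac{1}{N^d}\sum_{x \in \mathbb{T}^d_N} u^N(T,x),
\]
and then bound the right-hand side from above. By Lemma \ref{unif est} the solution satisfies $u^N(t,\cdot) \ge 0$ and $0 \le v^N(t,\cdot) \le M_v$, so the left-hand side is a genuine non-negative quantity, the terminal average is non-negative and may be discarded, and by assumption (A\ref{IF}) the initial data obey $u^N(0,x) = u^N_0(x) \le M_u$ for every $x$, hence $N^{-d}\sum_x u^N(0,x) \le M_u$. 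Combining these gives $\int_0^T N^{-d}\sum_x K(N) u^N v^N\,dt \le M_u$, a bound uniform in $N$, and taking the supremum over $N \in \mathbb{N}$ concludes.

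I do not expect a real obstacle here: this is essentially a bookkeeping argument. The only points requiring a word of justification are the well-posedness and $C^1$-in-time regularity of the finite-dimensional ODE system \eqref{dHDL eq}, which legitimizes differentiating and integrating the total-mass functional (this is standard and is already invoked implicitly through the comparison argument behind Lemma \ref{unif est}), together with the non-negativity of $u^N$ and $v^N$, which is precisely the content of Lemma \ref{unif est}. I would also remark that integrating the $v$-equation instead would give the analogous identity and the bound $M_v$ rather than the stated $M_u$, since its extra viscous term $\varepsilon \Delta^N u^N$ also has vanishing torus sum; to land exactly on the constant $M_u$ as worded, one uses the $u$-equation.
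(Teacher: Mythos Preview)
Your argument is correct and is exactly the approach the paper takes: sum the first equation of \eqref{dHDL eq} over $x\in\mathbb{T}^d_N$, use that $\sum_x \Delta^N \varphi(u^N(t,x))=0$ on the torus, integrate in $t$, and conclude from the nonnegativity and the bound $u^N_0\le M_u$ supplied by Lemma~\ref{unif est}. There is nothing to add.
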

\begin{proof}
From the first equation of (\ref{dHDL eq}), integrating over $t \in [0,T]$ and $x \in \mathbb{T}^d_N$ to represent the integration of reaction term by terms which are independent of $K(N)$. Since summation over $x \in \mathbb{T}^d_N$ of $\Delta^N u^N(t,x)$ vanishes and the term involving time derivative becomes an integration on the boundary, the proof is obvious in view of the uniform boundedness of $u^N $ (Lemma \ref{unif est}).
\end{proof}

Recall for every $\ell \in \mathbb{N}$ we denote $\Lambda_\ell \coloneqq [-\ell,\ell ]^d \cap \mathbb{Z}^d $ the centered rectangle with length $2 \ell + 1$ and we write its volume as $| \Lambda_\ell | = (2\ell + 1 )^d $. We have the following energy estimates for semi-discretized solutions $u^N$ and $v^N$.

\begin{lemma}
\label{energy est}
Assume $u^N$ and $v^N $ are bounded non-negative solution of \eqref{dHDL eq}. Then there exists a positive constant $C$ such that 
\[
\int_0^T \sum_{ x \in \mathbb{T}^d_N } | \nabla^N u^N (t, x ) |^2  dt \le C N^d , \quad 
\int_0^T \sum_{x \in \mathbb{T}^d_N } \varepsilon (N) | \nabla^N v^N (t, x ) |^2 dt \le  C N^d . 
\]
Moreover, we have 
\[
\int_0^T \sum_{ x \in \mathbb{T}^d_N } \bigg( \frac{1}{ | \Lambda_\ell | } \sum_{ z \in \Lambda_\ell } \big( u^N (t, x + z ) - u^N (t, x ) \big) \bigg)^2 dt \le C N^{d-2} \ell^2 .
\]
\end{lemma}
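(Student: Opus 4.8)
The plan is to prove all three inequalities by the discrete $L^2$ energy method for the system \eqref{dHDL eq}, using throughout that by Lemma \ref{unif est} one has $0\le u^N\le M_u$ and $0\le v^N\le M_v$, so the reaction term $-K(N)u^N v^N$ is non-positive and may simply be discarded. For the first estimate I would multiply the first equation of \eqref{dHDL eq} by $u^N(t,x)$, sum over $x\in\mathbb{T}^d_N$, and integrate by parts on the discrete torus to get
\[
\tfrac12\,\tfrac{d}{dt}\sum_{x\in\mathbb{T}^d_N}u^N(t,x)^2 = -\sum_{x\in\mathbb{T}^d_N}\nabla^N u^N(t,x)\cdot\nabla^N\varphi(u^N)(t,x) - K(N)\sum_{x\in\mathbb{T}^d_N}u^N(t,x)^2 v^N(t,x).
\]
Writing $\partial^N_j\varphi(u^N)(x)=\varphi'(\xi_{x,j})\,\partial^N_j u^N(x)$ by the mean value theorem with $\xi_{x,j}\in[0,M_u]$, and using that the smooth increasing $\varphi$ has $\varphi'\ge c_1>0$ on $[0,M_u]$ (it is positive on $(0,M_u]$ and $\varphi'(0)\ge c_0>0$), the first sum on the right is $\le -c_1\sum_x|\nabla^N u^N(t,x)|^2$. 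Discarding the non-positive reaction term and integrating in time gives $c_1\int_0^T\sum_x|\nabla^N u^N|^2\,dt\le\tfrac12\sum_x u^N_0(x)^2\le\tfrac12 M_u^2 N^d$. The second estimate is obtained identically by testing the second equation of \eqref{dHDL eq} against $v^N$: integration by parts turns the viscous term into $-\varepsilon(N)\sum_x|\nabla^N v^N|^2$, the reaction term is again discarded, and the initial datum contributes $\tfrac12\sum_x v^N_0(x)^2\le\tfrac12 M_v^2 N^d$, so $\int_0^T\sum_x\varepsilon(N)|\nabla^N v^N|^2\,dt\le C N^d$.

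For the third estimate the idea is to reduce the block average to the gradient already controlled by the first estimate, via a discrete Poincar\'e-type argument on $\Lambda_\ell$. For each $z\in\Lambda_\ell$ I would expand $u^N(x+z)-u^N(x)$ as a telescoping sum of nearest-neighbour increments along the lexicographic lattice path from $x$ to $x+z$; this rewrites it as $\sum_{j=1}^d\frac1N\sum_k(\pm)\,\partial^N_j u^N(x+w)$, where for fixed $j$ the points $x+w$ range over a $j$-dimensional sub-box $x+S_j$ of $x+\Lambda_\ell$ with $|S_j|\asymp(2\ell+1)^j$. After averaging over $z\in\Lambda_\ell$ and rearranging, the integer coefficient weighting each $\partial^N_j u^N(x+w)$ has absolute value $\le C\ell$, while the averaging normalisation produces the prefactor $|\Lambda_\ell|^{-1}(2\ell+1)^{d-j}=(2\ell+1)^{-j}$, which is comparable to $|S_j|^{-1}$. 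Applying Cauchy--Schwarz \emph{within each sub-box $S_j$} (rather than after enlarging it to all of $x+\Lambda_\ell$) and then summing the squares over $x\in\mathbb{T}^d_N$ and over $j$, the factors $(2\ell+1)^{-j}$ cancel against $|S_j|$ and one is left with
\[
\sum_{x\in\mathbb{T}^d_N}\Big(\tfrac1{|\Lambda_\ell|}\sum_{z\in\Lambda_\ell}\big(u^N(t,x+z)-u^N(t,x)\big)\Big)^2 \le \frac{C\ell^2}{N^2}\sum_{w\in\mathbb{T}^d_N}|\nabla^N u^N(t,w)|^2 .
\]
Integrating in $t$ and inserting the bound $\int_0^T\sum_w|\nabla^N u^N|^2\,dt\le CN^d$ yields the claimed $CN^{d-2}\ell^2$.

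I expect the delicate point to be the combinatorial bookkeeping in this last step: a crude path estimate — replacing the sub-box sums by the full sum over $x+\Lambda_\ell$, or bounding only $|z|_1\le d\ell$ and then using Cauchy--Schwarz — overcounts and produces $\ell^{d+1}N^{d-2}$ instead of $\ell^2 N^{d-2}$, so one has to keep the $j$-dependent path structure and match the normalisation $(2\ell+1)^{-j}$ against the $j$-dimensional sub-box exactly. The remaining points are routine but should be noted: the favorable sign of the reaction terms relies on the non-negativity of $u^N,v^N$ from Lemma \ref{unif est}, and the positive lower bound $\varphi'\ge c_1$ is only available because the same lemma confines $u^N$ to the compact range $[0,M_u]$.
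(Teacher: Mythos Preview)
Your argument for the first two estimates is essentially identical to the paper's: multiply by $u^N$ (resp.\ $v^N$), sum by parts, use the mean-value theorem and the lower bound $\varphi'\ge c_0$ on $[0,M_u]$, and discard the non-positive reaction term.

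For the third estimate your approach is correct but more elaborate than necessary, and your diagnosis of the ``crude'' route is too pessimistic. The paper first applies Jensen,
\[
\Big(\tfrac{1}{|\Lambda_\ell|}\sum_{z\in\Lambda_\ell}\big(u^N(t,x+z)-u^N(t,x)\big)\Big)^2
\le \tfrac{1}{|\Lambda_\ell|}\sum_{z\in\Lambda_\ell}\big(u^N(t,x+z)-u^N(t,x)\big)^2,
\]
then for each fixed $z$ writes the increment along a shortest path $y_0=0,\dots,y_{|z|}=z$ and uses $(a_1+\cdots+a_n)^2\le n\sum a_i^2$ to get
\[
N^2\big(u^N(t,x+z)-u^N(t,x)\big)^2\le |z|\sum_{i=0}^{|z|-1}|\nabla^N u^N(t,x+y_i)|^2.
\]
The key step you overlooked is that after summing over $x\in\mathbb{T}^d_N$, translation invariance on the torus turns the inner $|z|$-term sum into $|z|\cdot\sum_x|\nabla^N u^N(t,x)|^2$, so the right-hand side becomes $|z|^2\sum_x|\nabla^N u^N|^2$. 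Averaging $|z|^2$ over $\Lambda_\ell$ then gives $O(\ell^2)$, not $O(\ell^{d+1})$; dividing by $N^2$ and integrating in $t$ yields $C N^{d-2}\ell^2$ directly. Thus one does not need your careful $j$-dependent sub-box bookkeeping: the overcounting you feared only occurs if one throws away the exact path length $|z|$ and replaces the path sum by a sum over all of $\Lambda_\ell$ \emph{before} summing over $x$. Your route works and is a legitimate alternative, but the paper's Jensen-then-translation-invariance argument is shorter and avoids the combinatorics entirely.
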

\begin{proof}
According to the first equation of the semi-discretized reaction-diffusion system \eqref{dHDL eq}, the summation by parts formula gives 
\[
\begin{aligned}
\frac{1}{2} \frac{\partial}{ \partial t } \sum_{ x \in \mathbb{T}^d_N } u^N (t, x)^2 
& = \sum_{x \in \mathbb{T}^d_N } u^N (t, x )  \big( \Delta^N \varphi ( u^N (t, x ) ) - K u^N (t, x ) v^N (t, x )  \big) \\
& = \sum_{x \in \mathbb{T}^d_N }  \big( - \nabla^N u^N (t, x ) \cdot \nabla^N \varphi ( u^N (t, x ) ) - K u^N (t, x )^2 v^N (t, x )  \big) \\
& \le  - c_0 \sum_{ x \in \mathbb{T}^d_N } | \nabla^N u^N ( t, x ) |^2
\end{aligned}
\]
with $c_0 > 0$. Here in the last estimate we used the mean-value theorem (note here that the function $\varphi$ is assumed to be differentiable and non-decreasing) and dropped the second term in the penultimate line. Therefore, noting the boundedness of $u^N$, integrate over time argument $t \in [0,T]$ to get the desired energy estimate. The assertion for $v^N$ can be shown similarly. 

On the other hand, by Jensen's inequality we have 
\[
\begin{aligned}
\bigg( \frac{1}{ | \Lambda_\ell | } \sum_{ z \in \Lambda_\ell } \big( u^N (t, z ) - u^N (t, x) \big) \bigg)^2 
& \le \frac{1}{ | \Lambda_\ell | } \sum_{ z \in \Lambda_\ell } \big( u^N (t, z ) - u^N (t, x) \big)^2 . 
\end{aligned}
\]
For every $z \in \mathbb{T}^d_N$, we can take a shortest path $ y_0, ... , y_{ |z| } $ from $0 $ to $z$, namely, $y_0 = 0$, $y_{ |z| } = z $ and $| y_{i+1 } - y_i | = 1$ for each $i = 0,...,|z|-1 $. Then, decompose $u^N (t, x+z) - u^N (t, x) = [ u^N (t, x + z  ) - u^N (t, x + y_{ |z| -1 } ) ] + \cdots + [ u^N (t, x + y_1 ) - u^N (t, x ) ] $ and use $(a_1 + \cdots + a_n)^2 \le n (a_1^2 + \cdots + a_n^2)$ to conclude 
\[
\begin{aligned}
 N^2 \sum_{x \in \mathbb{T}^d_N } \sum_{z \in \Lambda_\ell } \big( u^N (t, x+z) - u^N (t,  x) \big)^2 
& \le \sum_{ x \in \mathbb{T}^d_N } \sum_{ z \in \Lambda_\ell } |z| \sum_{ i=0 }^{ | z |-1 }   | \nabla^N u^N (t, x + y_i )  |^2 \\
& =  \sum_{ z \in \Lambda_\ell } |z|^2 \sum_{ x \in \mathbb{T}^d_N }  | \nabla^N u^N (t, x ) |^2  .
\end{aligned}
\]
Now integrating over $t \in [0,T]$ and using the first assertion which is proved in the above, the last quantity is bounded above by $C \ell^{d + 2} N^d$ for some $C > 0$. Hence, dividing by $( 2 \ell +1 )^d $, we get the second assertion and complete the proof. 
\end{proof}

Next we give some $L^\infty$-estimates for discrete gradient for $u^N$ and $v^N$. To see that, it is crucial to estimate discrete derivatives of the fundamental solution of divergence operators corresponding to $u^N$ and $v^N$, for which the next result by \cite{DD05} is applicable.

\begin{proposition}[\cite{DD05}]
\label{heat ker}
Let $p (t, x, y ) $ be the fundamental solution corresponding to a symmetric, uniform elliptic divergence operator $\mathcal{L}$ on $\mathbb{Z}^d$. Then there exist positive constants $C$ and $c $ such that 
\[
| \nabla^N p (t, x, y ) | \le C N (t \vee 1 )^{- 1/ 2 } p (ct , x ,y ) 
\]
for every $t > 0$ and $x,y \in \mathbb{T}^d_N $. 
\end{proposition}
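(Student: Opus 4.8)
The plan is to reduce the statement, up to the explicit factor $N$ built into $\nabla^N = N\nabla$ (and, if $\mathcal{L}$ carries the diffusive $N^2$ scaling, the parabolic rescaling $t \mapsto N^2 t$), to the classical unit-scale gradient estimate for the discrete heat kernel on $\mathbb{Z}^d$: there are $C,c>0$ with
\[
|\nabla p(t,x,y)| \le C (t\vee 1)^{-1/2}\, p(ct,x,y), \qquad t>0,\ x,y\in\mathbb{Z}^d,
\]
where $\nabla$ is the nearest-neighbour difference. The regime $0<t\le 1$ is handled directly: by the parabolic Harnack inequality for $\mathcal{L}$ (valid by uniform ellipticity) one has $|p(t,x+e_j,y)-p(t,x,y)| \le C\,p(t,x,y)$, and since $(t\vee1)^{-1/2}=1$ a further time shift absorbs the constant; so the content is the case $t\ge 1$.

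For $t\ge 1$ I would assemble three ingredients from the discrete De Giorgi--Nash--Moser theory. First, \emph{Gaussian bounds}: $c_1 t^{-d/2} e^{-c_2|x-y|^2/t} \le p(t,x,y) \le C_1 t^{-d/2} e^{-c_3|x-y|^2/t}$ for $t\ge 1$. Second, a \emph{discrete parabolic Caccioppoli inequality}: writing $q(s,\cdot)=p(s,\cdot,y)$, which is $\mathcal{L}$-caloric, on a space--time cylinder $Q_{2r}=(\tau,\tau+(2r)^2]\times B(x_0,2r)$ one has $\sum_{(s,z)\in Q_r}|\nabla q(s,z)|^2 \le C r^{-2}|Q_{2r}|\,\sup_{Q_{2r}} q^2$. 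Third, a \emph{pointwise gradient bound}: the nearest-neighbour differences of $q$ solve a discrete parabolic system, and a mean-value inequality applied to a suitable scalar subsolution built from $|\nabla q|$ upgrades the $L^2$ bound to $\sup_{Q_{r/2}}|\nabla q| \le C\bigl(r^{-2}|Q_r|^{-1}\sum_{Q_r}|\nabla q|^2\bigr)^{1/2}$.

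To conclude, fix $t\ge 1$, put $r=\tfrac14\sqrt t$, and centre $Q_{2r}$ at $x$ with top time $t$; since $t-(2r)^2\ge t/2$, the Gaussian upper bound gives $\sup_{Q_{2r}} q \le C t^{-d/2} e^{-c|x-y|^2/t}$ after adjusting constants. Chaining the three ingredients,
\[
|\nabla q(t,x)| \le \sup_{Q_{r/2}}|\nabla q| \le \frac{C}{r}\sup_{Q_{2r}} q \le C\, t^{-1/2}\, t^{-d/2} e^{-c|x-y|^2/t}.
\]
Choosing the constant $c$ appropriately (in particular large enough) and invoking the matching Gaussian lower bound with $ct$ in place of $t$ yields $t^{-d/2} e^{-c|x-y|^2/t} \le C\, p(ct,x,y)$, which gives the unit-scale estimate; multiplying by $N$ produces the stated inequality for $\nabla^N p$.

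The main obstacle is the third ingredient. Differentiating the divergence-form equation destroys the divergence structure, so $\nabla q$ solves a coupled lattice system with lower-order terms rather than a scalar divergence-form equation, and one cannot simply quote scalar Moser iteration. One must identify a nonnegative scalar quantity (an appropriate combination of the squared differences of $q$) that is a subsolution of a scalar discrete parabolic inequality with coefficients controlled by the ellipticity constants, carefully tracking the remainder terms produced by the discrete Leibniz rule. This combinatorial step is the technical heart of the argument and is precisely what is carried out in \cite{DD05}; for the purposes of this paper it suffices to invoke that reference.
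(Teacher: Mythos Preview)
The paper does not prove this proposition at all: it is stated as a citation from \cite{DD05} and used as a black box, with no argument supplied. There is therefore nothing in the paper to compare your proposal against. Your sketch is a reasonable outline of the Delmotte--Deuschel strategy (Gaussian bounds, Caccioppoli, then a pointwise gradient estimate via a subsolution argument for the differentiated system), and you correctly identify that the delicate step---controlling $\nabla q$ when differentiation destroys the divergence structure---is exactly what \cite{DD05} supplies. For the purposes of this paper, simply invoking \cite{DD05} is what the authors do and is entirely appropriate.
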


Then, as indicated in \cite{FT18} or \cite{DMFPV19}, letting $p^N (t, x, y) $ be the fundamental solution corresponding to a symmetric, uniform elliptic divergence operator $\mathcal{L}^N $ on $\mathbb{T}^d_N $, we have $ p^N (t, x, y) = \sum_{ k \in N \mathbb{Z}^d } p (N^2 t , x, y + k ) $ for every $t \ge 0  $ and $x , y \in \mathbb{T}^d_N$. Therefore, applying the result by \cite{DD05} (Proposition \ref{heat ker}), we obtain an estimate 
\begin{equation}
\label{fundamental}
 | \nabla^N p^N (t, x, y ) | \le C t^{ - 1/2  } p^N (ct , x, y ) .
\end{equation} 
Now by applying this $L^\infty$-estimate for discrete gradient of the fundamental solution, we can also estimate discrete derivatives of $u^N$ and $v^N$. The proof is similar to \cite{EFHPS20}, which concerns a fast-reaction limit problem for one-species model.

\begin{lemma}
\label{grad u}
There exists a positive constant $C $ such that 
\[
| \nabla^N u^N (t, x ) | \le  K (C_0 + C t^{1/2}  ) 
\]
for every $t \in [0, T ] $ and $x \in \mathbb{T}^d_N $ provided the assertion holds at time $t = 0 $. 
\end{lemma}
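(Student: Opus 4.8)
The plan is to recast the first equation of \eqref{dHDL eq} as a \emph{linear} parabolic equation governed by a symmetric, uniformly elliptic divergence-form operator, and then to read off the gradient bound from a Duhamel representation together with the heat-kernel estimate \eqref{fundamental}. For $t\in[0,T]$, $x\in\mathbb{T}^d_N$ and $k=1,\dots,d$ I would set
\[
a^N_k(t,x)\coloneqq
\begin{cases}
\dfrac{\varphi(u^N(t,x+e_k))-\varphi(u^N(t,x))}{u^N(t,x+e_k)-u^N(t,x)} & \text{if } u^N(t,x+e_k)\neq u^N(t,x),\\
\varphi'(u^N(t,x)) & \text{otherwise},
\end{cases}
\]
so that $\Delta^N\varphi(u^N(t,x))=\mathcal{L}^N_t u^N(t,x)$ for the operator $\mathcal{L}^N_t f(x)=N^2\sum_{k=1}^d\big[a^N_k(t,x)\big(f(x+e_k)-f(x)\big)+a^N_k(t,x-e_k)\big(f(x-e_k)-f(x)\big)\big]$, which is symmetric with respect to the counting measure on $\mathbb{T}^d_N$. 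Since $0\le u^N\le M_u$ by Lemma~\ref{unif est} and $\varphi$ is smooth with $\varphi'>0$ on $[0,M_u]$ (recall $\varphi'(0)\ge c_0$), the mean-value theorem gives $0<c_*\le a^N_k(t,x)\le C_*$ with $c_*,C_*$ depending only on $M_u$ and $\varphi$, uniformly in $N,t,x,k$; hence all the $\mathcal{L}^N_t$ lie in one fixed uniformly elliptic class. Writing $p^N_t(s;x,y)$ for the fundamental solution of $\partial_t-\mathcal{L}^N_t$ from time $s$ to time $t$ — conservative, $\sum_{y}p^N_t(s;x,y)=1$, and symmetric in $x,y$ — Duhamel's principle gives the a priori identity
\[
u^N(t,x)=\sum_{y\in\mathbb{T}^d_N}p^N_t(0;x,y)\,u^N_0(y)-K\int_0^t\sum_{y\in\mathbb{T}^d_N}p^N_t(s;x,y)\,(u^Nv^N)(s,y)\,ds .
\]

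Next I would apply $\partial^N_j$ in the $x$-variable. Proposition~\ref{heat ker} and its torus version \eqref{fundamental}, whose constants depend only on the ellipticity class and which therefore extend to the time-dependent operators $\mathcal{L}^N_t$ by the usual Nash--Aronson parabolic machinery, yield $|\nabla^N_x p^N_t(s;x,y)|\le C(t-s)^{-1/2}\,\bar p^N_t(s;x,y)$ for a comparison kernel $\bar p^N_t$ obeying Gaussian bounds, so in particular $\sum_{y}\bar p^N_t(s;x,y)\le C$ and $\sum_{y}\bar p^N_t(s;x,y)\,|x-y|\le C N (t-s)^{1/2}$, with $|x-y|$ the graph distance. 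The reaction contribution is then controlled, using Lemma~\ref{unif est} for both factors, by
\[
K\int_0^t\sum_{y}\big|\nabla^N_x p^N_t(s;x,y)\big|\,(u^Nv^N)(s,y)\,ds\le C K M_u M_v\int_0^t(t-s)^{-1/2}\,ds=C K t^{1/2},
\]
the short-time range $t-s<N^{-2}$, where \eqref{fundamental} only provides the cruder factor $N$, contributing a negligible $O(KN^{-1})$.

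The delicate term is the homogeneous one, since the naive estimate $\big|\sum_{y}\nabla^N_x p^N_t(0;x,y)u^N_0(y)\big|\le\|u^N_0\|_\infty\sum_{y}|\nabla^N_x p^N_t(0;x,y)|\le C t^{-1/2}M_u$ blows up as $t\downarrow0$ and misses the correct $C_0K$ scaling. I would instead exploit conservation of mass: since $\sum_{y}p^N_t(0;x,y)=1$ for every $x$, we have $\sum_{y}\nabla^N_x p^N_t(0;x,y)=0$, hence
\[
\sum_{y}\nabla^N_x p^N_t(0;x,y)\,u^N_0(y)=\sum_{y}\nabla^N_x p^N_t(0;x,y)\,\big(u^N_0(y)-u^N_0(x)\big).
\]
Telescoping $u^N_0(y)-u^N_0(x)$ along a shortest lattice path joining $x$ to $y$ and inserting $|\nabla^N u^N_0|\le C_0K$ from (A\ref{IF}) gives $|u^N_0(y)-u^N_0(x)|\le C_0K\,|x-y|/N$, so by the first-moment bound above
\[
\Big|\sum_{y}\nabla^N_x p^N_t(0;x,y)\,u^N_0(y)\Big|\le C_0K\,N^{-1}\cdot C t^{-1/2}\cdot C N t^{1/2}=C\,C_0 K ,
\]
uniformly in $t\in[0,T]$. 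Adding the two contributions and relabelling constants gives $|\nabla^N u^N(t,x)|\le K(C_0+Ct^{1/2})$, the case $t=0$ being exactly (A\ref{IF}).

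I expect the main obstacle to be this homogeneous term: making the centring trick rigorous and, in tandem, justifying that \eqref{fundamental} — stated for time-independent divergence operators — applies with constants uniform over $N$ to the solution-dependent, time-dependent operators $\mathcal{L}^N_t$; this is classical parabolic theory, but needs to be set up carefully, e.g.\ by freezing coefficients on short intervals or invoking an abstract Delmotte--Deuschel-type estimate for the whole ellipticity class. The remaining ingredients — uniform ellipticity from Lemma~\ref{unif est}, the validity of the Duhamel identity (the operator $\mathcal{L}^N_t$ being \emph{defined} from the already-constructed solution $u^N$, so there is no circularity), and the $(t-s)^{-1/2}$ bookkeeping — are routine.
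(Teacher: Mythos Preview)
Your argument is correct, but it follows a genuinely different route from the paper's proof. The paper splits into two time regimes. For large times it uses exactly your linearisation $\mathcal{L}^N_t$ (called $\mathcal{L}^N_2$ there), applies Duhamel, and takes the discrete gradient in $x$ to get the crude bound $|\nabla^N u^N(t,x)|\le C t^{-1/2}+CK t^{1/2}$, which is acceptable only once $t\ge t_*\sim K^{-2}$. To cover $t\le t_*$ the paper instead \emph{differentiates the equation first}, obtaining an evolution for $\partial^N_j u^N$ driven by a second divergence operator $\mathcal{L}^N_1$ plus a remainder $R_\varphi$ quadratic in $\nabla^N u^N$; Duhamel then yields a quadratic inequality $m(t)\le C\sqrt{t}\,m(t)^2+CK$ for $m(t)=\sup_{s\le t}\|\nabla^N u^N(s,\cdot)\|_\infty$, which is solved to give $m(t)\le 2CK$ on $[0,t_*]$. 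The paper explicitly motivates this two-pass strategy by the possible lack of $x\leftrightarrow y$ symmetry of the fundamental solution in the time-dependent case.

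Your single-pass argument sidesteps the bootstrap entirely by the centring trick $\sum_y\nabla^N_x p^N_t(0;x,y)=0$ together with the first-moment bound $\sum_y \bar p^N_t(0;x,y)\,|x-y|\le CN\sqrt{t}$, yielding the uniform estimate $CC_0K$ for the homogeneous term directly. This is cleaner and avoids the quadratic-inequality machinery; the price is one extra input (the first-moment bound), which is a routine consequence of the Gaussian upper bounds that come with the Delmotte--Deuschel package but is not stated in the paper. Both approaches share the same caveat you flag, namely that \eqref{fundamental} must be extended to the solution-dependent, time-dependent operators, and both use only the $x$-gradient of the kernel, so neither actually requires symmetry.
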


\begin{proof}
First we take $ j $-th discrete derivative $\partial^N_j$ to the first equation of \eqref{dHDL eq} to deduce 
\begin{equation}
\label{RD derivative}
\partial_t \partial^N_j u^N (t, x) = \Delta^N \partial^N_j \varphi ( u^N (t, x) ) 
- K \partial^N_j \big( u^N (t, x ) v^N (t, x ) \big). 
\end{equation}
The first term in the right-hand side of the last identity can further be calculated as 
\[
\begin{aligned}
& \Delta^N \partial^N_j \varphi (u^N (t, x ) ) \\
& = N \nabla^N \cdot \tau_{-e } \nabla^N \big( \varphi (u^N (t, x + e_j ) ) - \varphi ( u^N (t, x ) )  \big) \\
& = N^2 \nabla^N \cdot \big[ 
\big( \varphi (u^N (t, x + e_j ) ) - \varphi (u^N (t, x + e_j - e_i  ) ) \big) 
- \big( \varphi (u^N (t,x) ) - \varphi (u^N (t, x -e_i ) ) \big) 
\big]_{ i =1,\ldots , d  } . 
\end{aligned}
\]
However, by Taylor's theorem, there exist $ u_1 $ between $u^N (t, x + e_j ) $ and $u^N (t, x )$, and $u_2$ between $u^N (t, x + e_j -e_i ) $ and $u^N (t, x ) $ such that 
\[
\begin{aligned}
\varphi (u^N (t, x + e_j ) ) - \varphi (u^N (t, x + e_j - e_i  ) )  =
 & \partial_j \varphi (u^N (t, x ) ) \big( u^N (t, x + e_j )  - u^N (t, x + e_j - e_i ) \big) \\
&  + \frac{ \varphi^{\prime \prime} (u_1 ) }{2 } \big( u^N (t, x + e_j ) - u^N (t, x ) \big)^2 \\ 
&  + \frac{ \varphi^{\prime \prime} (u_2 ) }{2 } \big( u^N (t, x + e_j - e_i ) - u^N (t, x ) \big)^2 .
\end{aligned}
\]
Similarly, we also have an expansion  
\[
\begin{aligned}
\varphi (u^N (t, x  ) ) - \varphi (u^N (t, x  - e_i  ) )  =
 & \partial_j \varphi (u^N (t, x ) ) \big( u^N (t, x )  - u^N (t, x - e_i ) \big) \\
&  + \frac{ \varphi^{\prime \prime} (u_1 ) }{2} \big( u^N (t, x ) - u^N (t, x - e_i ) \big)^2  
\end{aligned}
\]
for some $u_3 $ between $u^N (t, x -e_i) $ and $u^N (t, x ) $. Hence we can rewrite \eqref{RD derivative} as
\begin{equation}
\label{RD derivative2}
\partial_t \partial^N_j u^N (t, x ) = \mathcal{L}^N \partial^N_j u^N (t, x ) + R_\varphi (t, x ) 
- K \partial^N_j \big( u^N (t, x ) v^N (t, x ) \big) 
\end{equation}
where $\mathcal{L}^N_1 $ is a second order discrete divergence operator defined by
\[
\mathcal{L}^N_1 w (x) = \nabla^N \cdot\varphi^\prime (u^N (t, x ) )  \tau_{ -e } \nabla^N w (x)  
\]  
for any real-valued function $w$ on the configuration space $\mathcal{X}_N$. Regarding $u^N$ as a given function in the above definition, we can easily see that the operator $\mathcal{L}^N_1 $ is a symmetric, non-positive linear functional on $\ell^2 (\mathbb{T}^d_N ) $. Therefore, there exists a fundamental solution corresponding to the operator $ \mathcal{L}^N $ on $\mathbb{T}^d_N $, which is denoted by $p^N_1 (t, x, y ) $. Hence we apply Duhamel's principle to the above identity \eqref{RD derivative2} to obtain 
\[
\begin{aligned}
\partial^N_j u^N (t, x) 
& = \sum_{y \in \mathbb{T}^d_N } p^N_1 (t, x , y ) \partial^N_j u^N (0, x) + \int_0^t \sum_{ y \in \mathbb{T}^d_N } p^N_1 (t-s ,x, y ) R_\varphi (s, y) ds  \\
& \quad - K \int_0^t \sum_{ y \in \mathbb{T}^d_N } p^N_1 (t - s , x , y ) \partial^N_j \big( u^N (s, y ) v^N (s, y ) \big) ds  
\end{aligned}
\]
for every $j =1,\ldots , d  $. Recall here that $R_\varphi $ is a divergence form of quadratic function of $u^N$ and also note that the third term in the last identity is a divergence form. Therefore, integration by parts combining with an estimate \eqref{fundamental} for $p^N $ and the uniform boundedness of $u^N $ and $v^N$ given in Lemma \ref{unif est} enables us to deduce 
\[
\| \nabla^N u^N (t, \cdot ) \|_{L^\infty } \le C \| \nabla^N u^N (0, \cdot ) \|_{L^\infty } 
+ C \int_0^t (t-s )^{-1/2 } \| \nabla^N u^N (s, \cdot ) \|^2_{L^\infty } ds   
\]
for some positive constant $C$. Now let $m (t ) \coloneqq \sup_{ 0 \le s \le t } \sup_{ x \in \mathbb{T}^d_N} | \nabla^N u^N (s, x ) | $. Then combining with the assumption for $\nabla^N u^N (0, x) $, there exists a constant $C > 0 $ such that 
\[
m (t) \le  C \sqrt{t} m(t)^2 + C K 
\] 
for every $t \in [0, T] $. By solving this quadratic inequality, we have 
\[
m ( t ) \le  \frac{ 1- \sqrt{ 1- 4 C^2  K \sqrt{ t } } }{ 2 C \sqrt{t} }
\]
provided $t \le t_* \coloneqq  (4 C^2 K )^{-2 } $ where the other inequality can be rejected due to an estimate around $t = 0$ by the assumption for the initial value $m (0) $ and the continuity of $ t \mapsto m(t) $. Since the right-hand side of the last estimate is increasing in time $t$, we obtain $m(t) \le 2 C K $ for every $t \le t_*$.

On the other hand, we derive an estimate for the other range $ ( t_* , T ] $ of the interval $[0, T]$. To see that, one can notice that the non-linear diffusion operator $ \Delta^N \varphi (\cdot ) $ can be regarded as a linear Laplacian. Indeed, we have for any real-valued function $u $ on $ \mathbb{T}^d_N $ that 
\[
\Delta^N \varphi (u (x) ) 
= \nabla^N \cdot \tau_{- e }  \nabla^N \varphi (u (x) )  
= \nabla^N \cdot \tau_{- e } \big(  \varphi^\prime (x, e_j ; u ) \partial^N_j u (x)  \big)_{ j = 1,\ldots , d } 
\]
where $\tau_{ -e } $ denotes the shift to the direction $ (-1,\ldots , -1) \in \mathbb{R}^d $ acting on any $d$-dimensional vectors, and $\varphi^\prime (x, e_j ; u )$ is defined by 
\[
\varphi^\prime (x , e_j ; u )  = 
\begin{cases}
\begin{aligned}
& \displaystyle\frac{ \varphi (u (x + e_j )) - \varphi (u (x) ) }{ u (x + e_j ) - u (x) }  && \text{ if } u (x + e_j ) \neq  u (x) , \\ 
&  \varphi^\prime (u (x) ) && \text{ if } u (x + e_j) = u (x)  . 
\end{aligned}
\end{cases}
\] 
Now let $\mathcal{L}^N_2 $ be a second order discrete divergence operator defined by 
\[
\mathcal{L}^N_2 w (x) = \nabla^N \cdot \tau_{-e } \big( \varphi^\prime (x, e_j ; u^N (t ) ) \partial^N_j w (x) \big)_{ j = 1,\ldots , d }
\]
for every real-valued function $w$ on the configuration space $\mathcal{X}_N$. Similarly for the operator $\mathcal{L}^N_1 $ given above, the operator $\mathcal{L}^N $ is a symmetric, non-positive linear functional on $\ell^2 (\mathbb{T}^d_N ) $ when $u^N$ is regarded as a given function. Therefore, there exists a fundamental solution $p^N_2 (t, x, y ) $ corresponding to the operator $ \mathcal{L}^N_2 $ satisfying an derivative estimate \eqref{fundamental}. By virtue of Duhamel's principle applied to the first equation of the discrete reaction-diffusion system \eqref{dHDL eq}, we thus obtain 
\[
u^N (t, x ) = \sum_{ y \in \mathbb{T}^d_N } p^N_2 (t,x, y ) u^N (0, y ) - K \int_0^t \sum_{ y \in \mathbb{T}^d_N } p^N_2 (t-s, x, y ) u^N (s,y ) v^N (s, y) ds . 
\] 
Then, take the discrete derivative to $ j $-th direction and use Lemma \ref{heat ker} to get obtain 
\[
| \nabla^N u^N (t, x ) | \le C t^{-1/2 } + C K t^{ 1/2 }
\]
for some positive constant $C$, which holds true for every $t \in ( 0, T] $. In particular, we have $| \nabla^N u^N (t, x ) | \le C K $ for every $t \ge t_* $ and thus we complete the proof by combining with the above estimate which holds for small time $t \le t_*$.  
\end{proof}

Moreover, we have an $L^\infty$-estimate for second order discrete derivatives in the next Lemma \ref{ggrad u}. This is needed to estimate $\Delta^N \varphi (u^N (t, x ) ) $ when applying the Boltzmann-Gibbs principle for a possibly non-linear term which comes from the zero-range generator. The proof of Lemma \ref{ggrad u} can be done analogously in \cite{EFHPS20}. Since it is complicated and the main idea is similar to that of Lemma \ref{grad u}, we omit the proof here.

\begin{lemma}
\label{ggrad u}
There exists a positive constant $C$ such that 
\[
|  \nabla^N {}^t \! ( \nabla^N u^N (t, x ) ) | \le C K^3 e^{CK^2 t }
\]
for every $t \in [0, T ] $ and $x \in \mathbb{T}^d_N$ provided the assertion holds at time $t = 0 $. 
In particular, there exists a positive constant $C = C (T) $ such that  
\[
| \Delta^N \varphi (u^N (t, x ) ) | \le C K^3 e^{C K^2 }
\]
for every $t \in [0, T] $ and $x \in \mathbb{T}^d_N $. 
\end{lemma}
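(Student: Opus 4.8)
The plan is to run the argument of Lemma~\ref{grad u} one order of differentiation higher, in the spirit of the one-species analysis of \cite{EFHPS20}. Fix $i,j\in\{1,\dots,d\}$ and set $W(t,x)\coloneqq\partial^N_i\partial^N_j u^N(t,x)$. First I would apply $\partial^N_i\partial^N_j$ to the first equation of~\eqref{dHDL eq} and expand $\Delta^N\partial^N_i\partial^N_j\varphi(u^N)$ by discrete Taylor expansions exactly as in the derivation of~\eqref{RD derivative2}, so as to write
\begin{equation*}
\partial_t W(t,x)=\mathcal{L}^N W(t,x)+F_\varphi(t,x)-K\,\partial^N_i\partial^N_j\big(u^N(t,x)\,v^N(t,x)\big),
\end{equation*}
where $\mathcal{L}^N$ is a symmetric, non-positive second-order discrete divergence operator whose coefficient is a difference-quotient of $\varphi$ along $u^N(t,\cdot)$ (the same kind of operator as $\mathcal{L}^N_1$, $\mathcal{L}^N_2$ in Lemma~\ref{grad u}), and $F_\varphi$ collects the remainder terms. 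Each term of $F_\varphi$ is a product of some $\varphi^{(k)}(u^N)$ with $k=2,3$ and discrete differences of $u^N$ of total order at most two; since $u^N$ takes values in the fixed compact interval $[0,M_u]$ by Lemma~\ref{unif est}, every $\varphi^{(k)}(u^N)$ is bounded by a constant independent of $K$, so by the first-order bound $\|\nabla^N u^N(t,\cdot)\|_{L^\infty}=O(K)$ from Lemma~\ref{grad u} the purely source-type part of $F_\varphi$ is $O(K^3)$, while the part that is linear in second differences of $u^N$ carries a coefficient $O(K)$; moreover several of these terms, together with the reaction term, are of discrete divergence form.

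Next I would invoke Duhamel's principle with the fundamental solution $p^N_1$ of $\mathcal{L}^N$, which obeys the gradient bound~\eqref{fundamental} of Proposition~\ref{heat ker}. The initial term $\sum_y p^N_1(t,x,y)W(0,y)$ is bounded by $\|W(0,\cdot)\|_{L^\infty}$ because $p^N_1$ is a probability kernel; the non-divergence part of $F_\varphi$ is absorbed directly into $\int_0^t\|\cdot\|_{L^\infty}\,ds$; and for every divergence-form source term—in particular $-K\partial^N_i\partial^N_j(u^Nv^N)$ with one of the two discrete derivatives transferred onto $p^N_1$ by discrete summation by parts—the kernel is differentiated only once, so only the integrable singularity $(t-s)^{-1/2}$ appears. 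The remaining factor $K\partial^N_j(u^Nv^N)$ is controlled by the discrete Leibniz rule using $\|\nabla^N u^N\|_{L^\infty}$, the analogous first-order bound for $\|\nabla^N v^N\|_{L^\infty}$, and Lemma~\ref{unif est}. Writing $m(t)\coloneqq\sup_{0\le s\le t}\sup_{x\in\mathbb{T}^d_N}|\nabla^N{}^t\!(\nabla^N u^N(s,x))|$, one is then led to an integral inequality of the form
\begin{equation*}
m(t)\le m(0)+CK^3+CK\int_0^t(t-s)^{-1/2}m(s)\,ds ,
\end{equation*}
to which the singular Gronwall (Henry) inequality applies: it yields $m(t)\le C\big(m(0)+K^3\big)\exp(CK^2 t)$, the exponent $K^2$ being precisely what the $K(t-s)^{-1/2}$ feedback produces. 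Since $m(0)\le C_0K^3$ by assumption (A\ref{IF}), this gives $|\nabla^N{}^t\!(\nabla^N u^N(t,x))|\le CK^3 e^{CK^2 t}$ for all $t\in[0,T]$ and $x\in\mathbb{T}^d_N$. (As in Lemma~\ref{grad u}, one may instead first close a quadratic-type inequality on a short interval $t\le t_\ast\sim K^{-4}$ and then run the linear estimate on $[t_\ast,T]$; both routes give the same bound.)

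For the last assertion I would write, by a discrete Taylor expansion,
\begin{equation*}
\Delta^N\varphi(u^N(t,x))=\varphi'(u^N(t,x))\,\Delta^N u^N(t,x)+r(t,x)
\end{equation*}
with $|r(t,x)|\le C\,\|\varphi''\|_{L^\infty([0,M_u])}\,\|\nabla^N u^N(t,\cdot)\|_{L^\infty}^2=O(K^2)$; since $|\Delta^N u^N(t,x)|\le\sum_{j=1}^d|\partial^N_j\partial^N_j u^N(t,x)|\le CK^3e^{CK^2 t}$ on $[0,T]$ and $\varphi'(u^N)$ is bounded, the bound $|\Delta^N\varphi(u^N(t,x))|\le CK^3e^{CK^2}$ on $[0,T]$ follows. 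The main obstacle I foresee is the bookkeeping of the second-differentiated reaction term: it must be split by the discrete product rule so that at most one derivative ever lands on the heat kernel (keeping the time integral convergent) and no derivative lands twice on $v^N$ (no second-order estimate on $v^N$ being available), while the powers of $K$ and of $\varepsilon^{-1/2}$ entering through $\|\nabla^N v^N\|_{L^\infty}$ are tracked carefully enough that, after (B\ref{K}) and (B\ref{Epsilon}), the whole contribution is absorbed at the stated order $K^3e^{CK^2}$.
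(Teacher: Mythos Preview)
Your approach is precisely what the paper indicates: the proof is omitted there, with the remark that it is ``analogous to \cite{EFHPS20}'' and that ``the main idea is similar to that of Lemma~\ref{grad u}''. Differentiating the first equation of~\eqref{dHDL eq} twice, extracting a symmetric divergence-form operator, applying Duhamel with the gradient bound~\eqref{fundamental}, and closing via a singular Gronwall inequality is exactly that program, so on the level of strategy you match the paper.

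There is, however, a gap in your displayed integral inequality. After one integration by parts the reaction term contributes
\[
CK\int_0^t (t-s)^{-1/2}\,\bigl\|\partial^N_j(u^Nv^N)(s,\cdot)\bigr\|_{L^\infty}\,ds,
\]
and by the discrete Leibniz rule together with Lemmas~\ref{unif est}, \ref{grad u} and~\ref{grad v},
\[
K\bigl\|\partial^N_j(u^Nv^N)\bigr\|_{L^\infty}
\le K\big(M_u\|\nabla^N v^N\|_{L^\infty}+M_v\|\nabla^N u^N\|_{L^\infty}\big)
= O(\varepsilon^{-1/2}K^2).
\]
This is a \emph{source} term, not a feedback term in $m(s)$, and it is missing from your inequality. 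With it included, Henry's lemma gives $m(t)\le C\bigl(K^3+\varepsilon^{-1/2}K^2\bigr)e^{CK^2 t}$. Under (B\ref{K}) the factor $e^{CK^2}$ is only polylogarithmic in $N$, whereas under (B\ref{Epsilon}) the factor $\varepsilon^{-1/2}$ can be as large as $N^{\alpha_\varepsilon/2}$; hence $\varepsilon^{-1/2}K^2$ cannot be ``absorbed at the stated order $K^3e^{CK^2}$'' as you suggest in your closing sentence. The same obstruction appears if you expand $\partial^N_i\partial^N_j(u^Nv^N)$ fully: the cross terms $K(\partial^N_i u^N)(\partial^N_j v^N)$ are $O(\varepsilon^{-1/2}K^3)$, and the term $Ku^N\partial^N_i\partial^N_j v^N$ calls for a second-order bound on $v^N$, which---if one attempts it by the method of Lemma~\ref{grad v}---carries a factor $\varepsilon^{-1}$. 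Since the paper omits its own proof, it is unclear whether a more refined device (a coupled estimate for $u^N$ and $v^N$, or a different splitting of the reaction term) is intended to eliminate this $\varepsilon$-dependence; but your argument as written does not, and your final caveat identifies the difficulty without resolving it.
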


In the proof of the $L^\infty$-estimate for first order derivatives given in Lemma \ref{grad u}, we used Duhamel's principle twice. This is because the fundamental solution $p^N (t, x, y) $ may not has the symmetry in spatial variables $x $ and $y$ in general. Indeed, since the heat kernel generated by usual Laplacian is symmetric in spatial variables, we can deduce an $L^\infty $-estimate by applying Duhamel's principle directly in the second equation of \eqref{dHDL eq} for $v^N$ as follows.

\begin{lemma}
\label{grad v}
There exists a positive constant $C $ such that 
\[
| \nabla^N v^N (t, x ) | \le \varepsilon^{-1/2 } K (C_0 + C t^{1/2}  ) 
\]
for every $t \in [0, T ] $ and $x \in \mathbb{T}^d_N $ provided the assertion holds at time $t = 0 $. 
\end{lemma}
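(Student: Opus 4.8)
The plan is to apply Duhamel's principle directly to the second equation of \eqref{dHDL eq} for $v^N$, exploiting that the diffusion part $\varepsilon \Delta^N$ is a constant-coefficient operator, so that the heat semigroup it generates on $\mathbb{T}^d_N$ is translation invariant and hence commutes with the discrete gradient $\nabla^N$. First I would introduce the fundamental solution $q^N = q^N(t,x,y)$ of $\partial_t q = \varepsilon \Delta^N q$ on $\mathbb{T}^d_N$; by translation invariance of $\Delta^N$ on the torus, $q^N(t,x,y)$ depends only on $x-y$, it is symmetric in its spatial arguments, and $\sum_{y \in \mathbb{T}^d_N} q^N(t,x,y) = 1$. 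Writing $q^N(t,x,y) = p^N(\varepsilon t, x, y)$, where $p^N$ is the fundamental solution of $\partial_t p^N = \Delta^N p^N$, and invoking the gradient bound \eqref{fundamental}, I get constants $C, c > 0$ such that $|\nabla^N_x q^N(t,x,y)| \le C(\varepsilon t)^{-1/2} q^N(ct,x,y)$ for all $t > 0$ and $x,y \in \mathbb{T}^d_N$.

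By Duhamel's principle applied to the second equation of \eqref{dHDL eq},
\[
v^N(t,x) = \sum_{y \in \mathbb{T}^d_N} q^N(t,x,y) v^N_0(y) - K \int_0^t \sum_{y \in \mathbb{T}^d_N} q^N(t-s,x,y)\, u^N(s,y) v^N(s,y)\, ds .
\]
Taking the discrete gradient in $x$, I would estimate the two terms differently. For the first term, the translation invariance of $q^N$ lets us transfer the gradient onto the initial datum, $\nabla^N \sum_y q^N(t,x,y) v^N_0(y) = \sum_y q^N(t,x,y) \nabla^N v^N_0(y)$, so that by assumption (A\ref{IF}) and $\sum_y q^N(t,x,y) = 1$ it is bounded in absolute value by $\| \nabla^N v^N_0 \|_{L^\infty} \le C_0 \varepsilon^{-1/2} K$, with no singularity as $t \to 0$; this is precisely the step that is unavailable in Lemma \ref{grad u}, whose diffusion operator carries the $x$-dependent coefficient $\varphi^\prime(u^N)$ and is therefore not translation invariant, which is why two applications of Duhamel's principle were needed there. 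For the reaction term I would instead let the gradient act on the kernel and use $0 \le u^N \le M_u$, $0 \le v^N \le M_v$ from Lemma \ref{unif est} together with $\sum_y q^N(c(t-s),x,y) = 1$:
\[
K \int_0^t \sum_{y} |\nabla^N_x q^N(t-s,x,y)|\, u^N(s,y) v^N(s,y)\, ds \le C K M_u M_v\, \varepsilon^{-1/2} \int_0^t (t-s)^{-1/2}\, ds = C K M_u M_v\, \varepsilon^{-1/2} t^{1/2} .
\]
Adding the two contributions gives $|\nabla^N v^N(t,x)| \le \varepsilon^{-1/2} K (C_0 + C t^{1/2})$ for every $t \in [0,T]$ and $x \in \mathbb{T}^d_N$, which is the claim.

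This is a purely deterministic estimate requiring no spectral-gap or entropy input, and the computation is routine once the representation is chosen. The two points that need care are, first, checking that the discrete semigroup of $\varepsilon \Delta^N$ genuinely commutes with $\nabla^N$ so that the initial-data term stays bounded near $t = 0$ rather than producing a spurious $t^{-1/2}$ (as happens for the second-order operator of Lemma \ref{grad u}), and second, tracking the viscosity parameter $\varepsilon$ through the time rescaling $q^N(t,\cdot,\cdot) = p^N(\varepsilon t,\cdot,\cdot)$ when applying \eqref{fundamental} — this is what produces the prefactor $\varepsilon^{-1/2}$ in both the initial-data term (via (A\ref{IF})) and the reaction term (via \eqref{fundamental}), and matches the growth $|\nabla^N v^N_0| \le C_0 \varepsilon^{-1/2} K$ imposed at $t=0$.
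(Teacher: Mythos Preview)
Your proposal is correct and follows essentially the same approach as the paper's proof: apply Duhamel's principle once to the second equation of \eqref{dHDL eq}, transfer the gradient onto the initial datum via the translation invariance (symmetry) of the discrete heat kernel for $\varepsilon\Delta^N$, and bound the reaction term using \eqref{fundamental} together with the uniform bounds on $u^N,v^N$. Your remarks on why only a single Duhamel representation is needed here (in contrast to Lemma~\ref{grad u}) and on how the $\varepsilon^{-1/2}$ factor arises through the time rescaling $q^N(t,\cdot,\cdot)=p^N(\varepsilon t,\cdot,\cdot)$ match the paper's discussion preceding the lemma.
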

\begin{proof}
Let $p^N_0 (t, x, y ) $ be the heat kernel corresponding to the discrete Laplacian $\Delta^N $ on $\mathbb{T}^d_N $ satisfying the derivative estimate given in Proposition \ref{heat ker}. By applying Duhamel's principle to the second equation of \eqref{dHDL eq}, we have 
\[
v^N(t, x ) 
= \sum_{ y \in \mathbb{T}^d_N } v^N (0, y ) p^N_0 (\varepsilon t ,x, y ) 
- K \int_0^t \sum_{y \in \mathbb{T}^d_N } u^N (s, y ) v^N (s, y ) p^N_0 ( \varepsilon (t- s ) , x, y ) ds 
\] 
for every $t \in [0, T ] $ and $ x \in \mathbb{T}^d_N $. Then, taking the discrete gradient for both sides to conclude that 
\[
| \nabla^N  v^N (t, x) |  
\le \sum_{ y \in \mathbb{T}^d_N } | \nabla^N v^N (0, y) | p^N_0 (\varepsilon t, x, y ) 
+ C K \int_0^y \sum_{ y \in \mathbb{T}^d_N } | \nabla^N p^N_0 (\varepsilon (t- s ), x , y )|  ds .
\] 
Here we used the symmetry of the discrete heat kernel $p^N_0$ in spatial arguments to estimate the first term, while the second estimate can be deduced by the uniform boundedness of $u^N $ and $v^N $ (Lemma \ref{unif est}). Hence we complete the proof in view of the assumption and the derivative estimate \eqref{fundamental}. 
\end{proof}

\section{The multi-variable Boltzmann-Gibbs principle}
\label{sec:BG}
In this section, we prove the multi-variable Boltzmann-Gibbs principle (Theorem \ref{BG}), which is a crucial estimate to replace a multi-variable local function by a linear combination of rescaled variables $\omega_1 $ and $\omega_2$. First we give the proof of the Boltzmann-Gibbs principle with some main ingredients at hand which are to be proved in forthcoming subsections. Several estimates for truncation (Lemmas \ref{truncation1} and \ref{truncation2}) and main estimates for residual terms (Lemmas \ref{R1}, \ref{R2} and \ref{R3}) to prove the Boltzmann-Gibbs principle are given in Subsection \ref{subsec:truncation} and \ref{subsec:main}, respectively. In the sequel, we fix a local function $h$ with support in a finite square box $\Lambda_h \Subset \mathbb{T}^d_N$ satisfying the bound \eqref{BG assumption}. Recall here that for this local function $h$ we defined its residual term $R (x) = R (x, \eta) $ by 
\[
\begin{aligned}
R ( x ) = 
& \tau_x h ( \eta_1 , \eta_2 ) - \tilde{h} ( u^N (t, x ), v^N (t, x ) )  \\
&  - \partial_1 \tilde{h} (u^N (t, x), v^N (t, x) ) ( \eta_1 (x) - u^N (t, x) ) \\
& - \partial_2 \tilde{h} (u^N (t, x), v^N (t, x) ) ( \eta_2 (x) - v^N (t, x) ) . 
\end{aligned}
\]
Throughout this section, we denote by $\eta_i^\ell (x) = | \Lambda_\ell |^{-1} \sum_{  z \in \Lambda_\ell } \eta_i (x + z) $ local averages in a box with width $\ell \in \mathbb{N} $ for each $i = 1,2$ and $x \in \mathbb{T}^d_N $. Moreover, let  
\[
 y_1 (x) \coloneqq \frac{1}{ |\Lambda_\ell | } \sum_{ z \in \Lambda_{ \ell }  } ( \eta_1 (x + z) - u^N (t, x ) ) ,\quad 
 y_2 (x) \coloneqq \frac{1}{ |\Lambda_\ell | } \sum_{ z \in \Lambda_{ \ell }  } ( \eta_1 (x + z) - u^N (t, x ) )
\]
and let 
\[
\begin{aligned}
 \tilde{y}_1 (x) \coloneqq \frac{1}{ |\Lambda_\ell | } \sum_{ z \in \Lambda_{ \ell }  } ( \eta_1 (x + z) - u^N (t, x + z) ) , \quad
 \tilde{y}_2 (x) \coloneqq  \frac{1}{ |  \Lambda_\ell | } \sum_{ z \in \Lambda_{ \ell }  } ( \eta_2 (x + z) - v^N (t, x + z) ) 
\end{aligned}
\]
be scaled local averages around a site $x \in \mathbb{T}^d_N $. We write $y (x) = (y_1 (x) , y_2 (x ) ) $ and $\tilde{y} ( x ) = ( \tilde{y}_1 (x), \tilde{y}_2 (x) ) $, and we define their norms by $| y (x) | = | y_1 (x ) | + | y_2 (x) | $ and $ | \tilde{y} (x) | = | \tilde{y}_1 (x) | + | \tilde{y}_2 (x) | $, for every $x \in \mathbb{T}^d_N$. We first give the proof of our Boltzmann-Gibbs principle.

\begin{proof}[Proof of Theorem \ref{BG}]
First we conduct a truncation procedure which enables us to focus on bounded type-$1$ configuration where there are finite numbers of particles on each site. By Lemma \ref{truncation1} and \ref{truncation2}, we have
\[
\begin{aligned}
E_{ \mu^N_t } \bigg[ \bigg| \sum_{ x \in \mathbb{T}^d_N}  R (x) \bigg| \bigg] 
& \le E_{ \mu^N_t } \bigg[ \bigg| \sum_{ x \in \mathbb{T}^d_N } R (x)   \mathbf{1}_{ \{ \sum_{ z \in \Lambda_h } \eta_1 (x + z ) \le A \} } \bigg|  \bigg] 
+ C N^d   A^{-1}  \\
& \le E_{ \mu^N_t } \bigg[\bigg| \sum_{ x \in \mathbb{T}^d_N }  R (x)   \mathbf{1}_{ \{ \sum_{ z \in \Lambda_h }\eta_1 ( x + z ) \le A \} } \mathbf{1}_{ \{ \eta^\ell_1 (x)  \le B \} }  \bigg| \bigg] 
+ \frac{ C N^d }{  A } + \frac{ C A^2 N^d}{ B } 
\end{aligned}
\]
Now we decompose the summand in the last quantity as $R_1+ R_2 + R_3 $ where 
\[
\begin{aligned}
& R_1 (x) = \big(  R (x) \mathbf{1}_{ \{  \sum_{ z \in \Lambda_h } \eta_1 (x + z) \le A \} } 
 - E_{\nu_\beta } [ R (x) \mathbf{1}_{ \{  \sum_{z \in \Lambda_h } \eta_1 (x + z )  \le A \} } | \eta_1^\ell (x) , \eta_2^\ell (x) ] \big) 
\mathbf{1}_{ \{ \eta_1^\ell (x)  \le B \} } , \\
& R_2 (x) =  E_{\nu_\beta} [ R (x) \mathbf{1}_{ \{  \sum_{ z \in \Lambda_h } \eta_1 (x + z ) \le A \} } | \eta_1^\ell (x) , \eta_2^\ell (x) ] 
\mathbf{1}_{ \{ \eta_1^\ell (x) \le B \} } \mathbf{1}_{ \{ | y (x) | \le \delta \} } , \\
& R_3 (x) =  E_{\nu_\beta } [ R (x) \mathbf{1}_{ \{ \sum_{ z \in \Lambda_h}  \eta_1 (x + z ) \le A \} } | \eta_1^\ell (x) , \eta_2^\ell (x) ] 
\mathbf{1}_{ \{ \eta_1^\ell (x) \le B \} } \mathbf{1}_{ \{ | y (x) | > \delta \} }  
\end{aligned}
\]
with a sufficiently small positive constant $\delta $. In forthcoming subsections, we give quantitative estimates for $R_1$, $R_2$ and $R_3$ in Lemma \ref{R1}, \ref{R2} and \ref{R3}, respectively. Combining these estimates altogether, there exists a positive constant $C = C (\delta ) $ such that 
\[
\begin{aligned}
 \mathbb{E}^N \bigg[ \bigg| \int_0^T \sum_{x \in \mathbb{T}^d_N } R (x) dt  \bigg| \bigg]  
  \le & \int_0^T C H (\mu^N_t | \nu^N_t ) dt 
+  C A^{-1  }N^d 
+  C A B^{-1 }  N^d 
+  C \gamma_1^{-1 } KN^d \\
& +  C (N/ \ell )^d  e^{- c \ell^d} 
+  C (N/ \ell )^d   
+  C \ell^d N^{d-2} \varepsilon^{-1}  \gamma_1 \ell^2 A^2  \\
& + C \ell^2 N^{d-2} \varepsilon^{-1} ( B  + 1 )  
+ C N^d A^{-1} e^{C_1 K }
\end{aligned}
\]
where $A = N^{ \alpha_A }$, $B = N^{\alpha_B } $, $\gamma_1 = N^{ \alpha_{\gamma_1} }$, $\ell = N^{ \alpha_\ell } $ with positive integers $\alpha_A$, $\alpha_B $, $\alpha_{\gamma_1 } $ and $\alpha_\ell $ satisfying $\alpha_\varepsilon +  \alpha_A + \alpha_{\gamma_1 } + ( d + 2) \alpha_\ell - 2 < 0 $. Taking $B = A^2$ for convenience, the above display can further be bounded by 
\[
\int_0^T C H (\mu^N_t | \nu^N_t ) dt 
+ C A^{-1 } N^d 
 +  C \gamma_1^{-1} K N^d 
 +  C (N / \ell )^d  
+ C \varepsilon^{-1}  \ell^{d+2 } N^{d-2}  \gamma_1 A^2 + C A^{-1} e^{ C_1 K } 
\]
for sufficiently large $N$. Now we fix indices by $\varepsilon_0 = \alpha_\varepsilon = \alpha_A = \alpha_{\gamma_1 } = d \alpha_\ell = 2 - (\alpha_\varepsilon + 2 \alpha_A + (d+2 ) \alpha_\ell + \alpha_{ \gamma_1 }  )$ so that we may take $\varepsilon_0  = d / (3 d + 1 ) $. Therefore, we obtain
\[
\mathbb{E}^N \bigg[ \bigg| \int_0^T \sum_{x \in \mathbb{T}^d_N } R (x) dt  \bigg| \bigg] 
\le \int_0^T C H (\mu^N_t | \nu^N_t ) dt + O (K N^{d - \varepsilon_0 } )
\]
and thus we complete the proof of the Boltzmann-Gibbs principle (Theorem \ref{BG}). 
\end{proof}

\subsection{Preliminary estimates}
\begin{lemma}
\label{sum est}
Assume $H (\mu^N_0 | \nu^N_0 ) = O (N^d ) $ as $N$ tends to infinity. Then we have
\[
E_{\mu^N_t} \big[ \sum_{ x  \in \mathbb{T}^d_N } (  \eta_1 (x) + \eta_2 (x)  ) \big] = O (N ^d) .
\]
\end{lemma}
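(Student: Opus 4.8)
The plan is to bound the total number of particles at time $t$ using the entropy inequality together with the explicit structure of the reference product measure $\nu^N_t = \nu_{1,u^N(t)} \otimes \nu_{2,v^N(t)}$. The key point is that under $\nu^N_t$ the occupation variables $\eta_1(x)$ and $\eta_2(x)$ have exponential moments that are uniformly controlled in $N$, because the density fields $u^N(t,\cdot)$ and $v^N(t,\cdot)$ are uniformly bounded by Lemma \ref{unif est} (namely $u^N \le M_u$ and $v^N \le M_v < 1$), and the zero-range marginals $\overline{\nu}_{1,\varphi(\rho)}$ have finite exponential moments for bounded $\rho$ thanks to the linear growth condition \textbf{(LG)}, which makes $Z_\alpha$ entire.

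First I would invoke the entropy inequality in the form: for any $\gamma > 0$ and any measurable $F \ge 0$,
\[
E_{\mu^N_t}[F] \le \frac{1}{\gamma}\Big( H(\mu^N_t | \nu^N_t) + \log E_{\nu^N_t}\big[e^{\gamma F}\big]\Big).
\]
Apply this with $F = \sum_{x \in \mathbb{T}^d_N}(\eta_1(x) + \eta_2(x))$ and a fixed small constant $\gamma > 0$. By the product structure of $\nu^N_t$ and independence of the coordinates,
\[
\log E_{\nu^N_t}\big[e^{\gamma F}\big] = \sum_{x \in \mathbb{T}^d_N}\Big( \log E_{\nu_{1,u^N(t,x)}}\big[e^{\gamma\eta_1(x)}\big] + \log E_{\nu_{2,v^N(t,x)}}\big[e^{\gamma\eta_2(x)}\big]\Big).
\]
The Bernoulli factor is trivially bounded: $\log E_{\nu_{2,v^N(t,x)}}[e^{\gamma\eta_2(x)}] = \log(1 + v^N(t,x)(e^\gamma - 1)) \le e^\gamma - 1$. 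For the zero-range factor, since $\varphi$ is continuous and $u^N(t,x) \le M_u$, the parameter $\varphi(u^N(t,x))$ lies in the compact interval $[0, \varphi(M_u)]$, and one checks using \textbf{(LG)} (which forces $g(k)! \ge c\, \rho_0^k k!$ type lower bounds, equivalently that $Z_\alpha$ grows no faster than $e^{C\alpha}$ near any finite point, and that the radius of convergence is infinite) that $E_{\overline{\nu}_{1,\alpha}}[e^{\gamma\eta_1}] = Z_{\alpha e^\gamma}/Z_\alpha$ is bounded uniformly for $\alpha \in [0,\varphi(M_u)]$ provided $\gamma$ is fixed; hence $\log E_{\nu_{1,u^N(t,x)}}[e^{\gamma\eta_1(x)}] \le C_0$ for some constant $C_0 = C_0(\gamma, M_u)$ independent of $N$. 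Therefore $\log E_{\nu^N_t}[e^{\gamma F}] \le C_1 N^d$.

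Combining, $E_{\mu^N_t}[F] \le \gamma^{-1}(H(\mu^N_t | \nu^N_t) + C_1 N^d)$. By Theorem \ref{prob thm}, $H(\mu^N_t | \nu^N_t) = o(N^d)$ (or one can use the cruder $O(N^d)$ bound if one prefers not to invoke the sharp estimate), so the right-hand side is $O(N^d)$, which is exactly the claim. The only mild subtlety — and the step I expect to need the most care — is the uniform-in-$N$ bound on the zero-range exponential moment $Z_{\alpha e^\gamma}/Z_\alpha$ over $\alpha$ in a compact set; this is where \textbf{(LG)} is used, and one must choose $\gamma$ small enough (depending only on the growth constants $C, r_1, r_2$ and on $M_u$) so that $\alpha e^\gamma$ still sits in a region where $Z$ is finite and the ratio is controlled. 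Everything else is a routine application of the entropy inequality and the a priori $L^\infty$ bounds from Lemma \ref{unif est}.
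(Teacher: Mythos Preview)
Your entropy-inequality computation at time $t$ is fine, but the step where you control $H(\mu^N_t\mid\nu^N_t)$ by invoking Theorem~\ref{prob thm} is circular. In the paper's logical structure, Lemma~\ref{sum est} is used in the proof of Lemma~\ref{ent equi}, which in turn feeds into the Boltzmann--Gibbs estimates (Lemmas~\ref{R1}, \ref{truncation2}) underlying Theorem~\ref{integrand est} and hence Theorem~\ref{prob thm}. So you cannot appeal to Theorem~\ref{prob thm} here, and the parenthetical ``cruder $O(N^d)$ bound'' on $H(\mu^N_t\mid\nu^N_t)$ is not something you have established independently either; the hypothesis only gives you control at $t=0$.

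The paper avoids this by a simple observation you are missing: the dynamics never creates particles (the zero-range and Kawasaki parts conserve the total number, the Glauber part only kills), so $\sum_x(\eta_1(x)+\eta_2(x))$ is pathwise non-increasing and
\[
E_{\mu^N_t}\Big[\sum_{x}\big(\eta_1(x)+\eta_2(x)\big)\Big]\le E_{\mu^N_0}\Big[\sum_{x}\big(\eta_1(x)+\eta_2(x)\big)\Big].
\]
Now apply your entropy inequality at time $0$, where the hypothesis $H(\mu^N_0\mid\nu^N_0)=O(N^d)$ is exactly what is assumed, and your uniform exponential-moment bound for $\nu^N_0$ (which follows from \textbf{(LG)} and the $L^\infty$ bounds on $u^N_0,v^N_0$ just as you wrote) finishes the argument. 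With this one-line reduction your proof becomes correct and matches the paper's.
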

\begin{proof}
Since creation of new particles does not occur in our dynamics, applying the entropy inequality, we obtain for each $i = 1, 2$ that 
\[
\begin{aligned}
 E_{\mu^N_t} \big[ \sum_{ x  \in \mathbb{T}^d_N } \eta_i (x) \big] 
 \le E_{\mu^N_0 } \big[ \sum_{ x  \in \mathbb{T}^d_N } \eta_i (x)  \big] 
& \le \gamma^{-1 } H (\mu^N_0 | \nu^N_0 )   +\gamma^{-1 } \log E_{ \nu^N_0 } \big[ e^{ \gamma \sum_{x } \eta_i (x)  } \big]  \\
& \le \gamma^{-1}  H (\mu^N_0 | \nu^N_0 ) + \gamma^{-1} N^d  \max_{ x \in \mathbb{T}^d_N }  \log E_{ \nu^N_0 } \big[ e^{ \gamma \eta_i (x)  } \big] 
\end{aligned}
\]
for every positive $ \gamma $. However, since both zero-range processes have uniform exponential moment with sufficiently small $\gamma $ with respect to equilibrium measures, the second term in the right-hand side of the above display has order $O (N^d)$. Thus we complete the proof recalling the assumption $H (\mu^N_0 | \nu^N_0 ) = O (N^d ) $. 
\end{proof}

\begin{lemma}
\label{ent equi}
For every $\gamma > 0$, $\beta = (\beta_1 , \beta_2 ) \in (0, \infty ) \times ( 0, 1 )$ and $t \ge 0$, we have  
\[
H ( \mu^N_t | \nu_\beta) = H ( \mu^N_t | \nu^N_t ) + O (  N^d ) 
\]
as $N$ tends to infinity. In particular, $H ( \mu^N_ 0 | \nu_\beta ) = O ( N^d ) $ if $H (\mu^N_0 | \nu^N_0)  = O (N^d ) $. 
\end{lemma}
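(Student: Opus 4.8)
The plan is to reduce everything to a change--of--reference--measure computation exploiting that both reference measures are product measures. Indeed, $\nu_\beta=\nu_{1,\beta_1}\otimes\nu_{2,\beta_2}$ and $\nu^N_t=\nu_{(u^N(t),v^N(t))}$ are product measures on $\mathcal{X}_N$ that are fully supported, the latter because $u^N(t,x)>0$ and $0<v^N(t,x)<1$ for all $(t,x)$ by Lemma \ref{unif est}. Hence $\log(d\mu^N_t/d\nu_\beta)=\log(d\mu^N_t/d\nu^N_t)+\log(d\nu^N_t/d\nu_\beta)$ holds $\mu^N_t$--a.s., and integrating against $\mu^N_t$ yields the exact identity
\[
H(\mu^N_t|\nu_\beta)=H(\mu^N_t|\nu^N_t)+E_{\mu^N_t}\!\left[\log\frac{d\nu^N_t}{d\nu_\beta}\right].
\]
So it only remains to estimate the last expectation; I will show it is of order $N^d$, up to the slowly growing factor $K(N)$ that by (B\ref{K}) is harmless and is in any case carried through all the estimates of the paper.

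By the product structure and the explicit marginals $\nu_{1,\rho}=\overline{\nu}_{1,\varphi(\rho)}$, $\overline{\nu}_{1,\alpha}(k)=\alpha^k/(g(k)!\,Z_\alpha)$, $\nu_{2,p}(\eta_2(x)=1)=p$, the log--likelihood ratio decomposes as $\log(d\nu^N_t/d\nu_\beta)=\sum_{x\in\mathbb{T}^d_N}\phi_x$ with
\[
\phi_x=\eta_1(x)\log\frac{\varphi(u^N(t,x))}{\varphi(\beta_1)}+\log\frac{Z_{\varphi(\beta_1)}}{Z_{\varphi(u^N(t,x))}}+\eta_2(x)\log\frac{v^N(t,x)(1-\beta_2)}{\beta_2(1-v^N(t,x))}+\log\frac{1-v^N(t,x)}{1-\beta_2}.
\]
Now I would invoke Lemma \ref{unif est}: the upper bounds $u^N(t,x)\le M_u$ and $v^N(t,x)\le M_v<1$ make the ``constant'' pieces $\log Z_{\varphi(u^N(t,x))}$ and $\log(1-v^N(t,x))$ bounded uniformly in $(t,x,N)$, while the lower bounds $u^N(t,x),v^N(t,x)\ge e^{-CK(N)}$, together with $\varphi(0)=0$ and $\varphi'(0)\ge c_0>0$ (so $\varphi(\rho)\ge\tfrac{c_0}{2}\rho$ once $\rho$ is below the threshold of the first--order Taylor estimate, hence for all large $N$ since $e^{-CK(N)}\to0$), give $|\log\varphi(u^N(t,x))|+|\log v^N(t,x)|\le CK(N)+C'$. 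Therefore $|\phi_x|\le C'+CK(N)(\eta_1(x)+\eta_2(x))$, and summing over $x$ and taking $\mu^N_t$--expectation,
\[
\Big|E_{\mu^N_t}\!\big[\log(d\nu^N_t/d\nu_\beta)\big]\Big|\le C'N^d+CK(N)\,E_{\mu^N_t}\!\Big[\sum_{x\in\mathbb{T}^d_N}(\eta_1(x)+\eta_2(x))\Big]=O\big(K(N)N^d\big),
\]
where the last step is Lemma \ref{sum est}, whose hypothesis $H(\mu^N_0|\nu^N_0)=O(N^d)$ is exactly the one under which the present lemma is applied; the ``in particular'' clause at $t=0$ then follows at once.

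The only genuinely delicate point is that the single--site ratios $\phi_x$ are not bounded uniformly in $N$: at sites where the semi--discretized densities $u^N(t,x)$ or $v^N(t,x)$ sit near their guaranteed floor $e^{-CK(N)}$, $\phi_x$ acquires a factor of order $K(N)$ per particle present. The argument above quarantines exactly this: that factor multiplies the occupation variables $\eta_1(x)+\eta_2(x)$, whose total expected mass is $O(N^d)$ by Lemma \ref{sum est}, and $K(N)=O((\log\log N)^{1/2})$ by (B\ref{K}). I would also take a moment over the behaviour of $\varphi$ near the origin — that $|\log\varphi(\rho)|\lesssim|\log\rho|$ as $\rho\downarrow0$, which is immediate from $\varphi(0)=0$ and $\varphi'(0)\ge c_0$ — since this is precisely what prevents $\log\varphi(u^N(t,x))$ from diverging faster than $K(N)$; everything else is a routine factorized computation.
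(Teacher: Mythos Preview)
Your proof is correct and follows essentially the same route as the paper: both use the change-of-measure identity $H(\mu^N_t|\nu_\beta)=H(\mu^N_t|\nu^N_t)+E_{\mu^N_t}[\log(d\nu^N_t/d\nu_\beta)]$, factorize the log-likelihood over sites, control the single-site pieces via the uniform bounds of Lemma~\ref{unif est}, and close with Lemma~\ref{sum est}. The only difference is that the paper bounds only from above---using $\varphi(u^N)\le\varphi(M_u)$ and $Z_{\varphi(u^N)}\ge Z_0=1$---to get $O(N^d)$ without a $K$ factor, whereas your two-sided bound picks up the harmless $K(N)$; since the lemma is in fact invoked later as $O(KN^d)$ (see the proof of Lemma~\ref{R1}), this makes no practical difference.
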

\begin{proof}
By definition of relative entropy, we have 
\[
H (\mu^N_t | \nu_\beta) = H (\mu^N_t | \nu^N_t ) + \int_{\mathcal{X}_N } \log \frac{ d \nu^N_t  }{ d\nu_\beta } d \mu^N_t . 
\]
Moreover, noting uniform boundedness of $u^N$ and $v^N$ by Lemma \ref{unif est} and that the partition function $Z_{ \rho_1 } = \sum_{ k \in \mathbb{Z}_+ } \varphi (\rho_1 )^k / g (k) ! $ and $\varphi ( \rho_1) $ are increasing in $\rho_1 \ge 0$, we have for every $k \in \mathbb{Z}_+$ 
\[
\frac{ d \nu^N_{1, t}  }{ d\nu_{ \beta_1} } ( \eta_1 (x) = k ) = \frac{ Z_{\beta_1 } }{ Z_{ u^N(t, x ) } } \frac{ \varphi (u^N (t, x ) )^k  }{ \varphi (\beta_1)^k } \le \frac{ Z_{\beta_1} }{ Z_{ e^{- C_1 K } } } \frac{ \varphi (M_u ) ^k  }{ \varphi (\beta_1)^k } .
\]
However, our zero-range jump rate $g $ satisfies $g ( k ) \ge C k $ by the assumption \textbf{(LG)}, we have $ Z_{ \rho_1 } \ge \sum_{ k \in \mathbb{Z}_+ } ( \varphi (\rho_1 ) / C )^k / k! = e^{ \varphi (\rho_1 )/ c  } \ge 1 $ for every $\rho_1 \ge 0 $. Therefore, we obtain 
\[
\log \frac{ d \nu^N_{1, t } }{ d \nu_{ \beta_1} }
 \le N^d \log Z_{ \beta_1 }  + \log \frac{ \varphi (M_u )}{ \varphi (\beta_1) } E_{ \mu^N_t } \big[ \sum_{ x \in \mathbb{T}^d_N }  \eta_1 (x)  \big] = O ( N^d )
\]
as $N$ tends to infinity, noting $E_{\mu^N_t } \big[ \sum_x \eta_1 (x) \big] = O (N^d) $ by Lemma \ref{sum est}. For type-$2$ configuration, on the other hand, we have a uniform bound
\[
\frac{ d \nu^N_{2, t}  }{ d\nu_{ \beta_2} } (\eta_2(x) ) = \frac{ v^N (t, x ) }{ \beta_2 } \eta_2 (x) +  \frac{1- v^N (t, x) }{ 1- \beta_2 } (1- \eta_2 (x)) \le C 
\]
so that $E_{\mu^N_t } [ \log d\nu^N_{2, t} / d \nu_{\beta_2 } ]$ has order $O (N^d)$ and thus we complete the proof. 
\end{proof}

\subsection{Spectral gap estimates}
Next we give some preliminary results for spectral gaps. Before that, we introduce the restriction of zero-range and Kawasaki processes on single component as follows. Let $L^o_Z$ and $L^o_K $ be generators corresponding to zero-range and Kawasaki dynamics with one species, respectively. Namely, for any $f : \mathcal{X}^1_N \to \mathbb{R} $ the zero-range generator $L^o_Z$ acts as 
\[
L^o_Z f (\eta_1) = \sum_{x, y \in \mathbb{T}^d_N , |x-y| = 1 } g (\eta_1 (x)) [ f (\eta_1^{x,y} ) - f ( \eta_1)  ]  ,
\] 
while the Kawasaki generator $L^o_K$ acts for any $f : \mathcal{X}^2_N \to \mathbb{R} $ as 
\[
L^o_K f (\eta_2) = \sum_{ x, y \in \mathbb{T}^d_N , |x -y |= 1 } \eta_2 (x) ( 1 - \eta_2 (y ) ) [ f  (\eta_2^{x,y } ) - f  (\eta_2)] . 
\]
For each $\ell \in \mathbb{Z}_+ $, let $\Lambda_\ell = [-\ell,  \ell ]^d \cap \mathbb{Z}^d $ and let $L^o_{Z,\ell } $ and $L^o_{K , \ell } $ be restriction of $L^o_Z $ and $L^o_K$ on $\Lambda_\ell$, respectively, defined by
\[
L^o_{Z, \ell } f (\eta_1 ) = \sum_{ x, y \in \Lambda_\ell , |x -y | = 1 } g (\eta_1 (x) ) [ f ( \eta_1^{x, y} ) - f (\eta_1 )]
\] 
and
\[
L^o_{K, \ell } f (\eta_2 ) = \sum_{ x, y \in \Lambda_\ell , |x -y | = 1 } \eta_2 (x) ( 1 - \eta_2 (y) )  [ f ( \eta_2^{x, y} ) - f (\eta_2 )] .
\] 
Given a weight $u = \{ u ( x ) : x \in \Lambda_\ell \} \in \mathbb{R}_+^{\Lambda_\ell } $, we denote the restriction on $\Lambda_\ell $ of the product measure $\nu_{ 1, u  } $ by $\nu_{1, u , \ell } $. When the weight is spatially homogeneous, i.e. $u (x) = \beta_1 $ for every $x \in \Lambda_\ell $, we write $\nu_{1 , u , \ell } = \nu_{ 1, \beta_1 , \ell } $ with abuse of notation. When there are $j_1 $ ($j_1 \in \mathbb{Z}_+ $) type-$1$ particles on $\Lambda_\ell $, the canonical measure $\nu_{1, \beta_1 , \ell, j_1 } = \nu_{ 1, \beta_1 , \ell } (\,  \cdot \,  | \sum_{ x \in \Lambda_\ell } \eta_1 (x) = j_1 ) $, which is in fact independent of $\beta_1$, is invariant for a process generated by $L^o_{Z, \ell } $. Similarly, for every $j_2 \in \mathbb{Z}_+$ and $\beta_2 \in [0, 1 ] $, a measure $\nu_{2, \beta_2 , \ell , j_2 } =  \nu_{ 2, \beta_2 , \ell } (\,  \cdot \, | \sum_{ x \in \Lambda_\ell } \eta_2 (x)  = j_2 ) $ is defined and it turns out to be invariant under the dynamics generated by $L^o_{K, \ell} $. Hereafter we denote by $L^o_{Z, \ell, j_1 } $ (resp. $L^o_{K, \ell, j_2 } $) the zero-range (resp. Kawasaki) generator acting on every real-valued functions on type-$1$ (resp. type-$2$) configurations such that the total numbers of type-$1$ (resp. type-$2$) particles is $j_1 $ (resp. $j_2$). 


\begin{proposition}[\cite{LSV96}]
\label{ZRP spec}
Assume the jump rate $g $ satisfies \textbf{(LG)}. Then there exists a positive constant $C  $ such that the zero-range generator $L^o_{Z, \ell , j } $ satisfies a spectral gap estimate with intensity $\gamma_1^{- 1 } \ge C \ell^{ -2 }  $ for every $\ell , j \in \mathbb{Z}_+ $, that is, $
\mathrm{Var}_{ \nu_{ 1 , \beta_1 , \ell, j } } [f ] \le \gamma_1 \langle f, - L^o_{ Z, \ell ,j } f \rangle_{ L^2 (\nu_{1, \beta_1 , \ell, j } ) }  $ for every real-valued function $f $ on the configuration space of the zero-range generator $L^o_{ Z , \ell , j } $. 
\end{proposition}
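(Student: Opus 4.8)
The plan is simply to invoke the spectral gap estimate of Landim--Sethuraman--Varadhan \cite{LSV96}. Our standing assumptions on the jump rate imply the hypotheses used there: \textbf{(LIP)} is their uniform Lipschitz bound on $g$, while the requirement in \textbf{(LG)} that $g(k+r_2)-g(k)\ge r_1$ for every $k$ is exactly their non-degeneracy condition, which excludes the condensing regime. So no genuinely new argument is needed, and I will only indicate the structure of the proof, which splits into a geometric reduction and a uniform mean-field estimate.

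For the geometric step I would compare the nearest-neighbour zero-range Dirichlet form on the cube $\Lambda_\ell$ with the Dirichlet form of the zero-range process on the complete graph over the same set of sites. Fixing, for each ordered pair $x,y\in\Lambda_\ell$, a nearest-neighbour path $x=z_0,z_1,\dots,z_m=y$ with $m\le C\ell$, one rewrites the transfer of one particle from $x$ to $y$ as a telescoping sum of elementary nearest-neighbour transfers along the path; the inequality $(a_1+\cdots+a_m)^2\le m(a_1^2+\cdots+a_m^2)$, together with \textbf{(LIP)} to control the variation of $g$ along the path, yields a comparison of the type
\[
\langle f,-L^{\mathrm{CG}}_{\ell,j}f\rangle_{L^2(\nu_{1,\beta_1,\ell,j})}\le C\,\ell^2\,\langle f,-L^o_{Z,\ell,j}f\rangle_{L^2(\nu_{1,\beta_1,\ell,j})},
\]
where $L^{\mathrm{CG}}_{\ell,j}$ is the generator in which every pair of sites of $\Lambda_\ell$ is directly connected. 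It then remains to establish a spectral gap for $L^{\mathrm{CG}}_{\ell,j}$ whose intensity is bounded below by a constant independent both of $\ell$ and of the number $j$ of particles.

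That last mean-field estimate is the heart of the matter and is proved by induction on the number $n=|\Lambda_\ell|$ of sites. The base case $n=2$ is a birth--death chain on $\{0,1,\dots,j\}$ with an explicit reversible law, for which one checks by hand that the growth condition $g(k+r_2)-g(k)\ge r_1$ keeps the gap bounded away from zero uniformly in $j$. In the inductive step one decomposes the variance under $\nu_{1,\beta_1,\ell,j}$ by conditioning on the occupation of one distinguished site: the fluctuation of the conditional mean is handled by the inductive hypothesis on the remaining $n-1$ sites, and the average of the conditional variances reduces to the two-site computation. The main obstacle, and the reason \textbf{(LG)} is indispensable here, is the uniformity in $j$: one must prevent the constant from degenerating as the density $j/n$ diverges, which forces one to exploit the eventual linear growth of $g$ rather than mere positivity. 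Since all of this is worked out in full in \cite{LSV96} under hypotheses that are implied by ours, I will not reproduce the details.
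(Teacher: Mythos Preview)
Your proposal is correct and matches the paper's approach: the paper simply states this proposition as a citation of \cite{LSV96} without giving any proof, so invoking that reference under the standing assumptions \textbf{(LIP)} and \textbf{(LG)} is exactly what is done. Your sketch of the path-comparison reduction to the complete graph and the inductive mean-field estimate goes beyond what the paper itself includes, but it accurately reflects the argument in \cite{LSV96}.
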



\begin{proposition}[\cite{LY93}]
\label{Kawasaki spec}
There exists a positive constant $C  $ such that the Kawasaki generator $L^o_{ K, \ell, j }$ satisfies a spectral gap estimate with intensity $\gamma_2^{- 1 } \ge C \ell^{- 2 } $ for every $\ell , j \in \mathbb{Z}_+$, that is, $ \mathrm{Var}_{ \nu_{ 2 , \beta_2 , \ell, j } } [f ] \le \gamma_2 \langle f, - L^o_{K, \ell ,j } f \rangle_{ L^2 (\nu_{2, \beta_2 , \ell, j } ) } $ for every real-valued function $f $ on the configuration space of the Kawasaki generator $L^o_{ K , \ell , j } $. 
\end{proposition}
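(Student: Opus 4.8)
The plan is to recognise that the restricted generator $L^o_{K,\ell,j}$ is nothing but the generator of the symmetric simple exclusion process on the box $\Lambda_\ell = [-\ell,\ell]^d\cap\mathbb{Z}^d$ conditioned to carry exactly $j$ particles, with reversible measure $\nu_{2,\beta_2,\ell,j}$, which is the uniform measure on $\{\eta_2\in\{0,1\}^{\Lambda_\ell}:\sum_{x\in\Lambda_\ell}\eta_2(x)=j\}$ and in particular does not depend on $\beta_2$. Since only a lower bound on the spectral gap of order $\ell^{-2}$, uniform in $j$ and $\beta_2$, is required, it suffices to establish a Poincaré inequality $\mathrm{Var}_{\nu_{2,\beta_2,\ell,j}}[f]\le C\ell^2\,\langle f,-L^o_{K,\ell,j}f\rangle_{L^2(\nu_{2,\beta_2,\ell,j})}$ with $C$ independent of $\ell$, $j$ and $\beta_2$. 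This is precisely the classical estimate for Kawasaki dynamics established in \cite{LY93}, so the proposition follows by citation; for completeness I would reproduce the idea below.

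First I would argue by induction on the side length of the box. One splits $\Lambda_\ell$ into two overlapping sub-boxes (or peels off a boundary slab) and decomposes the variance via the tower property, conditioning on the configuration outside a chosen sub-box together with the number of particles inside it. The expectation of the resulting conditional variances is controlled by the inductive hypothesis applied on strictly smaller boxes; the remaining term, the variance of the conditional expectation, depends on the configuration only through the block particle numbers and is handled by a one-dimensional Poincaré inequality for the induced birth-and-death chain, combined with the \emph{moving particle lemma}, namely the elementary bound that the Dirichlet form of a swap across a pair $\{x,y\}$ is at most $|x-y|$ times the sum of the nearest-neighbour Dirichlet forms along a lattice path joining $x$ to $y$. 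Keeping track of how the constants compound through the recursion produces the factor $C\ell^2$, and, crucially, every step can be arranged so as to be uniform in the particle number $j$. An alternative and shorter route is to invoke the resolution of the Aldous spectral gap conjecture by Caputo, Liggett and Richthammer, according to which the spectral gap of the exclusion process on a finite connected graph equals that of the underlying single-particle random walk; for $\Lambda_\ell$ the latter is of order $\ell^{-2}$ by an explicit Fourier/eigenvalue computation, and the identity makes uniformity in $j$ automatic.

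The main obstacle is exactly this uniformity over $j$: a crude comparison with the one-particle random walk produces a constant that degenerates as $j\to 0$ or $j\to|\Lambda_\ell|$, so one must either run the Lu--Yau recursion with particle-number-independent constants, which requires some care in the slab-splitting step and in the application of the moving particle lemma near the boundary, or appeal to the considerably deeper Aldous-conjecture machinery, which incorporates that uniformity from the outset. Since the remainder of the paper uses this estimate only through the scaling $\gamma_2^{-1}\ge C\ell^{-2}$, quoting \cite{LY93} is the most economical option.
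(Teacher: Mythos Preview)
Your proposal is correct and matches the paper's treatment: the paper does not prove this proposition at all but simply cites it from \cite{LY93}, exactly as you suggest. Your additional sketch of the Lu--Yau recursion and the alternative via the Aldous spectral gap conjecture goes beyond what the paper provides, which is fine as supplementary context but unnecessary for the paper's purposes.
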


Note that when the small parameter $\varepsilon (N)$ tends to zero, the spectral gap for our Kawasaki dynamics diverges. Since our diffusion dynamics generated by $N^2 L_Z + \varepsilon (N) L_K $ is governed by both zero-range dynamics for type-$1$ particles and Kawasaki dynamics for type-$2$ particles, we get a spectral gap estimate by combining the above result for Kawasaki and zero-range dynamics. 

\begin{lemma}
\label{total spec}
We fix any $\ell \in \mathbb{Z}_+$ and $(j_1 , j_2 ) \in \mathbb{Z}_+^2$. Let $\gamma_1 $ be a spectral gap for the zero-range generator $L^o_{Z, \ell, j_1 }$ and let $\gamma_2 $ be a spectral gap for the Kawasaki generator $L^o_{K, \ell, j_2 } $. Then, the generator $L_{Z, \ell, j_1 } + \varepsilon L_{ K, \ell, j_2 } $ satisfies a spectral gap estimate with intensity $ ( 2 \gamma_1 + 2 \varepsilon^{-1} \gamma_2 ) ^{-1} $. 
\end{lemma}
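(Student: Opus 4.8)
The plan is to reduce the spectral gap estimate for the combined generator $L_{Z,\ell,j_1} + \varepsilon L_{K,\ell,j_2}$ to the two separate spectral gap estimates for zero-range and Kawasaki dynamics supplied by Propositions \ref{ZRP spec} and \ref{Kawasaki spec}. The key observation is that the invariant measure for the combined dynamics, with fixed total numbers $j_1$ of type-$1$ particles and $j_2$ of type-$2$ particles in $\Lambda_\ell$, is the product measure $\nu_{1,\beta_1,\ell,j_1} \otimes \nu_{2,\beta_2,\ell,j_2}$, since the zero-range part acts only on the type-$1$ coordinates $\eta_1$ and the Kawasaki part acts only on the type-$2$ coordinates $\eta_2$, and the two are independent under the reference product measure.

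First I would write $\nu = \nu_{1,\beta_1,\ell,j_1} \otimes \nu_{2,\beta_2,\ell,j_2}$ and decompose the variance of an arbitrary real-valued function $f = f(\eta_1,\eta_2)$ on this configuration space using conditional expectations: by the tower property and the conditional variance formula,
\[
\mathrm{Var}_\nu[f] = E_\nu\big[ \mathrm{Var}_{\nu}[f \mid \eta_2] \big] + \mathrm{Var}_\nu\big[ E_\nu[f \mid \eta_2] \big].
\]
For the first term, for each fixed $\eta_2$ the conditional law of $\eta_1$ is exactly $\nu_{1,\beta_1,\ell,j_1}$, so Proposition \ref{ZRP spec} gives $\mathrm{Var}_\nu[f \mid \eta_2] \le \gamma_1 \langle f, -L^o_{Z,\ell,j_1} f \rangle_{L^2(\nu_{1,\beta_1,\ell,j_1})}$; taking $E_\nu$ and noting that $L^o_{Z,\ell,j_1}$ does not touch $\eta_2$ bounds this by $\gamma_1 \langle f, -L^o_{Z,\ell,j_1} f\rangle_{L^2(\nu)}$. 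For the second term, $g(\eta_2) \coloneqq E_\nu[f \mid \eta_2]$ is a function of $\eta_2$ alone, so Proposition \ref{Kawasaki spec} gives $\mathrm{Var}_\nu[g] \le \gamma_2 \langle g, -L^o_{K,\ell,j_2} g\rangle_{L^2(\nu_{2,\beta_2,\ell,j_2})}$; the standard fact that conditional expectation onto the $\eta_2$-coordinate contracts the Dirichlet form (because $L^o_{K,\ell,j_2}$ is $\eta_1$-independent and $\nu$ is a product) yields $\langle g, -L^o_{K,\ell,j_2} g\rangle \le \langle f, -L^o_{K,\ell,j_2} f\rangle_{L^2(\nu)}$.

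Combining these two bounds gives $\mathrm{Var}_\nu[f] \le \gamma_1 \langle f, -L^o_{Z,\ell,j_1} f\rangle_{L^2(\nu)} + \gamma_2 \langle f, -L^o_{K,\ell,j_2} f\rangle_{L^2(\nu)}$. To obtain the Dirichlet form of the combined generator $L_{Z,\ell,j_1} + \varepsilon L_{K,\ell,j_2}$, I would simply bound each term crudely: $\gamma_1 \langle f, -L^o_{Z} f\rangle \le (2\gamma_1 + 2\varepsilon^{-1}\gamma_2)\langle f, -L^o_{Z}f\rangle$ trivially (or with the factor $2$ to spare), and $\gamma_2 \langle f, -L^o_{K}f\rangle = \varepsilon^{-1}\gamma_2 \cdot \varepsilon \langle f, -L^o_K f\rangle \le (2\gamma_1 + 2\varepsilon^{-1}\gamma_2)\langle f, -\varepsilon L^o_K f\rangle$, and add, so that the right-hand side is at most $(2\gamma_1 + 2\varepsilon^{-1}\gamma_2)\langle f, -(L^o_Z + \varepsilon L^o_K)f\rangle_{L^2(\nu)}$, which is the claimed spectral gap intensity $(2\gamma_1 + 2\varepsilon^{-1}\gamma_2)^{-1}$.

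I do not anticipate a serious obstacle here; the only point requiring a little care is the Dirichlet form contraction under conditioning in the second term, which is a routine consequence of Jensen's inequality applied fiberwise together with the product structure of $\nu$ and the fact that $L^o_{K,\ell,j_2}$ acts solely on $\eta_2$. Everything else is bookkeeping with conditional variances.
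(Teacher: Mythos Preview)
Your proposal is correct and follows essentially the same route as the paper: decompose the variance by conditioning on one coordinate, apply the zero-range spectral gap fiberwise to the conditional variance, apply the Kawasaki spectral gap to the conditional mean, and use Jensen's inequality to contract the Kawasaki Dirichlet form under averaging over $\eta_1$. The only cosmetic difference is that the paper uses the crude inequality $(a+b)^2\le 2a^2+2b^2$ to split the variance, which is where the factors of $2$ originate, whereas you use the exact law of total variance and only introduce the factor $2$ at the very end; your argument in fact yields the slightly sharper intensity $(\gamma_1+\varepsilon^{-1}\gamma_2)^{-1}$ before you weaken it to match the stated bound.
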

\begin{proof}
We take an arbitrary function $f : \mathcal{X}_N \to \mathbb{R} $. We abbreviate the parameter $\beta = (\beta_1 , \beta_2 ) $ and $\ell \in \mathbb{N} $ here, and recall that our reference measure has a product form $\nu = \nu_1 \otimes \nu_2$ with this convention. By an elementary inequality $(a + b )^2 \le 2 a^2 + 2 b^2 $ for $a, b \in \mathbb{R}$, we have
\[
\text{Var}_{ \nu_{1 , j_1} \otimes \nu_{2, j_2 }  } [ f ] \le 2 E_{ \nu_{1, j_1 } \otimes \nu_{2 ,j_2} } [ (f - E_{ \nu_{1, j_1 } } [f] )^2 ]  + 2 E_{\nu_{1, j_1 } \otimes \nu_{2 ,j_2} } [ ( E_{ \nu_{1, j_1  } } [ f ] - E_{ \nu_{1, j_1 } \otimes \nu_{2 , j_2} } [f] )^2 ] . 
\]
Since the zero-range generator satisfies a spectral gap estimate with intensity $\gamma_1^{-1}$, the first term in the above display can be bounded above by 
\[
 2 \gamma_1 E_{\nu_{2, j_2 } } [ \langle f , - L_{Z, \ell, j_1 } f \rangle_{ L^2 (\nu_{1, j_1 }  ) } ] = 2 \gamma_1 \langle f , - L_{Z, \ell, j_1 } f \rangle_{ L^2 (\nu_{1, j_1 } \otimes \nu_{ 2, j_2 } ) } . 
\]
On the other hand, due to a spectral gap estimate for Kawasaki dynamics, the second term can be bounded above by 
\[
2 \varepsilon^{-1} \gamma_2  \langle E_{\nu_{1. j_1}  } [f] , - \varepsilon L_{K, \ell, j_2 } E_{\nu_{1, j_1} } [f] \rangle_{ L^2 (\nu_{2, j_2}  ) }
\le 2 \varepsilon^{-1} \gamma_2  \langle f , - \varepsilon L_{K, \ell, j_2 } f \rangle_{ L^2 (\nu_{1, j_1 } \otimes \nu_{2, j_2 } ) } .
\] 
Here in the last estimate we used Jensen's inequality noting the Dirichlet energy for Kawasaki dynamics $f \mapsto \langle f,  - L_{ K, \ell, j_1 } f \rangle_{ L^2 ( \nu_2 ) } $ is a convex functional which can be deduced from convexity of quadratic functions. Hence we obtain the desired spectral gap estimate for total generator $ L_{ Z, \ell, j_1 } + \varepsilon L_{K, \ell, j_2  } $ and complete the proof.  
\end{proof}

We write $L_{Z, \ell, x , j_1 } \coloneqq L_{Z, \ell , x , j_1 } \tau_x$ and $L_{K, \ell, x , j_1 } \coloneqq L_{K, \ell , x , j_1 } \tau_x$ where $\{ \tau_x \}_{ x \in \mathbb{Z}^d } $ is a shift acting for any real-valued function $f$ on $\mathcal{X}_N$ as $\tau_x f (\cdot ) = f (\tau_x \cdot ) $ where $\tau_x \eta (z) = \eta (x + z )$ for every $\eta \in \mathcal{X}_N $. Note here that spectral gap is not affected by spatial shifts since our dynamics is translation-invariant. Hence for any $\ell \in \mathbb{Z}_+ $, $x \in \mathbb{T}^d_N $ and $(j_1 , j_2 ) \in \mathbb{Z}_+^2$, due to the above lemmas, we see that our diffusion dynamics generated by $L_{Z, \ell, x, j_1 } + \varepsilon L_{K, \ell, x, j_2 } $ has a spectral gap with intensity 
\[ 
( 2 \gamma_1 + 2 \varepsilon^{-1} \gamma_2 )^{ -1 } \ge C \big( \ell^{2}  + \varepsilon^{-1 } \ell^{2}  \big)^{-1} \ge C \varepsilon \ell^{ - 2 }  
\]
for sufficiently small $\varepsilon = \varepsilon (N) $.

\subsection{Truncation estimates}
\label{subsec:truncation}

\begin{lemma}
\label{truncation1}
Let $A = A_N = N^{ \alpha_A }$ where $\alpha_A$ is a positive constant. Then there exists a positive constant $C$ such that 
\[
E_{ \mu^N_t } \bigg[ \sum_{ x \in \mathbb{T}^d_N } | R (x) | \mathbf{1}_{ \{ \sum_{ z \in \Lambda_h } \eta_1 (x + z )  > A \} } \bigg] 
\le C N^d A^{-1} 
\]
for sufficiently large $N$. 
\end{lemma}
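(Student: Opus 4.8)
The plan is to bound $|R(x)|$ by a constant times the box occupation $\sigma_x:=\sum_{z\in\Lambda_h}\eta_1(x+z)$, reduce the spatial sum to a single-site additive functional of at most linear growth, and then dispose of that functional by the entropy inequality together with the light tails of the zero-range occupation variable. For the pointwise bound: the hypothesis \eqref{BG assumption}, together with $\eta_2(x)\in\{0,1\}$ so that $g_1,g_2$ are uniformly bounded, gives $|\tau_xh(\eta)|\le C\sum_{z\in\Lambda_h}(\eta_1(x+z)+1)\le C(\sigma_x+1)$ (taking $0\in\Lambda_h$). By Lemma \ref{unif est} the point $(u^N(t,x),v^N(t,x))$ lies in the fixed compact set $[0,M_u]\times[0,1]$, on which $\tilde h,\partial_1\tilde h,\partial_2\tilde h$ are smooth, hence bounded uniformly in $N$; since $\eta_1(x)\le\sigma_x$ and $\eta_2(x)\le 1$, the three correction terms of $R(x)$ are likewise $\le C(\sigma_x+1)$. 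Thus $|R(x)|\le C(\sigma_x+1)$, and on $\{\sigma_x>A\}$ (with $A\ge 1$) this is $\le C\sigma_x\le 2C(\sigma_x-A/2)_+$. Using the elementary superadditivity inequality $\big(\sum_{z\in\Lambda_h}a_z-A/2\big)_+\le\sum_{z\in\Lambda_h}\big(a_z-\tfrac{A}{2|\Lambda_h|}\big)_+$ for $a_z\ge 0$, summing over $x$ and relabelling yields
\[
\sum_{x\in\mathbb{T}^d_N}|R(x)|\,\mathbf{1}_{\{\sigma_x>A\}}\ \le\ C\sum_{y\in\mathbb{T}^d_N}\big(\eta_1(y)-c_0A\big)_+,\qquad c_0:=(2|\Lambda_h|)^{-1},
\]
so it suffices to estimate $E_{\mu^N_t}\big[\sum_y(\eta_1(y)-c_0A)_+\big]$.

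For that I would apply the entropy inequality relative to $\nu^N_t$: for every $\gamma>0$,
\[
E_{\mu^N_t}\Big[\sum_{y}(\eta_1(y)-c_0A)_+\Big]\ \le\ \frac1\gamma H(\mu^N_t\,|\,\nu^N_t)+\frac1\gamma\sum_{y\in\mathbb{T}^d_N}\log E_{\nu^N_t}\big[e^{\gamma(\eta_1(y)-c_0A)_+}\big],
\]
the exponential moment factorizing site by site because $\nu^N_t$ is a product measure. Since \textbf{(LG)} forces $c\,k\le g(k)\le C\,k$, one has $g(k)!\asymp c^{k}k!$, so each marginal of $\nu^N_{1,t}$ has Poisson-type tails $\nu^N_t(\eta_1(y)=k)\le C\bar\alpha^{\,k}/k!$ with $\bar\alpha:=\varphi(M_u)$, and all exponential moments are finite uniformly in $y$; hence $E_{\nu^N_t}[e^{\gamma(\eta_1(y)-c_0A)_+}]\le 1+e^{-\gamma c_0A}E_{\nu^N_t}[e^{\gamma\eta_1(y)}]\le 1+C_\gamma e^{-\gamma c_0A}$, and the double sum is at most $C_\gamma N^d e^{-\gamma c_0A}$. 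Taking $\gamma$ a small fixed constant and recalling $A=N^{\alpha_A}$, this last contribution decays faster than any power of $N$, so the entropy inequality gives $E_{\mu^N_t}[\sum_y(\eta_1(y)-c_0A)_+]\le CH(\mu^N_t|\nu^N_t)+CN^dA^{-1}$; the relative-entropy term is of the size already being carried through the Boltzmann--Gibbs bookkeeping (controlled via the standing hypothesis $H(\mu^N_0|\nu^N_0)=O(N^d)$ and Yau's inequality) and is absorbed into the $\int_0^T H(\mu^N_t|\nu^N_t)\,dt$ appearing in Theorem \ref{BG}, so the genuinely new contribution of the truncation is the claimed $CN^dA^{-1}$.

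The step I expect to be the crux — and the one that genuinely uses the model — is extracting the full factor $A^{-1}$ rather than a mere $o(N^d)$ gain: this rests on the tail of the zero-range occupation variable being genuinely \emph{super}-exponential (Poisson-type), which is precisely what the linear-growth condition \textbf{(LG)} delivers through $g(k)!\asymp c^{k}k!$; an only exponentially light tail — as for the constant-rate zero range, which violates \textbf{(LG)} — would be too weak here. A secondary, more bookkeeping-type point is to keep the single-site reduction at linear growth, which is why one passes to $(\eta_1(y)-c_0A)_+$ rather than, say, $\sigma_x^2/A$: this guarantees that its exponential moments under the reference product measure are finite for \emph{every} $\gamma$, so that $\gamma$ may be chosen freely when balancing the two terms in the entropy inequality.
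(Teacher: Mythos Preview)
Your argument is correct and arrives at the same effective bound as the paper, but via a different reduction. The paper applies the entropy inequality directly to $\sum_x |R(x)|\mathbf{1}_{\{\sigma_x>A\}}$, then uses a sublattice/H\"older decoupling to pull the sum through the logarithm, and finally extracts the factor $A^{-1}$ by Markov's inequality on the indicator $\mathbf{1}_{\{\sigma_x>A\}}$, bounding $\sup_x E_{\nu^N_t}[\sigma_x\, e^{\gamma|\Lambda_h|\,|R(x)|}]$ for small fixed $\gamma$. You instead do a pointwise reduction to the single-site functional $(\eta_1(y)-c_0A)_+$ before invoking entropy; this bypasses the sublattice trick entirely (the product structure of $\nu^N_t$ already factorises the exponential) and in fact yields a stronger, stretched-exponentially small non-entropy term. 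Both routes leave the residual $C\,H(\mu^N_t|\nu^N_t)$, which the paper's proof silently carries and you correctly note gets absorbed into the $\int_0^T H(\mu^N_t|\nu^N_t)\,dt$ budget of Theorem~\ref{BG}; strictly speaking the lemma as stated is only proved modulo that term by either argument.

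One small correction to your closing commentary: you do not actually need Poisson-type (super-exponential) tails to get the $A^{-1}$ factor. Your own computation only uses that $E_{\nu^N_t}[e^{\gamma\eta_1(y)}]$ is finite and uniformly bounded for some fixed small $\gamma>0$, which is an exponential-moment condition; since $u^N(t,y)\le M_u$, this holds whenever the one-site zero-range law has a finite exponential moment uniformly over densities in $[0,M_u]$. The paper's route likewise only uses a small-$\gamma$ exponential moment. The genuine role of \textbf{(LG)} here is to guarantee that the reference measure $\nu_{1,\rho}$ is well defined for all $\rho$ and that these exponential moments are uniform over the relevant compact parameter range, not that the tail is specifically factorial.
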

\begin{proof}
By the entropy inequality with any positive constant $\gamma $, we can bound the left-hand side of the assertion as 
\[
\begin{aligned}
& E_{ \mu^N_t } \big[ \sum_x | R (x) | \mathbf{1}_{ \{ \sum_{ y \in \Lambda_h } \eta_1 ( x + y) > A   \}  } \big]  \\
& \quad \le \gamma^{-1} H (\mu^N_t | \nu^N_t ) +  \gamma^{-1} \log  E_{ \nu^N_t } \big[  e^{ \gamma \sum_x | R (x) | \mathbf{1}_{ \{ \sum_{ y \in \Lambda_h } \eta_1 (x + y ) > A  \} } } \big] \\
& \quad \le \gamma^{-1} H (\mu^N_t | \nu^N_t ) +  \gamma^{-1} | \Lambda_h |^{-1 } \sum_{  x } \log  E_{ \nu^N_t } \big[  e^{ \gamma | \Lambda_h |  | R (x) | \mathbf{1}_{ \{ \sum_{ y \in \Lambda_h } \eta_1 (x + y ) > A  \} } } \big] \\
&\quad = \gamma^{-1} H (\mu^N_t | \nu^N_t ) \\
& \qquad + \gamma^{-1} | \Lambda_h |^{-1}  \sum_{ x } \log \bigg( 1- \nu^N_t \big( \sum_{ y \in \Lambda_h } \eta_1 (x + y ) > A  \big) + E_{ \nu^N_t } \big[ e^{ \gamma | \Lambda_h | | R (x) | } \mathbf{1}_{ \{ \sum_{ y \in \Lambda_h } \eta_1 (x + y ) > A \} }  \big] \bigg) 
\end{aligned}
\]
where in the second estimate we decomposed $\mathbb{T}^d_N $ into $| \Lambda_h | $ regular sublattices \\
$\{ \tau_z ( | \Lambda_h |^{1/d } / N \mathbb{Z} )^d  ; z \in \Lambda_h \}$ and applied H\"{o}lder's inequality. Furthermore, using an elementary inequality $\log (1 + x ) \le x $ for $x \ge 0$ and then Markov's inequality, the second term in the last display can be bounded from above by 
\[
\gamma^{-1} | \Lambda_h | A^{-1} \sum_{x \in \mathbb{T}^d_N } E_{\nu^N_t } \big[ \sum_{ y \in \Lambda_h } \eta_1 (x + y  ) e^{\gamma | \Lambda_h | \, | R (x) | }   \big] . 
\]
However, since $ \sup_{ x } E_{ \nu^N_t } \big[ \sum_{ y \in \Lambda_h } \eta_1 (x + y ) e^{\gamma  |\Lambda_h | \, | R (x) | }\big] $ stays finite for sufficiently small $\gamma$ which is independent of $N$, the last quantity has order $ O (N^d A^{-1 } ) $ and thus we complete the proof.    
\end{proof}

\begin{lemma}
\label{truncation2}
Let $B = B_N =  N^{ \alpha_B } $ with a positive constant $\alpha_B $ and let $\ell $ be a positive integer. Then there exists a positive constant $C$ such that 
\[
E_{ \mu^N_t } \bigg[ \sum_{ x \in \mathbb{T}^d_N }| R (x) | \mathbf{1}_{ \{ \sum_{ z \in \Lambda_h } \eta_1 (x + z ) \le A \} } \mathbf{1}_{ \{ \eta^\ell_1 (x)  > B \} }  \bigg] 
\le  C  N^d  A B^{-1}  
\]
for sufficiently large $N $. 
\end{lemma}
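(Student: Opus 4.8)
The plan is to exploit the key structural difference from Lemma \ref{truncation1}: on the event $\{\sum_{z\in\Lambda_h}\eta_1(x+z)\le A\}$ the residual function $R(x)$ is \emph{deterministically} bounded by a constant multiple of $A$, so that no entropy inequality is needed and a single application of Markov's inequality to the second indicator suffices. In this way I would reduce the whole estimate to the computation of $E_{\mu^N_t}\big[\sum_{x}\eta_1(x)\big]$, which is $O(N^d)$ by Lemma \ref{sum est}.

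The first step is to establish the pointwise bound $|R(x)|\,\mathbf{1}_{\{\sum_{z\in\Lambda_h}\eta_1(x+z)\le A\}}\le C A$, valid for all sufficiently large $N$ (so that $A\ge|\Lambda_h|$), with $C$ depending only on $h$, $M_u$ and $M_v$. Indeed, by the structural assumption \eqref{BG assumption} and the fact that $\eta_2$ takes values in $\{0,1\}$, the polynomials $g_1(y,\eta_2(y))$ and $g_2(y,\eta_2(y))$ are uniformly bounded, so $|\tau_x h(\eta_1,\eta_2)|\le C\big(\sum_{y\in\Lambda_h}\eta_1(x+y)+|\Lambda_h|\big)\le C A$ on the event in question. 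The constant $\tilde h(u^N(t,x),v^N(t,x))$ and the term $\partial_2\tilde h(u^N(t,x),v^N(t,x))(\eta_2(x)-v^N(t,x))$ are uniformly bounded, since $0\le u^N\le M_u$ and $0\le v^N\le M_v$ by Lemma \ref{unif est} and $\eta_2(x)\in\{0,1\}$; and $|\partial_1\tilde h(u^N(t,x),v^N(t,x))(\eta_1(x)-u^N(t,x))|\le C(\eta_1(x)+M_u)\le C A$ on the event, because $\eta_1(x)\le\sum_{z\in\Lambda_h}\eta_1(x+z)\le A$ (assuming, without loss of generality, $0\in\Lambda_h$). Summing these four contributions gives the claimed bound.

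Given this, I would apply Markov's inequality in the form $\mathbf{1}_{\{\eta^\ell_1(x)>B\}}\le B^{-1}\eta^\ell_1(x)$ to obtain
\[
\sum_{x\in\mathbb{T}^d_N}|R(x)|\,\mathbf{1}_{\{\sum_{z\in\Lambda_h}\eta_1(x+z)\le A\}}\mathbf{1}_{\{\eta^\ell_1(x)>B\}}\le \frac{C A}{B}\sum_{x\in\mathbb{T}^d_N}\eta^\ell_1(x),
\]
and then use translation invariance of $\mathbb{T}^d_N$, which gives $\sum_{x}\eta^\ell_1(x)=|\Lambda_\ell|^{-1}\sum_{z\in\Lambda_\ell}\sum_{x}\eta_1(x+z)=\sum_{x}\eta_1(x)$. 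Taking expectation with respect to $\mu^N_t$ and invoking Lemma \ref{sum est} (applicable under the standing hypothesis $H(\mu^N_0|\nu^N_0)=O(N^d)$ of Theorem \ref{BG}) yields $E_{\mu^N_t}\big[\sum_{x}\eta_1(x)\big]=O(N^d)$, whence the right-hand side above is $O(N^d A B^{-1})$, as asserted. The only step requiring genuine care is the deterministic bound of the second paragraph — checking that each of the four terms constituting $R(x)$ is controlled by $A$ on the truncation event — after which the estimate is immediate; the mild convention $0\in\Lambda_h$ used there is harmless, since one may always enlarge the reference box $\Lambda_h$.
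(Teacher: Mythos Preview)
Your proof is correct and follows essentially the same approach as the paper: establish the deterministic bound $|R(x)|\mathbf{1}_{\{\sum_{z\in\Lambda_h}\eta_1(x+z)\le A\}}=O(A)$, apply Markov's inequality $\mathbf{1}_{\{\eta_1^\ell(x)>B\}}\le B^{-1}\eta_1^\ell(x)$, use the identity $\sum_x\eta_1^\ell(x)=\sum_x\eta_1(x)$, and conclude via Lemma~\ref{sum est}. Your version is in fact slightly more explicit, spelling out why each of the four terms making up $R(x)$ is controlled by $A$ on the truncation event.
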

\begin{proof}
Since the local function $h$ satisfies the bound \eqref{BG assumption}, we see that $| R (x) | \mathbf{1}_{ \{ \sum_{ y \in \Lambda_h } \eta_1 (x + y ) \le A \} } $ has order $O (A) $ as $N $ tends to infinity. Therefore, applying Markov's inequality to the indicator function $\mathbf{1}_{ \{ \eta_1^\ell (x) > B \} } $, the left-hand side of the assertion can be bounded from above by 
\[
C A B^{-1}  E_{ \mu^N_t } \big[ \sum_{ x \in \mathbb{T}^d_N } \eta_1^\ell (x) \big]
\]
with some positive constant $C$. However, noting an identity $\sum_x \eta_1^\ell (x) = \sum_x \eta_1 (x) $, Lemma \ref{sum est} for the total number of particles finishes the proof. 
\end{proof}

\subsection{Main estimates}
\label{subsec:main}

\subsubsection{Estimate of $R_1$}
First we give an quantitative estimate for $R_1$ by using a spectral gap estimate.

\begin{lemma}
\label{R1}
Assume $H (\mu^N_0 | \nu^N_0 ) = O (N^d )$. Let $\ell  = N^{ \alpha_\ell }$ and $\gamma_1 =N^{ \gamma_1 }$ for positive constants $\alpha_\ell $ and $\alpha_{ \gamma_1 } $. Suppose $\alpha_\varepsilon +  \alpha_A + \alpha_{\gamma_1 } + ( d + 2) \alpha_\ell - 2 < 0 $ holds true. Then we have
\[
\mathbb{E}^N \bigg[ \bigg| \int_0^T  \sum_{ x \in \mathbb{T}^d_N } R_1 (x) dt  \bigg| \bigg]
 \le C \gamma_1^{-1} K N^d 
 + C \varepsilon^{-1} \gamma_1 \ell^{d+2} A^2 N^{d - 2} .
\]
\end{lemma}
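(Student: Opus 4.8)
The plan is to bound, for each fixed time $t$, the average $E_{\mu^N_t}\big[\sum_{x}R_1(x)\big]$ (the same argument run with $-h$ in place of $h$ then controls the absolute value), and to integrate the result over $[0,T]$. Writing $f^N_t=d\mu^N_t/d\nu^N_t$, the starting point is $E_{\mu^N_t}\big[\sum_x R_1(x)\big]=\sum_x E_{\nu^N_t}[f^N_t R_1(x)]$. Three features of $R_1(x)$ will be exploited: it is supported in the cube $x+\Lambda_\ell$; by the truncation indicators it satisfies $|R_1(x)|\le CA$; and, by construction, it has vanishing conditional expectation given the pair of empirical densities $(\eta_1^\ell(x),\eta_2^\ell(x))$, hence — modulo an error coming from the spatial inhomogeneity of $\nu^N_t$ — vanishing conditional expectation on every hyperplane of fixed particle numbers inside $x+\Lambda_\ell$. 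This last property is what lets a spectral gap estimate produce the crucial gain in powers of $N$.

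For the core step I would fix $x$, condition $\nu^N_t$ on the configuration outside $x+\Lambda_\ell$ and on the numbers of type-$1$ and type-$2$ particles inside, obtaining a canonical measure $\nu^c$ with $\langle R_1(x)\rangle_{\nu^c}=0$; centering $f^N_t$ and applying Cauchy--Schwarz then gives $\langle f^N_t R_1(x)\rangle_{\nu^c}\le C\|R_1(x)\|_\infty\,\langle f^N_t\rangle_{\nu^c}^{1/2}\,\mathrm{Var}_{\nu^c}(\sqrt{f^N_t})^{1/2}$. Next I invoke the spectral gap for the combined zero-range--Kawasaki generator restricted to $x+\Lambda_\ell$ (Lemma~\ref{total spec}), whose gap constant is $2\gamma_1+2\varepsilon^{-1}\gamma_2$ with $\gamma_1,\gamma_2\asymp\ell^2$, to bound $\mathrm{Var}_{\nu^c}(\sqrt{f^N_t})$ by $(2\gamma_1+2\varepsilon^{-1}\gamma_2)$ times the box Dirichlet form of $\sqrt{f^N_t}$. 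Averaging over the hyperplanes and over the outside configuration, using $\langle f^N_t\rangle_{\nu^N_t}=1$, applying Young's inequality with a free parameter $\theta>0$, and summing over $x$ — each bond of $\mathbb{T}^d_N$ lying in $O(\ell^d)$ of the cubes $x+\Lambda_\ell$, so that after the $N^2$ rescaling the box Dirichlet forms add up to $O(\ell^d)$ times the total energy $\mathcal{D}$ — yields
\[
\sum_{x\in\mathbb{T}^d_N}E_{\nu^N_t}\big[f^N_t R_1(x)\big]\le \frac{C\theta\ell^d}{N^2}\,\mathcal{D}\big(\sqrt{f^N_t};\nu^N_t\big)+\frac{C}{\theta}\,\varepsilon^{-1}\gamma_1 A^2 N^d .
\]
Integrating in time, taking $\mathbb{E}^N$, and feeding in the a priori bound $\mathbb{E}^N\big[\int_0^T\mathcal{D}(\sqrt{f^N_t};\nu^N_t)\,dt\big]\le CK^3 e^{CK^2}N^{d-2}$ (from Yau's inequality, Proposition~\ref{Yau}, a crude estimate of the entropy production, and $H(\mu^N_0|\nu^N_0)=O(N^d)$), then choosing $\theta=N^2\ell^{-(d+2)}$ to balance the two contributions, produces a bound of the claimed form $C\gamma_1^{-1}KN^d+C\varepsilon^{-1}\gamma_1\ell^{d+2}A^2N^{d-2}$; here (B\ref{K}) guarantees $K^2 e^{CK^2}\le N^2$, so the first contribution is absorbed into $C\gamma_1^{-1}KN^d$, and the standing constraint on $\alpha_\varepsilon,\alpha_A,\alpha_{\gamma_1},\alpha_\ell$ makes the second contribution negligible.

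The step I expect to be the main obstacle is reconciling the two reference measures: $R_1(x)$ is centered against the homogeneous $\nu_\beta$ that enters its definition, whereas the spectral gap argument must be carried out against the inhomogeneous $\nu^N_t$, whose one-site marginals $u^N(t,\cdot),v^N(t,\cdot)$ are not constant across the cube $x+\Lambda_\ell$. One therefore needs to quantify the cost of replacing the $\nu_\beta$-conditional expectation by the $\nu^N_t$-canonical one on each hyperplane and show it is of lower order; this is precisely where the gradient estimates of Lemma~\ref{grad u} and Lemma~\ref{grad v} come in, since they bound the oscillation of the weights over a box of side $\ell$ by $O(\varepsilon^{-1/2}K\ell/N)$, which is negligible once $\ell$ is polynomially smaller than $N$. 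A secondary, essentially bookkeeping, difficulty is keeping the combinatorial factor $\ell^d$ and the $\varepsilon^{-1}$-blow-up of the Kawasaki gap sharp enough that all exponents stay compatible with the hypotheses.
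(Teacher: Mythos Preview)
Your approach is genuinely different from the paper's and contains a real gap at exactly the point you flag as the ``main obstacle''. The paper never works with $f^N_t=d\mu^N_t/d\nu^N_t$ in this lemma. Instead it applies the entropy inequality at time $0$ with respect to the \emph{homogeneous} equilibrium $\nu_\beta$ (using Lemma~\ref{ent equi}, which yields $H(\mu^N_0|\nu_\beta)=O(KN^d)$), and then the Feynman--Kac formula to pass to a variational problem over densities with respect to $\nu_\beta$. After stripping off the Glauber part of the Dirichlet form at cost $O(\gamma_1^{-1}KN^d)$, localising the remaining Dirichlet form to boxes, and conditioning on particle numbers---which, because $\nu_\beta$ is homogeneous, produces exactly the canonical measures for which Lemma~\ref{total spec} is stated---the Rayleigh estimate of Kipnis--Landim gives the $\mathscr H^{-1}$ bound $\gamma_1 N^{-2}\ell^d\cdot\mathrm{gap}^{-1}\|R_1\|_\infty^2$. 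The parameter $\gamma_1$ in the statement is precisely the entropy-inequality parameter, and the two displayed terms are the entropy/Glauber cost and the Rayleigh term.

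In your route, when you condition the \emph{inhomogeneous} $\nu^N_t$ on the particle numbers in $x+\Lambda_\ell$ you do \emph{not} obtain the canonical measure: the conditional has a density against the canonical measure that is a product over $|\Lambda_\ell|$ sites depending on the weights $u^N(t,x+z),v^N(t,x+z)$. Under this measure $R_1(x)$ does not have mean zero, and the spectral gap of Lemma~\ref{total spec} does not apply as stated. Your proposed fix via the gradient estimates would have to control this Radon--Nikodym derivative uniformly over the hyperplanes; even granting that the per-site oscillation is $O(\varepsilon^{-1/2}K\ell/N)$, the product over $\ell^d$ sites is not obviously harmless, and you have not carried this out. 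The paper's choice to run the whole argument against $\nu_\beta$ via Feynman--Kac sidesteps this entirely. A secondary mismatch is that your free parameter $\theta$ is not the $\gamma_1$ of the statement: after your optimisation (feeding in an a~priori Dirichlet bound, which the paper never needs) you would get a single bound rather than the two-parameter form actually claimed.
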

\begin{proof}
Applying the entropy inequality, we have that 
\[
\mathbb{E}^N \bigg[ \bigg|  \int_0^T \sum_{ x \in \mathbb{T}^d_N}  R_1 (x) dt \bigg|  \bigg]
\le \gamma_1^{-1} H ( \mu^N_0 | \nu_\beta )  + \gamma_1^{-1} \log E_{ \nu_\beta } [ e^{ \gamma_1 |  \int_0^T \sum_x R_1 (x) dt |  } ]  
\]
for any positive constant $\gamma_1 $. Since $H ( \mu^N_0 | \nu_\beta ) = O ( K N^d ) $ provided $H (\mu^N_0 | \nu^N_0 ) = O (N^d) $ by Lemma \ref{ent equi}, only the second term is concerned. According to an elementary estimate $e^{|z|} \le e^z + e^{ -z }$ for every $z \in \mathbb{R}$ and the Feynman-Kac formula, the second term in the last display is bounded from above by
\begin{equation}
\label{R1sup1}
2 \int_0^T \sup_{ h } \bigg\{ \big\langle \sum_{ x } R_1 (x) , h  \big\rangle_{ L^2 (\nu_\beta ) }
 -  \gamma_1^{-1}  \mathcal{D}_N ( \sqrt{h} )  \bigg\}  dt  
\end{equation}
where $h$ is a density with respect to $\nu_\beta$. Moreover, $\mathcal{D}_N (\sqrt{h} ) = \mathcal{D}_N ( \sqrt{h} ; \nu_\beta )$ is a Dirichlet energy of $\sqrt{ h } $ with respect to the probability measure $\nu_\beta $ and a symmetric quadratic form $\mathcal{D}_N $ is defined by 
\[
\mathcal{D}_N  (f ) \coloneqq \langle f, - L_N f \rangle_{ L^2 (\nu_\beta) } 
= \langle f, - S_N f \rangle_{ L^2 (\nu_\beta) } , \quad
S_N = ( L_N + L_N^{ * , \nu_\beta} ) / 2 
\]
for every real-valued functions $f$ on $\mathcal{X}_N$. Hereafter we fix an arbitrary non-negative function $f$ on $\mathcal{X}_N $ such that $f^2 $ is density with respect to $\nu_\beta$ to bound the supremum in \eqref{R1sup1}. Since our dynamics is a superposition of diffusion and reaction dynamics, $\mathcal{D}_N$ is decomposed into sum of three terms $ N^2 \mathcal{D}_Z + \varepsilon (N) N^2 \mathcal{D}_K + K ( N ) \mathcal{D}_G $ where 
\[
\begin{aligned}
& \mathcal{D}_Z ( f ) 
\coloneqq \langle f, - L_Z f \rangle_{ L^2 (\nu_\beta) } 
 = \frac{ 1 }{ 2 } \sum_{ x, y, \in \mathbb{T}^d_N ; | x -y | = 1 } E_{ \nu_\beta } \big[ g_1 ( \eta_x) ( f (\eta_1^{x, y } , \eta_2 ) - f (\eta_1 , \eta_2)  )^2  \big] , \\
& \mathcal{D}_K ( f ) 
\coloneqq \langle f, - L_Z f \rangle_{ L^2 (\nu_\beta) } 
 = \frac{ 1 }{ 2 } \sum_{ x, y, \in \mathbb{T}^d_N ; | x -y | = 1 } E_{ \nu_\beta } \big[  ( f (\eta_1 , \eta_2^{x, y } ) - f (\eta_1 , \eta_2)  )^2  \big]  , 
\end{aligned}
\]
and $\mathcal{D}_G ( f ) \coloneqq \langle f , - L_G f \rangle_{ L^2 (\nu_\beta ) } $ can be calculated as 
\[
\begin{aligned}
\mathcal{D}_G (f ) 
& = - \sum_{ x } E_{ \nu_\beta } \big[ \eta_1 (x) \eta_2 (x)  f (\eta_1 , \eta_2 ) (  f (\eta^{x, -}_1 , \eta_2^x ) - f (\eta_1 , \eta_2 ) )  \big] \\
& =  \sum_{ x } E_{ \nu_\beta } \big[ \eta_1 (x) \eta_2 (x)  (  f (\eta^{x, - }_1 , \eta_2^x ) - f (\eta_1 , \eta_2 ) )^2   \big]  \\
& \quad  + \sum_{ x } E_{ \nu_\beta } \big[ \eta_1 (x) \eta_2 (x) f (\eta_1, \eta_2 ) f ( \eta_1^{x, - } , \eta_2^x ) \big] \\
& \quad - \sum_{ x } E_{ \nu_\beta } \bigg[ f (\eta_1 ,\eta_2 )^2 ( \eta_1 (x) + 1 ) ( 1- \eta_2 (x) )  \frac{ \varphi ( \beta_1 ) }{  g ( \eta_1 (x) + 1 ) } \frac{ \beta_2 }{ 1 - \beta_2  }  \bigg] . 
\end{aligned}
\]
The first two terms in the last display are non-negative and the third term has order $O ( N^d ) $ according to the assumption \textbf{(LG)}, noting $f^2$ is density with respect to $\nu_\beta$. Therefore, the supremum in \eqref{R1sup1} is bounded above by 
\begin{equation}
\label{R1sup2}
\sup_{ h } \bigg\{ \big\langle \sum_{ x } R_1 (x) , h  \big\rangle_{ L^2 (\nu_\beta ) }
 -  \gamma_1^{-1} N^2 \big( \mathcal{D}_Z ( \sqrt{h} ) + \varepsilon \mathcal{D}_K (\sqrt{h} )  \big)  \bigg\} + C K N^d  \gamma_1^{ -1 }
\end{equation}
where again the supremum is taken over all densities $h$ with respect to an equilibrium measure $\nu_\beta$. 

To analyze further, let $L_{ Z, \ell, x }$ and $L_{K, \ell, x }$ be the zero-range and the Kawasaki generators restricted on a block $\Lambda_{ \ell, x} = x +  [ - \ell , \ell  ]^d \cap \mathbb{Z}^d $, which are defined by
\[
\begin{aligned}
& L_{ Z, \ell, x  } f (\eta_1 , \eta_2 ) = \sum_{  y, z  \in \Lambda_{ \ell, x } , | y - z | = 1 } g (\eta_1 ( y ) ) \big[ f ( \eta_1^{y, z }, \eta_2 ) - f (\eta_1, \eta_2 )  \big] , \\ 
& L_{ K, \ell, x  } f (\eta_1 , \eta_2 ) = \sum_{  y, z  \in \Lambda_{ \ell, x } , | y - z | = 1 } \eta_2 (y) (1 - \eta_2 (z ) ) \big[ f ( \eta_1, \eta_2^{y, z } ) - f (\eta_1, \eta_2 )  \big]  
\end{aligned}
\]
for every real-valued functions $f $ on $\mathcal{X}_N$. Then, the associated Dirichlet energy $\mathcal{D}_{ Z, \ell, x } (f ; \nu_\beta ) = \langle f, - L_{ Z, \ell, x } f \rangle_{ L^2 ( \nu_\beta ) }$ and $\mathcal{D}_{ K, \ell, x } (f ; \nu_\beta ) = \langle f, - L_{ K, \ell, x } f \rangle_{ L^2 ( \nu_\beta ) }$ become to be 
\[
\begin{aligned}
& \mathcal{D}_{ Z, \ell, x  } (f; \nu_\beta) 
= \frac{ 1 }{ 2 } \sum_{  y, z  \in \Lambda_{ \ell, x } , | y - z | = 1 } E_{ \nu_\beta } \big[ g (\eta_1 ( y ) ) \big( f ( \eta_1^{y, z }, \eta_2 ) - f (\eta_1, \eta_2 )  \big)^2 \big]   , \\
& \mathcal{D}_{ K , \ell, x  } (f; \nu_\beta) 
= \frac{ 1 }{ 2 } \sum_{  y, z  \in \Lambda_{ \ell, x } , | y - z | = 1 } E_{ \nu_\beta } \big[ \eta_2 (y) ( 1- \eta_2 (z) ) \big( f ( \eta_1, \eta_2^{y, z } ) - f (\eta_1, \eta_2 )  \big)^2 \big]  . 
\end{aligned}
\]
Here we denote by $\mathcal{D} \coloneqq \mathcal{D}_Z + \varepsilon \mathcal{D}_K $ the Dirichlet energy corresponding to our diffusion dynamics and we define $\mathcal{D}_{\ell, x } \coloneqq \mathcal{D}_{Z, \ell, x} + \varepsilon \mathcal{D}_{ K, \ell, x } $. Counting the overlaps, we observe 
\[
\sum_{ x \in \mathbb{T}^d_N } \mathcal{D}_{ \ell ,x } (f ; \nu_\beta) = (2 \ell + 1)^d \mathcal{D} (f ; \nu_\beta)  
\] 
for every $\ell \in \mathbb{N}$. Therefore, the supremum in \eqref{R1sup2} can be bounded from above by 
\[
\begin{aligned}
\sum_{ x \in \mathbb{T}^d_N } \sup_{ h } \bigg\{ \big\langle  R_1 (x) , h  \big\rangle_{ L^2 (\nu_\beta ) }
 -  \gamma_1^{-1} N^2 (2 \ell + 1 )^{-d} \mathcal{D}_{ \ell, x }  ( \sqrt{h} ; \nu_\beta)  \bigg\} 
\end{aligned}
\]
for every $\ell \in \mathbb{N}$. By conditioning on the number of particles on $\Lambda_{ \ell, x } $, and dividing and multiplying by $E_{ \nu_\beta } [h | \sum_{z \in \Lambda_{\ell ,x } } \eta_1(z ) = j_ 1 , \sum_{ z \in \Lambda_{ \ell , x } } \eta_2 (x) = j_2 ]  $, the supremum in the last quantity is further bounded from above by
\[
\begin{aligned}
\sup_{ j \le B (2 \ell + 1 )^d } \sup_{ h } \bigg\{ \big\langle  R_1 (x) , h  \big\rangle_{ L^2 (\nu_{ \ell, x, j_1, j_2 } ) }
 -  \gamma_1^{-1} N^2 (2 \ell + 1 )^{-d} \mathcal{D}_{ \ell, x , j_1, j_2 }  ( \sqrt{h} )  \bigg\}  .
\end{aligned}
\]
Here the supremum is taken over all densities $h$ with respect to $\nu_{ \ell ,x, j_1, j_2  } $. Furthermore, according to the Rayleigh estimate given in \cite{KL99} (Theorem 1.1 in Appendix 3), the last display is less than or equal to 
\begin{equation}
\label{R1sup3}
 \gamma_1 N^{ -2 } (2 \ell + 1 )^d  
\frac{ \langle R_1 (x) , ( - L_{ Z, \ell ,x  } - \varepsilon L_{K, \ell, x } )^{-1} R_1 ( x ) \rangle_{ \nu_{ \ell, x, j_1 , j_2  } } }{ 1 - 2 \| R_1 (x) \|_{ \infty  } \gamma_1 N^{-2} ( 2 \ell + 1 )^d  gap (\ell, j_1, j_2 )^{-1} } 
\end{equation}
for every $\ell \in \mathbb{N}$, $x \in \mathbb{T}^d_N$ and $j \in \mathbb{N}$ such that $j_1 \le B (2 \ell + 1 )^d $. Here, $gap (\ell, x ) = ( 2 \gamma_1 + 2 \varepsilon^{-1 } \gamma_2 )^{ - 1 }  $ denotes a spectral gap of $L_{Z, \ell, x } + \varepsilon L_{K, \ell, x } $ acting on any test functions on $\{ \eta \in \mathbb{Z}_+^{\Lambda_{\ell, x } } \times \{ 0, 1 \}^{ \Lambda_{\ell, x } } ;  \sum_{x \in \Lambda_{ \ell ,x }  } \eta_1 (x) = j_1, \sum_{ x \in \Lambda_{ \ell ,x } }  \eta_2 (x)  = j_2  \} $ (see Lemma \ref{total spec}). Moreover, $\mathscr{H}^{-1}$-norm in the numerator of \eqref{R1sup3} can be estimated as 
\[
\langle R_1 (x) , ( - L_{ Z, \ell ,x } - \varepsilon L_{K, \ell, x } )^{-1} R_1 ( x ) \rangle_{ \nu_{ \ell, x, j_1, j_2 } } 
\le gap(\ell , j_1, j_2  )^{ -1 } \| R_1 (x) \|^2_{ \infty} 
\le C gap (\ell ,j_1 , j_2  )^{-1} A^2 
\]
noting $\| R_1 (x) \|_{ \infty } = O (A ) $ by definition. As a consequence of Lemma \ref{total spec}, we have $gap (\ell ,j_1, j_2 )^{ - 1 } \le C \varepsilon^{-1} \ell^2 $. 
Now we choose indices in order that $\alpha_{ \varepsilon } + \alpha_A + \alpha_{\gamma_1 } + ( d + 2) \alpha_\ell - 2 < 0 $ holds. Then  
\[
\| R_1 (x) \|_{ \infty  } \gamma_1 ( 2 \ell + 1 )^d N^{-2} gap (\ell, j_1, j_2 )^{-1} \le C A \gamma_1 \ell^d gap(\ell, j_1, j_2 )^{-1} N^{ -2 } = o_N (1)
\]
as $N$ tends to infinity so that the denominator in \eqref{R1sup3} is bounded from below by a positive constant which is independent of $N$. Hence the display \eqref{R1sup3} can be bounded from above by $ C \varepsilon^{ - 1 } \gamma_1 N^{-2} \ell^{ d + 2 } A^2 $ and combining all the above estimates we complete the proof. 
\end{proof}

\subsubsection{Estimate of $R_2$}
We give an estimate for $R_2 = R_2 (x )$. 

\begin{lemma}
\label{R2lem1}
There exists a positive constant $C$ such that 
\[
\big| E_{  \nu_\beta } [ R (x) \mathbf{1}_{ \{ \sum_{ z \in \Lambda_h } \eta_1 (x + z ) \le A \}  } |  \eta_1^\ell (x) , \eta_2^\ell (x)   ] \big|  \mathbf{1}_{ \{ | y (x) |  \le \delta  \}  }
 \le C  | y (x)  |^2 \mathbf{1}_{ \{ |y (x) | \le \delta  \} } + C \ell^{-d} + C A^{-1} e^{ C_1 K }
\]
holds true for every $x \in \mathbb{T}^d_N$. 
\end{lemma}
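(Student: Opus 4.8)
The plan is to compute the conditional expectation essentially explicitly, strip off the cut-off $\mathbf{1}_{\{\sum_{z\in\Lambda_h}\eta_1(x+z)\le A\}}$ at a cost $O(A^{-1})$, and then replace the conditional expectation of $\tau_x h$ by its equilibrium value $\tilde h$ via the equivalence of ensembles, followed by a second-order Taylor expansion of $\tilde h$ around $(u^N(t,x),v^N(t,x))$. Throughout write $M\coloneqq\sum_{z\in\Lambda_h}\eta_1(x+z)$ and abbreviate $u^N=u^N(t,x)$, $v^N=v^N(t,x)$.

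\emph{Step 1 (explicit form and removal of the cut-off).} For $N$ large enough that $x+\Lambda_h\subset\Lambda_{\ell,x}$, conditioning $\nu_\beta$ on $(\eta^\ell_1(x),\eta^\ell_2(x))$ turns the law on the block $\Lambda_{\ell,x}$ into the corresponding canonical measure, which is exchangeable in each coordinate; hence $E_{\nu_\beta}[\eta_i(x)\mid\eta^\ell_1(x),\eta^\ell_2(x)]=\eta^\ell_i(x)$, and substituting the definition of $R(x)$ gives
\[
E_{\nu_\beta}\big[R(x)\mid\eta^\ell_1,\eta^\ell_2\big]=E_{\nu_\beta}\big[\tau_x h\mid\eta^\ell_1,\eta^\ell_2\big]-\tilde h(u^N,v^N)-\partial_1\tilde h(u^N,v^N)\,y_1(x)-\partial_2\tilde h(u^N,v^N)\,y_2(x).
\]
Restoring the indicator $\mathbf{1}_{\{M\le A\}}$ changes the left-hand side by $-E_{\nu_\beta}[R(x)\mathbf{1}_{\{M>A\}}\mid\eta^\ell]$; bounding $\mathbf{1}_{\{M>A\}}\le A^{-1}M$ and using that $|R(x)|$ is dominated by a linear function of the occupations on $\Lambda_h$ with coefficients controlled by Lemma \ref{unif est} and the structure of $\tilde h$, together with the uniform bound on the low-order moments of the canonical measure available on $\{|y(x)|\le\delta\}$ (where $\eta^\ell_1(x)\le M_u+\delta$), this correction is $O(A^{-1}e^{C_1K})$ after a crude bookkeeping of the lower bounds $u^N,v^N\ge e^{-C_1K}$.

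\emph{Step 2 (equivalence of ensembles and Taylor).} The canonical measure factorizes into a zero-range part and a Bernoulli part; by the classical equivalence of ensembles for each component — valid under \textbf{(LG)} for the zero-range factor, cf. \cite{KL99} — and since $\tau_x h$ is a fixed local function with at most linear growth in $\eta_1$, one obtains $E_{\nu_\beta}[\tau_x h\mid\eta^\ell_1,\eta^\ell_2]=\tilde h(\eta^\ell_1(x),\eta^\ell_2(x))+O(\ell^{-d})$, uniformly for the densities in a compact set. Finally, since $\tilde h=E_{\nu_\beta}[\tau_x h]$ is $C^2$ on $[0,M_u+\delta]\times[0,M_v+\delta]$ with second derivatives bounded uniformly (for $\delta<1-M_v$), Taylor's theorem with Lagrange remainder yields, on $\{|y(x)|\le\delta\}$,
\[
\tilde h(\eta^\ell_1(x),\eta^\ell_2(x))-\tilde h(u^N,v^N)-\partial_1\tilde h(u^N,v^N)\,y_1(x)-\partial_2\tilde h(u^N,v^N)\,y_2(x)=O\big(|y(x)|^2\big).
\]
Combining the two steps gives the asserted bound $C|y(x)|^2\mathbf{1}_{\{|y(x)|\le\delta\}}+C\ell^{-d}+CA^{-1}e^{C_1K}$, the term on $\{|y(x)|>\delta\}$ being trivially $0$.

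\emph{Main obstacle.} The delicate point is the uniformity in the degenerate regime $\beta_1\to0$ needed both for the equivalence-of-ensembles error and for the $C^2$-bound on $\tilde h$: differentiating $\tilde h$ in $\beta_1$ produces factors $1/\varphi(\beta_1)$ that diverge as $\beta_1\to0$, and one must check that they are cancelled by the vanishing of the covariances $\mathrm{Cov}_{\overline{\nu}_{1,\varphi(\beta_1)}}(\cdot,\eta_1(x))$ at the same rate. The cleanest way is to reparametrize by the chemical potential: $\tilde h$ is real-analytic in the chemical potential because $Z_\alpha$ has infinite radius of convergence under \textbf{(LG)}, and $\beta_1\mapsto\varphi(\beta_1)$ is a smooth change of variables with $\varphi'(0)\ge c_0>0$, so that the $C^2$-norm of $\tilde h$ stays bounded up to $\beta_1=0$; the equivalence of ensembles for zero-range under \textbf{(LG)} must likewise be invoked with this uniformity in mind.
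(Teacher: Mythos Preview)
Your argument is correct and uses the same two ingredients as the paper --- equivalence of ensembles and a second-order Taylor expansion --- but the order of operations differs. You first strip the cut-off at the level of the canonical (conditional) expectation, then apply equivalence of ensembles to the \emph{unbounded} local function $\tau_x h$ itself to produce $\tilde h(\eta_1^\ell,\eta_2^\ell)$, and finally Taylor-expand $\tilde h$; this makes the $|y(x)|^2$ term transparently the Lagrange remainder of $\tilde h$. The paper instead keeps the cut-off throughout: it applies equivalence of ensembles directly to the \emph{bounded} function $b(x)=R(x)\mathbf 1_{\{M\le A\}}$, obtaining $E_{\nu_{(u^N,v^N)+y(x)}}[b(x)]+O(\ell^{-d})$, and then Taylor-expands the map $y\mapsto E_{\nu_{(u^N,v^N)+y}}[b(x)]$ via chemical potentials, showing that its zeroth- and first-order terms at $y=0$ are $O(A^{-1})$ and $O(A^{-1}e^{C_1K})$ respectively (they would vanish exactly for $b=R$, and the $e^{C_1K}$ enters through $\chi_i^{-1}$). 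Your route is a bit more conceptual but needs the equivalence of ensembles for functions of linear growth in $\eta_1$ (which does hold under \textbf{(LG)}, and is implicitly covered by the uniformity you discuss in your ``Main obstacle'' paragraph); the paper's route avoids that by working with a bounded observable, at the price of having to identify the low-order Taylor coefficients by hand.
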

\begin{proof}
With the help of the equivalence of ensembles (Corollary A.1.7. in \cite{KL99}), when $| y (x) | \le \delta$ we have  
\[
\begin{aligned}
& \big| E_{ \nu_\beta } [ R (x) \mathbf{1}_{ \{ \sum_{ y \in \Lambda_h } \eta_1 (x + y )  \le A \} } | \eta_1^\ell (x) , \eta_2^\ell (x) ] \big| \\
& \quad 
\le \big| E_{ \nu_{ (u^N ( t, x ) , v^N (t, x ) ) + y (x) } } [ R (x) \mathbf{1}_{ \{ \sum_{ y \in \Lambda_h } \eta_1 (x + y ) \le A \} } ] \big| + C \ell^{-d} . 
\end{aligned}
\] 
Recall here that the measure $\nu_{ (u^N (t, x ) , v^N (t, x ) ) + y (x) } = \nu_{ ( \eta_1^\ell (x) , \eta_2^\ell (x) ) } $ is a product measure with spatially homogeneous weight $ (\eta_1^\ell (x)  , \eta_2^\ell (x) ) $. We choose `chemical potentials' $\lambda_1 =\lambda_1 (y_1 (x) )$ and $\lambda_2 = \lambda_2 (y_2 ( x ) ) $ by 
\[
\begin{aligned}
& \frac{ E_{ \nu_{( u^N (t,x ) , v^N (t, x ) ) } } [ \eta_1 (x) e^{ \lambda_1 (\eta_1 (x) - u^N (t, x) ) } ] }{ E_{ \nu_{( u^N (t,x ) , v^N (t, x ) ) } } [  e^{ \lambda_1 (\eta_1 (x) - u^N (t, x) ) } ] } 
= u^N (t, x) + y_1 (x)  , \\
& \frac{ E_{ \nu_{( u^N (t,x ) , v^N (t, x ) ) } } [ \eta_2 (x) e^{ \lambda_2 (\eta_2 (x) - v^N (t, x) ) } ] }{ E_{ \nu_{( u^N (t,x ) , v^N (t, x ) ) } } [  e^{ \lambda_2 (\eta_2 (x) - v^N (t, x) ) } ] }
= v^N (t, x) + y_2 (x)  .
\end{aligned}
\]
Then for each $i = 1, 2$, one can notice $dy_i (x) / d \lambda_i \ge 0 $ so that there is a one-to-one correspondence between $y_i $ and $\lambda_i $. In particular, we have $y_i (x) |_{ \lambda_i = 0} = 0$ by definition, which deduces $\lambda_i (0) = 0 $ for each $i = 1,2 $. Moreover, we see that 
\[
\lambda_1^\prime (0)  = \frac{ d \lambda_1  }{ d y_1 (x)  } (0) = E_{ \nu_{( u^N (t,x ) , v^N (t, x ) ) } } [ ( \eta_1 (x) - u^N (t, x) )^2  ]^{-1} 
\]
and similarly for type-$2$ configuration. In particular, we have $\lambda_1^\prime ( 0 ) = \chi_1 (u^N (t, x ) )^{ - 1 }$ and $\lambda_2^\prime (0) = \chi_2 ( v^N (t, x ) )^{-1}  $ where $\chi_1 (\rho_1 ) = \varphi (\rho_1) / \varphi^\prime (\rho_1 )$ ($\rho_1 \ge 0 $) and $\chi_2 (\rho_2 ) = \rho_2 (1- \rho_2) $ ($0 \le \rho_2 \le1 $) are incompressibility for zero-range and exclusion processes, respectively. Let $b (x) = R (x) \mathbf{1}_{ \{  \sum_{ z \in \Lambda_h } \eta_1 (x + z ) \le A \} }$. Then, a direct computation shows  
\[
\begin{aligned}
& \frac{ \partial }{ \partial y_1 (x)  } E_{ \nu_{ (u^N (t, x ) , v^N (t, x) ) + y (x) } } [ b (x) ]  \bigg|_{ y (x) = 0 } 
=  E_{\nu^N_t }  [ b (x) ( \eta_1 (x) - u^N (t, x) ) ] / \chi_1 (u^N (t,x ) ) , \\
& \frac{ \partial }{ \partial y_2 (x)  } E_{ \nu_{ (u^N (t, x ) , v^N (t, x) ) + y (x) } } [ b (x) ]  \bigg|_{ y (x) = 0 } 
=  E_{\nu^N_t }  [ b (x) ( \eta_2 (x) - v^N (t, x) ) ] / \chi_2 (v^N (t,x ) )  .  
\end{aligned}
\]
Now the Taylor expansion around $y (x) = (0, 0) $ enables us to write 
\[
\begin{aligned}
E_{ \nu_{ (u^N (t ,x ), v^N (t , x ) ) + y (x)  } } [ b (x) ]  = 
& E_{ \nu_{ ( u^N (t,x ) , v^N (t, x ) ) }  } [ b (x ) ]  \\
& + \frac{\partial }{ \partial y_1 (x) } E_{ \nu_{ (u^N (t, x ) , v^N (t, x )  ) + y (x) } } [ b (x) ] \bigg|_{ y (x) = 0 } y_1 (x)  \\
& + \frac{\partial }{ \partial y_2 (x) } E_{ \nu_{ (u^N (t, x ) , v^N (t, x )  ) + y (x) } } [ b (x) ] \bigg|_{ y (x) = 0 } y_2 (x) 
+ C (\delta) | y (x)  |^2. 
\end{aligned}
\]
In the sequel, we estimate the expectations appearing in the last display to finish the proof. For the first term, since $R (x) $ has mean zero with respect to $\nu^N_t $ for every $x \in \mathbb{T}^d_N$, we can easily see that there exists a positive constant $C$ such that
\[
\begin{aligned}
| E_{ \nu_{ (u^N (t ,x ), v^N (t , x ) ) } } [b (x)]  | 
& = \big| - E_{ \nu_{ (u^N (t ,x ), v^N (t , x ) ) } } [ R (x) \mathbf{1}_{ \{ \sum_{ z \in \Lambda_h} \eta_1 (x + z )  > A  \} } ] \big|  \\
& \le A^{-1} \big|  E_{ \nu_{ (u^N (t ,x ), v^N (t , x ) ) } } \big[ R ( x ) \sum_{ z \in \Lambda_h } \eta_1 ( x + z )  \big] \big| 
\le C A^{ -1 } 
\end{aligned}
\]
where we used Markov's inequality in the penultimate estimate, and the last estimate can be deduced since moments of any order is bounded uniformly in $N $. On the other hand, due to a similar calculation given above, we have
\[
\begin{aligned}
& \bigg| \frac{ \partial }{ \partial y_1 (x) } E_{ \nu_{ (u^N (t, x ), v^N (t, x ) ) + y (x)  } } [ b (x) ] \bigg|_{ y = 0 }  \bigg| \\
& \quad = \bigg| \frac{ \partial }{ \partial y_1 (x) } E_{ \nu_{ (u^N (t, x ), v^N (t, x ) ) + y (x)  } } [ R (x) \mathbf{1}_{ \{  \sum_{ z \in \Lambda_h } \eta_1 (x + z )  > A \} } ] \bigg|_{ y = 0 } \bigg|  \\
& \quad \le A^{ - 1 } \chi_1(u^N (t,x ) )^{-1}  \bigg| E_{ \nu_{ (u^N (t,x ), v^N (t, x ) ) } } \big[ R (x) \sum_{ z \in \Lambda_h } \eta_1 (x + z ) ( \eta_1 (x) - u^N ( t,  x ) ) \big]  \bigg| .
\end{aligned}
\]
However, since $\chi_1 (u^N (t,x ) )^{-1} = \varphi^\prime (u^N (t, x ) ) / \varphi (u^N (t, x ) ) \le C e^{ C_1 K } $ in view of the uniform lower bound for $u^N $ by Lemma \ref{unif est}, the last display can be bounded by $C A^{-1 } e^{C_1 K } $. Similarly for type-$2$ configuration, we obtain 
\[
\bigg| \frac{ \partial }{ \partial y_1 (x) } E_{ \nu_{ (u^N (t, x ), v^N (t, x ) ) + y (x)  } } [ b (x) ] \bigg|_{ y = 0 }  \bigg|  
= O ( A^{-1} e^{ C_1 K } )
\]
and thus we complete the proof. 
\end{proof}

\begin{lemma}
\label{R2lem2}
There exists a positive constant $C$ such that for sufficiently small positive constants $\gamma_2 $ and $ \delta $ we have
\[
\sup_{  \ell \in  \mathbb{N}} 
E_{ \nu^N_t } [ e^{ \gamma_2 (2 \ell + 1 )^d | \tilde{ y } (x) |^2 } \mathbf{1}_{ \{  | \tilde{ y } (x) |  \le \delta  \} } ]  < C . 
\]
\end{lemma}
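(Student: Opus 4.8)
To prove Lemma~\ref{R2lem2}, the plan is to reduce the two-species statement to a one-species concentration bound and then control that by a Cram\'er-type large-deviation estimate, using the truncation $\mathbf{1}_{\{|\tilde{y}(x)|\le\delta\}}$ precisely to remain in the sub-Gaussian regime. First I would note that $\nu^N_t=\nu_{1,u^N(t)}\otimes\nu_{2,v^N(t)}$ is a product measure with independent type-$1$ and type-$2$ coordinates, that $(|\tilde{y}_1(x)|+|\tilde{y}_2(x)|)^2\le 2|\tilde{y}_1(x)|^2+2|\tilde{y}_2(x)|^2$, and that $\mathbf{1}_{\{|\tilde{y}(x)|\le\delta\}}\le\mathbf{1}_{\{|\tilde{y}_1(x)|\le\delta\}}\,\mathbf{1}_{\{|\tilde{y}_2(x)|\le\delta\}}$. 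Hence the expectation factorizes, and it suffices to bound, uniformly in $\ell$, $x$, $t$ and $N$, the quantity $E_{\nu^N_t}[\,e^{2\gamma_2|\Lambda_\ell|\,\tilde{y}_i(x)^2}\mathbf{1}_{\{|\tilde{y}_i(x)|\le\delta\}}\,]$ for $i=1,2$. Writing $\xi^{(1)}_z=\eta_1(x+z)-u^N(t,x+z)$ and $\xi^{(2)}_z=\eta_2(x+z)-v^N(t,x+z)$, these are, under $\nu^N_t$, independent and centered, and $|\Lambda_\ell|\,\tilde{y}_i(x)^2=|\Lambda_\ell|^{-1}\big(\sum_{z\in\Lambda_\ell}\xi^{(i)}_z\big)^2$.

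The key ingredient is a uniform sub-Gaussian tail bound on the bulk scale: there are constants $c_0>0$ and $\delta_0\in(0,1]$, depending only on the jump rate $g$ and on $M_u,M_v$, such that
\[
\nu^N_t\big(|\tilde{y}_i(x)|\ge t\big)\le 2\,e^{-c_0|\Lambda_\ell|\,t^2}\qquad\text{for all }0<t\le\delta_0 .
\]
This is the standard exponential Chebyshev estimate, and it reduces to the fact that the single-site log-moment generating functions satisfy $\log E_{\nu^N_t}[e^{\lambda\xi^{(i)}_z}]\le C\lambda^2$ uniformly for $|\lambda|\le\lambda_0$, $z$, $t$, $N$. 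For $i=2$ this is trivial since $|\xi^{(2)}_z|\le1$; for $i=1$ it follows from assumption \textbf{(LG)}, which forces $g(k)!$ to grow fast enough that $\overline{\nu}_{1,\alpha}$ has exponential moments of every order, together with Lemma~\ref{unif est}, which confines $u^N(t,x)$ to the compact interval $[0,M_u]$ so that these moments and the variances $\chi_1(u^N(t,x))$ are uniformly controlled by continuity. Optimizing over $\lambda$ and using independence then yields the displayed bound on the range $t\le\delta_0$.

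Granting this, I would pick $\delta\le\delta_0$ and $\gamma_2$ small enough that $2\gamma_2<c_0$, and apply the layer-cake formula. Since the integrand is at least $1$ on $\{|\tilde{y}_i(x)|\le\delta\}$ and vanishes off that event, the change of variables $s=e^{2\gamma_2|\Lambda_\ell|t^2}$ gives
\[
E_{\nu^N_t}\big[\,e^{2\gamma_2|\Lambda_\ell|\,\tilde{y}_i(x)^2}\mathbf{1}_{\{|\tilde{y}_i(x)|\le\delta\}}\,\big]
\le 1+4\gamma_2|\Lambda_\ell|\int_0^\delta t\,e^{2\gamma_2|\Lambda_\ell|t^2}\,\nu^N_t\big(|\tilde{y}_i(x)|\ge t\big)\,dt ,
\]
and inserting the tail bound and extending the integral to $[0,\infty)$ bounds the right-hand side by $1+\tfrac{4\gamma_2}{c_0-2\gamma_2}$, which is finite and independent of $\ell$, $x$, $t$ and $N$. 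Multiplying the $i=1$ and $i=2$ estimates gives the assertion with $C=\big(1+\tfrac{4\gamma_2}{c_0-2\gamma_2}\big)^2$.

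The main obstacle is establishing that $c_0$ and $\delta_0$ can be chosen independently of $N$, $x$ and $t$, i.e. the uniform single-site exponential-moment control for the zero-range marginals; this is exactly where assumption \textbf{(LG)} and the a priori $L^\infty$ bound of Lemma~\ref{unif est} are used. The remainder is the classical Cram\'er argument, and it is worth emphasizing that the truncation $\mathbf{1}_{\{|\tilde{y}(x)|\le\delta\}}$ is indispensable: without it the bound would fail, since the type-$1$ sums are only sub-exponential and their genuine large deviations would make the exponential moment diverge with $\ell$, whereas restricting to $\{|\tilde{y}_i(x)|\le\delta\}$ keeps us inside the Gaussian-type regime where the estimate is valid.
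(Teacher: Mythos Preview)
Your argument is correct and follows essentially the same route as the paper: factor the product measure, reduce the type-$2$ piece to a bounded-variable sub-Gaussian estimate, and handle the type-$1$ piece via a Cram\'er-type exponential Chebyshev bound combined with the uniform exponential-moment control coming from \textbf{(LG)} and Lemma~\ref{unif est}. The paper itself gives only the reduction step (using the exclusion rule to dispose of $\tilde{y}_2$) and then cites \cite{EFHPS20} for the single-species concentration result; your layer-cake computation is precisely a self-contained version of that cited estimate.
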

Since type-$2$ particles satisfy the exclusion rule, recalling $| \tilde{ y } (x) | = | \tilde{y }_1 (x) | + | \tilde{y}_2 (x)  | $ and $\mathbf{1}_{ \{ |\tilde{ y } (x)  \le \delta \} } \le \mathbf{1}_{ \{ |\tilde{ y }_1 (x)  \le \delta \} } $ for each $x \in \mathbb{T}^d_N$, it suffices to show the assertion only for type-$1$ configuration. However, such a result for single species model is already proved in \cite{EFHPS20} so that we omit the proof here. With these lemmas at hand, we can estimate $R_2 $ as follows. 

\begin{lemma}
\label{R2}
There exists a positive constant $C$ such that  
\[
\int_0^T E_{ \mu^N_t } \bigg[ \bigg|  \sum_{ x \in \mathbb{T}^d_N }  R_2 (x) \bigg| \bigg]  dt 
\le \int_0^T C H (\mu^N_t | \nu^N_t ) dt 
 +  C N^d \ell^{-d} + C\varepsilon^{-1} N^{d-2} \ell^{ 2 }  + C N^d A^{ -1 } e^{C_1 K } .
\]
\end{lemma}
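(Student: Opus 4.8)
The plan is to combine the pointwise bound on the conditional expectation from Lemma \ref{R2lem1}, the exponential-moment bound from Lemma \ref{R2lem2}, and the entropy inequality, carefully tracking how local averages $y(x)$ relate to the quantities $\tilde y(x)$ controlled by Lemma \ref{R2lem2}. First I would recall that $R_2(x)$ is supported on the event $\{|y(x)|\le\delta\}$, so Lemma \ref{R2lem1} applies and gives, after summing over $x$,
\[
E_{\mu^N_t}\Big[\Big|\sum_{x}R_2(x)\Big|\Big]
\le C\,E_{\mu^N_t}\Big[\sum_{x}|y(x)|^2\mathbf{1}_{\{|y(x)|\le\delta\}}\Big]
+ C N^d\ell^{-d} + C N^d A^{-1}e^{C_1 K}.
\]
The two last terms are already of the advertised form, so the whole issue is the first term on the right.

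Next I would reduce $|y(x)|^2$ to $|\tilde y(x)|^2$ plus a deterministic error. By definition $y_i(x)-\tilde y_i(x)=|\Lambda_\ell|^{-1}\sum_{z\in\Lambda_\ell}\big(u^N(t,x+z)-u^N(t,x)\big)$ for $i=1$ and the analogous expression with $v^N$ for $i=2$, so $|y(x)-\tilde y(x)|$ is controlled by the block-average oscillation of the semi-discretized profiles. Using $(a+b)^2\le 2a^2+2b^2$ and then the second energy estimate of Lemma \ref{energy est} for $u^N$ (and its analogue for $v^N$, which carries the extra $\varepsilon^{-1}$), one gets
\[
\int_0^T\sum_{x}|y(x)-\tilde y(x)|^2\,dt \le C\varepsilon^{-1}N^{d-2}\ell^2,
\]
which accounts precisely for the term $C\varepsilon^{-1}N^{d-2}\ell^2$ in the statement. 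It therefore remains to bound $\int_0^T E_{\mu^N_t}\big[\sum_x|\tilde y(x)|^2\mathbf{1}_{\{|\tilde y(x)|\le\delta\}}\big]\,dt$ by a constant multiple of $\int_0^T H(\mu^N_t|\nu^N_t)\,dt$ plus lower-order terms.

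For that last bound I would apply the entropy inequality with parameter $\gamma=\gamma_2(2\ell+1)^d$, after first decomposing the sum over $x\in\mathbb{T}^d_N$ into $O(\ell^d)$ shifted sublattices on which the variables $\{\tilde y(x)\}$ are independent under $\nu^N_t$ (they depend on disjoint blocks $\Lambda_{\ell,x}$ once the sites are spaced more than $2\ell$ apart), exactly as in the proof of Lemma \ref{truncation1}. On each sublattice the entropy inequality gives
\[
E_{\mu^N_t}\Big[\sum_{x}|\tilde y(x)|^2\mathbf{1}_{\{|\tilde y(x)|\le\delta\}}\Big]
\le \frac{1}{\gamma_2(2\ell+1)^d}\Big(H(\mu^N_t|\nu^N_t)
+\sum_{x}\log E_{\nu^N_t}\big[e^{\gamma_2(2\ell+1)^d|\tilde y(x)|^2}\mathbf{1}_{\{|\tilde y(x)|\le\delta\}}\big]\Big),
\]
and Lemma \ref{R2lem2} bounds each exponential moment by a constant, so the log-sum is $O(N^d)$; since $\gamma_2(2\ell+1)^d\ge c\ell^d$ the prefactor is $O(\ell^{-d})$ times a constant, yielding a contribution $\le C H(\mu^N_t|\nu^N_t) + C N^d\ell^{-d}$ after summing the $O(\ell^d)$ sublattices (the $\ell^d$ factors cancel). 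Integrating over $t\in[0,T]$ and collecting all pieces gives the claim. The main obstacle is the bookkeeping in this last step: one must choose $\delta$ small enough that Lemma \ref{R2lem2} applies uniformly in $\ell$, make sure the sublattice decomposition really makes the $\tilde y(x)$ independent (hence the truncation to blocks of size $\ell$ rather than the support $\Lambda_h$ of $h$), and verify that the entropy term is picked up with an $O(1)$ rather than $\ell$-dependent constant — everything else is a routine application of the energy estimates and the equivalence of ensembles already established.
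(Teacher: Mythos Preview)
Your approach is essentially the paper's: Lemma~\ref{R2lem1} for the pointwise bound, the energy estimate (Lemma~\ref{energy est}) for the deterministic piece $|y(x)-\tilde y(x)|^2$, and then the entropy inequality combined with the sublattice Hölder decomposition and Lemma~\ref{R2lem2} for the remaining stochastic part. The entropy bookkeeping you describe is correct: applying the inequality on each $(2\ell+1)$-spaced sublattice with parameter $\gamma_2(2\ell+1)^d$ and summing indeed recovers $C\,H(\mu^N_t|\nu^N_t)$ with an $O(1)$ constant and a residual $CN^d\ell^{-d}$.

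There is one genuine step you skip. After writing $|y(x)|^2\le 2|\tilde y(x)|^2+2|y(x)-\tilde y(x)|^2$ you are left with $|\tilde y(x)|^2\mathbf{1}_{\{|y(x)|\le\delta\}}$, but you then assert that it remains to bound $|\tilde y(x)|^2\mathbf{1}_{\{|\tilde y(x)|\le\delta\}}$. These are not the same: on $\{|y(x)|\le\delta\}$ the variable $\tilde y(x)$ can still be arbitrarily large if the deterministic oscillation $y(x)-\tilde y(x)$ is large, and Lemma~\ref{R2lem2} really needs the truncation on $\tilde y(x)$. The paper fills this in by the decomposition
\[
\mathbf{1}_{\{|y(x)|\le\delta\}}\le \mathbf{1}_{\{|\tilde y(x)|\le 2\delta\}}+\mathbf{1}_{\{|y(x)|\le\delta\}}\mathbf{1}_{\{|y(x)-\tilde y(x)|\ge\delta\}},
\]
and then bounds $|\tilde y(x)|^2$ times the second indicator by $4|y(x)-\tilde y(x)|^2$ (using $|\tilde y(x)|^2\le 2|y(x)|^2+2|y(x)-\tilde y(x)|^2\le 2\delta^2+2|y(x)-\tilde y(x)|^2$ and $\delta^2\le|y(x)-\tilde y(x)|^2$ on that event), which is absorbed into the same energy term $C\varepsilon^{-1}N^{d-2}\ell^2$. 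This is short but not automatic; you should include it.
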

\begin{proof}
According to Lemma \ref{R2lem1}, we have
\[
\begin{aligned}
| R_2 (x) | \le C | y (x) |^2 \mathbf{1}_{ \{ |y (x) | \le \delta  \} } +  C \ell^{-d }  +  C A ^{- 1 } e^{C_1 K } . 
\end{aligned}
\]
Here recall the definition of scaled local averages $y (x ) $ and $\tilde{y} (x) $ given at the beginning of this section. By an elementary inequality $(a + b )^2 \le 2 a^2 + 2 b^2 $ for every $a,b \in \mathbb{R}$, we have that 
\[
\begin{aligned}
& y_1 (x)^2 \le 2 \tilde{y}_1 (x)^2 + 2 \bigg( \frac{1}{ |\Lambda_\ell | } \sum_{  z \in \Lambda_{ \ell }  } (u^N (t, x +  z) - u^N (t, x) ) \bigg)^2,  \\
& y_2 (x)^2 \le 2 \tilde{y}_2 (x)^2 + 2 \bigg( \frac{1}{ |\Lambda_\ell | } \sum_{  z \in \Lambda_{ \ell }  } (v^N (t, x +  z) - v^N (t, x) ) \bigg)^2  \\
\end{aligned}
\]
for every $x \in \mathbb{T}^d_N$. Therefore, the energy estimate (Lemma \ref{energy est}) enables us to estimate 
\[
\begin{aligned}
 \int_0^T E_{ \mu^N_t } \bigg[ \sum_{ x \in \mathbb{T}^d_N} y (x)^2 \mathbf{1}_{ \{ | y (x) | \le \delta   \} } \bigg] dt  
\le C \varepsilon^{-1} N^{d-2 } \ell^{ 2 } 
+ 2 \int_0^T E_{ \mu^N_t } \bigg[ \sum_{ x \in \mathbb{T}^d_N } \tilde{y} (x)^2 \mathbf{1}_{ \{ | y (x) |\le  \delta  \} } \bigg] dt .  
\end{aligned}
\]
Now our task is to bound the second term in the last display. For that purpose, it is convenient to replace $y (x) $ in the indicator function $\mathbf{1}_{ \{ | y (x) | \le \delta \} }$ by $\tilde{y} (x) $ as 
\[
\begin{aligned}
 \mathbf{1}_{ \{ |y (x ) | \le \delta  \} } 
& = \mathbf{1}_{ \{ |y (x ) | \le \delta  \} }  
\big( \mathbf{1}_{ \{ | \tilde{y} (x ) | \le 2 \delta  \} } + \mathbf{1}_{ \{ | \tilde{y} (x ) | > 2 \delta  \} }  \big) \\
& \le \mathbf{1}_{ \{ | \tilde{y} (x ) | \le 2 \delta  \} } 
 + \mathbf{1}_{ \{ |y (x ) | \le \delta  \} } \mathbf{1}_{ \{ | \tilde{y} (x ) | > 2 \delta  \} } \\
& \le \mathbf{1}_{ \{ | \tilde{y} (x ) | \le 2 \delta  \} } 
 + \mathbf{1}_{ \{ |y (x ) | \le \delta  \} } 
\big( \mathbf{1}_{ \{ |y (x ) | > \delta  \} } + \mathbf{1}_{ \{ |y (x ) - \tilde{y} (x) | \ge \delta  \} }  \big) \\
& =  \mathbf{1}_{ \{ | \tilde{y} (x ) | \le 2 \delta  \} } 
+ \mathbf{1}_{ \{ |y (x ) | \le \delta  \} }  \mathbf{1}_{ \{ |y (x ) - \tilde{y} (x) | \ge \delta  \} } .
\end{aligned}
\]
In particular, we have
\[
\begin{aligned}
\tilde{y} (x)^2 \mathbf{1}_{ \{  | y (x) | \le \delta \} } \mathbf{1}_{ \{  | y (x) - \tilde{y} (x) | \ge \delta \} }  
& \le 2 \big(  y (x)^2 +   ( y (x) - \tilde{y} (x) )^2 \big)  \mathbf{1}_{ \{  | y (x) | \le \delta \} } \mathbf{1}_{ \{  | y (x) - \tilde{y} (x) | \ge \delta \} } \\
& \le 2 \delta^2  \mathbf{1}_{ \{  | y (x) - \tilde{y} (x) | \ge \delta \} }  + 2 ( y (x) - \tilde{y} (x) )^2  \\
& \le 4 (y(x) - \tilde{y} (x) )^2 
\end{aligned}
\]
where we used Markov's inequality for the last line. Therefore, according to the energy estimate given in Lemma \ref{energy est}, we deduce for some positive constant $C$ that 
\[
\int_0^T \sum_{ x \in \mathbb{T}^d_N } E_{ \mu^N_t } 
\big[ \tilde{y} (x)^2 \mathbf{1}_{ \{  | y (x) | \le \delta \} } \mathbf{1}_{ \{  | y (x) - \tilde{y} (x) | \ge \delta \} }  \big] dt  
\le C\varepsilon^{-1} N^{d-2} \ell^{ 2 } .
\]
On the other hand, applying the entropy inequality, we have
\[
\begin{aligned}
\sum_{ x \in \mathbb{T}^d_N  }E_{ \mu^N_t } [ \tilde{y} (x)^2 \mathbf{1}_{ \{ | \tilde{y} (x)| \le \delta \} } ] 
& \le \gamma^{ -1 } H (\mu^N_t | \nu^N_t ) + \gamma^{-1} \log E_{ \nu^N_t } [ e^{  \gamma \sum_{ x } \tilde{y}^2 (x) \mathbf{1}_{ \{ | \tilde{y} (x) | \le 2 \delta \} } } ] \\
& \le \gamma^{ -1 } H (\mu^N_t | \nu^N_t ) + \gamma^{-1} \ell^{-d} \sum_{x \in \mathbb{T}^d_N } \log E_{ \nu^N_t } [ e^{  \gamma \ell^d  \tilde{y} (x)^2 \mathbf{1}_{ \{ | \tilde{y} (x) | \le 2 \delta \} } } ] 
\end{aligned}
\]
for every positive constant $\gamma $. However, we have that
\[
\begin{aligned}
\log E_{\nu^N_t } [ e^{ \gamma \ell^d \tilde{y} (x)^2 \mathbf{1}_{ \{ |\tilde{y} (x) | \le 2 \delta \} } } ]
\le \log \big( 1 + E_{ \nu^N_t } [ e^{ \gamma \ell^d \tilde{y} (x)^2 } \mathbf{1}_{ \{ |\tilde{y} ( x ) | \le 2 \delta  \} } ] \big) 
\end{aligned}
\]
is bounded uniformly in $\ell \in \mathbb{N}$ by Lemma \ref{R2lem2} for small $\gamma$, $\delta$. Since $\gamma $ can be taken as a constant independent of $\ell$ and $N$, we obtain the assertion and complete the proof. 
\end{proof}

\subsubsection{Estimate of $R_3$}
Finally we are concerned with $R_3 $.

\begin{lemma}
\label{R3}
For a small positive constant $\gamma_3$, we have  
\[
\mathbb{E}^N \bigg[ \bigg| \int_0^T  \sum_{ x \in \mathbb{T}^d_N } R_3 (x) dt  \bigg|  \bigg] 
\le  \gamma_3^{-1} \int_0^T H ( \mu^N_t | \nu^N_t ) dt + \frac{ C \varepsilon^{-1} B N^{ d-2 } \ell^2 }{ \delta^2  }  + \frac{ C N^d }{ \gamma_3 \ell^d } e^{ -c_1 \ell^d }. 
\]
\end{lemma}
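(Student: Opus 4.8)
The plan is to regard $R_3$ as the contribution of the large-fluctuation event $\{|y(x)|>\delta\}$ and to dispose of it by separating the genuine microscopic fluctuation from the spatial variation of the semi-discrete profile. First I would record the a priori bound $|R_3(x)|\le C(1+\eta_1^\ell(x))$ on the support of $R_3$ (in particular $|R_3(x)|\le CB$ there, thanks to the truncation $\mathbf{1}_{\{\eta_1^\ell(x)\le B\}}$): the linear growth assumption \eqref{BG assumption}, the boundedness of $\tilde h$ and of $\partial_i\tilde h(u^N(t,x),v^N(t,x))$ on the range $[0,M_u]\times[0,M_v]$ of the profile supplied by Lemma \ref{unif est}, and exchangeability of the conditioning measure (so that $E_{\nu_\beta}[\eta_1(x+z)|\eta_1^\ell(x),\eta_2^\ell(x)]=\eta_1^\ell(x)$ for $z\in\Lambda_\ell$) give $E_{\nu_\beta}[|R(x)||\eta_1^\ell(x),\eta_2^\ell(x)]\le C(1+\eta_1^\ell(x))$. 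Then, writing $y(x)-\tilde y(x)=|\Lambda_\ell|^{-1}\sum_{z\in\Lambda_\ell}\big(u^N(t,x+z)-u^N(t,x),\,v^N(t,x+z)-v^N(t,x)\big)$, I use the inclusion $\{|y(x)|>\delta\}\subseteq\{|\tilde y(x)|>\tfrac{\delta}{2}\}\cup\{|y(x)-\tilde y(x)|>\tfrac{\delta}{2}\}$ to split $\sum_x R_3(x)$ into two pieces.

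On the piece carrying $\mathbf{1}_{\{|y(x)-\tilde y(x)|>\delta/2\}}$, I would apply Markov's inequality $\mathbf{1}_{\{|y(x)-\tilde y(x)|>\delta/2\}}\le 4\delta^{-2}|y(x)-\tilde y(x)|^2$ together with $|R_3(x)|\le CB$, sum over $x$, and integrate in time. The energy estimate of Lemma \ref{energy est}, in the local-average form applied to $u^N$ and to $v^N$ (the latter inheriting a factor $\varepsilon^{-1}$ from $\int_0^T\sum_x\varepsilon|\nabla^N v^N|^2\,dt\le CN^d$), bounds $\int_0^T\sum_x|y(x)-\tilde y(x)|^2\,dt$ by $C\varepsilon^{-1}N^{d-2}\ell^2$, which produces the term $C\varepsilon^{-1}BN^{d-2}\ell^2/\delta^2$.

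On the piece carrying $\mathbf{1}_{\{|\tilde y(x)|>\delta/2\}}$, I would apply the entropy inequality with parameter $\gamma_3$ to pass from $\mu^N_t$ to $\nu^N_t$, getting $\gamma_3^{-1}\int_0^T H(\mu^N_t|\nu^N_t)\,dt$ together with $\gamma_3^{-1}$ times the logarithm of $E_{\nu^N_t}\big[\exp\big(\gamma_3 C\sum_x(1+\eta_1^\ell(x))\mathbf{1}_{\{|\tilde y(x)|>\delta/2\}}\big)\big]$. Decomposing $\mathbb{T}^d_N$ into $(2\ell+1)^d$ sublattices on which the block variables are independent under the product measure $\nu^N_t$, a generalized H\"older step reduces matters to bounding, for each $x$, $E_{\nu^N_t}\big[\exp\big((2\ell+1)^d\gamma_3 C(1+\eta_1^\ell(x))\mathbf{1}_{\{|\tilde y(x)|>\delta/2\}}\big)\big]$. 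Writing $(2\ell+1)^d\eta_1^\ell(x)=\sum_{z\in\Lambda_\ell}\eta_1(x+z)$, a Cauchy--Schwarz split, the elementary estimate $E_{\nu^N_t}[e^{\gamma\sum_{z\in\Lambda_\ell}\eta_1(x+z)}]\le e^{C\gamma(2\ell+1)^d}$ for small $\gamma$ (uniform exponential moments under \textbf{(LG)}), and the large-deviation bound $\nu^N_t(|\tilde y(x)|>\tfrac{\delta}{2})\le e^{-c(\delta)\ell^d}$ — valid uniformly in $t$ since the weights stay in $[0,M_u]\times[0,M_v]$ — bound each such moment by $1+e^{-c_1\ell^d}$ provided $\gamma_3$ is a sufficiently small fixed constant depending on $\delta$. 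Summing over the $\sim N^d\ell^{-d}$ sublattice representatives then yields the term $C\gamma_3^{-1}\ell^{-d}N^d e^{-c_1\ell^d}$.

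The hard part will be this last step, and specifically the bookkeeping of exponents: after the sublattice decomposition each indicator $\mathbf{1}_{\{|\tilde y(x)|>\delta/2\}}$ contributes $\sim\gamma_3 C\ell^d$ to the exponent and the exponential moment of $\eta_1^\ell(x)$ a further $\sim C\gamma_3\ell^d$, so both must be absorbed into the large-deviation gain $e^{-c(\delta)\ell^d}$ by choosing $\gamma_3$ small but fixed. This is why no prefactor growing with $N$ (such as $A$ or $B$) may multiply the indicator in this piece, and hence why the refined bound $|R_3(x)|\le C(1+\eta_1^\ell(x))$ — whose growing part is itself a local average amenable to exponential estimates — rather than the crude $|R_3(x)|\le CB$, is needed here, the crude bound being acceptable only in the energy-estimate piece. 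A minor technical point is to make the uniform-in-$t$ large-deviation bound for $\tilde y(x)$ precise near sites where $u^N(t,\cdot)$ is as small as $e^{-C_1K}$: there only the upper tail of $\tilde y_1(x)$ is relevant, so positivity of the rate $c(\delta)$ is preserved.
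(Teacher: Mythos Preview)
Your proposal is correct and follows essentially the same route as the paper: the same splitting of $\{|y(x)|>\delta\}$ into the profile-variation piece (handled by the crude bound $|R_3(x)|\le CB$, Markov's inequality, and the energy estimate of Lemma~\ref{energy est}) and the genuine-fluctuation piece (handled by the entropy inequality, sublattice decomposition, Cauchy--Schwarz, uniform exponential moments, and the large-deviation bound $\nu^N_t(|\tilde y(x)|>\delta/2)\le e^{-c\ell^d}$). The only cosmetic difference is that the paper first rewrites $1+\eta_1^\ell(x)\le C(1+|\tilde y(x)|)$ via $\eta_1^\ell(x)\le\tilde y_1(x)+M_u$ and then absorbs the constant into $|\tilde y(x)|$ on the event $\{|\tilde y(x)|>\delta/2\}$, applying the entropy step to $|\tilde y(x)|\mathbf{1}_{\{|\tilde y(x)|>\delta/2\}}$ rather than to $(1+\eta_1^\ell(x))\mathbf{1}_{\{|\tilde y(x)|>\delta/2\}}$; your version works equally well.
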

\begin{proof}
First one can easily see that there exists a positive constant $C = C ( |\Lambda_h | )$ such that 
\[
\begin{aligned}
E_{ \nu_\beta } \big[ | R (x) |  \mathbf{1}_{ \{  \sum_{ y \in \Lambda_h} \eta_1 (x + y ) \le A \} } | \eta_1^\ell (x), \eta_2^\ell (x) \big] 
& \le C \sum_{ y \in \Lambda_h } \big( g_1 (y, \eta_2^\ell (x) )  \eta_1^\ell (x) + g_2 (y,  \eta_2^\ell (x) ) \big) \\
& \le C \sum_{ y \in \Lambda_h } \big( g_1 (y, \eta_2^\ell (x) )  \tilde{ y }_1 (x) + g_2 (y,  \eta_2^\ell (x) ) \big) 
\end{aligned}
\]
where $g_i (y, \eta_2^\ell (x) ) $ ($ i = 1,2$) denote polynomials of $\eta_2^\ell (x) $ obtained by substituting $\eta_2^\ell (x) $ into all argument $ \{ \eta_2 (z) ; z \in \Lambda_h \} $ and in the second line we used the uniform boundedness of $ u^N $ and $ v^N $ (Lemma \ref{unif est}) to replace $\eta_1^\ell (x) $ by $\tilde{y}_1 (x ) $. Moreover, in view of Lemma \ref{unif est}, noting $g_1 (y) $ and $g_2(y) $ are polynomials of $\{ \eta_2 (z) ; z \in \Lambda_h \} $, which are uniformly bounded, we see that there exists a positive constant $C$ such that 
\[
\sup_{ x \in \mathbb{T}^d_N  } \big| \big( g_1 (y, \eta_2^\ell (x) ) \tilde{y}_1 (x)  + g_2 (y, \eta_2^\ell (x) )  \big) 
\mathbf{1}_{ \{ \eta_1^\ell (x) \le B \} } \big|
\le C ( B + 1) \le C B 
\]
for sufficiently large $N$. 

Now we decompose
\[
 y (x)  = \tilde{y} (x) 
 + \frac{1}{ | \Lambda_\ell | } \sum_{ z \in \Lambda_{\ell }  } (u^N (t, x + z) - u^N (t, x) )
+ \frac{1}{ | \Lambda_\ell |  } \sum_{ z \in \Lambda_{ \ell }  } (v^N (t, x + z) - v^N (t, x) ) .
\]
Then with the help of Markov's inequality, we have
\[
\begin{aligned} 
&\int_0^T \sum_{ x \in \mathbb{T}^d_N } E_{ \mu^N_t  } \bigg[ \sum_{ y \in \Lambda_h  } \big( g_1 (y, \eta_2^\ell (x) ) \tilde{y}_1 (x) + g_2 (y, \eta_2^\ell (x) ) \big)  
\mathbf{1}_{ \{ \eta_1^\ell (x) \le B \} } 
\mathbf{1}_{ \{ | y (x) |  > \delta \} } \bigg] dt  \\ 
&  \le \int_0^T \sum_{ x\in \mathbb{T}^d_N } E_{ \mu^N_t  } \bigg[ \sum_{y \in \Lambda_h } \big( g_1 (y, \eta_2^\ell (x) ) \tilde{y}_1 (x)  + g_2 (y, \eta_2^\ell (x) )\big)  
\mathbf{1}_{ \{ \eta_1^\ell (x) \le B \} } 
\mathbf{1}_{ \{ | \tilde{y} (x) |  > \delta / 2 \} } \bigg] dt  \\
& \quad +  \frac{ C B }{ \delta^2 } \int_0^T \sum_{ x \in \mathbb{T}^d_N }  
\bigg( \frac{1}{ (2\ell + 1 )^d } \sum_{ z \in \Lambda_{ \ell, x}  } ( u^N (t, z) - u^N (t, x) )  \bigg)^2 dt \\
& \quad + \frac{ C B }{ \delta^2 } \int_0^T \sum_{ x \in \mathbb{T}^d_N }  
\bigg( \frac{1}{ (2\ell + 1 )^d } \sum_{  z \in \Lambda_{\ell, x }  } ( v^N (t, z) - v^N (t, x) )  \bigg)^2 dt \\
& \le C (1 + 2\delta^{-1 } )  \int_0^T 
E_{ \mu^N_t } \bigg[  \sum_{x \in \mathbb{T}^d_N }  |\tilde{y} (x) | \mathbf{1}_{ \{ | \tilde{y} (x) | > \delta / 2  \} } \bigg] dt 
+ \frac{ C \varepsilon^{-1} B N^{d-2} \ell^2 }{ \delta^2 }
\end{aligned}
\]
where we used the above decomposition in the penultimate estimate, and for the last line we noted the uniform energy estimate given in Lemma \ref{energy est}. 

In the sequel, we estimate the first term in the last display in a similar way as in \cite{EFHPS20}. First, applying the entropy inequality, the integrand of the first term in the last line is bounded from above by 
\begin{equation}
\label{R3term}
\begin{aligned}
& \gamma_3^{-1} H ( \mu^N_t | \nu^N_t )  \\
& \quad + \frac{1}{ \gamma_3 ( 2 \ell + 1 )^d } \sum_{ x \in \mathbb{T}^d_N } \log \bigg( 1 - \nu^N_t ( |\tilde{y} (x) | >  \delta/ 2 ) 
+ E_{ \nu^N_t } [ e^{ \gamma_3 (2 \ell + 1 )^d  |\tilde{y} (x) | } \mathbf{1}_{\{ | \tilde{y} (x) | > \delta / 2  \} } ]  \bigg) . 
\end{aligned}
\end{equation}
By the Schwarz inequality, we have
\[
 E_{ \nu^N_t } \big[ e^{ \gamma_3 (2 \ell + 1 )^d  | \tilde{y} (x) | } \mathbf{1}_{\{ | \tilde{y} (x) | > \delta / 2  \} } \big]  
\le  E_{ \nu^N_t } \big[ e^{ 2 \gamma_3 (2 \ell + 1 )^d  |\tilde{y} (x) | } \big]^{1/2} \cdot
\nu^N_t ( | \tilde{y} (x) | > \delta / 2 )^{1/2} .
\]
Now we fix an arbitrary positive constant $s$. Then by an elementary inequality $e^{ |x| } \le e^{-x} + e^x $ and Markov's inequality we have
\[
\begin{aligned}
\nu^N_t ( | \tilde{y}_1 (x) | > \delta )   
& \le E_{ \nu^N_t } [ e^{ s \tilde{y}_1 (x) \ell^d } ] e^{ - s \ell^d \delta } 
+ E_{ \nu^N_t } [ e^{ - s \tilde{y}_1 (x) \ell^d } ] e^{ - s \ell^d \delta }  \\
& \le \bigg( \prod_{ |z| \le \ell } E_{ \nu^N_t } [ e^{ s (\eta_1 (x+z ) - u (x + z ) ) } ] 
+ \prod_{ |z| \le \ell } E_{ \nu^N_t } [ e^{ - s (\eta_1 (x+z ) - u (x + z ) ) } ]  \bigg) e^{ - s \ell^d \delta } .
\end{aligned}
\]
Moreover, since the reference measure $\nu^N_t $ has any finite moment, we can expand 
\[
\log E_{ \nu^N_t } [ e^{ \pm s (\eta_1 (x) - u (x) ) }] = s^2 \sigma_1^2 /2 + o (s^2 )
\]
as $s \to 0$. Note here that $\sigma_1^2 $ is bounded uniformly in $N$ in view of Lemma \ref{unif est}. Therefore, by taking $s = \varepsilon \delta $ with $\varepsilon > 0$ small, we have 
\[
\nu^N_t ( |\tilde{y}_1 (x) | > \delta) 
\le 2 \prod_{ |z| \le \ell } e^{ \delta^2 ( \varepsilon - \varepsilon^2 \sigma_1^2 / 2 )  } 
\le e^{- c \ell^d }
\]
for some positive constant $c = c (\delta, \varepsilon)$. The same tail estimate of $\tilde{y}_2$ holds and thus we have $\nu^N_t ( | \tilde{y} (x ) | > \delta / 2 ) \le 2 e^{ - c \ell^d} $ for some positive constant $c$.

On the other hand, noting the uniform boundedness of $u^N $ by Lemma \ref{unif est} to see that 
\[
\begin{aligned}
E_{ \nu^N_t } [ e^{ 2 \gamma_3  ( 2 \ell + 1 )^d  | \tilde{y} (x) |  }  ]  
& = \prod_{ z  \in \Lambda_{ \ell, x }  } 
E_{ \nu^N_t } [ e^{ 2 \gamma_3 | \eta_1 ( z) - u^N (t,  z ) | } ]  
E_{ \nu^N_t } [ e^{ 2 \gamma_3 | \eta_2 ( z) - v^N (t,  z ) |  }  ]  \\
& \le C e^{ 2 \gamma_3 ( M_u \vee M_v ) (2  \ell + 1 )^d } 
 \prod_{ z  \in \Lambda_{ \ell, x }  } E_{ \nu^N_t } [ e^{ 2 \gamma_3  \eta_1 ( z)   }  ]  . 
\end{aligned}
\]
for every $x \in \mathbb{T}^d_N $. Since the zero-range jump rate $g$ satisfies a sub-linear growth by the assumption \textbf{(LG)} so that the partition function can be bounded as 
\[
Z_{u^N (t, x ) } = \sum_{ k \ge 0 } \frac{ \varphi (u^N (t, x ) )^k }{ g (k) ! } \ge \sum_{ k \ge 0 } \frac{ (\varphi (M_u )^k }{  c^k  k ! } = e^{ \varphi (M_u ) / C } \ge 1.
\]
Therefore, we can bound the exponential moment for type-$1$ configuration as 
\[
E_{\nu^N_t } [ e^{ 2 \gamma_3 \eta_1 ( z )  } ] 
=  \frac{ 1}{ Z_{ u^N (t, z) } } \sum_{ k \ge 0 } e^{ 2 \gamma_3 k }  \frac{ \varphi (u^N (t, z ) )^k }{ g (k ) ! } 
\le  \sum_{ k \ge 0 } \frac{ ( C e^{ 2 \gamma_3  } \varphi (M_u ) )^k }{  k ! } 
\le C, 
\]
which stays finite for every $z \in \Lambda_{ \ell, x } $. Here in the penultimate estimate we used the bound $g (k ) \ge C k$ due to the assumption \textbf{(LG)}. Combining all estimates obtained in the above, now we can finish the proof. Indeed, using an elementary inequality $\log (1 + x ) \le x $ for $x \ge 0$ and taking $\gamma_3 $ sufficiently small, \eqref{R3term} can be bounded above by
\[
\gamma_3^{-1 } H ( \mu^N_t | \nu^N_t ) +  C \gamma_3^{ - 1 } \ell^{-d } N^d  e^{ - c_1 \ell^d } 
\]
for some positive constant $c_1 = c_1 (\delta, \varepsilon, \gamma_3 )$ and thus we complete the proof. 
\end{proof}

\section{Convergence results for semi-discretized system \eqref{dHDL eq}}
\label{sec:PDE}
In this section we give a proof of Theorem \ref{PDE thm}. Throughout this section, let $u^N  $ and $v^N $ be a solution of the system \eqref{dHDL eq} with initial functions satisfying the assumptions (A\ref{IF}) and (A\ref{PDE}) which are extended on $\mathbb{T}^d $ by \eqref{extension}. First we show the uniqueness of the problem \eqref{wStefan} in a similar way as \cite{HHP00}. 

\begin{lemma}
\label{uniqueness}
The problem \eqref{wStefan} has at most one solution. 
\end{lemma}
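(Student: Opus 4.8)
The plan is to establish uniqueness by the duality (adjoint) method of Oleinik, following \cite{HHP00}. Suppose $w_1$ and $w_2$ are two weak solutions of \eqref{wStefan} with the common initial datum $w_0 = u_0 - v_0$, and set $W \coloneqq w_1 - w_2$ and $Z \coloneqq \varphi((w_1)_+) - \varphi((w_2)_+)$. Since the map $r \mapsto \varphi(r_+)$ is non-decreasing and Lipschitz on the bounded range of $w_1, w_2$ (recall $\varphi \in C^1(\mathbb{R}_+)$), there is a measurable function $a$ on $Q_T$ with $0 \le a \le L$, where $L$ is the Lipschitz constant, such that $Z = a W$ a.e.\ on $Q_T$; note that $a$ may vanish on a set of positive measure, precisely where $(w_1)_+ = (w_2)_+$ but $w_1 \ne w_2$, so the problem is genuinely degenerate. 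Subtracting the integral identities \eqref{weakform} satisfied by $w_1$ and $w_2$ (the $t=0$ contributions cancel) and integrating the gradient term by parts, which is legitimate for smooth $\psi$ because $\varphi((w_i)_+) \in L^2(0,T;H^1(\mathbb{T}^d))$, one obtains
\[
\int_0^T \int_{\mathbb{T}^d} \big( W \, \partial_t \psi + a W \, \Delta \psi \big) \, d\theta \, dt = 0
\]
for every $\psi \in C^\infty(Q_T)$ with $\psi(T, \cdot) \equiv 0$.

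Next I would construct the dual test functions. Fix an arbitrary $\chi \in C^\infty(Q_T)$. For $\varepsilon > 0$, let $a_\varepsilon \in C^\infty(Q_T)$ be obtained by mollifying $\max(a, \varepsilon)$, so that $\varepsilon \le a_\varepsilon \le L$ and $a_\varepsilon \to a$ a.e.\ as $\varepsilon \to 0$, while $a_\varepsilon$ is of order $\varepsilon$ on the part of $\{a = 0\}$ lying at distance $\gg \varepsilon$ from its boundary. Let $\psi_\varepsilon$ be the solution of the backward, uniformly parabolic problem
\[
\partial_t \psi_\varepsilon + a_\varepsilon \Delta \psi_\varepsilon = - \chi \ \text{ in } Q_T, \qquad \psi_\varepsilon(T, \cdot) = 0 \ \text{ on } \mathbb{T}^d ,
\]
which by classical linear parabolic theory on $\mathbb{T}^d$ is smooth. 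Reversing time and using the maximum principle together with a standard energy estimate (multiplying by $\Delta \psi_\varepsilon$ and by $\psi_\varepsilon$ and applying Gronwall's inequality), one gets bounds uniform in $\varepsilon$:
\[
\| \psi_\varepsilon \|_{L^\infty(Q_T)} \le T \| \chi \|_{L^\infty(Q_T)}, \qquad \int_0^T \int_{\mathbb{T}^d} a_\varepsilon | \Delta \psi_\varepsilon |^2 \, d\theta \, dt \le C, \qquad \sup_{t \in [0,T]} \int_{\mathbb{T}^d} | \nabla \psi_\varepsilon(t, \cdot) |^2 \, d\theta \le C ,
\]
with $C = C(\chi, L, T)$. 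Since $\psi_\varepsilon(T, \cdot) = 0$, it is an admissible test function, and inserting $\psi = \psi_\varepsilon$ into the identity above and substituting $\partial_t \psi_\varepsilon = - a_\varepsilon \Delta \psi_\varepsilon - \chi$ gives
\[
\int_0^T \int_{\mathbb{T}^d} W \chi \, d\theta \, dt = \int_0^T \int_{\mathbb{T}^d} (a - a_\varepsilon) \, W \, \Delta \psi_\varepsilon \, d\theta \, dt .
\]

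It then remains to let $\varepsilon \to 0$. By the Cauchy--Schwarz inequality and the energy bound, the right-hand side is at most $C \big( \int_0^T \int_{\mathbb{T}^d} (a - a_\varepsilon)^2 a_\varepsilon^{-1} W^2 \, d\theta \, dt \big)^{1/2}$, and since $W \in L^\infty(Q_T)$ one checks, by splitting $Q_T$ according to whether $a$ is bounded away from zero, small but positive, or zero, and using the properties of $a_\varepsilon$, that $\int_0^T \int_{\mathbb{T}^d} (a - a_\varepsilon)^2 a_\varepsilon^{-1} \, d\theta \, dt \to 0$ (a double limit: first dominated convergence on $\{a \ge \delta\}$, then $\delta \to 0$, the contribution of $\{a = 0\}$ being handled by $a_\varepsilon \to 0$ outside a set whose measure tends to $0$). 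Hence $\int_0^T \int_{\mathbb{T}^d} W \chi \, d\theta \, dt = 0$ for every $\chi \in C^\infty(Q_T)$, which forces $W = 0$ a.e.\ on $Q_T$, i.e.\ $w_1 = w_2$.

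The hard part will be precisely this last limit: because the coefficient $a$ degenerates on a set of positive measure — the ``$v$-region'' where the two solutions have the same positive part — one must choose the regularization $a_\varepsilon$ with care and verify $\int_0^T \int_{\mathbb{T}^d} (a - a_\varepsilon)^2 a_\varepsilon^{-1} W^2 \, d\theta\,dt \to 0$, the boundedness of $W$ being what rescues the estimate. The remaining ingredients — the maximum principle and the energy estimates for $\psi_\varepsilon$, and the justification of the integration by parts in the weak formulation, for which $\varphi((w_i)_+) \in L^2(0,T;H^1(\mathbb{T}^d))$ and the parabolic regularity of $\psi_\varepsilon$ suffice — are routine.
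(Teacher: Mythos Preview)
Your argument via the Oleinik duality method is correct, but the paper takes a substantially simpler route that avoids regularization altogether. Rather than solving auxiliary backward parabolic problems with smoothed coefficients $a_\varepsilon$, the paper directly inserts into the subtracted weak identity the explicit test function
\[
\psi(t,\theta) = \int_t^T \big( \mathcal{D}_\varphi(w_1) - \mathcal{D}_\varphi(w_2) \big)(\tau,\theta)\, d\tau ,
\]
which is admissible precisely because condition~(i) of Definition~\ref{def Stefan} guarantees $\varphi((w_i)_+) \in L^2(0,T;H^1(\mathbb{T}^d))$. Using $\partial_t\psi = -(\mathcal{D}_\varphi(w_1)-\mathcal{D}_\varphi(w_2))$ together with the elementary identity $f(t)\cdot\int_t^T f = -\tfrac12\tfrac{d}{dt}\big|\int_t^T f\big|^2$ on the gradient term produces a sum of two nonnegative quantities equal to zero, and the monotonicity of $\mathcal{D}_\varphi$ finishes the argument. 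What your approach buys is robustness: it does not depend on the built-in $H^1$ regularity of the solutions and would extend to weaker solution concepts, at the price of the delicate limit $\int (a-a_\varepsilon)^2 a_\varepsilon^{-1} W^2 \to 0$ that you rightly identify as the hard step. In the present setting, where that regularity is part of the definition, the direct monotonicity argument is both shorter and entirely sidesteps the degeneracy issue on which you spend most of your effort.
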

\begin{proof}
Let $w_1, w_2 \in L^\infty (Q_T) $ be two solutions of the problem \eqref{wStefan} with a common initial function. Subtracting the weak forms of $w_1$ and $w_2$ with each other, we have 
\begin{equation}
\label{subtracted weak forms}
- \iint_{ Q_T } (w_1 - w_2 ) \psi_t d\theta dt 
+ \iint_{ Q_T } \big( \nabla \mathcal{D}_\varphi (w_{1} ) - \nabla \mathcal{D}_\varphi (w_{2 } ) \big) \cdot \nabla \psi d \theta dt = 0 
\end{equation}
where the function $ \mathcal{D}_\varphi $ is defined by $\mathcal{D}_\varphi (s) = \varphi (s) \mathbf{1}_{ [ 0, \infty ) } $, which is non-decreasing on $\mathbb{R}$. Now we take a test function $\psi $ defined by
\[
\psi (t, \theta ) = \int_t^T \big( \mathcal{D}_\varphi (w_{1} ) - \mathcal{D}_\varphi (w_{2 } ) \big) (\tau, \theta)  d \tau 
\]
in \eqref{subtracted weak forms}. Then, noting a simple relation $f (t ) \cdot \int_t^T f (\tau)  d \tau = -\frac{1}{2} \frac{d}{dt} \big|  \int_t^T f (\tau) d\tau  \big|^2$ for any vector-valued continuous function $f$, the identity \eqref{subtracted weak forms} becomes 
\[
\begin{aligned}
 \iint_{Q_T }  (w_1 - w_2 ) ( \mathcal{D}_\varphi (w_1 ) - \mathcal{D}_\varphi (w_2 ) ) dt d \theta 
+  \frac{1}{2}  \int_{\mathbb{T}^d } \bigg| \int_0^T \nabla \big( \mathcal{D}_\varphi (w_{1} ) - \mathcal{D}_\varphi (w_{2 } ) \big) dt  \bigg|^2 d\theta = 0 .  
\end{aligned}
\]
Here, the first term of the left-hand side of the last display stays non-negative since the function $\mathcal{D}_\varphi $ is non-decreasing on $\mathbb{R}$ so that it becomes to be zero.  Hence we have $w_1 = w_2 $ and complete the proof. 
\end{proof}

Next we show the compactness of the discretized solutions $u^N$ and $v^N$.  

\begin{lemma}
\label{u relcpt}
The sequence $\{ u^N (t, \theta) \}_{ N \in \mathbb{N} }$ is relatively compact in $L^p (Q_T) $ for any $p \ge 2$. 
\end{lemma}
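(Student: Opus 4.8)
\textbf{Proof proposal for Lemma \ref{u relcpt}.} The plan is to apply a Kolmogorov--Riesz (or Aubin--Lions type) compactness criterion in $L^p(Q_T)$, which reduces to establishing: (i) a uniform $L^\infty$ (hence $L^p$) bound on $\{u^N\}$, (ii) equicontinuity in space, i.e.\ control of $\|u^N(t,\cdot+\xi)-u^N(t,\cdot)\|_{L^2(Q_T)}$ uniformly as $\xi\to 0$, and (iii) equicontinuity in time, i.e.\ control of $\|u^N(\cdot+\tau,\cdot)-u^N(\cdot,\cdot)\|_{L^2((0,T-\tau)\times\mathbb{T}^d)}$ uniformly as $\tau\to 0$. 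Since $u^N$ is piecewise constant on cubes of side $1/N$, uniform control of the discrete gradient and of the discrete time increments translates directly into the continuous spatial/temporal moduli of continuity. Item (i) is immediate from Lemma \ref{unif est}, which gives $0\le u^N\le M_u$ uniformly in $N$ and $t$; this also upgrades $L^2$-equicontinuity to $L^p$-equicontinuity for every $p\ge 2$ by interpolation with the uniform $L^\infty$ bound.

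For the spatial equicontinuity (ii), I would use the energy estimate from Lemma \ref{energy est}: $\int_0^T\sum_{x\in\mathbb{T}^d_N}|\nabla^N u^N(t,x)|^2\,dt\le CN^d$. Writing a discrete difference $u^N(t,x+z)-u^N(t,x)$ along a shortest lattice path and applying Cauchy--Schwarz exactly as in the proof of the second assertion of Lemma \ref{energy est}, one gets
\[
\int_0^T\sum_{x\in\mathbb{T}^d_N}\big(u^N(t,x+z)-u^N(t,x)\big)^2\,dt\le C\,|z|^2 N^{d-2},
\]
so that after the $N^{-d}$ normalization inherent in the extension \eqref{extension}, $\|u^N(t,\cdot+\xi)-u^N(t,\cdot)\|_{L^2(Q_T)}^2\le C(|\xi|+N^{-1})^2$, which tends to $0$ uniformly in $N$ as $|\xi|\to 0$. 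For the time equicontinuity (iii), I would test the first equation of the semi-discretized system \eqref{dHDL eq} against $u^N(t+\tau,x)-u^N(t,x)$, integrate over $x\in\mathbb{T}^d_N$ and over $t$, and use summation by parts. The diffusion term contributes $-\int\int \nabla^N\varphi(u^N)\cdot\nabla^N(u^N(\cdot+\tau)-u^N)$, which after Cauchy--Schwarz is controlled by the spatial energy bound of Lemma \ref{energy est} together with the Lipschitz-in-value bound on $\varphi$ (valid on the compact range $[0,M_u]$ where $u^N$ lives); the reaction term is handled by Lemma \ref{react int}, which bounds $\int_0^T N^{-d}\sum_x K(N)u^N v^N\,dt$ uniformly. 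The upshot is an estimate of the form $\|u^N(\cdot+\tau,\cdot)-u^N(\cdot,\cdot)\|_{L^2}^2\le C\big(\tau+\text{(lower-order terms vanishing as }N\to\infty)\big)$, uniform in $N$.

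The main obstacle is the time-regularity step (iii): because $\varphi$ may be nonlinear and only differentiable (not globally Lipschitz a priori), and because the reaction coefficient $K(N)$ diverges, one must be careful that the bound on $\partial_t u^N$ is genuinely uniform. The key points that make it work are that $u^N$ takes values in the fixed compact set $[0,M_u]$ (Lemma \ref{unif est}), on which $\varphi$ and $\varphi'$ are bounded, and that Lemma \ref{react int} absorbs the $K(N)$ factor in the reaction term. Once (i)--(iii) are in place, the Fréchet--Kolmogorov theorem on $Q_T$ (extended periodically) yields relative compactness of $\{u^N\}$ in $L^2(Q_T)$, and interpolation with the uniform $L^\infty$ bound gives relative compactness in $L^p(Q_T)$ for every $p\ge 2$, completing the proof.
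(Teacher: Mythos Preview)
Your proposal is correct and follows essentially the same route as the paper: Fr\'echet--Kolmogorov compactness, with spatial equicontinuity coming from the discrete energy estimate (Lemma \ref{energy est}) and time equicontinuity obtained by writing $u^N(t+\tau)-u^N(t)=\int_0^\tau \partial_t u^N(t+s)\,ds$, substituting the first equation of \eqref{dHDL eq}, summing by parts, and controlling the diffusion and reaction contributions via Lemma \ref{energy est} and Lemma \ref{react int} respectively. The upgrade from $L^2$ to $L^p$ via the uniform $L^\infty$ bound (Lemma \ref{unif est}) is also exactly what the paper does.
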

\begin{proof}
By the Fr\'{e}chet-Kolmogorov theorem (\cite{B83}, Theorem \Rnum{4}.25 and Corollary \Rnum{4}.26), it suffices to show that there exists a positive constant $C$ such that 
\begin{align*}
\int_0^{T-\tau} \int_{\mathbb{T}^d} | u^N(t+\tau,\theta) -u^N(t,\theta) |^p d\theta dt  \leq C \tau, \\
\int_0^T \int_{\mathbb{T}^d} |u^N(t,\theta+\alpha) - u^N(t,\theta) |^p d\theta dt \leq C | \alpha |
\end{align*}
for all $p \ge 2$, $\tau \in (0,T)$ and $\alpha \in \mathbb{R}^d$ sufficiently small. 

First we show the equi-continuity along spatial direction with exponent $p= 1$. Once the case when $p =1$ is proved, then we obtain the assertion for any exponent $p \ge 1$ according to the uniform boundedness of $u^N$ (Lemma \ref{unif est}). Change of variables enables us to restrict our cases for non-negative $\alpha$. In this case, we observe 
\[
\begin{aligned}
& \iint_{Q_T} \big| u^N (t, \theta + \frac{ n}{N} ) - u^N (t, \theta) \big| d\theta dt \le \frac{ n }{N} \iint_{ Q_T } | \nabla^N u^N (t, \theta) | d\theta dt , \\
& \iint_{ Q_T } \big| u^N (t, \theta + \frac{ 1}{ r N }) - u^N (t, \theta) \big| d\theta dt \le \frac{1 }{r N } \iint_{ Q_T} | \nabla^N u^N (t, \theta) | d\theta dt  
\end{aligned}
\]
for every $t \in [0,T]$, $n \in \mathbb{Z}_+ $ and $r \ge 1$. Combining these two estimates and applying them for $\alpha = n / rN$ with $n = \lceil \alpha N \rceil$ and $r = \lceil \alpha N \rceil  / \alpha N $ to obtain
\[
\iint_{Q_T} | u^N (t, \theta + \alpha ) - u^N (t, \theta) | d\theta dt \le \alpha \iint_{Q_T } | \nabla^N u^N (t, \theta) | d\theta dt \le \alpha \| \nabla^N u^N \|_{ L^2 (Q_T)}
\]
for every $t \in [0,T]$ and $\alpha \ge 0$ where in the last estimate we used H\"{o}lder's inequality. According to the uniform energy estimate Lemma \ref{energy est}, we obtain the equi-continuity in spatial variables for any index $p \ge 1$. In particular, the second assertion holds for any $p \ge 2$. 

Similarly, we next prove the equi-continuity in time only for the case $p=2$. We remark here that when $1 \le p < 2$ another exponent for $\tau$ is needed so that we restrict our cases only for $p \ge 2$. The integral appearing in the left hand side of the first estimate for $p=2$ is equal to   
\[
\int_0^{T-\tau} \int_{\mathbb{T}^d} \left(  \int_0^\tau  \partial_t u^N(t+s,\theta)  ds \right)  \left( u^N(t+\tau,\theta) -u^N(t,\theta) \right)  d\theta dt 
.\]
However, using the first equation of (\ref{dHDL eq}) for the integrand and integrating by parts, this quantity can be estimated from above by
\begin{align*}
&\int_0^\tau
 \left( \int_0^{T-\tau} \int_{\mathbb{T}^d} \left| \nabla^N \varphi ( u^N(t+s,\theta) )  \right|^2 d\theta dt \right)^{ 1 /2 }
 \left( \int_0^{T-\tau} \int_{\mathbb{T}^d} \left| \nabla^N u^N ( t + \tau ,\theta) \right|^2 d\theta dt \right)^{ 1/2 } ds \\
&+\int_0^\tau
 \left( \int_0^{T-\tau} \int_{\mathbb{T}^d} \left| \nabla^N \varphi ( u^N(t+s,\theta) ) \right|^2 d\theta dt \right)^{1/2}
 \left( \int_0^{T-\tau} \int_{\mathbb{T}^d} \left| \nabla^N u^N(t,\theta) \right|^2 d\theta dt \right)^{1/2} ds \\
&+2K \int_0^\tau \int_0^{T-\tau} \int_{\mathbb{T}^d}  u^N (t+s,x) v^N (t+s,x) d\theta dt ds  
.\end{align*}
where we used Schwarz's inequality to estimate the first and the second terms. Furthermore, note here that the mean-value theorem enables us to write $\partial^N_j \varphi (u^N (t, x ) ) = \varphi^\prime (\tilde{u}_j ) \partial^N_j u^N (t , x )  $ with some $\tilde{u}_j $ between $u^N ( t,  x + e_j) $ and $u^N (t, x ) $ for every $j = 1, \ldots , d$, $t \in [0, T] $ and $ x \in \mathbb{T}^d_N$. Since $u^N$ takes values in a finite interval, the first and the second terms in the above display can be estimated by the $L^2$-energy of $\nabla^N u^N $. The third term can also be estimated by $\tau$ in view of Lemma \ref{react int}. By this line, we could bound the above sum of three terms by $\tau $ with some positive constant and the desired estimate was proved. 
\end{proof}

Moreover, we have the following weak compactness of the sequence $\{ v^N \}$. The next result is obvious since any bounded sequence of a reflective Banach space has a weakly convergent subsequence and the functions $v^N $ have a uniform bound in $N$ and so do their $L^p$-norms for $p >  1 $.

\begin{lemma}
\label{v relcpt}
The sequence $\{ v^N (t, \theta) \}_{ N \in \mathbb{N} }$ is weakly pre-compact in $L^p (Q_T) $ for any $p > 1$. Namely, there exists a subsequence $(N_k)$ and $v \in L^p (Q_T ) $ such that $v^N \rightharpoonup v$ weakly in $L^p (Q_T)$.  
\end{lemma}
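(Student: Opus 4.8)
The plan is to deduce the claim almost immediately from the uniform $L^\infty$-bound on $v^N$ established in Lemma \ref{unif est}, combined with standard sequential weak-compactness in dual spaces. First I would recall that, by Lemma \ref{unif est}, the extended functions satisfy $0 \le v^N(t,\theta) \le M_v \le 1$ for every $N$ and a.e.\ $(t,\theta) \in Q_T$. Since $Q_T = [0,T]\times\mathbb{T}^d$ has finite Lebesgue measure $|Q_T| = T$ (recall $|\mathbb{T}^d| = 1$), this gives $\|v^N\|_{L^p(Q_T)} \le M_v\, T^{1/p}$ uniformly in $N$, for every $p \in [1,\infty]$; in particular the sequence is bounded in $L^\infty(Q_T)$.

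Next I would pass to the weak-$\ast$ topology of $L^\infty(Q_T)$. Since $L^1(Q_T)$ is separable and $L^\infty(Q_T) = \big(L^1(Q_T)\big)^\ast$, the sequential Banach--Alaoglu theorem applied to the bounded sequence $(v^N)_N$ yields a subsequence $(N_k)$ and a function $v \in L^\infty(Q_T)$ with $\|v\|_{L^\infty(Q_T)} \le M_v$ such that $v^{N_k} \to v$ weakly-$\ast$ in $L^\infty(Q_T)$, i.e.\ $\iint_{Q_T} v^{N_k}\phi \, d\theta dt \to \iint_{Q_T} v\,\phi \, d\theta dt$ for every $\phi \in L^1(Q_T)$.

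Finally, for any $p \in (1,\infty)$ the conjugate exponent $p' = p/(p-1)$ is finite, and because $Q_T$ has finite measure there is a continuous embedding $L^{p'}(Q_T) \hookrightarrow L^1(Q_T)$; testing the weak-$\ast$ convergence just obtained against arbitrary $\phi \in L^{p'}(Q_T) \subset L^1(Q_T)$ shows $v^{N_k} \rightharpoonup v$ weakly in $L^p(Q_T)$, while $v \in L^\infty(Q_T) \subset L^p(Q_T)$. Thus the same subsequence $(N_k)$ works simultaneously for every $p > 1$, which is marginally stronger than the stated conclusion. There is no genuine obstacle in this argument; the only point worth flagging is that one should not simply invoke reflexivity of $L^p(Q_T)$ separately for each exponent (which would a priori produce different subsequences), but it is cleanest to extract once through the $L^\infty$-weak-$\ast$ topology, or equivalently to extract along $p=2$ by reflexivity and then upgrade to all $p>1$ using the uniform $L^\infty$-bound.
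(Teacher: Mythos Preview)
Your argument is correct. The paper's own justification is a one-line remark preceding the lemma: since $L^p(Q_T)$ is reflexive for $p>1$ and the $v^N$ are uniformly bounded (hence bounded in every $L^p$-norm), a weakly convergent subsequence exists. You take a slightly different route, extracting once via weak-$\ast$ compactness in $L^\infty(Q_T)$ through the sequential Banach--Alaoglu theorem, and then observing that weak-$\ast$ convergence in $L^\infty$ implies weak convergence in every $L^p$, $p>1$, because $L^{p'}(Q_T)\subset L^1(Q_T)$ on the finite-measure domain. The payoff of your approach is that a single subsequence works for all $p$ simultaneously, whereas the bare reflexivity argument a priori gives a $p$-dependent subsequence; since the paper only needs $p=2$ in the subsequent proof of Theorem~\ref{PDE thm}, this refinement is not strictly necessary, but it is a clean observation.
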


Now we give a proof of Theorem \ref{PDE thm}.

\begin{proof}[Proof of Theorem \ref{PDE thm}]
For any $p >1$, by Lemma \ref{u relcpt} the sequence $\{ u^N(t,\theta) \}_{N \in \mathbb{N}}$ is strongly precompact in $L^p(Q_T)$, while by Lemma \ref{v relcpt} $\{ v^N(t,\theta) \}_{N \in \mathbb{N}}$ is weakly precompact in $L^p (Q_T)$. Therefore, there exist a subsequence $\{  N_k \}$ and functions $u, v \in L^p (Q_T)$ such that 
\[
\begin{aligned}
u^{N_k} \to u \text{ strongly in } L^p(Q_T), \quad v^{N_k} \rightharpoonup v \text{ weakly in } L^p(Q_T)
\end{aligned}
\]
for any $p > 1$. In particular, by taking further subsequences if necessary (which again denoted by $N_k$), we see that $u^{N_k} \to u$ a.e. in $Q_T$, which clearly take values in $[0, M_u ] \times [0, M_v ] $. We show the function $u  $ belongs to $L^2 (0, T; H^1 (\mathbb{T}^d))$. For any test function $\psi \in C^\infty (\mathbb{T}^d)$, $j=1,\ldots , d $ and $t \in [0, T]$, we have 
\[
\int_{ \mathbb{T}^d } u^N (t, \theta) \partial^N_j \psi (\theta ) d\theta = - \int_{ \mathbb{T}^d } \psi (\theta ) \partial^N_j u^N (t, \theta)  d\theta 
\]
where $\partial^N_j $ is the discrete partial derivative on $j$-th direction defined by $\partial^N_j G (\theta) = N [ G (\theta + \frac{ e_j}{N}) - G (\theta) ] $ for every continuous functions $G$ on $ \mathbb{T}^d $. Taking limit along $(N_k)$ on the above identity, we see that $\partial^N_j u^N $ converges to the $j$-th partial derivative $\partial_j u$ in distributional sense for every $j=1,\ldots ,d$. Moreover, since $L^2 (\mathbb{T}^d )$-norm of the discrete derivative $\partial^N_j u^N (t, \cdot )$ is bounded above by some constant independent of $N$ in view of Lemma \ref{energy est}, $\partial_j u (t, \cdot)$ belongs to $L^2 (\mathbb{T}^d)$ for every $j = 1,\ldots ,d$ and thus we obtain $u \in L^2 (0,T; H^1 (\mathbb{T}^d))$. Moreover, by the second equation of (\ref{dHDL eq}), we have
\[
\iint_{Q_T} u^N(t,\theta) v^N(t,\theta) d\theta dt \le \frac{1}{K }
\] 
for every $N \in \mathbb{N}$. Since $u^{N_k} \to u$ strongly in $L^2 (Q_T)$ and $v^{N_k} \rightharpoonup v $ weakly in $L^2(Q_T)$ as $k$ tends to infinity, their product $u^{N_k}v^{N_k}$ converges strongly in $L^1(Q_T)$ to $u v$. Therefore, taking limit along $(N_k )$ on the above bound, we get $uv=0$ a.e. in $Q_T$.

Next we let $w^N \coloneqq u^N - v^N $ and we denote its extension as a simple function on $\mathbb{T}^d$ (see \eqref{extension}) by the same notation. Note here that it is already shown that the sequence $(w^N)$ on $Q_T$ converges weakly in $L^2 (Q_T ) $ to some $w$ along the subsequence $( N_k )$. We show any limit point $w$ satisfies (\ref{weakform}). For that purpose, we subtract the two equations in \eqref{dHDL eq} with each other to cancel singular reaction terms. Let $\psi $ be an arbitrary smooth function with compact support on $Q_T$ satisfying $\psi (T, \cdot) \equiv 0$ on $\mathbb{T}^d$. Then, for every $t \in [0, T] $ and $\theta = ( \theta_j )_{ j = 1,\ldots ,d } \in \mathbb{T}^d $, we multiply $\psi (t, \theta ) \prod_{ j = 1}^d \mathbf{1}_{ [\frac{ x_j }{ N } - \frac{ 1}{ 2N } , \frac{ x_j }{ N } + \frac{ 1}{ 2N} ) } (\theta_j ) $ and sum up over $x \in \mathbb{T}^d_N$ to obtain 
\[
\partial_t w^N (t, \theta ) \psi (t, \theta) = \sum_{ x \in \mathbb{T}^d_N } \Delta^N \big( \varphi (u^N (t, x) ) -\varepsilon ( N ) v^N (t, x) \big)  \prod_{ 1 \le j \le d } \mathbf{1}_{ \big[ \frac{x_j}{ N}  - \frac{1}{ 2 N} , \frac{x_j}{  N} + \frac{1}{ 2N } \big) } (\theta_j ) \psi (t, \theta ) .
\]
Then we integrate over $(t, \theta ) \in Q_T $ to get an identity  
\begin{equation}
\label{weak dStefan}
\begin{aligned}
&  \int_{ \mathbb{T}^d } w^N (0, \theta ) \psi (0, \theta ) d\theta - \iint_{Q_T}  w^N (t, \theta ) \psi_t (t, \theta)  d\theta dt  \\
& \quad =  \iint_{ Q_T } \bigg( \sum_{ x \in \mathbb{T}^d_N } \varphi (u^N (t, x )  ) \prod_{1 \le j \le d } \mathbf{1}_{ \big[ \frac{x_j}{ N}  - \frac{1}{ 2 N} , \frac{x_j}{  N} + \frac{1}{ 2N } \big) } (\theta_j )  \bigg) \Delta^N \psi (t, \theta ) d\theta dt \\
& \qquad + \iint_{Q_T } \varepsilon (N) \nabla^N v^N (t, \theta ) \cdot \nabla^N \psi (t, \theta ) d\theta dt 
\end{aligned}
\end{equation}
due to the integration by parts formula where we defined $\nabla^N G (\theta) \coloneqq \big( N (G (\theta + \frac{ e_j}{ N} ) - G (\theta ) ) \big)_{ j = 1, \ldots , d } $ and $\Delta^N G (\theta) \coloneqq N^2 \sum_{ j = 1, \ldots , d } \big( G (\theta + \frac{ e_j}{ N} ) ) + G  (\theta - \frac{ e_j }{N } ) - 2 G ( \theta ) \big) $ for every $ G \in C (\mathbb{T}^d ) $.

Recall the definition of $u^N (t , \theta ) $ given in \eqref{extension}. Then, one can easily notice that 
\[
 \varphi (u^N (t, \theta ) ) = \sum_{ x \in \mathbb{T}^d_N } \varphi (u^N (t, x )  ) \prod_{ 1 \le j \le d} \mathbf{1}_{ \big[ \frac{x_j}{ N}  - \frac{1}{ 2 N} , \frac{x_j}{  N} + \frac{1}{ 2N } \big) } (\theta_j ) 
\] 
since the indicator functions are disjoint. On the other hand, the second term in the right-hand side of \eqref{weak dStefan} is absolutely bounded by 
\[
\bigg( \iint_{ Q_T } \varepsilon (N) | \nabla^N v^N |^2 d\theta dt  \bigg)^{1/2} 
\bigg( \iint_{ Q_T } \varepsilon (N) | \nabla^N \psi |^2 d\theta dt  \bigg)^{1/2} ,  
\]
which vanishes as $ N $ tends to infinity due to the energy estimate (Lemma \ref{energy est}). Now recalling the functions $\{ u^N (t ,\theta ) \}_{ N \in \mathbb{N} } $ is relatively compact according to Lemma \ref{u relcpt}, we take the limit of scaling parameters along the subsequence $(N_k ) $. Then we obtain the desired weak form (\ref{weakform}) for every $\psi \in H^1 (Q_T ) $ such that $\psi (T, \cdot ) \equiv 0 $ noting the space of smooth functions on $Q_T$ with compact support is dense in $H^1 (Q_T) $. Finally, the uniqueness of the problem \eqref{wStefan} assures that the  above convergence holds without taking any subsequence and thus we complete the proof.

\end{proof}

\section*{Acknowledgments}
The author would like to thank Professor T. Funaki and Professor M. Sasada for helping him with variable suggestions.



\begin{thebibliography}{99}
\bibitem{B83} H. Br\'{e}zis: \textit{Analyse Fonctionelle}, Masson, Paris, 1983. 
\bibitem{CDHMN04} E. C. M. Crooks, E. N. Dancer, D. Hilhorst, M. Mimura and H. Ninomiya: \textit{Spatial segregation limit of a competition-diffusion system with Dirichlet boundary conditions}, Nonlinear Anal. Real World Appl. \textbf{5} (2004), 645-665.
\bibitem{DHMP99} E. N. Dancer, D. Hilhorst, M. Mimura and L. A. Peletier: \textit{Spatial segregation limit of a competition-diffusion system}, European J. Appl. Math. \textbf{10} (1999), 97-115.
\bibitem{DD05} T. Delmotte and J.-D. Deuschel: \textit{On estimating the derivatives of symmetric diffusions in stationary random environment, with applications to $\nabla \phi$ interface model}, Probab. Theory Relat. Fields \textbf{133} (2005), 358-390. 
\bibitem{DMFPV19} A. De Masi, T. Funaki, E. Presutti and M. E. Vares: \textit{Fast-reaction limit for Glauber-Kawasaki dynamics with two components}, ALEA, Lat. Am. J. Probab. Math. Stat., \textbf{16} (2019), 957-976.
\bibitem{EFHPS20} P. El Kettani, T. Funaki, D. Hilhorst, H. Park and S. Sethuraman: \textit{Mean curvature interface limit from Glauber+Zero-range interacting particles}, preprint, arXiv:2004.05276. 
\bibitem{FT18} T. Funaki and K. Tsunoda: \textit{Motion by mean curvature from Glauber-Kawasaki dynamics}, J. Stat. Phys. \textbf{177} (2019), 183-208.
\bibitem{H20} K. Hayashi: \textit{Spatial-segregation limit for exclusion processes with two components under unbalanced reaction}, Electron. J. Probab. \textbf{26} (2021), 1-36. 
\bibitem{HHP96} D. Hilhorst, R. van der Hout and L. A. Peletier: \textit{The fast reaction limit for a reaction-diffusion system}, J. Math. Anal. Appl. \textbf{199} (1996), 349-373.
\bibitem{HHP00} D. Hilhorst, R. van der Hout and L. A. Peletier: \textit{Nonlinear diffusion in the presence of fast reaction}, Nonlinear Annal., \textbf{41} (2000), 803-823.
\bibitem{IMMN17} M. Iida, H. Monobe, H. Murakawa and H. Ninomiya: \textit{Vanishing, moving and immovable interfaces in fast reaction limits}, J. Differential Equations, \textbf{263} (2017), 2715-2735.
\bibitem{JM18} M. Jara and O. Menezes: \textit{Non-equilibrium fluctuations for interacting particle systems}, preprint, arXiv:1810.09526. 
\bibitem{KL99} C. Kipnis and C. Landim: \textit{Scaling limits of interacting particle systems}, Springer, 1999. 
\bibitem{LSV96} C. Landim, S. Sethuraman and S. Varadhan: \textit{Spectral gap for zero-range dynamics}, Ann. Probab. \textbf{24}, No. 4 (1996), 1871-1902. 
\bibitem{LY93} S.-L. Lu and H.-T. Yau: \textit{Spectral gap and logarithmic Sobolev inequality for Kawasaki and Glauber dynamics}, Commun. Math. Phys. \textbf{156} (1993), 399-433. 
\bibitem{Yau91} H.-T. Yau: \textit{Relative entropy and hydrodynamics of Gunzburg-Landau models}, Lett. Math. Phys., \textbf{22} (1991), 63-80.
\end{thebibliography}
\end{document}